\documentclass[11pt,reqno,english]{article}

\usepackage{amsmath,amsfonts,amssymb,graphicx,amsthm,url}
\usepackage[noadjust]{cite}
\usepackage{stmaryrd}
\usepackage{mathrsfs,booktabs,tabularx}
\usepackage{xifthen,xcolor,tikz,setspace}
\usetikzlibrary{decorations.pathmorphing,patterns,shapes,calc,decorations}
\usetikzlibrary{decorations.pathreplacing}
\usepackage{mathtools,upgreek,paralist}
\usepackage[final]{showkeys} 

\definecolor{refkey}{gray}{.75}
\definecolor{labelkey}{gray}{.5}
\usepackage[algo2e,boxed,vlined,algoruled]{algorithm2e}
\usepackage{comment}
\usepackage[shortlabels]{enumitem}

\usepackage{extarrows}

\usepackage[letterpaper]{geometry}
\usepackage[colorinlistoftodos]{todonotes}
\presetkeys{todonotes}{inline, color=green}{}

\usepackage{accents}
\usepackage[algo2e,boxed,vlined,algoruled]{algorithm2e}

\usepackage{authblk}

\usepackage[small]{caption}
\usepackage[colorlinks=true]{hyperref}
\colorlet{DarkGreen}{green!50!black}
\colorlet{DarkGray}{gray!60!black}

\numberwithin{equation}{section}

\renewcommand{\restriction}{\mathord{\upharpoonright}}
\renewcommand{\epsilon}{\varepsilon}

\usepackage{color}
 \definecolor{refkey}{gray}{.5}
 \definecolor{labelkey}{gray}{.5}
\definecolor{light}{gray}{.9}

\usepackage{soul}

\usepackage[capitalise]{cleveref}

\newtheorem{thm}{Theorem}

\newtheorem{theorem}{Theorem}[section]
\newtheorem*{theorem*}{Theorem}
\newtheorem{lemma}[theorem]{Lemma}
\newtheorem{lem}[theorem]{Lemma}

\newtheorem{claim}[theorem]{Claim}
\crefname{claim}{Claim}{Claims}

\newtheorem{prop}[theorem]{Proposition}

\newtheorem{observation}[theorem]{Observation}

\newtheorem{cor}[theorem]{Corollary}

\newtheorem{assumption}[theorem]{Assumption}

\theoremstyle{definition}{

\newtheorem{definition}[theorem]{Definition}

\newtheorem*{definition*}{Definition}

\newtheorem{remark}[theorem]{Remark}
\newtheorem*{remark*}{Remark}

}

\newcommand{\E}{\mathbb E}

\renewcommand{\P}{\mathbb P}

\newcommand{\R}{\mathbb R}
\newcommand{\Z}{\mathbb Z}

\newcommand{\cI}{\ensuremath{\mathcal I}}

\newcommand{\SE}{\textsc{se}}
\newcommand{\SW}{\textsc{sw}}

\newcommand{\tmix}{t_{\textsc{mix}}}

 \renewcommand{\epsilon}{\varepsilon}

\DeclareMathOperator{\var}{Var}

\DeclareMathOperator{\diam}{diam}

\newcommand{\tv}{{\textsc{tv}}}

\title{Rapid phase ordering of Ising dynamics on $\mathbb Z^2$}
\date{}

\author{Reza Gheissari\thanks{Department of Mathematics, Northwestern University. Evanston, IL. \url{gheissari@northwestern.edu}}   { and } Allan Sly\thanks{Department of Mathematics, Princeton University. Princeton, NJ. \url{allansly@princeton.edu}}}

\begin{document}

\maketitle

\vspace{-1cm}
\begin{abstract}
    We consider the phase ordering problem for the low-temperature Ising dynamics initialized from a biased and disordered initialization. Work of Fontes, Schonmann, Sidoravicius (2002) showed that at zero-temperature, Ising Glauber dynamics on $\mathbb Z^d$ for $d\ge 2$ initialized from i.i.d.\ spins on each vertex that are $+1$ with sufficiently large probability, absorbs into the all-plus configuration quickly. We prove that analogous behavior holds throughout the low-temperature regime of the Ising model in two dimensions. Namely, there exists $p_0 <1$ such that Ising Glauber dynamics initialized from i.i.d.\ spins that are $+1$ with probability $p>p_0$, run at any low temperature $\beta>\beta_c$ converges rapidly to the plus phase  measure $\pi^+$. 
    
    The result is proved using a spacetime multiscale coupling valid in any $d\ge 2$, that boosts a uniform-in-$\beta$ quasi-polynomial bound on the mixing time of Ising dynamics with plus boundary conditions, into rapid phase ordering from biased initializations with no boundary conditions. 
\end{abstract}

\section{Introduction}

The Ising model is one of the simplest and best-studied models of phase transitions in statistical physics. 
It is the distribution over assignments of $\{-1,+1\}$ spins to the vertices of a finite graph $G= (V,E)$ given by 
\begin{equation}\label{eq:Ising-distribution}
    \pi_{G,\beta}(\sigma) \propto \exp\Big(  \beta \sum_{v\sim w} \sigma_v \sigma_w\Big)\,,
\end{equation}
where we use $v\sim w$ to mean $\{v,w\}$ forms an edge in $E$.  
In this paper, we are predominantly interested in the most physical case where the underlying graph is an $n\times n$ box in $\mathbb Z^d$ for $d\ge 2$, or is the infinite-volume limit of this distribution. 

The study of out-of-equilibrium dynamics of the Ising model date back to Glauber~\cite{Glauber}. 
We consider the continuous-time Ising Glauber dynamics on $\mathbb Z^d$ for $d\ge 2$. That is the continuous-time Markov chain which is initialized with $X_0 = x_0$, assigns every vertex $v$ a rate-$1$ Poisson clock, and when the clock at vertex $v$ rings at time $t$, the Markov chain $(X_t)_{t\ge 0}$ updates 
\begin{align}\label{eq:Glauber-update-rule}
    X_t(v)  = \begin{cases}
        +1 & \text{w.\ prob.\ }\propto \exp( 2\beta \sum_{w\sim v} X_{t^-}(w))\\ - 1 & \text{w.\ prob.\ }\propto \exp( - 2\beta \sum_{w\sim v} X_{t^-}(w))
    \end{cases}\,,
\end{align}
and $X_t(w) = X_{t^-}(w)$ for all $w\ne v$. It is easy to check that this dynamics satisfies the detailed balance equations with respect to the Ising Gibbs distribution~\eqref{eq:Ising-distribution}, and therefore for fixed $n$, converges as $t\to\infty$ to this distribution. 
It is well known that on boxes of side-length $n$ in $\mathbb Z^d$, the mixing time from worst-case initialization (denoted $\tmix$) undergoes a phase transition at the critical point of the system $\beta_c(d)$. When $\beta<\beta_c(d)$ the mixing time from any initial state is $O(\log n)$~\cite{MaOl1}, with even the exact constant in front of $\log n$ having been identified~\cite{LS-information-percolation}. On the other hand, when $\beta>\beta_c(d)$ the mixing time is $\exp(\Theta(n^{d-1}))$, because it takes an exponential time to go from the all-plus initialization to a majority minus configuration (though by spin-flip symmetry, the latter has probability $1/2$ at equilibrium: see~\cite{Thomas,Pisztora96,Bodineau05}).

A central question of interest in this low-temperature regime is of understanding the metastable behavior of the plus and minus phases (the Gibbs measure $\pi$ conditioned on positive or negative magnetization). Dating back to early physics work of Lifshitz~\cite{Lifshitz} and later Huse and Fisher~\cite{FisherHuse}, this has been studied in the context of questions of the following form: started ``out-of-equilibrium" how quickly and with what probabilities (depending on the initialization) does an Ising system order into one of the two phases? The study of such questions goes under the umbrella term of ``phase ordering kinetics" as per e.g., the important monograph of Bray~\cite{bray1994theory}.

In the mathematics literature, Fontes, Schonmann, and Sidoravicius~\cite{FoScSi02} studied the following setup of this question: 
``\emph{the behavior of a magnetic system which is initially at
high temperature under a strong external magnetic field, and from time 0 on is suddenly subject to a very low temperature and to no external field}." They then simplified to zero-temperature dynamics, and showed that in the zero-temperature limit where the chain follows majority dynamics on $\mathbb Z^d$, if the initialization is i.i.d.\ coin flips on the vertices with probability $p\ge 1-\epsilon$ of being plus (denoted $\bigotimes \text{Rad}(p)$), then the configuration converges to the all-plus configuration at stretched exponential rate. Morris~\cite{Morris-zero-temp} showed that the minimum parameter can be at least $\frac{1}{2} + \epsilon_d$ for $\epsilon_d \downarrow 0$ as $d \to\infty$. 

In this paper, we study the phase ordering problem at low but positive temperatures: Prepare a disordered Ising state $X_0$ with some bias towards plus spins, and then run low-temperature Ising Glauber dynamics from that initialization. Is the initial bias enough to ensure rapid equilibration to the metastable plus phase measure? This question has a long history, and a version was posed by Liggett~\cite[Open Problem 7, Chapter IV]{Liggett-book}. 
A simulation of this process is depicted in Figure~\ref{fig:phase-ordering-simulations}. 

From the statistical physics side, this has been a rich question because unlike situations where there is a weak external field or boundary conditions far away, the \emph{only} source of the symmetry breaking in the long-time dynamics is the bias in the initialization. From the Markov chain mixing time perspective, understanding fast (quasi-)convergence from certain ``nice" initial configurations, when correlations do not decay and the overall mixing time is exponentially slow is a problem of much recent interest for which there do not exist many tools. 

On  graph families other than $\mathbb Z^d$, there has been some progress on low-temperature phase ordering. On the complete graph, the Ising Glauber dynamics is essentially fully described by the magnetization process which forms a birth and death chain, and fast equilibration to the plus phase from biased initializations was shown in~\cite{LLP,DLP-censored-Glauber}. 
Caputo and Martinelli~\cite{CaMaTree} showed that on the infinite $d$-regular tree, from the i.i.d.\ $\bigotimes \text{Rad}(p)$ initialization for $p$ sufficiently close to $1$, the Glauber dynamics quickly converges to the infinite-volume plus measure on the tree. Recently~\cite{gheissari2025rapid} showed the analogous result for the Ising model on random $d$-regular graphs. 

Our main result is such a phase ordering result for the low-temperature Ising dynamics on~$\mathbb Z^2$. 
We note that whereas in the other geometries mentioned, e.g., the random regular graph result of~\cite{gheissari2025rapid}, rapid phase ordering holds from \emph{any} initialization with sufficiently large magnetization, on $(\Z/n\Z)^d$ there exist ``interface configurations" with arbitrarily large magnetization from which low-temperature Ising dynamics is slow to escape. Therefore in the context of this paper, beyond the magnetization bias, randomness in the initialization is essential.

\subsection{Main results}

There are two closely related settings in which we phrase our main result: a finite-volume one and an infinite-volume one. For the former, we consider the Ising Glauber dynamics on the torus of side-length $n$, denoted $\mathbb T_n^d = (\mathbb Z/n\mathbb Z)^d$, and for the latter, the Glauber dynamics on Ising configurations on the infinite $\mathbb Z^d$ graph. By convergence to the plus phase, on the finite torus, we mean the Gibbs distribution conditioned on having a majority of its spins be plus, i.e., $\pi_{\mathbb T_n^d}^+ = \pi_{\mathbb{T}_n^d}(\cdot \mid \sum_v \sigma_v \ge 0)$, while on $\mathbb Z^d$ we mean the extremal infinite-volume Gibbs measure $\pi^+_{\mathbb Z^d}$. 

\begin{figure}
\centering
    \includegraphics[width=.23\textwidth]{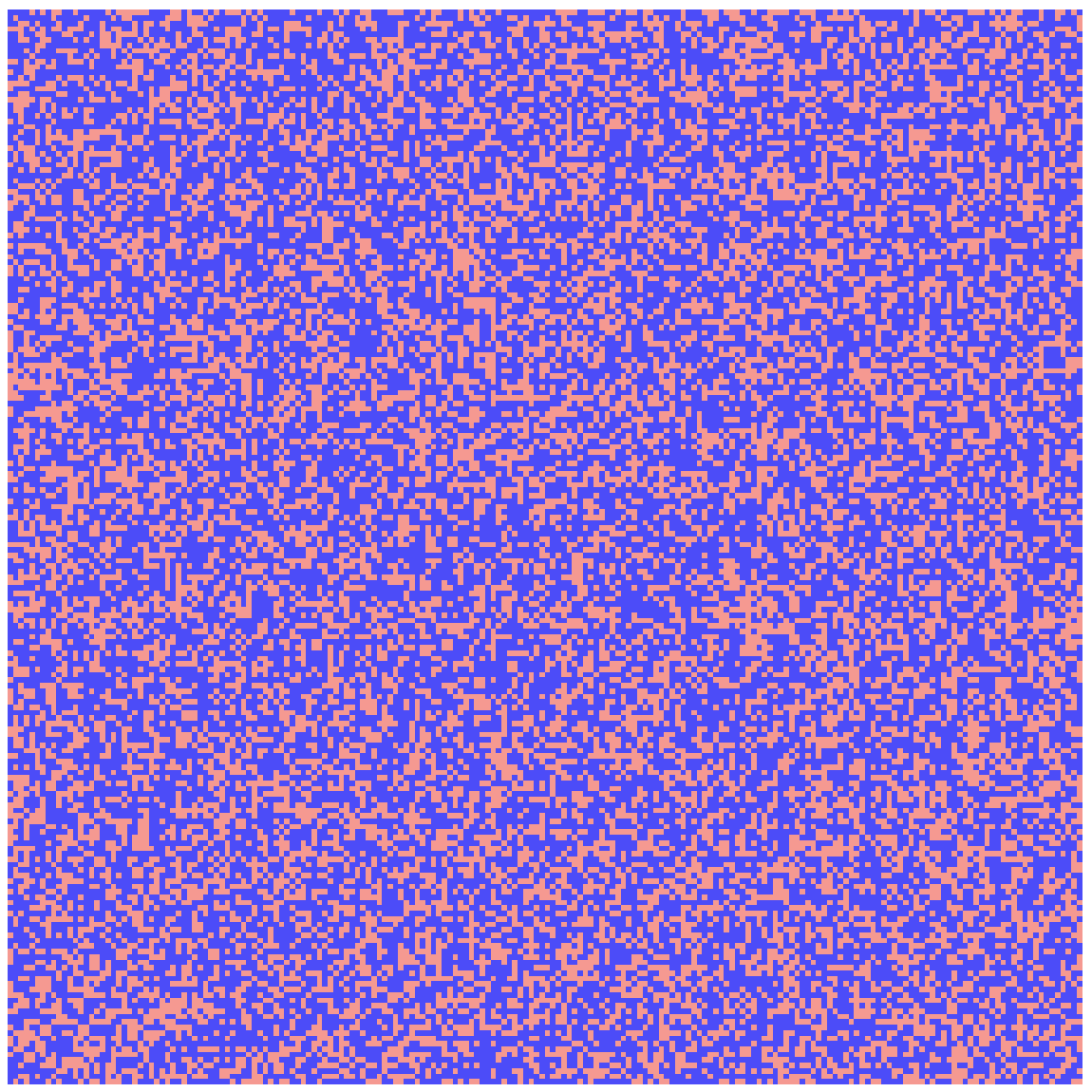}
    \includegraphics[width=.23\textwidth]{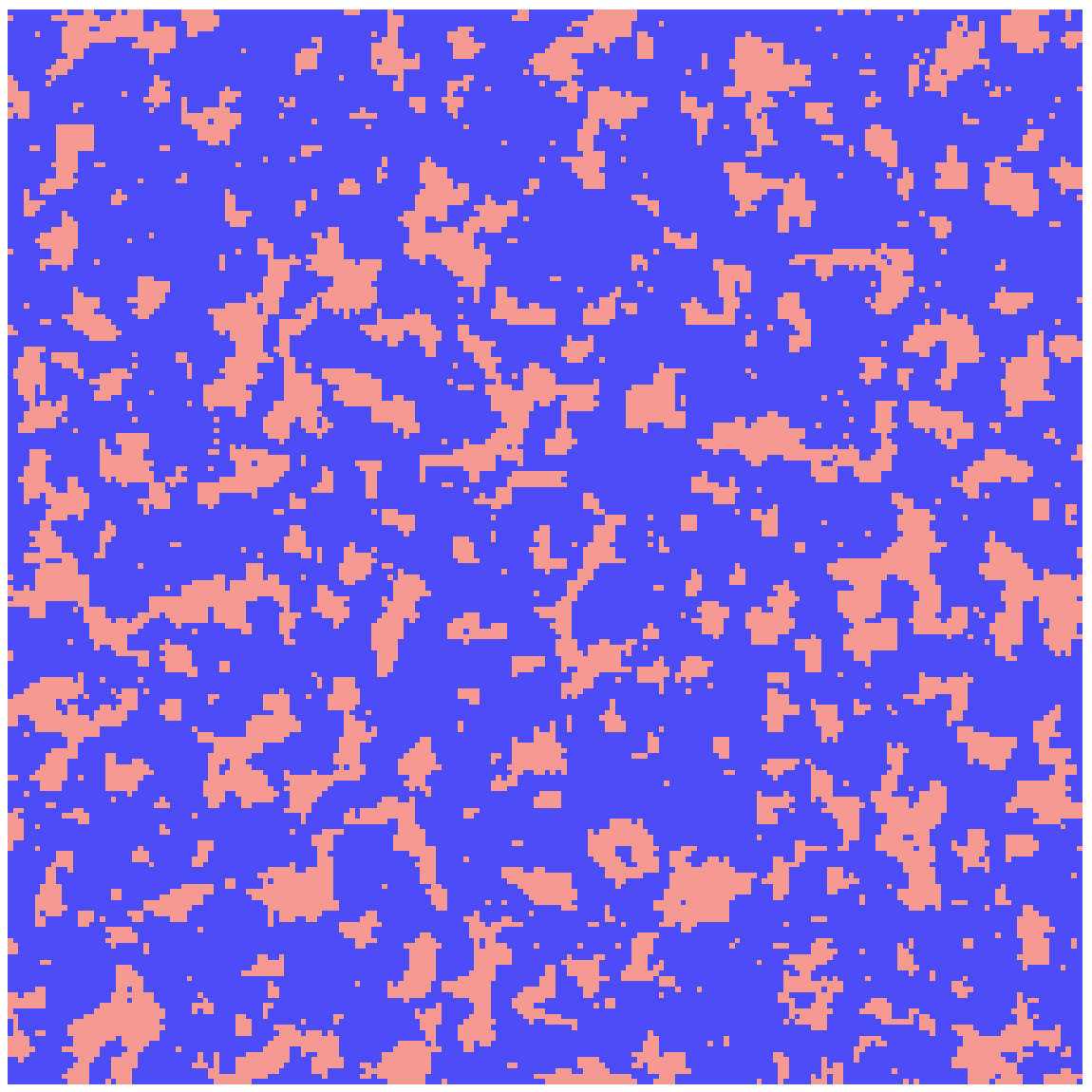}
        \includegraphics[width=.23\textwidth]{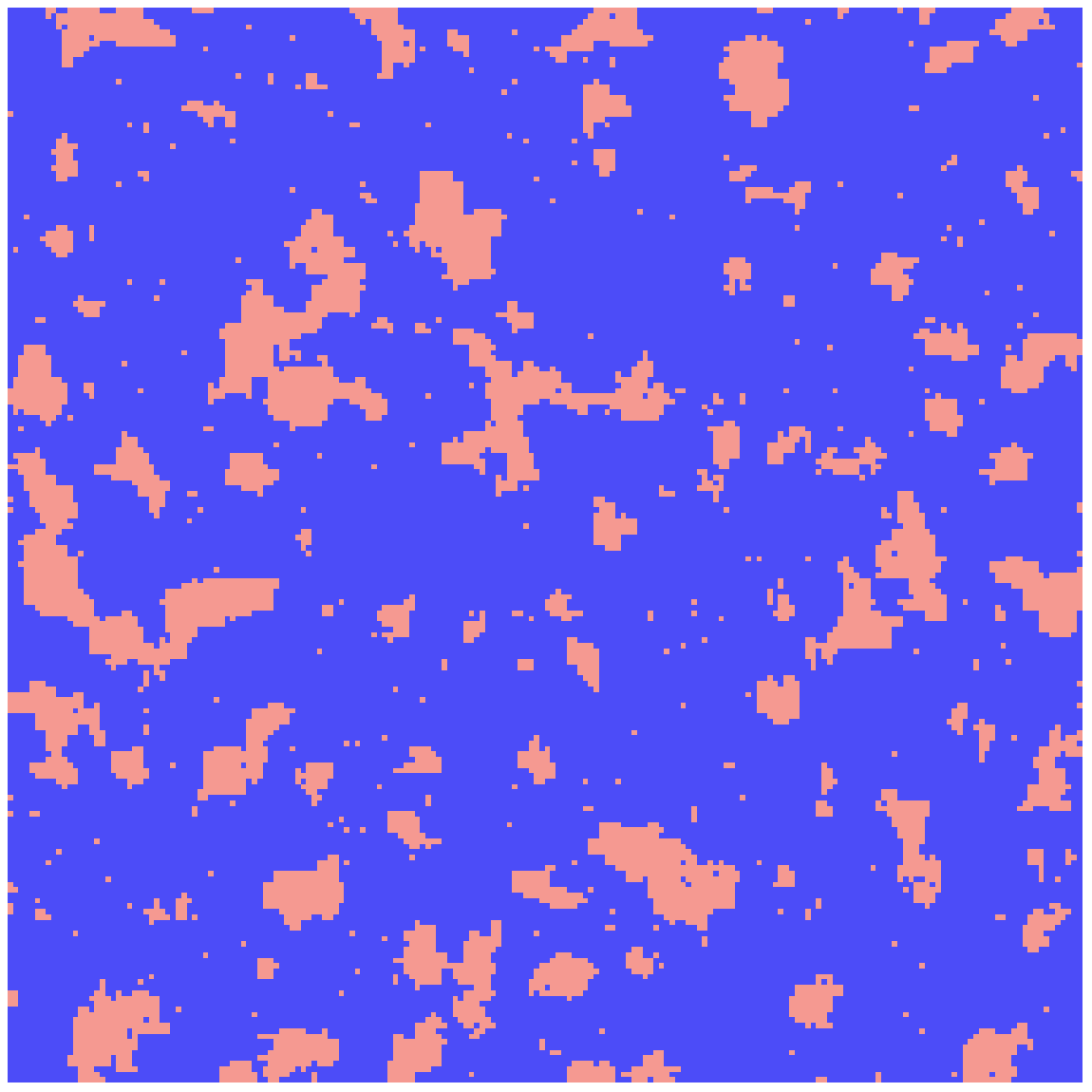}
    \includegraphics[width=.23\textwidth]{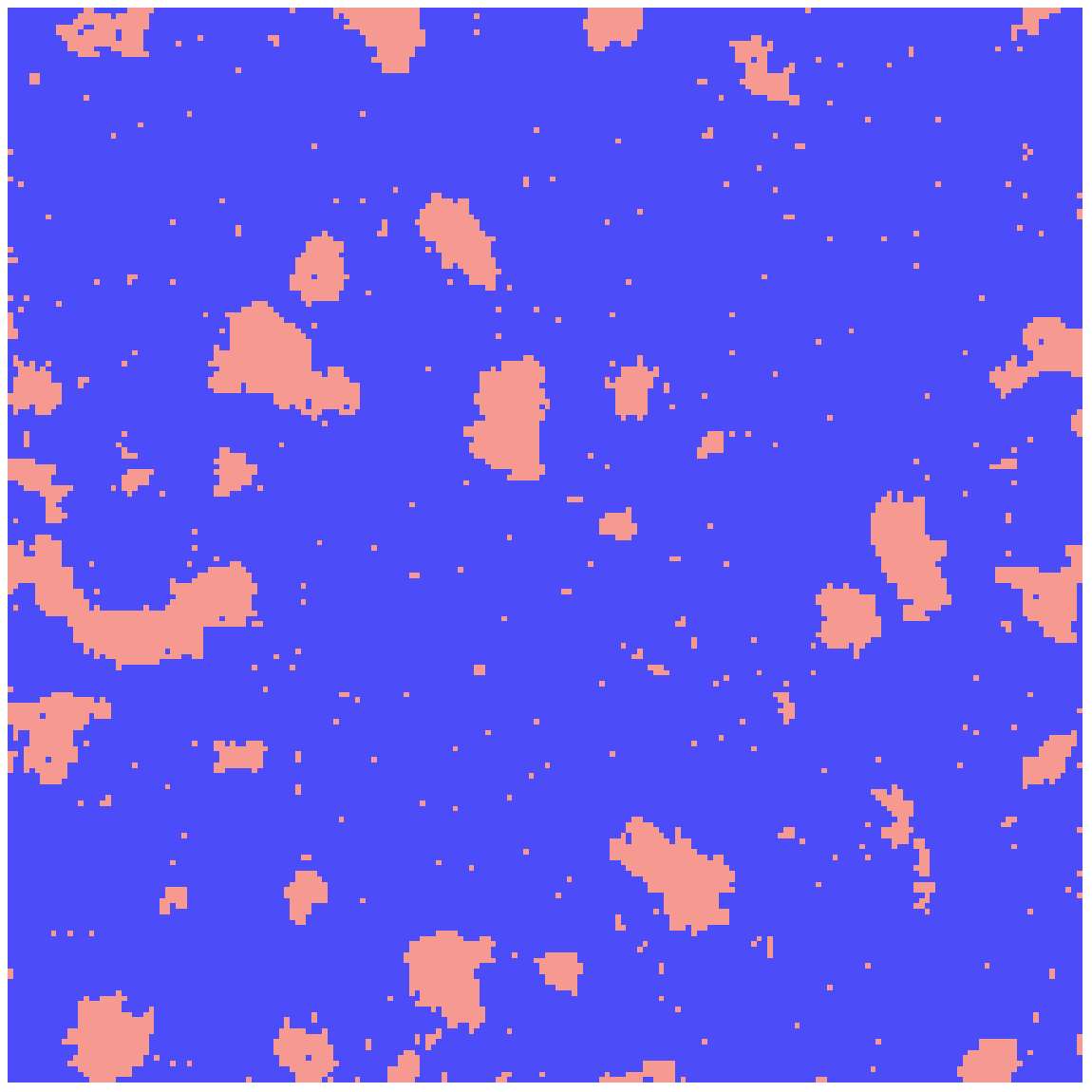}
\caption{Snapshots over time of low-temperature Ising dynamics on $\mathbb T_n^2$ initialized from i.i.d.\ spins with a slight bias towards plus (blue). The configuration first locally coarsens, then the small regions where minus regions dominate shrink by motion by mean-curvature while their complement gets close to the plus phase.}\label{fig:phase-ordering-simulations}
\end{figure}

\begin{thm}\label{thm:main} There exists $p_0<1$ such that for all $p>p_0$ and all $\beta>\beta_c(2)$ the following holds. 

\smallskip
\noindent \textbf{Finite volume.}  Suppose $X_t$  is the continuous-time Ising Glauber dynamics on $\mathbb{T}_n^2$ initialized from $x_0 \sim \bigotimes_{\mathbb{T}^2_n} Rad(p)$. With probability $1-o(1)$, $x_0$ is such that 
\begin{align*}
    \|\mathbb P_{x_0}( X_t\in \cdot ) - \pi_{\mathbb{T}_n^2}^+ \|_{\tv} \le n^{-10} \qquad \text{for all }t\in [n^{o(1)} ,  e^{\Omega(n)}]\,.
\end{align*}
\textbf{Infinite volume.} If $X_t$ is the continuous-time Glauber dynamics on all of $\Z^2$ initialized from $x_0 \sim \bigotimes_{\Z^2} \text{Rad}(p)$, then with probability going to $1$ as $R\to\infty$, $x_0$ is such that for $\Lambda_R = \{-\frac{R}{2},...,\frac{R}{2}\}^2$, for all $t \ge R$, we have 
\begin{align*}
        \|\mathbb P_{x_0}(X_t(\Lambda_R)\in \cdot) - \pi^+_{\mathbb Z^2}(\sigma(\Lambda_R)\in \cdot)\|_{\tv} \le \exp( - \Omega((\log t)^{10}))\,.
\end{align*}
\end{thm}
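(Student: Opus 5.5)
We prove \cref{thm:main} with a spacetime multiscale coupling. The abstract points to the one equilibrium input we take as a black box: a quasi-polynomial, uniform-in-$\beta$ bound on the mixing time of Glauber dynamics on a box of side $L$ in $\Z^2$ with plus boundary conditions, say $\tmix^{+}(L)\le \exp((\log L)^{C})$ for all $\beta>\beta_c(2)$. Alongside it we use the standard low-temperature equilibrium estimates for $\pi^+$ in $d=2$: exponential decay of truncated correlations, and the fact that $\pi^+$-typical configurations have minus $*$-clusters of size at most logarithmic.

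\textbf{Scales and good blocks.} Fix scales $L_{k+1}=L_k^{2}$, or some similar super-exponential sequence, with time horizons $T_k=\exp((\log L_k)^{C'})$. Call a spacetime block $B_k=\Lambda_{L_k}\times[0,T_k]$ \emph{good} if the dynamics on it can be coupled to a version started from all-plus with plus boundary conditions, so that at time $T_k$ the two configurations agree on the inner half of the box. By monotonicity it is enough to trap the dynamics between two plus-boundary chains. For the lower one, run the chain with plus boundary on a slightly smaller box started from the minimal configuration consistent with ``the boundary layer is already plus''. The quasi-polynomial mixing bound then says that once an annulus has become plus and stays plus, the interior equilibrates to $\pi^+$ within $T_k$.

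\textbf{Base case and induction.} At scale $0$, for $p>p_0$ the initial configuration restricted to $\Lambda_{L_0}$ is entirely plus except on sparse sites, and by zero-temperature-like comparison on an $O(1)$ time window (using the FSS-type fact that isolated minus spins are flipped before neighbours change), a block is bad with probability $\epsilon_0\to0$ as $p\to1$. The induction step shows $\P(B_{k+1}\text{ bad})\le \epsilon_{k+1}$ with $\epsilon_{k+1}\le C L_{k+1}^{O(1)}\epsilon_k^{2}+e^{-L_k}$. The argument runs as follows. A scale-$(k+1)$ block is tiled by scale-$k$ blocks in space and time, and by finite speed of propagation (disagreement percolation across a spacetime block of duration $T_k$ travels $O(T_k)\ll L_{k+1}$ with overwhelming probability) events in distant sub-blocks are essentially independent. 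So with high probability the bad sub-blocks form no spacetime path of two separated bad sub-blocks. If every sub-block around an annulus is good, the annulus is plus-coupled from time $T_k$ onward. The plus-boundary mixing bound on $\Lambda_{L_{k+1}}$ then finishes the equilibration within $T_{k+1}$. Each isolated bad sub-block is healed by the surrounding good ones, since after its neighbours have equilibrated it sees effectively plus boundary and mixes in another $T_k$. Iterating gives $\epsilon_k\le \exp(-(\log L_k)^{c})$, or better, doubly exponential decay in $k$.

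\textbf{Conclusions and the main obstacle.} In infinite volume, $\Lambda_R$ at time $t$ sits inside a good block at scale $k$ with $T_k\approx t$, which yields the $\exp(-\Omega((\log t)^{10}))$ bound; here the exponent $10$ is absorbed by choosing the $C'$ and the growth rate appropriately. On the torus, take $k$ with $L_k\approx n^{o(1)}$ large. A union bound over the $n^2$ blocks, with $\epsilon_k\le n^{-20}$, shows that all blocks are good. The dynamics is then sandwiched near $\pi^+$-like configurations, and from there metastability of the plus phase on $\mathbb T_n^2$ (exponential escape time $e^{\Omega(n)}$ via the Peierls/surface tension bound) gives closeness to $\pi^+_{\mathbb T_n^2}$ on the window $[n^{o(1)},e^{\Omega(n)}]$. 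The main obstacle is the induction step. We must couple without boundary conditions, because bad regions can grow over time $T_k$, and the recursion must close with only a quasi-polynomial, rather than polynomial, mixing bound. We would handle this by making goodness monotone (defined through the lower plus-boundary censored chain) and by choosing $T_k$ so that $T_k\ll L_{k+1}$ while still $T_{k+1}\ge \tmix^+(L_{k+1})$. Whether both hold at once is where the growth rate $L_{k+1}=L_k^{2}$ may need adjustment.
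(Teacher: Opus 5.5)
Your proposal has a genuine gap in the lower half of the sandwich. At any finite $\beta$, no annulus ``becomes plus and stays plus.'' The plus phase keeps creating minus spins at density $\asymp e^{-8\beta}$, so over a window of length $T_k\gg 1$ the annulus will contain minuses. Moreover, a chain on a sub-box with $+$ boundary conditions is always an \emph{upper} bound on the true dynamics, never a lower one. So neither ``the isolated bad sub-block sees effectively plus boundary'' nor your ``lower plus-boundary censored chain'' gives the lower bound you need.

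The missing idea is to couple $X^{\pi\wedge Q}$ with the \emph{stationary} plus-phase chain rather than an all-plus chain. This also makes the base case trivial, since $\pi\wedge Q=\pi$ on any block where $Q\equiv +$. A bad region $R$ is then cured by trapping between two \emph{stationary} chains: $V^{R,\pi}$ on $E_{+4}(R)$ with $+$ boundary (above), and $V^{B\setminus R,\pi}$ on $E_{+4}(B)\setminus R$ with $-$ frozen on $R$ (below). Both are stationary even conditionally on the clock rings. Hence their agreement on the buffer $E_{+2}(R)\setminus E_{+1}(R)$ at every ring in $[T_{k-1},T_k]$ is an \emph{equilibrium} statement: the $\pm$ interface around $R$ stays at distance $\ll \ell_{k-1}$ from $R$, with Gaussian tails uniform in $\beta$, union-bounded over $O(t_k|E_{+2}(R)|)$ rings. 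Monotonicity then forces the real chains to agree on the buffer. Inside the buffer they are squeezed between $V^{R,-}$ and $V^{R,+}$, which coalesce once $t_k\ge \tmix(E_{+4}(R))$.

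The scale tension you flag at the end is not a matter of tuning $L_{k+1}=L_k^2$; polynomial growth cannot work. With only quasi-polynomial mixing, equilibrating a box of side $L_k$ already takes time $\exp((\log L_k)^C)\gg L_k^2$. Equilibrating all of $\Lambda_{L_{k+1}}$ takes time $\gg L_{k+1}$, during which information from outside reaches the block's center, so block events stop being local. The paper instead uses:
\begin{itemize}
\item quasi-polynomial growth $\log \ell_k=(\log \ell_{k-1})^M$;
\item time \emph{linear} in space, $t_k=\ell_k/M$, so the no-fast-propagation event keeps $\{\text{$B$ bad}\}$ measurable with respect to $E_{+4}(B)\times[0,T_k]$;
\item mixing only on squares of side $\le 100 s_k\ell_{k-1}$ around bad sub-blocks, never on a whole block; their mixing time is $\ll \ell_k$ once $M>C$.
\end{itemize}
With this growth a block contains quasi-polynomially many sub-blocks, so a recursion like $\epsilon_{k+1}\lesssim L_{k+1}^{O(1)}\epsilon_k^2$ (one or two bad sub-blocks) cannot close. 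One must tolerate up to $s_k=(\log\ell_k)^{M^3}$ bad sub-blocks, merge nearby ones into squares at mutual distance $\ge 10\ell_{k-1}$, and cure them simultaneously. This gives $q_k\lesssim \binom{(2\ell_k/\ell_{k-1})^2}{s_k/4}\,q_{k-1}^{s_k/4}$, which closes with $q_k=\exp(-(\log \ell_k)^{M^3})$.

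Finally, your black box is not available uniformly down to $\beta_c$, and neither are the Peierls and interface inputs. Uniformity of every constant is exactly what makes $\ell_0$, and hence $p_0$ (which needs $1-p_0\le (\ell_0^2\ell_3)^{-1}$), independent of $\beta$. The paper runs the multiscale argument only for $\beta>\beta_0$. For $\beta\in(\beta_c,\beta_0]$ it takes $p_0\ge (1+e^{-8\beta_0})^{-1}$, so that $\bigotimes \mathrm{Rad}(p)\succeq \pi^+$, and sandwiches between $\pi^+$ and the all-plus chain.
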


\begin{remark}
    In the above, the sequence $n^{o(1)}$ in the time needed to quasi-equilibrate could be taken to be $t_0 = \exp(O(\log n)^{1/100})$. All hidden constants that depend on $\beta$ in $o(\cdot),\Omega(\cdot)$ terms may deteriorate as $\beta \downarrow \beta_c$, but are uniformly bounded as $\beta\uparrow \infty$.   
\end{remark}

The above result is expected to hold for all biases, i.e., $p_0 > 1/2$, but we require a sufficiently biased initialization. Even for zero-temperature dynamics, showing absorption to the all-plus configuration started from $\bigotimes \text{Rad}(\frac{1}{2}+\epsilon)$ remains a folklore open problem.  
We emphasize here that $p_0$ is fixed before $\beta$ and therefore the initialization can be significantly ``less plus" than the plus measure to which it is equilibrating. Indeed, most of the work in our paper fixes $p$ and studies the sufficiently low temperature regime of $\beta>\beta_0$ for a large fixed $\beta_0$. In the complementary regime of $\beta\in (\beta_c,\beta_0]$, the minimal bias $p_0$ can be taken to be large enough that the initialization stochastically dominates the plus phase measure, and the bound already follows by sandwiching it between $\pi^+$ and the all-plus initialization, which is known to converge rapidly~\cite{MaTo,GhSi22}.

The initialization in Theorem~\ref{thm:main} need not have been product; any initialization with a sufficiently high density of pluses, and exponential decay of correlations would yield the same result of rapid convergence to the plus phase. This generalization is relevant in the physics context of cooling (cf.\ simulated annealing) where one is interested in the equilibration rate at low-temperature, from an initialization in a higher temperature Gibbs measure. The following gives analogous results to Theorem~\ref{thm:main} when the initialization is any Ising measure with a strong external field, or a different low-temperature Ising measure in its corresponding plus phase. For an external field $h\in \R$, we write $\pi_{\beta,h}$ to denote the Gibbs measure which, in addition to~\eqref{eq:Ising-distribution} has a factor of $\exp( h\sum_{v} \sigma_v)$, so the case without a subscript $h$ is understood as $h=0$.

\begin{thm}\label{cor:mixing-from-other-temp}
    There exists $h_0$ such that for all $\beta'$ and all $h\ge h_0$, if $X_t$ is continuous-time Glauber dynamics run at $\beta>\beta_c$ on $\mathbb{T}_n^2$, initialized from $x_0 \sim \pi_{\beta',h}$, with probability $1-o(1)$, $x_0$ is such that 
\begin{align*}
    \|\mathbb P_{x_0}( X_t\in \cdot ) - \pi_{\mathbb{T}_n^2,\beta}^+ \| \le n^{-10}\,, \qquad \text{for all }t\in [n^{o(1)} ,  e^{\Omega( n)}]\,.
\end{align*}
The analogous infinite-volume statement as in Theorem~\ref{thm:main} also holds. 
\end{thm}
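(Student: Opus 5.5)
We derive Theorem~\ref{cor:mixing-from-other-temp} from Theorem~\ref{thm:main} and the remark after it: the conclusion only needs an initialization with a high density of pluses and exponential decay of correlations. We carry this out through monotonicity.

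\textbf{Step 1 (domination).} For $h\ge h_0$ and every $\beta'\ge 0$, the measure $\pi_{\beta',h}$ on $\mathbb T_n^2$ stochastically dominates $\bigotimes \text{Rad}(p)$ for some fixed $p>p_0$. By Holley's criterion it suffices to check the single-site conditionals. At a site $v$ with neighborhood sum $s\in[-4,4]$, the conditional probability of a plus spin is $e^{\beta' s+h}/(e^{\beta' s+h}+e^{-\beta' s-h})\ge (1+e^{-2h+8\beta'})^{-1}$.

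This bound is uniform in $\beta'$ only once $h\gg\beta'$, so "for all $\beta'$" needs more care. For large $\beta'$ we use instead that $\pi_{\beta',h}$ with $h>0$ at low temperature is the plus phase with a field. It dominates $\pi^+_{\beta'}$, whose minus spins form a subcritical Peierls contour gas. We then check that such a measure (density of minus spins $\le e^{-c\beta'}$, exponentially decaying correlations) dominates a product measure $\text{Rad}(p)$ with $p$ close to $1$. This follows from Liggett--Schonmann--Stacey: $\pi_{\beta',h}$ is a finitely dependent, or quickly decorrelating, perturbation. Concretely, the conditional probability of minus at $v$, given any configuration off a ball, is small uniformly in $\beta'$ when $h\ge h_0$. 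Combining the two regimes ($\beta'\le h/16$ by the single-site bound, $\beta'$ large by contours) gives domination uniformly in $\beta'$.

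\textbf{Step 2 (sandwich).} Use the monotone grand coupling of Glauber dynamics at inverse temperature $\beta$. Run three chains from $x_0^{\mathrm{prod}}\le x_0\le \mathbf 1$, with $x_0^{\mathrm{prod}}\sim\bigotimes\text{Rad}(p)$ and $x_0\sim\pi_{\beta',h}$ coupled monotonically. By Theorem~\ref{thm:main}, the lower chain is within $n^{-10}$ of $\pi^+_{\mathbb T_n^2}$ for $t\in[n^{o(1)},e^{\Omega(n)}]$ with probability $1-o(1)$ over $x_0^{\mathrm{prod}}$. The all-plus chain is close to $\pi^+$ in this window by~\cite{MaTo,GhSi22}.

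To pass from sandwiching to total variation, compare the laws of the top and bottom chains. They are close because both are near $\pi^+$. Given monotone coupling, $\P(X^{\mathbf 1}_t\ne X^{\mathrm{prod}}_t)\le\sum_v \big(\P(X^{\mathbf 1}_t(v)=+)-\P(X^{\mathrm{prod}}_t(v)=+)\big)$, which is $\le n^2\cdot 2n^{-10}$ up to adjusting exponents. Theorem~\ref{thm:main} is stated with any polynomial, so we apply it with $n^{-12}$. The middle chain then coincides with both outside this event.

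One point remains: "with probability $1-o(1)$ over $x_0$" must be transferred from the product initialization. Average over the coupling: the quenched TV bound for $x_0$ is at most the conditional expectation of the bound for the coupled $x_0^{\mathrm{prod}}$. Markov's inequality then gives $1-o(1)$ probability.

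\textbf{Step 3 (infinite volume).} The same sandwich works on $\Lambda_R$, using the infinite-volume statement of Theorem~\ref{thm:main} and the all-plus infinite-volume dynamics.

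The main obstacle is Step 1, namely domination uniform in $\beta'$. The single-site criterion fails when $\beta'\gtrsim h$, so we need either the contour argument at large $\beta'$ or an appeal to the general robustness of the multiscale coupling to initializations with exponential decay of correlations, as the remark notes.
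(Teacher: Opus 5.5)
\textbf{Gap: Step 1 at large $\beta'$.} Your Step 2 sandwich is fine, and so is the single-site bound for $\beta'\le h/16$. The problem is Step 1 when $\beta'$ is large, and both justifications you give there fail.

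\emph{The claim about $\pi^+_{\beta'}$ is false.} You assert that a measure with minus-density $e^{-c\beta'}$ and exponentially decaying correlations, such as $\pi^+_{\beta'}$, dominates $\bigotimes\text{Rad}(p)$ with $p$ near $1$. Take the decreasing event that $\Lambda_k$ is all minus. Apply FKG to the single-site events on the inner boundary circuit, each of probability at least $\frac{1}{1+e^{8\beta'}}$. Then use the domain Markov property, and FKG plus Peierls inside, where each site is minus with probability $\ge 1-Ce^{-8\beta'}$ under $\pi^-$. This gives
\[
\pi^+_{\beta'}(\sigma\equiv -1 \text{ on } \Lambda_k)\ \ge\ \big(\tfrac12 e^{-8\beta'}\big)^{4k}\big(1-Ce^{-8\beta'}\big)^{k^2}.
\]
The product measure gives $(1-p)^{k^2}$. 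Letting $k\to\infty$ rules out domination of $\text{Rad}(p)$ for every $p>Ce^{-8\beta'}$. Without a field, droplets pay only perimeter, so your chain $\pi_{\beta',h}\succeq\pi^+_{\beta'}\succeq\bigotimes\text{Rad}(p)$ breaks at the second link.

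\emph{The Liggett--Schonmann--Stacey input is also false as stated.} Put all minuses outside a ball $B_r(v)$ of fixed radius. Any configuration with $\sigma_v=+$ pays at least $e^{-8\beta'}$ in interaction and gains at most $e^{2h|B_r(v)|}$ from the field. Hence $\pi_{\beta',h}(\sigma_v=+\mid \sigma\equiv -1 \text{ off } B_r(v))\le 2^{|B_r(v)|}e^{2h|B_r(v)|-8\beta'}\to 0$ as $\beta'\to\infty$, so the conditional minus probability is not small uniformly in $\beta'$. A block version would need radii of order at least $\beta'/h$. It would also need quantitative control of the LSS bound as the dependency neighbourhood grows with $\beta'$. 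None of this is supplied, so domination uniform in $\beta'$ remains unproved.

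\textbf{What the paper does instead.} The paper never uses stochastic domination. That is also why its argument covers $\pi^+_{\beta'}$-initializations, where the display above shows domination is impossible. It reruns the multiscale scheme with $Q$ replaced by local stationary samples $Q_B\sim\pi^+_{E_{+4}(B),\beta',h}$, monotonically coupled across scales. It adds $\mathsf{StatEquiv}^Q_{k-1}(B')^c$ to $\mathsf{Bad}_{k-1}(B)$. It then uses only two properties of the initialization:
\begin{itemize}
\item the minus density is small uniformly in $\beta'$ once $h\ge h_0$, which gives the base case $\mathbb P(\widetilde D_0(B)=1)\le q_0$;
\item correlations decay exponentially, which fits $\mathbb P(\mathsf{StatEquiv}^Q_k(B)^c)$ into the spare $q_k/10$.
\end{itemize}
Your closing appeal to the robustness of the multiscale coupling points at exactly this, but it is the part that has to be carried out.
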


Likewise, if $X_t$ is continuous-time Glauber dynamics run at sufficiently low temperatures $\beta>\beta_0$ on $\mathbb T_n^2$, and initialized from a different low-temperature plus-phase measure $x_0 \sim \pi_{\mathbb T_n^2,\beta'}^+$ for $\beta'>\beta_0$, then with probability $1-o(1)$, $x_0$ is such that  $$\|\mathbb P_{x_0}(X_t \in \cdot) - \pi_{\mathbb{T}_n^2,\beta}^+\|_{\tv}  \qquad \text{for all } t \in  [n^{o(1)}, e^{\Omega(n)}]\,.$$
By averaging over $x_0$ and using spin-flip symmetry, we conclude that the mixing time for Glauber dynamics run at low temperature $\beta$, initialized from another low temperature $\pi_{\mathbb T_n^2,\beta'}$, is $n^{o(1)}$. 

\subsection{Proof sketch}

Our high-level strategy is to perform a multi-scale argument in space and time to bootstrap a quasi-polynomial input on the mixing time with plus boundary conditions, into Theorem~\ref{thm:main}. This part of the argument will actually apply in any dimension $d\ge 2$. We thus present the following assumption and theorem to fully separate out which parts of our argument require at present dimension two.

\begin{assumption}\label{assump:uniform-mixing-time}
    The mixing time of continuous-time Glauber dynamics on $\Lambda_L = \{-\frac{L}{2},...,\frac{L}{2}\}^d$ with all-plus boundary conditions satisfies the following: there exists $\beta_0$ and a $\beta$-independent constant $C(d)>0$ such that for all $\beta>\beta_0$ and all $L \ge 1$, 
    \begin{align*}
        \tmix \le C \exp( (\log L)^C)\,.
    \end{align*}
\end{assumption} 

It is a long-standing open problem (see Open Problems, Question 1 in~\cite{LP}, as well as~\cite{Martinelli-phase-coexistence,Martinelli-notes}) in the study of Markov chain mixing times that Glauber dynamics on boxes of side-length $L$ in $\mathbb Z^d$ with plus boundary conditions have polynomial (in fact, $\tilde O(L^2)$ continuous-time) mixing time. The above assumption only asks for this mixing time to be quasi-polynomial, but it does require that all hidden constants in the quasi-polynomial be uniform over large $\beta$. Such uniformity in large $\beta$ is reasonable to hope for, as in the $\beta =\infty$ limit, the $\tilde O(L^2)$ polynomial bound holds~\cite{lacoin2014zero-2D,Lacoin-Lifshitz-any-dimension}.

\begin{thm}\label{thm:main-general}
    Fix $d\ge 2$. Suppose Assumption~\ref{assump:uniform-mixing-time} holds. There exists $p_0(d)<1$ such that for all $p>p_0$ and all $\beta > \beta_0$, if $X_t$ is continuous-time Glauber dynamics on $\mathbb T^d_n = (\mathbb Z/n\mathbb Z)^d$, with probability $1-o(1)$, $x_0 \sim \bigotimes_{\mathbb T_n^d}\text{Rad}(p)$ is such that 
    \begin{align*}
        \|\mathbb P_{x_0} (X_t \in \cdot) - \pi^+_{\mathbb T^d_n}\|_{\tv} \le n^{-10} \qquad \text{ for all $t\in [n^{o(1)}, e^{ \Omega(n^{d-1})}]$}\,.
    \end{align*}
    If $X_t$ is the infinite volume Glauber dynamics on $\mathbb Z^d$, with probability going to $1$ as $R\to\infty$, $x_0\sim \bigotimes _{\Z^d} \text{Rad}(p)$ is such that for $\Lambda_R = \{-\frac{R}{2},...,\frac{R}{2}\}^2$, once $t \ge R$, 
    \begin{align*}
        \|\mathbb P_{x_0}(X_t(\Lambda_R)\in \cdot) - \pi^+_{\mathbb Z^d}(\sigma(\Lambda_R)\in \cdot)\|_{\tv} \le \exp( - \Omega((\log t)^{10}))\,.
    \end{align*}
\end{thm}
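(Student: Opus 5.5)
The plan is a spacetime multiscale renormalization that couples the dynamics from $x_0$ with the dynamics from the all-plus state and with the stationary plus phase, using monotonicity (the grand coupling) throughout. Fix scales $L_0 \ll L_1 \ll \cdots$ with $L_{k+1} = L_k^{\gamma}$ for some $\gamma>1$, and times $T_k = \exp((\log L_k)^{C'})$, with $C'>C$ from Assumption~\ref{assump:uniform-mixing-time}. A spacetime block $B_k(x,s)$ is a box of side $L_k$ centered at $x$ together with the time window $[s,s+T_k]$. We call a block \emph{good} if the Glauber dynamics, run inside the enlarged box $\Lambda_{3L_k}(x)$ from any configuration that agrees with the current state except in a small bad region, ends by time $s+T_k$ with the inner box coupled to the dynamics with plus boundary condition. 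We then try to show inductively that
\[
q_k := \sup_{x,s}\P\bigl(B_k(x,s)\text{ is bad}\bigr) \le \exp\bigl(-(\log L_k)^{C''}\bigr),
\]
and that goodness events at distance $\gg L_k$ in space or time are nearly independent via finite speed of propagation. At the base scale $k=0$, with $\beta>\beta_0$ and $p>p_0$, the initialization is all-plus on $\Lambda_{3L_0}$ with probability $\ge 1-3^d L_0^d(1-p)$. At zero/low temperature the plus-dominated region is stable: with $\beta$ large, a plus spin surrounded by pluses flips with rate $e^{-4d\beta}$, so over time $T_0$ very few errors appear. This gives the base case with $q_0$ small, uniformly in $\beta$, once $p_0$ is close to $1$.

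The inductive step from $k$ to $k+1$ proceeds as follows. Inside a scale-$(k+1)$ box, the bad scale-$k$ blocks form a sparse random set whose components have size $\ll L_{k+1}$ with overwhelming probability, by a Peierls / union bound over connected clusters that uses near-independence and $q_k^2 \ll q_{k+1}$ after accounting for $L_{k+1}^{O(d)}$ placements. Outside these components the configuration is, at each time, dominated from below by the plus-boundary dynamics on small boxes. Each bad component is then enclosed in a box $\Lambda$ of side at most $L_{k+1}^{1/2}$ whose surrounding annulus is good. On $\Lambda$ we run monotone censoring (Peres–Winkler) to compare with dynamics on $\Lambda$ with plus boundary. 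By Assumption~\ref{assump:uniform-mixing-time} it mixes within $C\exp((\log L_{k+1})^C) \ll T_{k+1}$, with failure probability $e^{-\Omega(T_{k+1}/t_{\mathrm{mix}})}$. Monotonicity sandwiches $X_t$ between the all-plus chain and a chain dominated below by the plus-boundary chain, so the scale-$(k+1)$ block is good outside an event of probability $q_k^{2}L_{k+1}^{O(d)}+\exp(-T_{k+1}/t_{\mathrm{mix}})$, which closes the recursion.

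The hardest step is controlling the boundary of the enclosing box during the time it is meant to mix. The annulus must stay close enough to plus to act as a plus boundary condition for the full time $T_{k+1}$, while spacetime errors can drift in from other bad blocks. I would handle this by defining goodness in spacetime, so that an enclosing annulus is good across all scale-$k$ time windows that cover $[s,s+T_{k+1}]$. The number of such windows grows like $T_{k+1}/T_k$, and this factor must be absorbed by the superpolynomial decay of $q_k$. That is why the times are chosen quasi-polynomial and the decay is kept of the form $\exp(-(\log L)^{C''})$ with $C''>C'$. The constants are tuned so that $p_0$ and $\beta_0$ do not depend on the scale.

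Finally, on the torus we take $k$ with $L_k \approx (\log n)^{A}$, for $A$ large, and time $t_0=T_k=n^{o(1)}$. A union bound over $n^d$ blocks shows that all blocks are good with probability $1-o(1)$ over $x_0$. Hence $X_{t_0}$ is coupled to the all-plus-started chain except with probability $n^{-10}$, since the goodness at the last scale yields decay $\exp(-(\log L_k)^{C''})\le n^{-20}$. Known results for the plus-started chain at times up to $e^{\Omega(n^{d-1})}$ then give closeness to $\pi^+_{\mathbb T_n^d}$, using that escaping the plus phase requires an exponentially unlikely large interface. For $\mathbb Z^d$ we use the same argument with $L_k$ chosen so that $T_k\approx t$, which gives the $\exp(-\Omega((\log t)^{10}))$ bound, with all boxes of side $\ge R$ required good.
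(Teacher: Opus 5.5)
There is a genuine gap. It sits at the step you yourself flag as hardest, and your choice of scales also undercuts the locality your recursion needs. With $L_{k+1}=L_k^{\gamma}$ and $T_k=\exp((\log L_k)^{C'})$, information can travel a distance of order $T_k\gg L_{k+1}$ during one scale-$k$ window. Your goodness event also depends on ``the current state'' at time $s$. So scale-$k$ goodness events inside one scale-$(k+1)$ box are not nearly independent by finite speed of propagation, and the cluster bound $q_k^2L_{k+1}^{O(d)}$ has no justification.

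The paper arranges the scales the other way around. The time window is linear in space, $t_k=\ell_k/M$, while space grows quasi-polynomially, $\log\ell_k=(\log\ell_{k-1})^M$. Then every curing square, of side at most $100s_k\ell_{k-1}$, mixes in time $\ll t_k$ by the Assumption. Meanwhile $\widetilde D_k(B)$ is defined through chains living on $E_{+4}(B)$ with frozen boundary, so it is measurable with respect to the randomness on $E_{+4}(B)\times[0,T_k]$. The event $\mathsf{InfProp}_k(B)$ is typical precisely because $t_k\propto\ell_k$, and it transfers these local conclusions to the global chain. This finite-range dependence is what drives Lemma~\ref{lem:rarity-of-disagreement-regions}. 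The paper also needs only the nested windows $[T_{k-1},T_k]$, not a supremum over all starting times $s$.

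More seriously, you give no mechanism by which the real, evolving exterior acts as a plus boundary condition. A local chain with plus boundary only bounds the real chain from above. To sandwich it, the real chain must coincide \emph{exactly}, at every time in the window, on a buffer annulus with a chain having a \emph{fixed} plus boundary. ``Close to plus'' does not give this. Nor does goodness on each of the $T_{k+1}/T_k$ sub-windows, which yields agreement only at discrete times and only with $X^+$, not with a fixed-boundary chain. Peres--Winkler censoring also does not help: it controls distance to stationarity from monotone starts, not a random time-dependent boundary.

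The paper's key idea is to run the argument relative to the \emph{stationary} chain. It couples $X^{\pi\wedge Q}$ to $X^\pi$, and only at the very end uses $X^{\pi\wedge Q}\le X^Q\le X^+$. Minuses are planted on each bad square $R$, and the planted chain is sandwiched between $V^{B\setminus R,\pi}$ (minuses frozen on $R$) and $V^{R,\pi}$ (plus boundary on $E_{+4}(R)$). Both are at equilibrium at every time, so by monotonicity their disagreement on $E_{+2}(R)\setminus E_{+1}(R)$ at a given time is bounded by a static total-variation distance. That distance is controlled by an interface-confinement estimate (Lemma~\ref{lem:coupling-annulus-away-from-minus}: Gaussian tails in $d=2$, rigidity in $d\ge 3$). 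A union bound over clock rings then gives agreement for all $t\in[T_{k-1},T_k]$. After that, $V^{R,-}\le W\le U\le V^{R,+}$ on $E_{+2}(R)$, and coalescence of $V^{R,\pm}$ from the Assumption cures $R$ (Lemma~\ref{lem:sandwiching-couplings}). A telling symptom of the missing step is that your proposal uses no equilibrium input beyond the mixing-time assumption.

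Lastly, on the torus your choice $L_k\approx(\log n)^A$ gives only $\exp(-(\log L_k)^{C''})=\exp(-(A\log\log n)^{C''})=n^{-o(1)}$, not $n^{-20}$. You need $L_k=\exp((\log n)^{\epsilon})$, as in the paper's choice $\ell_k=\exp((\log n)^{2/(3M)})$.
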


    In words, Theorem~\ref{thm:main-general} says that if in any dimension $d \ge 2$ the Ising model has a uniform-in-large-$\beta$ quasi-polynomial bound on its mixing with plus boundary conditions, then without boundary conditions where mixing times are exponentially slow, it exhibits rapid quasi-equilibration to the plus phase from disordered biased initializations. To get Theorem~\ref{thm:main} from Theorem~\ref{thm:main-general}, we prove  that Assumption~\ref{assump:uniform-mixing-time} holds in $d=2$ (\cite{LMST} had shown a quasi-polynomial bound on the mixing time, but with its $\beta$-dependencies being non-explicit, and in fact poorly behaved as $\beta \uparrow \infty$). 

   The remainder of the proof sketch will be separated into two parts: The first overviews the multiscale spacetime coupling to establish Theorem~\ref{thm:main-general};  the second describes the technical ingredients we need to establish that Assumption~\ref{assump:uniform-mixing-time} holds in $d=2$.

\subsubsection{The multiscale setup}

We prove our main theorems by putting the Markov chains initialized from the disordered initialization and from stationarity in the same probability space, and coupling them in such a way that they agree with high probability on $\mathbb T_n^2$ after time $n^{o(1)}$. 
Let $X_t^Q$ be the Glauber dynamics initialized from a disordered biased initialization $Q$ and $X_t^\pi$ be the stationary Markov chain initialized at the quasi-stationary distribution $\pi^+$.

At a high level, we follow a multiscale recursion in space and time, showing that if we tile the lattice by blocks of side-length $\ell_k$, the   density of such blocks on which the two chains agree at time $T_k$, i.e., $X_{T_k}^Q(B) = X_{T_k}^\pi(B)$, is bounded by a threshold $q_k$ that improves suitably with $k$. The precise relationships between the spatial scale $(\ell_k)_k$, the time scale $(T_k)_k$ and disagreement probabilities $(q_k)_k$ is important, but we defer the exact expressions to Section~\ref{sec:scales}. For now, we mention that the length scales grow quasi-polynomially as $\ell_k = e^{(\log\ell_{k-1})^C}$ (with the constant $C$ depending on the constant in Assumption~\ref{assump:uniform-mixing-time}), the time-scales are linearly related to the length scales, and the probability thresholds decay quasi-polynomially in $\ell_k$.

As with many such renormalization schemes, we will define a suitable notion of block $B$ of side-length $\ell_k$ being bad such that 
\begin{enumerate}
	\item The event $\{B \text{ is bad}\}$ is measurable with respect to the randomness on vertices at distance at most $O(\ell_k)$ from $B$ at times $[0,T_k]$; 
	\item If the block $B$ is not bad, then $X_{T_k}^Q(B) = X_{T_k}^\pi(B)$; 
	\item The probability of $B$ being bad is at most $q_k$. 
\end{enumerate}
By the second and third items, once $q_k \le o(1/n^d)$, then by a union bound, we have coupling on all of $\mathbb T_n^d$, yielding fast mixing to the plus phase.

The key in such spacetime renormalization schemes is in the specifics of the bad event so that it satisfies the desired inductive argument: namely that if at time $T_k$, all the scale-$k$ blocks interior to a scale-$k+1$ block $B'$ satisfy (1)--(3) above, then so does the event $\{B' \text{ is bad}\}$ at time $T_{k+1}$. 

This will go by reasoning that so long as the bad scale-$k$ blocks interior to $B'$  are sufficiently sparse, these ``bad regions" can be locally cured by boxes with all-plus boundary conditions surrounding them between times $[T_k,T_{k+1}]$. Importantly, unlike in high temperature (e.g.~\cite{DSVW}) local mixing timescales are larger than the speed at which information travels, so pretending that the boundary conditions is all-plus may pose a problem. But because the ``bad regions" are surrounded by good blocks, we use buffer annuli $\mathcal A$ separating the plus boundary from the bad region the chain with fixed plus boundary condition and the original process $X_t^\pi$ agree for all times $[T_k,T_{k+1}]$. This allows us to argue that if the local chain cures the bad region, then so does the original process $X_t^\pi$. 

With that description in hand, let us be a bit more mathematically precise about what we mean by a block being good. We also accompany the definition with Figure~\ref{fig:intro-proof-sketch}, which depicts the coupling between the local and global Markov chains which enables the curing of the bad regions.

\begin{definition}\label{def:informal-good-block}
    [Informal version of Definition~\ref{def:Dtilde}]
	A block $B$ at scale $k+1$ is called good (the complement of bad), if the following hold: 
	\begin{itemize}
		\item \emph{Local coupling}: If $V_t^{R}$ denotes Glauber dynamics on $R$, a box surrounding a connected set of scale-$k$ bad blocks with all-$+$ boundary conditions, then $V_t^R$ mixes (couples all initializations) in the time $[T_k,T_{k+1}]$;
		\item \emph{Coupling local and global Markov chains using buffers}: There is a buffer annulus $\mathcal A_R$ separating the boundary of $R$ from the bad blocks internal to it, such that $V_t^R(\mathcal A_R) = X_t^\pi(\mathcal A_R)$ for all $t\in [T_k,T_{k+1}]$;
		\item \emph{Linear speed of information propagation}: There does not exist  $v_1,...,v_N\in B$ for $N \ge \ell_{k+1}$ such that $v_i \sim v_{i+1}$ and their clocks ring at increasing times $t_{i_1}<...<t_{i_N} \in [T_k,T_{k+1}]$.
	\end{itemize}
\end{definition}

\begin{figure}[t]
\centering
\begin{tikzpicture}[
    scale=0.6, 
    gridline/.style={thin, black!80},
    blueoutline/.style={very thick, blue!80!black},
    redcell/.style={fill=red!40},
    greycell/.style={fill=black!15},
    orangehatch/.style={pattern=north east lines, pattern color=orange}
]
\begin{scope}[xshift=0cm]

    \def\gridSize{9}
    \def\cellSize{1cm}

    \draw[fill=green!5, thick] (-1,-1) rectangle (9,9);

    \tikzset{block/.style={fill=red!40!white}}
    
    \fill[block] (3 * \cellSize, 6 * \cellSize) rectangle (4 * \cellSize, 7 * \cellSize);
    \fill[block] (6 * \cellSize, 6 * \cellSize) rectangle (7 * \cellSize, 7 * \cellSize);
    \fill[block] (1 * \cellSize, 2 * \cellSize) rectangle (2 * \cellSize, 3 * \cellSize);
    \fill[block] (2 * \cellSize, 1 * \cellSize) rectangle (3 * \cellSize, 2 * \cellSize);
    
    \draw[<->] (-1, \gridSize * \cellSize + 0.55cm) -- (\gridSize * \cellSize, \gridSize * \cellSize + 0.55cm)
        node[midway, fill=white] {$\ell_{k+1}$};
        
    \draw[<->] (6 * \cellSize - 0.9 * \cellSize, 0 * \cellSize - 1.2 * \cellSize)-- (6 * \cellSize - 0.1 * \cellSize, 0 * \cellSize - 1.2 * \cellSize);
    \node[font=\tiny] at (6 * \cellSize - 0.5 * \cellSize, 0 * \cellSize - 1.5 * \cellSize) {$\ell_k$};

    \draw (-1,-1) grid (\gridSize * \cellSize, \gridSize * \cellSize);

\end{scope}

\begin{scope}[xshift=14cm]

    \draw[fill=green!5, thick] (-1,-1) rectangle (9,9);

    \draw[gridline] (-1,-1) grid (9,9);
    \fill[orangehatch, even odd rule] 
        (1.35, 5.35) rectangle (3.65, 7.65) 
        (1.65, 5.65) rectangle (3.35, 7.35); 

    \fill[orangehatch, even odd rule] 
        (5.35, 5.35) rectangle (7.65, 7.65) 
        (5.65, 5.65) rectangle (7.35, 7.35);

    \fill[orangehatch, even odd rule] 
        (0.35, 0.35) rectangle (3.65, 3.65) 
        (0.65, 0.65) rectangle (3.35, 3.35);

    \fill[redcell] (2,6) rectangle (3,7); 
    \fill[redcell] (6,6) rectangle (7,7); 
    \fill[redcell] (1,2) rectangle (2,3); 
    \fill[redcell] (2,1) rectangle (3,2); 

    \draw[gridline] (1,5) grid (4,8);
    \draw[gridline] (5,5) grid (8,8);
    \draw[gridline] (0,0) grid (4,4);

    \draw[blueoutline] (1,5) rectangle (4,8);
    \draw[blueoutline] (5,5) rectangle (8,8);
    \draw[blueoutline] (0,0) rectangle (4,4);
\end{scope}
\end{tikzpicture}
\caption{ Left: a scale $k+1$ block tiled by scale $k$ blocks. The scale $k$ blocks are bad (red) with small probability, finitely dependently, and thus the bad blocks are with high probability sparse. \\ Right: The local coupling event on the block $B$ asks that dynamics restricted to $R$ (a blue square) with $+1$ boundary conditions couple in their mixing time. This then implies coupling on the full scale $k+1$ block if the local and global chains equal each other on the orange shaded buffer regions $\mathcal A_R$. 
}\label{fig:intro-proof-sketch}
\end{figure}

The main part of the proof, after defining these formally, is then showing inductively, that for the definition of Definition~\ref{def:informal-good-block}, the desired items (1)-(3) above hold for all $k$. 

Of course, an inductive proof requires a base case, which in this case asks at a minimum that at time $t=0$, one has $X_{0}^Q(B) = X_{0}^\pi(B)$ with probability at least $1-\epsilon$ for blocks of scale $\ell_0$. The base case is attained from the simple observation that if the bias in the disordered initialization is sufficiently large, then for fixed $\ell_0$, there is a $1-\epsilon$ probability that both initializations are identically $+1$ on $B$. 
This is where the uniformity of Assumption~\ref{assump:uniform-mixing-time} over large $\beta$ becomes absolutely essential. If the constants in the mixing time deteriorate with $\beta \uparrow \infty$, then the $\ell_0$ we need to start with would grow with $\beta$, and therefore in order for a block of side-length $\ell_0$ to be initialized at all-plus we would need to take the initial bias to grow with $\beta$. For the minimal bias $p_0$ to not depend on $\beta$ large (ensuring our initialization is not stochastically comparable to the $\pi^+$), this cannot happen, and all our mixing time and equilibrium estimates have to be uniform over large~$\beta$. 

\subsubsection{Uniform-in-$\beta$ quasipolynomial mixing with plus boundary}
Once one obtains Theorem~\ref{thm:main-general}, the remaining step to get Theorem~\ref{thm:main} for large $\beta$ is establishing that Assumption~\ref{assump:uniform-mixing-time} holds in dimension two. 

\begin{thm}\label{thm:beta-independent-2D-mixing-time}
    Consider the Ising model on $\Lambda_n = [- \frac{n}{2},\frac{n}{2}]^2\cap \mathbb Z^2$ with all-plus boundary conditions. There exist $\beta_0>0$ and $C>0$ such that for all $\beta > \beta_0$ and all $n\ge 1$, 
    \begin{align*}
        \tmix \le  \exp( C (1 \vee \log n)^3)\,.
    \end{align*}
\end{thm}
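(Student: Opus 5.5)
Our plan is to follow the multiscale scheme of Martinelli--Toninelli and Lubetzky--Martinelli--Sly--Toninelli (\cite{LMST}), while keeping track of every $\beta$-dependence and replacing each non-uniform input with one that improves as $\beta\uparrow\infty$. The recursion runs over side-lengths $n_j = 2^j$ (or any geometric sequence) and rectangles of bounded aspect ratio. The goal is a recursion of the form
\begin{align*}
  \tmix(\Lambda_{2L}) \le \exp\big(C(\log L)^2\big)\cdot \tmix(\Lambda_L)\,,
\end{align*}
where $C$ is $\beta$-independent. Iterating over $\log_2 n$ scales then gives the bound $\exp(C(\log n)^3)$ stated in Theorem~\ref{thm:beta-independent-2D-mixing-time}. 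For the base case $n\le n_0$ with $n_0$ fixed, we do not use the trivial bound $e^{O(\beta n)}$, since it is not uniform in $\beta$. Instead we use monotonicity: from the all-plus start the chain is already close to $\pi^+$. From the all-minus start we argue with a censored chain that shrinks the minus region by single-site flips at corners, each of which is energetically neutral or favourable. Such flips occur at rate bounded below uniformly as $\beta\to\infty$, as in the zero-temperature dynamics. Together these give $\tmix(\Lambda_{n_0})\le C(n_0)$ uniformly in $\beta>\beta_0$.

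For the inductive step we use block dynamics together with the Peres--Winkler censoring inequality and monotone coupling, which reduces the problem to starting from all-minus. We cover $\Lambda_{2L}$ by overlapping boxes of side $L$ and the shrinking ``corner'' regions used in \cite{LMST}. Censoring means that we only update, in sequence, blocks whose boundary conditions are plus on a large portion. Each block update costs $\tmix(\Lambda_L)$ times a polynomial factor, by induction and the standard comparison between a block update and running the dynamics. What remains is to show that after a bounded-in-$\beta$ number of such sweeps, the minus-initialized censored chain is within small total variation of the plus measure on $\Lambda_{2L}$. This is an equilibrium estimate: under the plus phase in a rectangle whose boundary is plus on three sides, with arbitrary (worst case minus) values on the fourth, the law on a slightly shrunk interior region is close to that under all-plus boundary conditions. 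We would prove it via the Dobrushin--Koteck\'y--Shlosman/cluster-expansion description of the contour ensemble. At large $\beta$, the relevant probabilities are controlled by surface tension $\tau_\beta \ge c\beta$, which only improves with $\beta$. The interface pinned to the minus side then has fluctuations of order $\sqrt{L}$ and stays near the boundary with probability $1-e^{-c(\log L)^2}$, and the constants are uniform for $\beta$ large.

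The main obstacle is making the equilibrium estimates of this step uniform as $\beta\uparrow\infty$. The existing proofs in \cite{LMST} invoke Ornstein--Zernike/Dobrushin--Hryniv-type interface fluctuation results and weak-mixing constants whose $\beta$-dependence is not explicit. We would use the low-temperature cluster expansion directly: contour weights $e^{-2\beta|\gamma|}$ are summable uniformly in $\beta\ge\beta_0$. This yields exponential decay of correlations in the plus phase with rate at least $c\beta$, and a random-walk-like bound on the open contour with $\beta$-uniform constants. Because the $(\log L)^2$-type tail bounds only need Gaussian-scale control of a single interface, a crude large-deviation estimate on the open contour, obtained by comparing its length to the straight segment, should suffice. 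This lets us avoid sharp Ornstein--Zernike asymptotics. Finally, we convert total-variation closeness after the sweeps into a mixing-time bound via submultiplicativity, which completes the induction.
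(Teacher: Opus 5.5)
There is a genuine gap in your inductive step. Your induction hypothesis is the mixing time of $\Lambda_L$ with \emph{all-plus} boundary conditions. But in any censored block scheme on $\Lambda_{2L}$ started from all-minus, each block you update sees the not-yet-updated minus region on part of its boundary.

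\begin{itemize}
\item Replacing a heat-bath block update by Glauber dynamics on the block therefore needs the mixing time under interface-inducing boundary conditions (plus on some sides, minus on others). Your hypothesis says nothing about these, and controlling them is the whole difficulty.
\item This is why Martinelli--Toninelli, \cite{LMST}, and the paper (Theorem~\ref{thm:inductive-step}) never recurse on plus-boundary boxes. They recurse on thin rectangles of size $L\times e^{-\beta}\sqrt{\bar\kappa L\log L}$ with the class $\mathcal D(R)$ of random boundary conditions (minus-like on three sides, plus-like on the base). This class is stable under the censoring/doubling step.
\item Doubling the width requires forcing the interface to split. This is done by modifying the boundary on a stretch $\Delta$ of length $\asymp\log L$, at a cost of $e^{4\beta|\Delta|}$ in time.
\item The plus-boundary box is recovered only at the very end, by growing the rectangle's height (Lemma~\ref{lem:plus-start-final-step}).
\item The $e^{-\beta}$ in the rectangle height is essential. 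The true interface scale is $e^{-\beta}\sqrt L$, not $\sqrt L$, and the stiffness at slope zero is of order $e^{2\beta}$. With height $\sqrt L$, a logarithmic $\Delta$ could no longer pay for tilting the interfaces down to it.
\end{itemize}

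The equilibrium inputs that this scheme needs live at the diffusive scale, and a crude length comparison fails there.

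\begin{itemize}
\item Counting excess length against $e^{-2\beta}$ per unit only controls the height at the large-deviation scale $O(Le^{-2\beta})$, not at $e^{-\beta}\sqrt L$.
\item Bounding the floor partition function below by the straight segment loses a factor $e^{cLe^{-2\beta}}$ relative to $e^{-\tau_\beta(0)L}$. This bound is useless once $L\gg e^{2\beta}$.
\item The paper instead needs $\beta$-uniform Ornstein--Zernike bounds (Lemma~\ref{lem:OZ-asymptotics-beta-indep}) and a polynomial lower bound on the probability that the interface stays above a floor (Lemma~\ref{lem:thm:5.1-replacement}). From these it derives Propositions~\ref{prop:vertical-maximum-bound-replacement} and~\ref{prop:horizontal-splitting-beta-uniform}, using exact-solution control of $\tau_\beta$, $\tau_\beta''$ and the stiffness.
\end{itemize}

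Finally, the recursion is not $\beta$-free: the $\Delta$-modification makes each scale cost $e^{c\beta\log L}$.

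\begin{itemize}
\item The paper therefore proves only $\tmix\le\exp(C\beta(\log n)^2)$, and only for $n\ge e^{c\beta}$. It then uses $\beta\lesssim\log n$ in that range to get the exponent $(\log n)^3$.
\item The whole range $n\le e^{\beta/3}$, not just a fixed $n_0$, is handled by coupling with zero-temperature dynamics (Lemma~\ref{lem:small-n-mixing-time}). That dynamics absorbs in time $Cn^2(\log n)^C$, and the coupling fails with probability at most $e^{-2\beta}$ per update.
\item Your fixed-$n_0$ base case leaves the range $n_0\ll n\le e^{\beta/3}$ with no mechanism that yields $\beta$-independent constants.
\end{itemize}
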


We in fact prove a bound that for all $n \ge e^{\beta/5}$, one has $\tmix \le \exp( C \beta (\log n)^2)$ for $C$ independent of $\beta$, effectively quantifying the implicit constants in the $\exp(O(\log^2 n))$ in~\cite{LMST}.

Theorem~\ref{thm:beta-independent-2D-mixing-time} is shown by bounding $\beta$ dependencies carefully in~\cite{LMST} (and preceding works which established the equilibrium inputs necessary to those paper including understanding of the surface tension, two-point function, etc in the low-temperature Ising model~\cite{MaTo,DKS}). Indeed, naively, the proofs of sub-exponential mixing time with plus boundary conditions from~\cite{MaTo,LMST} rely on ``guiding" the dynamics to equilibrium by utilizing certain somewhat rare events, whose rarity actually gets worse exponentially as $\beta \to\infty$. We therefore have to correct for this by emulating their proofs, but with the recursive scale-changes being $\beta$-dependent, and with sharp understanding of $\beta$-dependencies in hidden constants in the interface equilibrium estimates of~\cite{LMST}. 

We give two examples of the kind of non-asymptotic, uniform-in-$\beta$ estimates we need to develop. 

\begin{lem}[Special case of Proposition~\ref{prop:vertical-maximum-bound-replacement}]\label{lem:informal-max-bound}
	There exists $C>0$ such that for all $\beta > \beta_0$, all $\ell,h$ the following holds. In an $\ell \times h$ rectangle with minus boundary conditions on the bottom and plus boundary conditions on the other three sides,  the probability the interface reaches height $C\max\{ e^{ - \beta} \sqrt{\ell \log \ell}\,,\,\beta\}$ is at most $\ell^{-10}$. 
\end{lem}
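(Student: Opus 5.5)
We study the interface $\Gamma$ separating the minus bottom boundary from the plus boundary on the other three sides of the $\ell\times h$ rectangle. Our plan is to show that for any column $x$, the event that $\Gamma$ reaches height $H$ above $x$ has probability at most $\ell^{-11}$, and then take a union bound over the $\ell$ columns. We would work in the cluster-expansion representation of the low-temperature interface. Via the Dobrushin/DKS framework (\cite{DKS}), we would write $\Gamma$ as a contour weighted by $e^{-2\beta|\Gamma|}$ times a cluster-expansion correction $\exp(\sum \Phi)$. Because $\beta>\beta_0$, the decoration functional satisfies $|\Phi(C,\Gamma)|\le e^{-(2\beta - c)\,\mathrm{diam}(C)}$ with constants uniform in $\beta$. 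The aim is therefore to compare the law of $\Gamma$ with a random walk with i.i.d.\ increments, using a perturbation whose size is $O(e^{-\beta})$ uniformly.

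The first key step is a non-asymptotic, explicitly $\beta$-dependent bound on the vertical excursion. We decompose $\Gamma$ into its horizontal steps and vertical jumps, in the spirit of a solid-on-solid comparison. Each vertical jump of size $j$ costs $e^{-2\beta j}$ relative to a flat step, so the height profile behaves like a random walk whose increment variance is $\sigma^2 \asymp e^{-2\beta}$. One feature not captured by this picture is overhangs: a local excursion of size $m$ costs at least $e^{-2\beta m}$ relative to its straightening. The contour part of this comparison we would handle by a Peierls-type map that straightens the excursion and sums over its location.

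Next, to reach height $H$ above column $x$, the interface must either
\begin{enumerate}[(a)]
\item contain a single local vertical structure (an overhang or jump) of size at least $H/2$, or
\item accumulate height $H/2$ through many small increments.
\end{enumerate}
Case (a) we would bound by $\ell^2 e^{-\beta H/C}$ via the Peierls map. This is at most $\ell^{-11}$ once $H\ge C\beta^{-1}\log\ell$, which is in turn dominated by $C\beta$ or by the first term in the regimes of interest. In case (b), the $O(\beta)$ term in the statement absorbs the bounded lattice effects. For the random-walk part, we would use a Chernoff bound on the bridge, which must return to height $0$ at both ends of the rectangle. Exponential tilting by a parameter $\lambda$ gives a moment generating function of at most $\exp(C e^{-2\beta}\lambda^2)$ per step, provided $\lambda\le \beta$, with the cluster-expansion corrections entering only as a factor $\exp(O(e^{-\beta}\lambda^2))$ per step. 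The resulting bound is
\[
\P(\text{height } H) \le \ell^2 \exp\!\left(-\frac{H^2}{C e^{-2\beta}\ell}\right),
\]
which is at most $\ell^{-11}$ when $H \ge C e^{-\beta}\sqrt{\ell\log \ell}$. The ceiling of the rectangle at height $h$ only helps, by monotonicity in boundary conditions (FKG), since lowering the ceiling pushes the interface down.

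We expect the main obstacle to be making the tilting argument rigorous with a genuinely non-product weight, keeping all constants uniform as $\beta\to\infty$. Concretely, we must show that the tilt $\lambda$ near the optimal value $\lambda\approx H e^{2\beta}/\ell$ remains within the analyticity region of the cluster expansion, and that the interaction terms between distant parts of $\Gamma$ change the partition function ratio by at most $e^{O(\ell e^{-\beta}\lambda^2)}$. Our first attempt would be to control the ratio of tilted to untilted partition functions via the standard decorrelation estimates of cluster weights, with the required uniformity coming from $\beta_0$ large. If this fails when $\lambda$ exceeds $O(\beta)$, we would split into the two regimes in the maximum above, using the Peierls bound alone when $e^{-\beta}\sqrt{\ell\log\ell}<\beta$.
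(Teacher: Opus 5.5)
There is a genuine gap. Your random-walk picture is the right heuristic: increments have law $\propto e^{-2\beta|k|}$ and per-step variance of order $e^{-2\beta}$, which gives the scale $e^{-\beta}\sqrt{\ell\log\ell}$. But a Chernoff/tilting argument only gives \emph{upper} bounds on unnormalized weights. The real difficulty is two \emph{lower} bounds on the normalizing partition function, and your plan provides neither.

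\emph{Pinning.} The interface is pinned at both bottom corners, while exponential tilting controls the free-endpoint polymer. To pass to the pinned interface you must divide by the probability that the free polymer of length $\ell$ ends at height $0$. You need this to be at least $\ell^{-C}$ (really of order $(1+\ell e^{-2\beta})^{-1/2}$) with $C$ independent of $\beta$. Naively forcing the endpoint from a variance bound loses factors that blow up with $\beta$, so this requires a local limit lower bound. In the paper it is the $\beta$-uniform Ornstein--Zernike estimate, Lemma~\ref{lem:OZ-asymptotics-beta-indep}. It is obtained by quantifying the local CLT of \cite{DKS} for $N\ge e^{11\beta/5}$, and for smaller $N$ by coupling to the tame walk and using log-concave local limit bounds.

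\emph{The floor.} The interface must stay above the minus bottom row, and you only treat the ceiling. The floor cannot be removed by monotonicity: extending the domain downward frees spins that were frozen at $-$, which lowers the interface. That is the wrong direction for an upper bound. One has to pay a factor $1/\pi^\eta_S(\lambda\text{ stays above the floor})$. The $\beta$-uniform lower bound $e^{-C\beta}\ell^{-1.1}$ on that probability is Lemma~\ref{lem:thm:5.1-replacement}, which requires re-running the multiscale argument of \cite{LMST} with $\beta$-free constants.

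Given these two inputs, the paper's proof is a union bound over the point $w$ at height $h$. The numerator is bounded via GKS by strip two-point functions, each at most $e^{-\tau}$ up to prefactors polynomial in $\ell$. The Gaussian factor comes from the surface-tension deficit $\tau(u-w)+\tau(w-v)-\tau(u-v)\gtrsim e^{2\beta}h^2/\ell$, supplied by the lower bound on $\tau_\beta''$ near angle zero (Lemma~\ref{lem:tau-double-prime-near-zero-angle}). That deficit is the rigorous counterpart of your Chernoff step.

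The Chernoff step as written also has quantitative errors. For increments with law $\propto e^{-2\beta|k|}$, the per-step log-moment generating function is about $2e^{-2\beta}(\cosh\lambda-1)$. This is at most $Ce^{-2\beta}\lambda^2$ only for $\lambda=O(1)$, not for all $\lambda\le\beta$; at $\lambda=\beta$ it is of order $e^{-\beta}\gg\beta^2e^{-2\beta}$. This is repairable: cap $\lambda$ at a constant and let the $C\beta$ term absorb the remaining regime. It mirrors the paper's restriction to angles $|\theta|\le ce^{-2\beta}$, which is why Proposition~\ref{prop:vertical-maximum-bound-replacement} assumes $\ell\ge\beta e^{2\beta}$ and $\delta\bar\alpha\le\sqrt{\beta\log\ell}$.

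More seriously, you state a cluster-expansion correction of $O(e^{-\beta}\lambda^2)$ per step, i.e.\ a perturbation of the walk of relative size $O(e^{-\beta})$. That would dominate the main term $e^{-2\beta}\lambda^2$ and only yield fluctuations on the scale $e^{-\beta/2}\sqrt{\ell\log\ell}$. You need per-step corrections that are $o(e^{-2\beta})$. \cite{DKS} does provide $O(e^{-4\beta})$, via $|\log\hat\Xi(N,H)|\le Ne^{-4(\beta-\beta_0)}$ together with analyticity in the tilt, and your argument must use that sharper input rather than the $e^{-\beta}$ bound you state.
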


Gaussian upper tail bounds on the height of an Ising interface have been established at low temperatures dating back to the monograph~\cite{DKS} (see also~\cite{GreenbergIoffe,IOVW-invarianceprinciple} for Brownian bridge convergence, and the more refined bounds of~\cite{LMST} that hold at all low temperatures) However, in these prior bounds, the constants in the Gaussian upper tail were either only asymptotic as $n\to \infty$, or $\beta$-dependent. In particular the fact that the variance in the tail bound scaled like $n e^{-2\beta}$ was not seen because a $\beta$-oblivious ``sharp triangle inequality" for the surface tension was used. 

Key to Lemma~\ref{lem:informal-max-bound}, and other equilibrium estimates necessary for Theorem~\ref{assump:uniform-mixing-time} is a sharp understanding of the probability of a $\pm$-interface separating minuses from pluses, connecting some vertices $x,y$ in $(\mathbb Z^2)^*$. By Kramers--Wannier duality, this is equivalent to two-point functions at a dual high temperature $\beta^*<\beta_c$.  
The celebrated Ornstein--Zernike theory, developed for all high temperatures in~\cite{PfisterVelenik,CIV03}, obtains the following sharp asymptotics for these dual two-point functions: for vertex $v$ forming angle $\theta\in [-\frac{\pi}{4},\frac{\pi}{4}]$ to the origin $0$, for all $\beta^*<\beta_c$, 
\begin{align*}
	\langle \sigma_0\sigma_v \rangle_{\beta^*,\mathbb Z^2}  = (1+o_{\|v\|}(1)) \frac{\Phi_{\beta}(\theta)}{\sqrt{\|v\|} }\exp ( - \tau_\beta(\theta) \|v\|)\,,
\end{align*}
where $\theta$ is the angle formed between the line $xy$ and the $x$-axis, $\Phi_\beta, \tau_\beta$ are explicit constants, and $o_{\|v\|}(1)$ means as $\|v\|\to\infty$ at angle $\theta$.  
However, the $o(1)$ in the above was not uniform over $\beta$; in fact if the distance $\|v\|$ is sub-exponential in $\beta$, the behavior of this interface is notably different. Our Lemma~\ref{lem:OZ-asymptotics-beta-indep} bounds two point functions at all finite distances $\|v\|$, up to constants that are uniform over $\beta$: For all $\beta>\beta_0$ (so that $\beta^*<\beta_0^*$) and all $v \in \mathbb Z^2$ forming angle $\theta_v\in [-\frac{\pi}{4},\frac{\pi}{4}]$, 
\begin{align}\label{eq:intro-version-of-sharp-OZ}
     \frac{C^{-1}}{\sqrt{1 + |v_1| e^{-2\beta} + |v_2|} }e^{ - \tau_\beta(\theta_v) \|v\|} \le \langle \sigma_0 \sigma_v\rangle_{\beta^*,\mathbb Z^2} \le \frac{C}{\sqrt{1 + |v_1| e^{-2\beta} + |v_2|} }e^{ - \tau_\beta(\theta_v) \|v\|}\,,
\end{align}
where, importantly, the proportionality constant $C$ does not depend on $\beta$.  

\subsection*{Acknowledgments}
R.G.\ thanks Vladas Sidoravicius for having introduced him to this problem. The research of R.G.\ is supported in part by NSF CAREER grant 2440509 and NSF DMS grant 2246780. A.S. was supported by a Simons Investigator Grant.

\section{The multi-scale framework}

In this section, we focus on the multiscale (recursion in space and time) framework that will be used to couple the chain $X_{t}^{x_0}$ from the random initialization, to the stationary chain $X_t^{\pi^+}$. The section will reduce the proof of Theorem~\ref{thm:main-general} to a series of equilibrium estimates to show the probability a block at scale-$k$ is bad is small. 

\subsection{Time and space scales, and notation for different processes}\label{sec:scales}

The multiscale framework will be indexed by a parameter $k\ge 1$. We define the following sequences:  Let $M$ be a sufficiently large constant (the choice will only depend on $d$ and the constant $C$ in the mixing time bound assumption). Initialize $\ell_0$ as a large enough constant depending only on $M,d$ and the $C$ in Assumption~\ref{assump:uniform-mixing-time}), and $t_0 = 0$. Define, 
\begin{align}
    \text{Spatial scale $\ell_k$}: & \qquad \log \ell_k = (\log (\ell_{k-1}))^M \qquad \text{i.e., } \quad \ell_k = \exp( \ell_0^{M^k})\,; \label{eq:spatial-scale} \\ 
    \text{Temporal scale $t_k$}: & \qquad t_k = \frac{1}{M} \ell_k  \qquad  \text{and} \qquad T_k = \sum_{i=1}^k t_k\,; \label{eq:temporal-scale}\\ 
    \text{Disagreement probability $q_k$}:  & \qquad q_k: = \frac{1}{\ell_{k+3}} = \exp( -(\log\ell_{k})^{M^3})\,. \label{eq:disagreeent-probability}
\end{align}

Given the above definitions, and in particular, the spatial scale, we define collections of blocks at each spatial scale, that partition $\mathbb Z^2$. 
Namely, let $\mathscr{B}_k$ be the set of blocks $B_{v,k} = (v+ [-\frac{\ell_k}{2}, \frac{\ell_k}{2}]^2)\cap \Z^2$ indexed by $v\in \ell_k \mathbb Z^2$. (For readability, we drop floors/ceilings and omit all associated rounding.)

Consider the following coupling of dynamics with different initializations and boundary conditions. Throughout, we will use the notations $(U_t)_t,(V_t)_{t}$ and variants on it for Markov chains used in the analysis, while reserving $(X_t)_t$ for the main Glauber dynamics on $\mathbb Z^d$ we are interested in. We begin by putting all these processes in the same probability space using the grand coupling.

\begin{definition}\label{def:initialization-coupling}
    [Time-zero monotone coupling] Assign each vertex in $\mathbb Z^d$ at time $0$ two $\text{Unif}[0,1]$ random variables $(\mathcal U_{v,0}^\pi)_{v\in \mathbb Z^d}$ and $(\mathcal U^Q_{v,0})_{v\in \mathbb Z^d}$ independently. Start with an arbitrary enumeration $v_1,v_2,...$ of the vertices of $\mathbb Z^d$. 
    
    For any distribution $\pi_{\Lambda}^{\eta}$ where $\Lambda \subset \mathbb Z^d$ finite, and $\eta \in \{\pm 1\}^{\Lambda^c}$ is a boundary condition, draw a sample $\sigma \sim \pi_{\Lambda}^{\eta}$  and $Q\sim \bigotimes_{v\in \Lambda} \text{Rad}(p)$ as follows: iteratively, for $i\ge 1$, 
    \begin{itemize}
        \item let $\sigma_{v_i}$ be $+1$ if $$\mathcal U_{v_i,0}^\pi \le \pi_{\Lambda}^{\eta} ( \sigma_{v_i} =+1 \mid (\sigma_{v_j})_{j<i:v_j\in \Lambda})$$
        and $-1$ else. 
        \item let $Q_{v_i}$ be $+1$ if $\mathcal U_{v_i,0}^Q \le p$ and $-1$ else. . 
    \end{itemize}
    Once all vertices in $\Lambda$ have been processed, terminate. 
\end{definition}

By monotonicity of the Ising model via the FKG inequality, the above coupling encodes monotone relations on the initializations: if two distributions $\pi_{\Lambda}^\eta \preceq \pi_{\Lambda'}^{\eta'}$, then the samples $(\sigma,\sigma')$ from the two under the above coupling will obey this ordering $\sigma \le \sigma'$ pointwise. Moreover,  $\sigma \wedge Q$ (denoting the minimum applied entrywise) will be stochastically below $\sigma' \wedge Q$ since the same $Q$ is used.

\begin{definition}\label{def:dynamics-coupling}
    [Grand coupling of dynamics] For each vertex $v$ in $\mathbb Z^d$, assign an independent intensity-$1$ Poisson process process on $(0,\infty)$, denoted $\mathcal T_v = (t_{v,1}<t_{v,2}<...)$ and to each point $t\in \bigcup_i \{t_{v,i}\}$ assign a $\text{Unif}[0,1]$ random variable $\mathcal U_{v,t}$. For every finite $\Lambda \subset \mathbb Z^d$, boundary conditions $\eta \in \{\pm 1\}^{\Lambda^c}$ and inital configuration $x_0\in \{\pm 1\}^\Lambda$, generate the Glauber dynamics $(X_t)_{t\ge 0} = (X_{\Lambda^\eta,t}^{x_0})_{t\ge 0}$ on $\Lambda$ with boundary conditions $\eta$ initialized from $x_0$ as follows: let $t_1<t_2<...<t_{k(t)}<t$ be the ordered points of $\bigcup_{v \in \Lambda} \mathcal T_v$ at times before $t$, 
    \begin{itemize}
        \item for $s\in [0,t_1)$, let $X_s = x_0$; 
        \item iteratively, for $i = 1,...,k(t)$, for $s\in [t_i,t_{i+1})$, if $t_i\in \mathcal T_v$, let 
        \begin{align*}
            X_s(w) & = X_{t_i^-}(w) \qquad w\ne v\\ 
            X_s(v) & = \begin{cases}+1  & \text{ if }\quad  \mathcal U_{v,t_i}\le \pi_{\Lambda}^{\eta}(\sigma_v = +1 \mid (\sigma_w)_{w\ne v} = (X_{t_i^-}(w))_{w\ne v}) \\ -1 & \text{ else}
            \end{cases}
        \end{align*}
    \end{itemize}
\end{definition}

As is standard from the monotonicity of the Ising model and its dynamics, the couplings are monotone: if $x_0 \preceq x_0'$ and the boundary conditions $\eta \preceq \eta'$ then for all $t\ge 0$, 
    $X_{\Lambda^\eta,t}^{x_0} \le X_{\Lambda^{\eta'},t}^{x_0'}$. In what follows, all Glauber chains we consider are coupled using Definitions~\ref{def:initialization-coupling}--\ref{def:dynamics-coupling} with the same Poisson processes $(\mathcal T_v)_{v\in \Z^2}$ and the same uniform random variables $\mathcal U_{v,0}^\pi,\mathcal U_{v,0}^Q$ and $\mathcal U_{v,t}$. The \emph{randomness on $\Lambda \times [0,t]$} means the intersection of the Poisson processes associated to vertices in $\Lambda$ with $[0,t]$, the uniform random variables used to generate the initialization for $v\in \Lambda$, and the uniform random variables used to generate the Glauber updates on vertices in $\Lambda$ for times in $(0,t]$.

\subsection{The scale-$k$ processes}
We first define domain enlargements of blocks. For a block $B\in \mathscr{B}_k$, let $E_r(B)$ be the set of vertices at $\ell^\infty$-distance at most $r$ from $B$, giving $E_r(B_{v,k}) = v + [ - \frac{\ell_k}{2} - r, \frac{\ell_k}{2} + r]^2$. If $B \in \mathscr{B}_k$ (so that the scale $k$ is understood contextually), then we simplify notation to write $E_{+j}(B) = E_{\ell_k j/10}(B)$. 

We define Markov chains on blocks of scale $k$ which are used to confine the spread of information, and locally equilibrate at scale $k$. 
For a block $B\in \mathscr{B}_k$, we let 
\begin{align*}
    U_t^{B,\pi} &: \text{Stationary Glauber dynamics on }E_{+4}(B)\text{ with $+$ boundary conditions }\\ 
    U_t^{B,\pi \wedge Q} & : \text{Glauber dynamics on }E_{+4}(B)\text{ with $+$ boundary conditions initialized from } x_0^B({E_{+4}(B)}^+)
\end{align*}
where 
\begin{align*}
    x_0^B (E_{+4}(B)) (u) = \begin{cases}
        U_0^{B,\pi}(u) \wedge Q(u) & u\in E_{+1}(B)\\ U_0^{B,\pi}(u)  & u\in E_{+4}(B)\setminus E_{+1}(B)
    \end{cases}\,.
\end{align*}
When we say $U_t^{B,\pi}$ is ``stationary" on domain $E_{+4}(B)$ and with $+1$ boundary, we mean that its initialization is from $\pi_{E_{+4}(B)}^{1}$. In words, $U_{t}^{B, \pi \wedge Q}$ is decreasing the initialization by taking a vertex-wise minimum with the disordered initialization $Q$, in the bulk of $E_{+4}(B)$. 
Observe that we have the ordering 
\begin{align*}
   U_t^{B,\pi \wedge Q} \le  U^{B,\pi}_t \le U_{t}^{B',\pi} \qquad \text{if }B' \subset B\,.
\end{align*}
In particular, if $B^{1} \subset B^2 \subset \cdots $ with $B^i \in \mathscr{B}_i$, then for each $t$, for all $u\in \Z^d$, 
\begin{align}\label{eq:i-to-infty-limit-of-localized-dynamics}
    X_t^\pi(u)= \lim_{i\to\infty} U_t^{B^i,\pi}(u) \qquad \text{and} \qquad X_t^{\pi \wedge Q} = \lim_{i\to\infty} U_t^{B^i, \pi \wedge Q}(u)\,,
\end{align}
where $X_t^\pi$ and $X_t^{\pi \wedge Q}$ are the infinite-volume processes started from the plus measure and the disordered initialization that we are ultimately trying to couple. 
Our focus is therefore on bounding the probability of disagreement of $U_t^{B,\pi}$ and $U_{t}^{B, \pi \wedge Q}$ at time $T_k$. 

\subsection{Auxiliary processes used in the analysis}\label{subsec:V-processes}
The other kinds of processes we need to consider live on domains that are roughly, but not quite, a scale down. Namely, we introduce a final parameter 
\begin{align}\label{eq:localization-scale}
    \text{Localization scale }s_k:  \qquad s_k = \log(\ell_{k+3}) = (\log (\ell_k))^{M^3}\,.
\end{align}

\begin{definition}\label{def:R-domains}
    For $ B\in \mathscr{B}_k$, define $\mathscr{R}_{k-1}( B)$ to be the set of square subsets of $E_{+2}( B)$ of side-length between $\ell_{k-1}$ and $100 s_k \ell_{k-1}$ made from unions of blocks in $\mathscr{B}_{k-1}$. 
\end{definition}

Since $R\in \mathscr{R}_{k-1}( B)$ is effectively at the $(k-1)$-scale (dilated by a polylogarithmic factor but not more), we understand $E_{+j}(R)$ to be its enlargement in all directions by $j\ell_{k-1}/10$. 

These blocks $R\in \mathscr{R}_{k-1}( B)$ will be used to cover disagreements at the $k-1$ scale and locally cure using the mixing time input them before information leaks in from the boundary of $ B$. 

For $R \in \mathscr{R}_{k-1}(B)$ for $B\in \mathscr{B}_k$, let  
\begin{align}\label{eq:V-processes}
    V_t^{R,\pi} &: \text{Stationary Glauber dynamics on $E_{+4}(R)$ with $+$ boundary conditions} \\ 
    V_t^{B\setminus R, \pi} & : \text{Stationary Glauber dynamics on $ E_{+4}(B) \setminus R$ with $+$ boundary condition on $E_{+4}(B)^c$} \nonumber \\
    & \qquad \text{and $-$ boundary condition on $R$} \nonumber 
\end{align}

Finally, we have two processes localized to  $R$ with all-plus and all-minus initializations: 
\begin{align*}
    V_t^{R, +}: \text{Glauber dynamics on $E_{+4}(R)$ with $+$ boundary conditions and $+$ initialization at time $T_{k-1}$} \\ 
    V_t^{R, -}: \text{Glauber dynamics on $E_{+4}(R)$ with $+$ boundary conditions and $-$ initialization at time $T_{k-1}$} 
\end{align*}

To try to sum up the process notations, we use $X_t$ for processes on the full $\Z^d$  domain, $U$ for processes on scale $k$, and $V$ for processes used in the analysis to bridge between scale $k-1$ and $k$.

\subsection{Good events which together ensure coupling}
We work towards constructing a dominating set $\tilde {\mathcal D}$ that confines the disagreement locations where $X_t^{\pi\wedge Q}$ and $X_t^\pi$ differ.  
The dominating process will take value $1$ on $B\in \mathscr{B}_k$ if one of several ``bad" events happen interior to $E_{+4}(B)$ on times $[0,T_k]$ and will take value $0$ if none of the ``bad" events happen (as that will imply $X_{T_k}^{\pi\wedge Q}(B) = X_{T_k}^\pi(B)$).  

For a block $B\in \mathscr{B}_k$, we define the following ``good" events. 

\smallskip
\noindent \emph{Nested stationary processes with $+$-boundary condition agree away from their boundaries.}
\begin{align*}
    \mathsf{StatEquiv}_k(B) & = \bigcap_{B'\in \mathscr{B}_{k+1}: B\subset E_{+3}(B')} \big\{ \forall t\in [0,T_k]: U_t^{B,\pi}(E_{+3}(B)) = U_t^{B',\pi}(E_{+3}(B))\big\}\,. 
\end{align*}

\smallskip
\noindent \emph{Information does not travel atypically fast.}

\begin{definition}\label{def:info-propagating-chain}
    We say there exists an \emph{$(L,T)$-propagating chain in $A$} if there exists a sequence of clock rings $(v_i,t_i)_{i=1}^L$ (i.e., the Poisson process at $v_i$ has a point at time $t_i$) with $v_i \in A$ and $v_i\sim v_{i+1}$ for all $i$, and with  $0 \le t_1\le t_2\le ... \le t_L \le T$. 
\end{definition}

\begin{observation}\label{obs:information-propagation}
    Suppose $\Lambda\subset \mathbb Z^d$ and $\Lambda'$ is its enlargement by $L$, and there is no $(L,T)$ propagating chain in $\Lambda'$. If $X_t, Y_t$ are coupled Glauber chains on some $D \supset \Lambda'$ with $X_0(\Lambda') = Y_0(\Lambda')$, then $X_T(\Lambda) = Y_T(\Lambda)$. 
\end{observation}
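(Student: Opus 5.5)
We argue by a backward-in-time "dependency" or disagreement-spreading argument under the grand coupling of Definition~\ref{def:dynamics-coupling}. Since both chains $X_t,Y_t$ run on the same domain $D$ with the same Poisson clocks $\mathcal T_v$ and the same uniforms $\mathcal U_{v,t}$, a vertex $v$ can only change its disagreement status at a ring time $t\in\mathcal T_v$. Moreover, if at that ring time $X_{t^-}$ and $Y_{t^-}$ agree on all neighbours of $v$, then the conditional probabilities $\pi_D^\eta(\sigma_v=+1\mid\cdot)$ coincide, because they depend only on the neighbouring spins and the common boundary. The same uniform $\mathcal U_{v,t}$ is used for both chains, so $X_t(v)=Y_t(v)$ after the update. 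Thus disagreements can only be created at $v$ at a ring of $v$ when some neighbour of $v$ already disagrees. (We are implicitly taking $D$ with one fixed boundary condition, or at least agreeing boundary conditions adjacent to $\Lambda'$, as in the applications.)

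The key step is to establish, by induction over the (a.s.\ locally finite, well-ordered on finite regions) ring times, the following claim. For every disagreement $X_t(u)\neq Y_t(u)$ with $u\in\Lambda'$ and $t\le T$, there is a chain of clock rings $(v_1,t_1),\dots,(v_m,t_m)$ with the following properties: $v_1\in\partial\Lambda'$ or adjacent to the complement of $\Lambda'$ (more precisely, $v_1$ has a neighbour outside $\Lambda'$), all $v_i\in\Lambda'$, consecutive $v_i$ adjacent, $t_1\le\dots\le t_m\le t$, and $v_m=u$. The reason is that at time $0$ all of $\Lambda'$ agrees, so any disagreement at $u\in\Lambda'$ must have been created at the last ring $t_m\le t$ of $u$. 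At that ring some neighbour $w$ disagreed at time $t_m^-$. Either $w\notin\Lambda'$, in which case we stop with $v_1=u$, or $w\in\Lambda'$ and we recurse on $(w,t_m^-)$. Since only finitely many rings occur in a finite spacetime box a.s., the recursion terminates. It cannot terminate inside $\Lambda'$ at time $0$, because there is no disagreement there.

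Finally, suppose $X_T(u)\neq Y_T(u)$ for some $u\in\Lambda$. The chain produced above starts at a vertex adjacent to $(\Lambda')^c$ and ends at $u$, which has $\ell^\infty$ (hence graph) distance at least $L$ from $(\Lambda')^c$. So the chain visits at least $L$ distinct vertices in order, and its first $L$ entries form an $(L,T)$-propagating chain in $\Lambda'$, contradicting the hypothesis. The only mildly delicate point, and the main thing to get right, is the off-by-one bookkeeping between ``enlargement by $L$'' and chain length $L$, together with the termination of the backward recursion. The latter I would handle by working on the finite set of rings in $\Lambda'\times[0,T]$, which is a.s.\ finite with distinct times.
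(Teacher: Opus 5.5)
Your backward disagreement-tracing argument is correct. It is exactly the standard reasoning the paper takes for granted: the observation is stated there without proof, and your chain of adjacent vertices with increasing clock rings is precisely the object whose probability is bounded in the proof of Lemma~\ref{lem:information-propagation}.

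Two cosmetic remarks. First, in the stopping case $v_1$ should be the current vertex of the recursion, not $u$. Second, you do not need termination or finiteness of $\Lambda'$. Every vertex the backward path visits inside $\Lambda'$ contributes a ring, and the path needs at least $L+1$ steps from $u\in\Lambda$ to leave $\Lambda'$, so you obtain at least $L$ rings in $\Lambda'$ whether or not the recursion terminates.
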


We can then define the following event that information does not propagate faster than linearly: 
\begin{align*}
    \mathsf{InfProp}_k(B) = \{\text{no $(\ell_k/10, T_k)$-propagating chain in $E_{+4}(B)$}\}\,.
\end{align*}

\begin{figure}[t]
\centering
\begin{tikzpicture}[scale=1.3,    orangehatch/.style={pattern=north east lines, pattern color=orange}
]

\draw[fill=green!5, thick] (-4,-4) rectangle (4,4);
\node[anchor=south east] at (3.8,-3.8) {\textbf{Large Domain} $E_{+4}(B)$};

\foreach \x in {-3.6,-3.3,...,3.6} {
    \node[scale=0.6, blue] at (\x, 3.9) {$+$}; 
    \node[scale=0.6, blue] at (\x, -3.9) {$+$}; 
    \node[scale=0.6, blue] at (3.9, \x) {$+$}; 
    \node[scale=0.6, blue] at (-3.9, \x) {$+$}; 
}

\node[blue, anchor=south] at (0, 4) {Outer Boundary $\partial E_{+4}(B)$ is All-Plus ($+$)};

\draw[fill = red!5, thick] (-0.8,-0.8) rectangle (0.8,0.8);
\node[fill=white, inner sep=1pt, text=red] at (0,0) {\textbf{Bad} $R$};

\foreach \x in {-0.4,0,0.4} {
    \node[scale=0.6, red] at (\x, 0.65) {$-$}; 
    \node[scale=0.6, red] at (\x, -0.65) {$-$}; 
    \node[scale=0.6, red] at (0.65, \x) {$-$}; 
    \node[scale=0.6, red] at (-0.65, \x) {$-$}; 
}

\draw[dashed, thick, purple] (-2.5,-2.5) rectangle (2.5,2.5);
\node[purple, anchor=north west] at (-2.5, 2.9) {Local chain is on $E_{+4}(R)$};

    \fill[orangehatch, even odd rule] 
                (-1.8, -1.8) rectangle (1.8, 1.8)
                (-1.4, -1.4) rectangle (1.4,1.4);
                
\node[gray, scale=0.8] at (0, 1.95) {Buffer Zone $E_{+2}(R)\setminus E_{+1}(R)$};

\end{tikzpicture}
\caption{The different regions used in the proof to construct sandwiching dynamics that cure a bad region $R$ at scale $k-1$ in time $t_k$.}\label{fig:sandwich-event}
\end{figure}

\smallskip
\noindent \emph{Locally coupling}. 
Recall the $V$-processes from~\eqref{eq:V-processes}. For a local region $R\in \mathscr{R}_{k-1}(B)$, define  
\begin{align}\label{eq:sandwich-B-R}
    \mathsf{Sandwich}(B,R) & = \{ V_{T_k}^{R,-} \equiv V_{T_k}^{R,+}\} \cap \\ 
    &  \qquad \qquad  \bigcap_{t\in [T_{k-1},T_k]} \{ V_t^{R, \pi}(E_{+2}(R) \setminus E_{+1}(R)) \equiv V_t^{B\setminus R,\pi}(E_{+2}(R) \setminus E_{+1}(R))\}\,. \nonumber 
\end{align}
This sandwiching event is the principal mechanism for locally curing bad regions at scale $k-1$ and moving up in scale: see Figure~\ref{fig:sandwich-event} for a depiction. 
We then say the local coupling event holds if $\mathsf{Sandwich}(B,R)$ holds for all $R \in \mathscr{R}_{k-1}(B)$ which ensures all discrepancies at the $k-1$-scale interior to $B$ got resolved by the local sandwiching chains: 
\begin{align}\label{eq:loc-coup}
    \mathsf{LocCoup}_k(B) = \bigcap_{R\in \mathscr{R}_{k-1}(B)} \mathsf{Sandwich}(B,R)\,.
\end{align}

\begin{lemma}\label{lem:events-are-measurable}
Both of $\mathsf{InfProp}_k(B)$ and $\mathsf{LocCoup}_k(B)$ are measurable with respect to the randomness on $E_{+4}(B) \times [0,T_k]$. If $B' \in \mathscr{B}_{k-1}$ is such that $B' \subset E_{+3}(B)$, then $\mathsf{StatEquiv}_{k-1}(B')$ is also measurable with respect to the randomness on $E_{+4}(B) \times [0,T_k]$. 
\end{lemma}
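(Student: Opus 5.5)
The plan is to check, event by event, which Poisson clocks and uniform variables are used to build the processes involved. The one structural fact we rely on is the grand coupling of Definitions~\ref{def:initialization-coupling}--\ref{def:dynamics-coupling}. A Glauber chain on a finite domain $D$ with fixed boundary condition, run on a time window $[0,t]$, is a deterministic function of three things: its initial configuration, the clocks $(\mathcal T_v)_{v\in D}$ restricted to $[0,t]$, and the update uniforms $\mathcal U_{v,s}$ for $v\in D$, $s\le t$. Likewise, a stationary initialization $\sigma\sim\pi_D^\eta$ produced by the sequential sampling of Definition~\ref{def:initialization-coupling} is a function only of $(\mathcal U^\pi_{v,0})_{v\in D}$, since only vertices of $D$ are processed. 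So each event reduces to a statement about which domains its constituent processes live on.

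For $\mathsf{InfProp}_k(B)$ this is immediate. The event only refers to clock rings $(v_i,t_i)$ with $v_i\in E_{+4}(B)$ and $t_i\le T_k$, so it is determined by $(\mathcal T_v\cap[0,T_k])_{v\in E_{+4}(B)}$.

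For $\mathsf{LocCoup}_k(B)$, fix $R\in\mathscr R_{k-1}(B)$. Then $R\subset E_{+2}(B)$, and $R$ is enlarged by only $4\ell_{k-1}/10$, which is far smaller than $2\ell_k/10$ because $\ell_k\gg s_k\ell_{k-1}$. Hence $E_{+4}(R)\subset E_{+3}(B)\subset E_{+4}(B)$. We then treat the four processes in turn:
\begin{itemize}
\item $V^{R,\pm}$ start from the deterministic configurations $\pm$ at time $T_{k-1}$ and use only randomness on $E_{+4}(R)\times[T_{k-1},T_k]$.
\item $V^{R,\pi}$ uses $\mathcal U^\pi_{\cdot,0}$ on $E_{+4}(R)$ together with the updates on $E_{+4}(R)\times[0,T_k]$.
\item $V^{B\setminus R,\pi}$ lives on $E_{+4}(B)\setminus R$ with deterministic boundary conditions. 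It therefore uses only initialization uniforms and updates in $E_{+4}(B)\setminus R$ up to time $T_k$.
\end{itemize}
Consequently $\mathsf{Sandwich}(B,R)$ is measurable with respect to the randomness on $E_{+4}(B)\times[0,T_k]$. $\mathsf{LocCoup}_k(B)$ is then a finite intersection of such events, which proves the claim.

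For $\mathsf{StatEquiv}_{k-1}(B')$ with $B'\in\mathscr B_{k-1}$ and $B'\subset E_{+3}(B)$, the event compares $U^{B',\pi}$ and $U^{B'',\pi}$ on $[0,T_{k-1}]$, where $B''$ ranges over blocks of $\mathscr B_k$ with $B'\subset E_{+3}(B'')$. This is the one point needing care, and it is where I expect the only real obstacle to lie. The domain of $U^{B',\pi}$ is $E_{+4}(B')$, a $4\ell_{k-1}/10$ enlargement of $B'$, and this lies in $E_{+4}(B)$. The processes $U^{B'',\pi}$, however, live on $E_{+4}(B'')$, which in general is not contained in $E_{+4}(B)$; for instance, neighbouring blocks $B''\neq B$ also satisfy $B'\subset E_{+3}(B'')$.

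I would handle this in one of two ways. The first option is to read the definition (as intended in the inductive argument) so that the relevant comparison at scale $k-1$ is with the enclosing block $B$ itself, whose domain $E_{+4}(B)$ is exactly right. The second option is to note that $\mathsf{StatEquiv}_{k-1}(B')$ is used only through the agreement $U^{B',\pi}=U^{B,\pi}$ on $E_{+3}(B')$. That restricted event involves only $U^{B,\pi}$ on $E_{+4}(B)$ and $U^{B',\pi}$ on $E_{+4}(B')\subset E_{+4}(B)$, run up to $T_{k-1}\le T_k$. Either way it is measurable with respect to $E_{+4}(B)\times[0,T_k]$, by the same grand-coupling observation as above. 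The containment $E_{+4}(B')\subset E_{+4}(B)$ follows from $B'\subset E_{+3}(B)$ together with $4\ell_{k-1}/10<\ell_k/10$.
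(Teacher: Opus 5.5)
Your handling of $\mathsf{InfProp}_k(B)$ and $\mathsf{LocCoup}_k(B)$ is the paper's argument. All four $V$-processes live on subsets of $E_{+4}(B)$ with frozen boundary conditions, so they are functions of the initialization uniforms, clocks and update uniforms there. Your check that $E_{+4}(R)\subset E_{+3}(B)$ just makes explicit what the paper leaves implicit.

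For $\mathsf{StatEquiv}_{k-1}(B')$ you have found a genuine defect, but it lies in the paper, not in your proposal.
\begin{itemize}
\item The paper's one-line justification (``the processes in $\mathsf{StatEquiv}_{k-1}(B')$ have boundary conditions outside $E_{+4}(B')$'') is true only of $U^{B',\pi}$. The intersection defining the event also contains comparisons with $U^{B'',\pi}$ for every $B''\in\mathscr{B}_k$ with $B'\subset E_{+3}(B'')$, including neighbours $B''\neq B$.
\item As literally stated, the claim is false. When such a neighbour exists, $B'$ is within $3\ell_k/10$ of the boundary of $B$, so $E_{+3}(B')$ is at bounded distance from $E_{+4}(B)^c$. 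With positive probability, conditionally on the randomness inside $E_{+4}(B)$, a chain of clock rings brings a minus from outside $E_{+4}(B)$ into $E_{+3}(B')$ in $U^{B'',\pi}$ before time $T_{k-1}$, while $U^{B',\pi}$ never sees that region. In addition, depending on the enumeration, the sampler for $U^{B'',\pi}_0$ already uses uniforms outside $E_{+4}(B)$.
\item So a repair is unavoidable, and your second option is the right one.
\end{itemize}

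Your first option should be phrased as an actual change of definition rather than a ``reading'', since the intersection explicitly ranges over all such $B''$. Concretely, replace $\mathsf{StatEquiv}_{k-1}(B')$ in \eqref{eq:Bad-set} by the pair event $\{\forall t\le T_{k-1}: U^{B',\pi}_t(E_{+3}(B'))=U^{B,\pi}_t(E_{+3}(B'))\}$. Nothing downstream breaks:
\begin{itemize}
\item This pair event is exactly the only consequence invoked in the proof of Proposition~\ref{prop:Dtilde-implies-coupling}.
\item Its complement is contained in $\mathsf{StatEquiv}_{k-1}(B')^c$, so Corollary~\ref{cor:statequiv-bound} still controls it.
\item Corollary~\ref{cor:measurable} and Lemma~\ref{lem:rarity-of-disagreement-regions} then hold verbatim.
\end{itemize}
An alternative fix also works: keep the definition and prove measurability with respect to an enlargement of $B$ by about $1.4\ell_k$. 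The price is a larger (still bounded) block separation in the independence argument of Lemma~\ref{lem:rarity-of-disagreement-regions}.
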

\begin{proof}
    The event $\mathsf{InfProp}_k(B)$ is evidently measurable simply with respect to the Poisson processes restricted to $E_{+4}(B)\times [0,T_k]$. 
    
    For each $R\in \mathscr{R}_{k-1}(B)$, since the chains $V_{t}^{R,-}, V_{t}^{R,+}, V_{t}^{R, \pi}$ and $V_{t}^{B\setminus R,\pi}$ from~\eqref{eq:V-processes} have frozen boundary conditions on all of $\mathbb Z^d \setminus E_{+4}(B)$, their initial states are measurable with respect to the initial randomness in $E_{+4}(B)$. Their evolutions are also then measurable with respect to the randomness on $E_{+4}(B) \times [0,T_k]$. As such, $\mathsf{LocCoup}_k(B)$ is also measurable with respect to the randomness on $E_{+4}(B)\times [0,T_k]$. 

    Since $B' \subset E_{+3}(B)$ is such that $E_{+4}(B') \subset E_{+4}(B)$, and the processes in $\mathsf{StatEquiv}_{k-1}(B')$ have boundary conditions outside $E_{+4}(B')$, we get the claim for $\mathsf{StatEquiv}_{k-1}(B')$. 
\end{proof}

\subsection{Dominating the disagreement set}

We now can construct $\widetilde{D}_k: \mathscr{B}_k \to \{0,1\}$ a percolation process on the level-$k$ blocks that will stochastically dominating the disagreement set $\{v: X_{t}^{\pi\wedge Q} (v) \ne X_t^\pi(v)\}$ as follows. 

\begin{definition}\label{def:Dtilde}
    For $B \in \mathscr{B}_0$, let $$\widetilde{D}_0(B) = \mathbf 1\Big\{\bigcup_{v\in B} Q(v) = -1\Big\}\,.$$ Now suppose we have defined $(\widetilde{D}_{k-1}(B'))_{B'\in \mathscr{B}_{k-1}}$; for each $B\in \mathscr{B}_k$, we will describe how to assign $\widetilde{D}_k(B)$.  Define 
    \begin{align}
        \mathsf{Dis}_{k-1}(B) & = \{B' \in \mathscr{B}_{k-1}: B' \subset E_{+2}(B)\,,\,\widetilde D_{k-1}(B') = 1\} \label{eq:Dis-set}\,, \\ 
        \mathsf{Bad_{k-1}}(B)& = \{B' \in \mathscr{B}_{k-1}: B' \subset E_{+3}(B)\,,\, \mathsf{StatEquiv}_{k-1}(B')^c \text{ or } \mathsf{InfProp}_{k-1}(B')^c \text{ hold}\}\,. \label{eq:Bad-set}
    \end{align}
    ($\mathsf{Dis}_{k-1}$ is roughly the set of blocks one scale down that were $1$ in the dominating set, and $\mathsf{Bad}_{k-1}$ are those one scale down that had very atypical events happen for them.) 
    We then set 
    \begin{align*}
        \widetilde{D}_k(B) = 1 - \mathbf 1\{|\mathsf{Dis}_{k-1}(B)| \le s_k\,,\, \mathsf{Bad}_{k-1}(B) =\emptyset\,,\, \mathsf{InfProp}_k(B), \mathsf{LocCoup}_k(B)\}\,.
    \end{align*}
\end{definition}

\begin{cor}\label{cor:measurable}[Corollary of Lemma~\ref{lem:events-are-measurable}]
        The random variable $\widetilde{D}_k(B)$ is measurable with respect to the randomness on $E_{+4}(B) \times [0,T_k]$. 
\end{cor}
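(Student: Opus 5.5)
The plan is an induction on $k$, carrying a slightly stronger statement: $\widetilde D_k(B)$ is measurable with respect to the randomness on $E_{+4}(B)\times[0,T_k]$. \emph{Base case $k=0$.} Here $\widetilde D_0(B)$ is a function of $(\mathcal U^Q_{v,0})_{v\in B}$ only. These variables are part of the time-zero randomness on $B\subset E_{+4}(B)$, so the claim holds.

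\emph{Inductive step.} Fix $B\in\mathscr B_k$. The indicator defining $\widetilde D_k(B)$ involves four kinds of ingredients, and I will check each separately. First, $\mathsf{InfProp}_k(B)$ and $\mathsf{LocCoup}_k(B)$ are handled directly by Lemma~\ref{lem:events-are-measurable}. Second, the condition $\mathsf{Bad}_{k-1}(B)=\emptyset$ is an intersection, over the finitely many $B'\in\mathscr B_{k-1}$ with $B'\subset E_{+3}(B)$, of the events $\mathsf{StatEquiv}_{k-1}(B')$ and $\mathsf{InfProp}_{k-1}(B')$.
\begin{itemize}
\item Measurability of $\mathsf{StatEquiv}_{k-1}(B')$ with respect to $E_{+4}(B)\times[0,T_k]$ is the last assertion of Lemma~\ref{lem:events-are-measurable}.
\item For $\mathsf{InfProp}_{k-1}(B')$, Lemma~\ref{lem:events-are-measurable} at scale $k-1$ gives measurability with respect to $E_{+4}(B')\times[0,T_{k-1}]$. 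This is a subset of $E_{+4}(B)\times[0,T_k]$, because $E_{+4}(B')$ enlarges $B'$ by only $4\ell_{k-1}/10\le \ell_k/10$ when $\ell_0$ is large, and because $T_{k-1}\le T_k$.
\end{itemize}

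Third, the condition $|\mathsf{Dis}_{k-1}(B)|\le s_k$ is a function of $\widetilde D_{k-1}(B')$ for the finitely many $B'\subset E_{+2}(B)$. By the induction hypothesis, each of these is measurable with respect to $E_{+4}(B')\times[0,T_{k-1}]$. Since $B'\subset E_{+2}(B)$, the set $E_{+4}(B')$ lies within $E_{2\ell_k/10+4\ell_{k-1}/10}(B)\subset E_{+4}(B)$. This is the one step needing a (trivial) comparison of scales, namely $\ell_{k-1}\ll \ell_k$.

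Finally, $\widetilde D_k(B)$ is a finite Boolean combination of events that are each measurable with respect to the randomness on $E_{+4}(B)\times[0,T_k]$, so it is measurable as well. This closes the induction. There is no real obstacle. The only care needed is to state the induction so that the scale-$(k-1)$ measurability of $\widetilde D_{k-1}$ is available, and to verify the geometric containments of the enlargements.
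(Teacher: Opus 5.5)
Your proof is correct and follows the paper's induction. The induction hypothesis, together with $E_{+4}(B')\subset E_{+4}(B)$ and $T_{k-1}\le T_k$, handles $\mathsf{Dis}_{k-1}(B)$, and Lemma~\ref{lem:events-are-measurable} handles the remaining events; you are in fact slightly more careful than the paper, since you explicitly treat the base case and the events $\mathsf{InfProp}_{k-1}(B')$ inside $\mathsf{Bad}_{k-1}(B)$ (these need the lemma at scale $k-1$ plus $\ell_k\ge 4\ell_{k-1}$), both of which the paper leaves implicit.
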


\begin{proof}
    Assume inductively that this is true for $k-1$. Then, since for $B' \in \mathscr{B}_{k-1}: B' \subset E_{+2}(B)$, one has $E_{+4}(B') \subset E_{+4}(B)$ and $T_{k-1}\le T_{k}$, we get $\mathsf{Dis}_{k-1}(B)$ is measurable with respect to that randomness. The remaining constituent events of $\{\widetilde{D}_k(B) = 1\}$ were shown to be measurable with respect to the randomness in $E_{+4}(B)$ in Lemma~\ref{lem:events-are-measurable}. 
\end{proof}

The following proposition justifies calling this a dominating set for the disagreement set of blocks, as it says that any block in which there is a disagreement must have $\widetilde D_k(B)=1$. In particular, if $\widetilde{D}_k(B) =0$ then the original processes (disordered initialization and stationary plus phase) are coupled on $B$. Most of the rest of this section will be spent proving this using properties of the couplings between the different processes involved. 

\begin{prop}\label{prop:Dtilde-implies-coupling}
    The event $\{\widetilde{D}_k(B) =0\}$ implies that $U_{T_k}^{B,\pi \wedge Q}(B) =  U_{T_k}^{B,\pi}(B)$. If moreover 
    \begin{align*}
        \mathsf{StatEquiv}_{\ge k}(B) & = \bigcap_{k'\ge k} \bigcap_{B' \in \mathscr{B}_{k'}: B\subset B'} \mathsf{StatEquiv}(B')\,.
    \end{align*}
    holds, then we have that $X_{T_k}^{\pi \wedge Q}(B) = X_{T_k}^{\pi}(B)$. 
\end{prop}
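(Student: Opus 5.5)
We argue by induction on $k$, proving the slightly stronger statement that $\{\widetilde D_k(B)=0\}$ implies $U_{T_k}^{B,\pi\wedge Q}(E_{+1}(B)) = U_{T_k}^{B,\pi}(E_{+1}(B))$ when needed; the plain statement on $B$ is what we ultimately use. For $k=0$, $T_0=0$ and $\widetilde D_0(B)=0$ means $Q\equiv +1$ on $B$, so $U_0^{B,\pi\wedge Q}(B)=U_0^{B,\pi}(B)\wedge Q(B)=U_0^{B,\pi}(B)$. The second assertion follows from the first by passing to the limit \eqref{eq:i-to-infty-limit-of-localized-dynamics}. On $\mathsf{StatEquiv}_{\ge k}(B)$ the stationary processes $U^{B^i,\pi}$ on the nested blocks $B=B^k\subset B^{k+1}\subset\cdots$ agree on $B$ up to time $T_k$, so $X_{T_k}^\pi(B)=U_{T_k}^{B,\pi}(B)$. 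For the $\pi\wedge Q$ side we use monotonicity, $X^{\pi\wedge Q}\le X^\pi$, together with a sandwich $U^{B,\pi\wedge Q}\le X^{\pi\wedge Q}$ obtained by the same nesting. The latter needs care, because the initializations differ outside $E_{+1}(B)$. We handle it via $\mathsf{InfProp}_k(B)$ and Observation~\ref{obs:information-propagation}: the region on which the initializations differ lies at distance at least $3\ell_k/10>\ell_k/10$ from $B$, so its influence cannot reach $B$ by time $T_k$.

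For the inductive step, fix $B\in\mathscr{B}_k$ with $\widetilde D_k(B)=0$. First, use $\mathsf{InfProp}_k(B)$ and Observation~\ref{obs:information-propagation} to replace $U^{B,\cdot}$ by the infinite-volume or larger-box processes on $E_{+3}(B)$. Then use $\mathsf{Bad}_{k-1}(B)=\emptyset$, which gives $\mathsf{StatEquiv}_{k-1}$ and $\mathsf{InfProp}_{k-1}$ for every sub-block $B'\subset E_{+3}(B)$, to identify the scale-$(k-1)$ processes $U^{B',\cdot}$ with the scale-$k$ processes on $B'$ at time $T_{k-1}$. By the inductive hypothesis, at time $T_{k-1}$ the two chains $U^{B,\pi\wedge Q}$ and $U^{B,\pi}$ agree on every $B'\subset E_{+2}(B)$ outside $\mathsf{Dis}_{k-1}(B)$.

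Since $|\mathsf{Dis}_{k-1}(B)|\le s_k$, we group the disagreeing blocks into clusters and cover each cluster by a square $R\in\mathscr{R}_{k-1}(B)$. Each such $R$ has side at most $100 s_k\ell_{k-1}$, and we choose it so that the annulus $E_{+4}(R)\setminus R$ contains no disagreeing block, merging clusters whose enlargements would overlap. This is possible because there are at most $s_k$ disagreeing blocks.

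On each $R$ we sandwich. Writing $W$ for either of $U^{B,\pi\wedge Q}$ and $U^{B,\pi}$ on $[T_{k-1},T_k]$, we aim to show $V^{R,-}_t\le W_t\le V^{R,+}_t$ on $E_{+1}(R)$. The minus-initialized chain $V^{R,-}$ lies below $W$ inside $E_{+4}(R)$ provided the boundary $W$ sees is effectively plus. Here the second clause of $\mathsf{Sandwich}(B,R)$ enters: $V^{R,\pi}$ and $V^{B\setminus R,\pi}$ agree on the buffer $E_{+2}(R)\setminus E_{+1}(R)$, which lets us transfer $W$ on that buffer to $V^{R,\pi}$. This uses the fact that $W$ agrees with the stationary $U^{B,\pi}$ off $\mathsf{Dis}_{k-1}$, together with monotonicity, since $V^{B\setminus R,\pi}\le U^{B,\pi}$ (minus boundary on $R$) and $U^{B,\pi}\le V^{R,\pi}$ when restricted appropriately. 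Once $W$ is trapped between chains with plus boundary on the buffer, $V^{R,-}_{T_k}\equiv V^{R,+}_{T_k}$ forces both versions of $W$ to agree on $E_{+1}(R)$, and in particular on $B\cap R$. Outside the union of the $R$'s we argue as follows. The chains agreed there at time $T_{k-1}$, and on the regions $R$ they agree throughout, with the agreement transferred through the buffers. Any residual disagreement on $B$ would have to travel from a region where agreement fails, which is outside $E_{+2}(B)$, at distance at least $\ell_k/5$. This is ruled out by $\mathsf{InfProp}_k(B)$.

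The main obstacle is making the sandwich step rigorous. One must produce monotone comparisons in which $W$ restricted to $E_{+4}(R)$ is dominated by chains with plus boundary, even though $W$'s true boundary is random. This means carefully exploiting buffer agreement for all $t\in[T_{k-1},T_k]$, rather than at a single time, and using the grand coupling's monotonicity in both initialization and boundary. The first thing I would try is to define hybrid chains that equal $V^{R,\pi}$ on the buffer and to compare updates vertex by vertex, inducting over clock rings.
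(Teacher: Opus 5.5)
There is a genuine gap, and it is at the point you flag as ``the main obstacle'': the inductive step is never carried out. You want to trap $W=U^{B,\pi\wedge Q}$ between $V^{R,-}$ and $V^{R,+}$ separately for each covering square $R$. The upper bound is immediate from monotonicity. The lower bound, however, needs $V^{B\setminus R,\pi}_t\le U^{B,\pi\wedge Q}_t$ on the buffer $E_{+2}(R)\setminus E_{+1}(R)$ for \emph{all} $t\in[T_{k-1},T_k]$. Your justification, that $W$ agrees with $U^{B,\pi}$ off $\mathsf{Dis}_{k-1}(B)$, holds only at time $T_{k-1}$. Over an interval of length $t_k\gg\ell_{k-1}$, minus spins of $U^{B,\pi\wedge Q}$ sitting in the \emph{other} squares $R_j$ can reach the buffer of $R$, while $V^{B\setminus R,\pi}$ has frozen minuses only on $R$, so the comparison fails. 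Ruling this out requires having already cured $R_j$, which in turn needs control coming from $R$. Your claim that ``on the regions $R$ they agree throughout, with the agreement transferred through the buffers'' is therefore circular as written. Moreover, the two chains need not agree on $R$ at any time before $T_k$.

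The missing idea is the paper's decoupling, which never tracks $U^{B,\pi\wedge Q}$ near the squares. Let $\widetilde U^{B,\pi}$ be $U^{B,\pi}$ with the blocks of $\mathsf{Dis}_{k-1}(B)$ reset to minus at time $T_{k-1}$. Then $\widetilde U^{B,\pi}\le U^{B,\pi\wedge Q}\le U^{B,\pi}$ on $E_{+2}(B)$ at time $T_{k-1}$, and hence on $E_{+1}(B)$ at time $T_k$ by $\mathsf{InfProp}_k(B)$. Also $W^{\min}\le\widetilde U^{B,\pi}\le U^{B,\pi}$, where $W^{\min}$ resets all of $R_1,\dots,R_K$ to minus. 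So it suffices to prove $W^{\min}_{T_k}=U^{B,\pi}_{T_k}$, a statement about modifications of the \emph{stationary} chain only.

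For a single square, let $W^{(i)}$ reset only $R_i$. Then $V^{B\setminus R_i,\pi}\le W^{(i)}\le U^{B,\pi}\le V^{R_i,\pi}$ at all times, with no interference from other squares. Hence $\mathsf{Sandwich}(B,R_i)$ gives agreement with $U^{B,\pi}$ off $E_{+1}(R_i)$ throughout, and coalescence at $T_k$ (Lemma~\ref{lem:sandwiching-couplings}). These single-square statements are glued by an induction over clock rings, using that the sets $E_{+2}(R_i)$ are pairwise separated. This shows $W^{\min}=W^{(i)}$ on $E_{+2}(R_i)$ and $W^{\min}=U^{B,\pi}$ elsewhere, for all $t\in[T_{k-1},T_k]$ (Observation~\ref{obs:local-sandwiching}). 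Your proposed ``hybrid chains, inducting over clock rings'' would have to reproduce exactly this comparison with single-square modifications.

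Smaller slips:
\begin{itemize}
\item In the ``moreover'' part, monotonicity gives $U^{B,\pi\wedge Q}\ge X^{\pi\wedge Q}$, not $\le$.
\item The $\wedge Q$ initializations may already differ on $E_{+3}(B)\setminus E_{+1}(B)$, at distance $\ell_k/10$ (not $3\ell_k/10$) from $B$. The argument survives: $\mathsf{StatEquiv}_{\ge k}(B)$ gives equal initializations on $E_{+1}(B)$, and $\mathsf{InfProp}_k(B)$ then yields equality on $B$ directly, with no sandwich needed.
\item Outside $E_{+2}(B)$ the two $U$-chains agree up to time $T_k$ by $\mathsf{InfProp}_k(B)$, so that region is not a source of disagreement.
\end{itemize}
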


Given Proposition~\ref{prop:Dtilde-implies-coupling}, the rest of the proof of Theorem~\ref{thm:main-general} will go by showing that this dominating set behaves like a very subcritical percolation process whose probability of being $1$ decreases appropriately with the scale. 

\begin{prop}\label{prop:Dtilde-probability-bound}
Under Assumption~\ref{assump:uniform-mixing-time}, there exist constants $M_0,\ell_0(M), \epsilon_0$ (only depending on $C,d$) such that for all $\beta>\beta_0$, all $M\ge M_0$, all $p_0 \ge 1-\epsilon_0$,  all $k$ and $B\in \mathscr{B}_k$, we have $\mathbb P(\widetilde{D}_k(B) =1) \le q_k$ for $q_k$ from~\eqref{eq:disagreeent-probability}.  
\end{prop}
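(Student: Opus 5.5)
We argue by induction on $k$, using the union bound
\begin{align*}
\P(\widetilde D_k(B)=1) \le \P(|\mathsf{Dis}_{k-1}(B)|>s_k) + \P(\mathsf{Bad}_{k-1}(B)\neq\emptyset) + \P(\mathsf{InfProp}_k(B)^c) + \P(\mathsf{LocCoup}_k(B)^c)
\end{align*}
and bounding each term by $q_k/4$.

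\emph{Base case.} For $k=0$, $\P(\widetilde D_0(B)=1)\le \ell_0^d(1-p)\le \ell_0^d\epsilon_0$. Since $\ell_0$ depends only on $M,d,C$ and not on $\beta$, we can choose $\epsilon_0$ small enough that this is at most $q_0=1/\ell_3$.

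\emph{Dis term.} By Corollary~\ref{cor:measurable}, $\widetilde D_{k-1}(B')$ depends only on the randomness in $E_{+4}(B')\times[0,T_{k-1}]$. Hence the family $(\widetilde D_{k-1}(B'))_{B'}$ is finitely dependent: blocks at $\ell^\infty$-block-distance more than $2$ are independent. Partition the $(\ell_k/\ell_{k-1})^d$ blocks in $E_{+2}(B)$ into $3^d$ sublattices of mutually independent blocks. If $|\mathsf{Dis}_{k-1}(B)|>s_k$, some sublattice contains at least $s_k/3^d$ ones. The probability of this is at most
\begin{align*}
3^d\binom{(\ell_k/\ell_{k-1})^d}{s_k/3^d}\,q_{k-1}^{s_k/3^d} \le \exp\big(-c\,s_k(\log\ell_{k+2}-d\log\ell_k)\big).
\end{align*}
Using $s_k=\log\ell_{k+3}$ and $\log q_{k-1}=-\log\ell_{k+2}$, this is far below $q_k=e^{-\log \ell_{k+3}}$ once $M$ is large.

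\emph{InfProp term.} Counting paths gives $\P(\text{an }(L,T)\text{-propagating chain exists from a fixed start})\le (2d)^L T^L/L!$. With $L=\ell_k/10$, $T_k\le 2\ell_k/M$ and $M$ large, this is $e^{-c\ell_k}$. A union bound over the $O(\ell_k^d)$ starting points keeps it $\ll q_k$.

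\emph{Bad term.} The $\mathsf{InfProp}_{k-1}$ part follows in the same way at scale $k-1$, with $e^{-c\ell_{k-1}}\ll q_k$ since $\log\ell_{k+3}=(\log \ell_{k-1})^{M^4}\ll \ell_{k-1}$. For $\mathsf{StatEquiv}_{k-1}(B')$: the processes $U^{B',\pi}$ and $U^{B'',\pi}$ are stationary with nested domains and monotonically coupled, $U^{B'',\pi}\le U^{B',\pi}$. By monotonicity, disagreement on $E_{+3}(B')$ at some time forces disagreement at a single time among a fine grid of times, up to clock rings. Each single-time disagreement has probability bounded by the difference of one-point magnetizations in $E_{+3}(B')$. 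That difference is controlled by the event that a $*$-connected minus path (or disagreement path) in the equilibrium coupling crosses an annulus of width $\ell_{k-1}/10$. This is $e^{-c\beta \ell_{k-1}}$ by a Peierls-type bound, uniform in $\beta>\beta_0$. Since this is an equilibrium input, it may need to be cited or proved separately. A union bound over $O(T_k \ell_k^d)$ time slots and sites gives $\ll q_k$.

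\emph{LocCoup term (main obstacle).} This is a union bound over the at most $\ell_k^{O(d)}$ regions $R\in\mathscr R_{k-1}(B)$, with two pieces.

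The first piece is $\P(V^{R,-}_{T_k}\neq V^{R,+}_{T_k})$. By Assumption~\ref{assump:uniform-mixing-time}, the side length $L\le 100 s_k\ell_{k-1}$ of $E_{+4}(R)$ gives $\tmix\le C\exp((\log L)^C)$, with $\log L\approx (\log\ell_{k-1})^{M^4}$. So $(\log L)^C=(\log\ell_{k-1})^{CM^4}\ll (\log \ell_{k-1})^{M^5}=\log\ell_k$ once $M> C$. Since $t_k=\ell_k/M$, this leaves $t_k/\tmix\ge \ell_k^{1/2}$ mixing-time windows. Submultiplicativity of the coupling probability under the grand coupling then gives failure probability at most $2^{-\ell_k^{1/2}}\ll q_k$.

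The second piece is the buffer agreement of $V^{R,\pi}$ and $V^{B\setminus R,\pi}$ on $E_{+2}(R)\setminus E_{+1}(R)$ over all $t\in[T_{k-1},T_k]$. This is the genuinely delicate point: the two chains have different boundary conditions, $-$ on $R$ versus $+$ far away. I expect the intended route is an equilibrium estimate showing that the stationary processes agree on the annulus at each fixed time with probability $1-e^{-c\ell_{k-1}}$, uniformly in $\beta$. Plausibly the argument uses that $V^{B\setminus R,\pi}$ is sandwiched monotonically together with the ``minus boundary on $R$'' bound, via a Peierls estimate that disagreement must propagate from $\partial R$ to distance $\ell_{k-1}/10$. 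This is then extended to all times by the time-discretization used for $\mathsf{StatEquiv}$. Establishing this uniform-in-$\beta$ equilibrium decoupling is where I expect the main difficulty, and the precise choice of $\mathscr R_{k-1}$ sizes (polylog dilation $s_k$) is what keeps the union bound affordable.
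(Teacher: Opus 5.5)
You leave the buffer half of $\mathsf{Sandwich}(B,R)$ unproved, and the mechanism you suggest for it fails in $d=2$. This is the agreement of $V^{R,\pi}$ and $V^{B\setminus R,\pi}$ on $E_{+2}(R)\setminus E_{+1}(R)$. You want a Peierls count bounding by $e^{-c\ell_{k-1}}$ the chance that the minus region of $R$ reaches distance $\ell=\ell_{k-1}/10$. The problem is that $R$ may have side $r$ as large as $100 s_k\ell_{k-1}\gg \ell$.

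Along a side of $R$, the $\pm$ interface is a random-walk-like excursion above the wall $\partial R$, with variance of order $e^{-2\beta}$ per unit length. Reaching height $\ell$ therefore costs only of order $e^{2\beta}\ell^2/r\asymp e^{2\beta}\ell_{k-1}/s_k$. This is far below $c\,\ell_{k-1}$ once $s_k\gg e^{2\beta}$, which holds for all large $k$ at fixed $\beta$. The entropy of long shallow excursions beats their energy, so no Peierls count can give your expected fixed-time bound $1-e^{-c\ell_{k-1}}$; that bound is false for such $k$.

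The paper's route is Corollary~\ref{cor:statequiv-annulus-bound}, which uses the same clock-ring discretization you propose. It compares both measures to $\pi^+_{E_{+4}(B)}$ by a triangle inequality. The plus-versus-plus comparison is indeed Peierls (Lemma~\ref{lem:stationary-plus-bc-stationary-mixing}). The $\pm$-annulus-versus-plus comparison (Lemma~\ref{lem:coupling-annulus-away-from-minus}) instead needs the Gaussian bound $Cr^Ce^{-\ell^2/(Cr)}+Ce^{-\beta r}$, with $C$ independent of $\beta$. That bound rests on the $\beta$-uniform maximal-height estimate for a Dobrushin interface above a hard floor (Lemma~\ref{lem:maximal-vertical-oscillations-with-floor}, using $\kappa_\beta\ge 1/3$); a Peierls argument is used only when $r\le e^{\beta/5}$.

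With $r\lesssim s_k\ell_{k-1}$ and $\ell=\ell_{k-1}/10$, this gives $\exp(-\ell_{k-1}/(Cs_k))$. After absorbing prefactors it is at most $e^{-\ell_{k-1}^{0.9}}$, which beats $q_{k+1}/\ell_k^{d+1}$. This is the real reason $R$ may only be a polylogarithmic dilation of scale $k-1$: it keeps $\ell^2/r$ large. The union bound over $R$, which you credit to this choice, would be affordable for any polynomial size. The $\beta$-independence of $C$ is essential, since $\ell_0$, and hence $p_0$, must not depend on $\beta$.

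The rest of your proposal matches the paper: the four-term decomposition, the base case, the $\mathsf{Dis}$ count, the $\mathsf{InfProp}$ bound, and $\mathsf{StatEquiv}$ via stationarity at clock-ring times. The mixing-time piece of $\mathsf{LocCoup}$ needs two arithmetic corrections.
\begin{itemize}
\item You wrote $\log L\approx(\log\ell_{k-1})^{M^4}$, but that quantity is $s_k$ itself. In fact $\log L\le\log(100s_k)+\log\ell_{k-1}\le 2\log\ell_{k-1}$.
\item You wrote $\log\ell_k=(\log\ell_{k-1})^{M^5}$, but $\log\ell_k=(\log\ell_{k-1})^{M}$.
\end{itemize}
With your value of $\log L$ the comparison would actually fail. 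Correctly, $\tmix\le C\exp(2^C(\log\ell_k)^{C/M})=\ell_k^{o(1)}$ for $M>C$, so $t_k/\tmix\ge\ell_k^{1/2}$ does hold. Keep the harmless factor $|E_{+4}(R)|$ when passing from total variation to the coupling probability.
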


Since $\widetilde D_k(B)=1$ includes the event of $\mathsf{StatEquiv}_{k-1}(B')$  a step of the proof (and a union bound) also gives the following lower bound on the probability of $\mathsf{StatEquiv}_{\ge k}(B)$. 

\begin{cor}\label{cor:statequiv-atleastk-bound}
    There exists $\beta_0(d)<\infty$ such that for every $\beta>\beta_0$, every $k$ and $B \in \mathscr{B}_k$, 
    \begin{align*}
        \mathbb P(\mathsf{StatEquiv}_{\ge k}(B)^c) \le \frac{q_k}{10}\,.
    \end{align*}
\end{cor}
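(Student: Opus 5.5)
The corollary reduces, via a union bound over all scales $k'\ge k$ and the finitely many blocks $B'$ involved at each scale, to a single-block estimate
\begin{align*}
\mathbb P(\mathsf{StatEquiv}_{k'}(B')^c)\le \exp\big(-c\,\ell_{k'}\big)
\end{align*}
for $B'\in\mathscr{B}_{k'}$, with $c>0$ uniform over $\beta>\beta_0$. This per-block bound is the estimate that the proof of Proposition~\ref{prop:Dtilde-probability-bound} also needs in order to control $\mathsf{Bad}_{k-1}(B)$. For the range of indices: $\mathsf{StatEquiv}_{\ge k}(B)$ intersects over $k'\ge k$ and over $B'\in\mathscr{B}_{k'}$ with $B\subset B'$. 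As written, each such $B'$ is a single block (or $O(2^d)$ of them if $B$ straddles block boundaries), and $\mathsf{StatEquiv}_{k'}(B')$ intersects over the $O(1)$ blocks $B''\in\mathscr{B}_{k'+1}$ with $B'\subset E_{+3}(B'')$. Given the per-block bound, $\sum_{k'\ge k}O(1)\exp(-c\ell_{k'})$ is dominated by its first term, since $\ell_{k'}$ grows superexponentially. Because $q_k=1/\ell_{k+3}=\exp(-(\log\ell_k)^{M^3})\gg\exp(-c\ell_k)$ once $\ell_0$ is large, we get $\mathbb P(\mathsf{StatEquiv}_{\ge k}(B)^c)\le q_k/10$.

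To prove the per-block bound, fix $B'\in\mathscr{B}_{k'}$ and $B''\supset B'$ at scale $k'+1$. Then $U_t^{B',\pi}$ lives on $E_{+4}(B')\subset E_{+4}(B'')$, and both processes have $+$ boundary conditions. By monotonicity of the grand coupling (Definitions~\ref{def:initialization-coupling}--\ref{def:dynamics-coupling}), $U^{B',\pi}_t\le U^{B'',\pi}_t$ on $E_{+4}(B')$ for all $t$, since restricting to $E_{+4}(B')$ is the same as imposing $+$ on $E_{+4}(B'')\setminus E_{+4}(B')$. A disagreement at time $t$ on $E_{+3}(B')$ therefore requires a vertex $u\in E_{+3}(B')$ with $U^{B'',\pi}_t(u)=+$ and $U^{B',\pi}_t(u)=-$. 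The discrepancy can enter only from $\partial E_{+4}(B')$, or from the time-$0$ initialization, where the two equilibrium samples may differ.

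I will handle these two sources separately.

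\emph{Dynamic source.} On $\mathsf{InfProp}$-type events, with no $(\ell_{k'}/20,T_{k'})$-propagating chain in $E_{+4}(B')$, a discrepancy born at the boundary cannot reach $E_{+3}(B')$, which lies at distance $\ell_{k'}/10$ from it, by time $T_{k'}\le 2\ell_{k'}/M$ (Observation~\ref{obs:information-propagation}). The probability that such a chain exists is at most $(2d)^L\,T^L/L!$ summed over starting points. Since $T_{k'}\le 2\ell_{k'}/M$ and $L=\ell_{k'}/20$, this is $\le \exp(-c\ell_{k'})$ once $M$ is large relative to $d$. The bound is $\beta$-independent because the clocks do not depend on $\beta$.

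\emph{Static source.} The time-$0$ samples from $\pi^{+}_{E_{+4}(B')}$ and $\pi^{+}_{E_{+4}(B'')}$ must agree on $E_{+4}(B')\setminus E_{+3.5}(B')$, or at least on a surrounding region wide enough to shield $E_{+3}(B')$ for the relevant time window. Under the monotone coupling this holds if there is a $+$-circuit in $\sigma''$ (the larger sample) in the annulus $E_{+3.5}(B')\setminus E_{+3}(B')$ shielding the interior. By the domain Markov property, together with the monotone sequential sampling (which I would reorder to reveal the outermost $+$-circuit first), the two samples then coincide inside that circuit. Peierls' argument gives that at $\beta>\beta_0$ the probability of no $+$-circuit in an annulus of width $\ell_{k'}/20$ is $\le\exp(-c\beta\ell_{k'})$, uniformly in large $\beta$. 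The same circuit plus information-propagation argument applied on a later time slice is not needed, since after time $0$ only the dynamic source matters.

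\emph{Main obstacle.} The hardest step is making the static coupling rigorous: Definition~\ref{def:initialization-coupling} fixes an arbitrary enumeration, so agreement inside a $+$-circuit is not immediate. I would first choose an enumeration that reveals vertices from the outside in (an exploration of the outermost $+$-circuit), so that conditional laws inside agree by the domain Markov property. Failing that, I would use only the monotonicity $\sigma'\le\sigma''$ together with equality in law of the marginals inside the circuit to conclude that the two samples are a.s. equal there.
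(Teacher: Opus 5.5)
Your outer structure matches the paper's: a union bound over scales $k'\ge k$ and blocks, a per-block bound that is exponentially small in $\ell_{k'}$ uniformly in $\beta$, and then $e^{-c\ell_k}\ll q_k$. The paper gets this by simply summing Corollary~\ref{cor:statequiv-bound} over $k'\ge k$.

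\textbf{The gap.} The problem is in your proof of the per-block bound, at the static step. You need the two time-$0$ samples $\sigma'\sim\pi^+_{E_{+4}(B')}$ and $\sigma''\sim\pi^+_{E_{+4}(B'')}$, produced by the coupling of Definition~\ref{def:initialization-coupling}, to coincide inside a $+$-circuit. Neither of your remedies delivers this.
\begin{itemize}
\item \emph{Reordering the enumeration.} Definition~\ref{def:initialization-coupling} uses one fixed enumeration for all measures $\pi^\eta_\Lambda$ simultaneously; that is what puts every $U^{B,\pi}$, at every scale, on a single space. $\mathsf{StatEquiv}$ is an event in that space. A configuration-dependent exploration of the outermost circuit cannot be run for all pairs of domains at once, and changing the coupling changes the event you are bounding. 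A fixed outside-in order does not help either, since vertices inside the circuit can be sampled, with different conditional laws, before the circuit is complete.
\item \emph{The fallback.} The circuit is a function of $\sigma''$. Conditioning on it says nothing about the law of $\sigma'$ inside it under this coupling, so there is no equality in law from which to extract a.s.\ equality.
\item \emph{Minor.} Your monotonicity is reversed. The chain on the smaller domain $E_{+4}(B')$ with $+$ boundary is pointwise larger: $U^{B',\pi}_t\ge U^{B'',\pi}_t$.
\end{itemize}

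\textbf{The fix.} You never need exact agreement. Under any monotone coupling $\sigma'\ge\sigma''$, each site satisfies $\mathbb P(\sigma'_u\ne\sigma''_u)=\pi'(\sigma_u=+)-\pi''(\sigma_u=+)$. This depends only on the marginals, so it is at most their total-variation distance. That distance is bounded as in Lemma~\ref{lem:stationary-plus-bc-stationary-mixing}, whose proof may use an exploration coupling because total variation does not depend on the coupling. Summing over $u\in E_{+3.5}(B')$ gives agreement on all of $E_{+3.5}(B')$ at time $0$, except with probability $\ell_{k'}^{O(d)}e^{-\beta\ell_{k'}/20}$. Then drop the circuit. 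Your propagation step, which needs agreement on the full $\ell_{k'}/20$-enlargement of $E_{+3}(B')$ as in Observation~\ref{obs:information-propagation}, finishes the per-block bound.

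\textbf{Comparison with the paper.} The paper applies this same marginal bound at every clock ring in $E_{+3}(B)$, not only at time $0$. Both chains are stationary (even conditionally on the ring times) and monotonically ordered at all times. So each ring costs at most $|E_{+3}(B)|\cdot\|\cdot\|_{\tv}$, and a Poisson bound controls the number of rings. No information-propagation estimate is needed. After the repair, your route trades that union over ring times for a propagation estimate; both are uniform in $\beta$.
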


The proof of Proposition~\ref{prop:Dtilde-probability-bound} is deferred to the following section where we provide bounds on the probabilities of the complements of each of the good events defined above. The rest of this section is focused on proving  Proposition~\ref{prop:Dtilde-implies-coupling}. We begin with a series of preliminary lemmas using the good events to locally couple chains from different initializations. The first claim will be used to give us a good cover of $\mathsf{Dis}_{k-1}(B)$ by squares of side-length that is not much larger than $\ell_{k-1}$. Recall from Definition~\ref{def:R-domains}. that for $B\in \mathscr{B}_k$, the set $\mathscr{R}_{k-1}$ is the set of squares internal to $E_{+2}(B)$ consisting of unions of blocks of $\mathscr{B}_{k-1}$ and having side-length at most $100 s_k \ell_{k-1}$. 

\begin{claim}\label{cl:covering}
Fix $B \in \mathscr{B}_k$. If $D\subset \mathscr{B}_{k-1}$ with each element of $D$ being contained in $E_{+2}(B)$, and with $|D|\le s_k$, then there is a cover of $D$ by squares $R_1,...,R_K \in \mathscr{R}_{k-1}(B)$ such that $K\le s_k$ and if $i\ne j$, then the $\mathbb Z^d$-distance between $R_i$ and $R_j$ is at least $10 \ell_{k-1}$.    
\end{claim}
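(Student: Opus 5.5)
The argument is the standard greedy merging procedure for sparse sets of blocks. Start with the trivial cover: each $B'\in D$ is itself a square of side-length $\ell_{k-1}$, a union of one block of $\mathscr{B}_{k-1}$, contained in $E_{+2}(B)$. So it lies in $\mathscr{R}_{k-1}(B)$, and this gives $K=|D|\le s_k$ squares. If two squares $R_i,R_j$ in the current cover are at $\mathbb Z^d$-distance less than $10\ell_{k-1}$, replace them by the smallest square that is a union of $\mathscr{B}_{k-1}$-blocks and contains $R_i\cup R_j$. Each merge lowers the number of squares by one, so the procedure stops after at most $|D|-1$ merges. When it stops, all pairwise distances are at least $10\ell_{k-1}$ and $K\le |D|\le s_k$.

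The main step is to control side-lengths so that every square stays in $\mathscr{R}_{k-1}(B)$. Track a weight for each current square: the number of original blocks of $D$ it has absorbed, say $m$. The inductive claim is that a square of weight $m$ has side-length at most $c\, m\,\ell_{k-1}$ for a fixed constant $c$. For the induction step, take squares of sides $a,b$ at distance less than $10\ell_{k-1}$. The smallest grid-aligned square containing both has side at most $a+b+10\ell_{k-1}+O(\ell_{k-1})$, where the last term accounts for rounding to the block grid. With weights $m_1,m_2$, this gives a side at most $c(m_1+m_2)\ell_{k-1}$ provided $c\ge 12$, say; in the base case a single block has side $\ell_{k-1}\le c\ell_{k-1}$. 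Since $m\le s_k$, every side-length is at most $12 s_k\ell_{k-1}\le 100 s_k\ell_{k-1}$, and every side-length is at least $\ell_{k-1}$ automatically.

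The remaining point, and the one I expect to need the most care, is containment in $E_{+2}(B)$. A merged square built around blocks near the boundary of $E_{+2}(B)$ could stick out. The fix is to keep the merged square inside the box. $E_{+2}(B)$ is itself a union of $\mathscr{B}_{k-1}$-blocks, up to the rounding conventions, and in each coordinate it has length $\ell_k(1+2/5)\gg 100 s_k\ell_{k-1}$. This holds because $\ell_k=\exp((\log\ell_{k-1})^M)$ while $s_k\ell_{k-1}$ is only quasi-polynomial in $\ell_{k-1}$. So a square of side $\le 12 s_k\ell_{k-1}$ containing $R_i\cup R_j$ can be translated in each coordinate, while still containing $R_i\cup R_j\subset E_{+2}(B)$, until it lies inside $E_{+2}(B)$. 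This is possible because the interval spanned by $R_i\cup R_j$ in that coordinate lies inside the box's interval, and the box's interval is longer than the square's side. Translating does not change the side-length, so the weight bound is preserved and every square produced lies in $\mathscr{R}_{k-1}(B)$.
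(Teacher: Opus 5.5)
Your proposal uses the same greedy merging procedure as the paper. The overall structure is right: termination after at most $|D|-1$ merges, final pairwise separation at least $10\ell_{k-1}$, and $K\le |D|\le s_k$.

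There is an arithmetic slip in your side-length induction. The invariant ``side $\le c\,m\,\ell_{k-1}$'' is not preserved under merging. A merge gives side at most $a+b+12\ell_{k-1}\le (c m_1+c m_2+12)\ell_{k-1}$, which exceeds $c(m_1+m_2)\ell_{k-1}$ for every $c$.

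You can fix this with an affine invariant, e.g.\ side $\le (13m-12)\ell_{k-1}$. Then $a+b+12\ell_{k-1}\le (13(m_1+m_2)-12)\ell_{k-1}$, and since $m\le s_k$, every side is at most $13 s_k\ell_{k-1}\le 100 s_k\ell_{k-1}$. Alternatively, do what the paper does and track the total sum of side-lengths of the current cover. This sum starts at $|D|\ell_{k-1}\le s_k\ell_{k-1}$ and grows by at most $O(\ell_{k-1})$ per merge, so every individual square has side $O(s_k\ell_{k-1})$.

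On containment in $E_{+2}(B)$, your translation argument is more careful than the paper, which simply asserts that merged squares stay inside $E_{+2}(B)$. An arbitrary smallest covering square can stick out near the boundary. As you note, one can always choose it inside, because $E_{+2}(B)$ has side $1.4\,\ell_k\gg 100 s_k\ell_{k-1}$. With the invariant corrected, your proof is complete.
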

\begin{proof}
    We start by defining a set of squares $R_1,...,R_K \in \mathscr{R}_{k-1}(B)$ covering $\mathsf{Dis}_{k-1}(B)$ from~\eqref{eq:Dis-set}. 
    
    \begin{definition}
        From a set $D = \mathsf{Dis}_{k-1}(B) \subset \mathscr{B}_{k-1}$ having $|\mathsf{Dis}_{k-1}(B)|\le s_k$, we construct $R_1,...,R_K$ as follows. Start with candidate set $\mathcal R = \{B': B'\in \mathsf{Dis}_{k-1}(B)\}$;  repeat the following process
        \begin{itemize}
            \item If the current candidate $\mathcal R$ contains two squares $R, R'$ of side-length $a \ell_{k-1}$ and $b \ell_{k-1}$ within distance $10 \ell_{k-1}$, then replace $R,R'$ in $\mathcal R$ with $R''$ which is a smallest square covering both squares.  
        \end{itemize}
    \end{definition}

    Evidently, in each iteration of the above process, $R''$ will have side-length at most $(a + b+ 10)\ell_{k-1}$. Also, since all the constituent blocks of $D$ are in $E_{+2}(B)$, so will all the $R''$ throughout the process. Thus, the sum of all sidelengths of squares in $\mathcal R$ will increase by at most $10\ell_{k-1}$ while the size of $\mathcal R$ will decrease by $1$. When the process terminates, the sum of all side-lengths of regions in $\mathcal R$ will be at most 
    \begin{align*}
        s_k \ell_{k-1} + 10 \ell_{k-1} (|\mathsf{Dis}_{k-1}(B)|-1) \le 10 \ell_{k-1} s_k\,.
    \end{align*}
    Recalling the definition of $\mathscr{R}_{k-1}(B)$, this ensures that any $R\in \mathcal R$ is in $\mathscr{R}_{k-1}(B)$, and also any two $R_{i},R_j \in \mathcal R$ will be at distance at least $10 \ell_{k-1}$. 
\end{proof}

The next is an observation that if two chains agree on a buffer separating $R_i$ from $R_j$ for all $i\ne j$ and a disagreement on $R_i$ gets cured in a certain amount of time $T$, then disagreements on all of $(R_i)_{i=1}^K$ get cured simultaneously. In what follows, $R_1,...,R_K$ are the output of Claim~\ref{cl:covering} so for $i\ne j$, one has $\mathbb Z^d$-distance at least $3$ between $E_{+2}(R_i)$ and $E_{+2}(R_j)$. 

\begin{observation}\label{obs:local-sandwiching}
    Suppose $(U_t)_{t\in [T_{k-1},T_k]}$ is Glauber dynamics on a domain $E_{+4}(B)$ with $+$ boundary conditions and initial condition $U_{T_{k-1}}$. Let $W_t^{(i)}$ be coupled to $U_t$ by the grand coupling, but with initialization at time $T_{k-1}$ given by 
    \begin{align*}
        W_{T_{k-1}}^{(i)}(u) = \begin{cases}
            -1 & u\in R_i \\ U_{T_{k-1}}(u) & u\notin R_i
        \end{cases}\,.
    \end{align*}
    Define the local sandwiching event  
    \begin{align*}
        \mathsf{Sandwich}(W^{(i)}) = \{W_{T_k}^{(i)} = U_{T_k}\}\cap \bigcap_{t\in [T_{k-1},T_k]} \{W_t^{(i)} (E_{+4}(B) \setminus E_{+2}(R_i)) = U_{t}(E_{+4}(B) \setminus E_{+2}(R_i))\}\,.
    \end{align*}
    Then on the event $\bigcap_{i=1}^K \mathsf{Sandwich}(W^{(i)})$ the coupled dynamics $W^{\min}_t$ initialized from $W^{\min}_{T_{k-1}} = \bigwedge_{i=1}^{K} W_{T_{k-1}}^{(i)}$ achieves $W^{\min}_{T_k} = U_{T_k}$ everywhere on $E_{+4}(B)$. 
\end{observation}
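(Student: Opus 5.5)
The plan is to run a sandwiching argument based on monotonicity of the grand coupling, and then a locality argument. Since $W^{\min}_{T_{k-1}} \le U_{T_{k-1}}$ pointwise, the monotone grand coupling (Definition~\ref{def:dynamics-coupling}) gives $W^{\min}_t \le U_t$ for all $t \ge T_{k-1}$. So it suffices to show $W^{\min}_{T_k} \ge U_{T_k}$ on $E_{+4}(B)$. Monotonicity alone cannot do this, because $W^{\min}$ lies below every $W^{(i)}$. Instead I will argue directly that $W^{\min}$ coincides with $W^{(i)}$ near $R_i$ and with $U$ away from $\bigcup_i E_{+2}(R_i)$, for all times.

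\textbf{Step 1 (the candidate process).} Define a spliced process $Z_t$ on $E_{+4}(B)$ for $t \in [T_{k-1}, T_k]$:
\[
Z_t(u) = W^{(i)}_t(u) \quad \text{for } u \in E_{+2}(R_i), \qquad Z_t(u) = U_t(u) \quad \text{for } u \notin \textstyle\bigcup_i E_{+2}(R_i).
\]
By Claim~\ref{cl:covering}, the sets $E_{+2}(R_i)$ are pairwise at $\mathbb Z^d$-distance at least $3$, so this is well defined. At time $T_{k-1}$ we have $Z_{T_{k-1}} = W^{\min}_{T_{k-1}}$, since the initial data of $W^{(i)}$ equals $-1$ on $R_i$ and equals $U_{T_{k-1}}$ elsewhere.

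\textbf{Step 2 (the main step).} I will show that $Z_t$ obeys the same update rule under the grand coupling as the Glauber dynamics started from $W^{\min}_{T_{k-1}}$. Because the update rule is deterministic given the clocks and uniforms, this forces $Z_t = W^{\min}_t$. Consider a clock ring at vertex $v$ at time $t$. The update at $v$ depends only on the configuration at the neighbours of $v$ at time $t^-$.

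If $v \in E_{+2}(R_i)$, then every neighbour $w$ of $v$ lies either in $E_{+2}(R_i)$ or in $E_{+4}(B) \setminus E_{+2}(R_i)$, and the latter lies outside all the other $E_{+2}(R_j)$ by the separation. For neighbours in $E_{+2}(R_i)$, $Z_{t^-}(w) = W^{(i)}_{t^-}(w)$ by definition. For neighbours outside, the second part of $\mathsf{Sandwich}(W^{(i)})$ gives $W^{(i)}_{t^-}(w) = U_{t^-}(w) = Z_{t^-}(w)$. Hence $Z$ sees the same neighbourhood as $W^{(i)}$ and updates $v$ exactly as $W^{(i)}$ does.

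If instead $v$ lies outside every $E_{+2}(R_i)$, any neighbour $w$ that falls in some $E_{+2}(R_i)$ satisfies $Z_{t^-}(w) = W^{(i)}_{t^-}(w) = U_{t^-}(w)$. Again this uses the agreement of $W^{(i)}$ with $U$ outside $E_{+2}(R_i)$, now applied to a vertex $w$ near but possibly inside $E_{+2}(R_i)$. This is the delicate point. I expect to need agreement on a slightly larger region than $E_{+4}(B)\setminus E_{+2}(R_i)$, or else to shrink the splice region to $E_{+2}(R_i)$ minus its outer boundary layer. The second option works: on that layer $W^{(i)} = U$ anyway, so $Z$ is unchanged but the neighbour-closure condition holds. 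The boundary conditions are $+$ for all of these processes, so updates at the edge of $E_{+4}(B)$ agree as well.

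Having verified the update rule at every clock ring, $Z = W^{\min}$ follows by induction over the almost surely finitely many clock rings in the bounded time interval on the finite domain. The interval is $[T_{k-1}, T_k]$ and the domain is $E_{+4}(B)$.

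\textbf{Step 3 (conclusion).} At time $T_k$, on each $E_{+2}(R_i)$ we have $W^{\min}_{T_k} = W^{(i)}_{T_k} = U_{T_k}$, using the first part of $\mathsf{Sandwich}(W^{(i)})$. Off the union of the $E_{+2}(R_i)$, $W^{\min}_{T_k} = U_{T_k}$ directly from the definition of $Z$. Therefore $W^{\min}_{T_k} = U_{T_k}$ on all of $E_{+4}(B)$.
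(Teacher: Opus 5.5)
Your architecture matches the paper's. Your spliced process $Z$ is exactly its pair of inductive invariants: $W^{\min}=W^{(i)}$ on $E_{+2}(R_i)$, and $W^{\min}=U$ off $\bigcup_j E_{+2}(R_j)$. These are verified ring by ring and closed off with the first part of $\mathsf{Sandwich}(W^{(i)})$.

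However, the case you flag as delicate is not handled, and your proposed fix does not work. For intermediate $t$, the event $\mathsf{Sandwich}(W^{(i)})$ gives $W^{(i)}_t=U_t$ only on $E_{+4}(B)\setminus E_{+2}(R_i)$; agreement everywhere is given only at the single time $T_k$. It says nothing about vertices of $E_{+2}(R_i)$, including its outermost layer. A disagreement can travel from $R_i$ to that layer and never cross it. This happens whenever, at each update of an outside neighbour, the shared uniform $\mathcal U_{v,t}$ does not fall between the two conditional probabilities. So ``on that layer $W^{(i)}=U$ anyway'' is false in general. Shrinking the splice region only moves the same problem one layer inward. In fact, with the shrunken splice $Z$ is in general \emph{not} equal to $W^{\min}$, because the true $W^{\min}$ agrees with $W^{(i)}$ on all of $E_{+2}(R_i)$.

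The missing idea is to compare with $W^{(i)}$ rather than with $U$ at such a vertex, and to use the sandwich event at $v$ \emph{after} the update. Keep the splice region $E_{+2}(R_i)$, and let $v\notin\bigcup_j E_{+2}(R_j)$ have a neighbour in $E_{+2}(R_i)$.
\begin{itemize}
\item Its neighbours in $E_{+2}(R_i)$ satisfy $Z_{t^-}=W^{(i)}_{t^-}$ by definition.
\item Its other neighbours lie outside every $E_{+2}(R_j)$ by the separation. So $Z_{t^-}=U_{t^-}=W^{(i)}_{t^-}$ there, by the second part of $\mathsf{Sandwich}(W^{(i)})$.
\item Hence the coupled update applied to $Z$ at $v$ outputs $W^{(i)}_t(v)$.
\item Since $v\in E_{+4}(B)\setminus E_{+2}(R_i)$, the second part of $\mathsf{Sandwich}(W^{(i)})$ at time $t$ gives $W^{(i)}_t(v)=U_t(v)$. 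This is exactly the value $Z_t(v)$ is supposed to take.
\end{itemize}
This is the paper's second case. With it in place, the rest of your argument goes through unchanged.
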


\begin{proof}
    We will show inductively in time, that 
    $$W_{t}^{\min}(E_{+4}(B) \setminus \bigcup_{i\le K} E_{+2}(R_i) ) = U_{t}(E_{+4}(B)\setminus \bigcup_{i\le K}E_{+2}(R_i))
    \qquad t\in [T_{k-1},T_k]$$
    and 
    $$W_{t}^{\min}(E_{+2}(R_i)) = W_t^{(i)} (E_{+2}(R_i)) \qquad t\in [T_{k-1},T_k]\,.$$
    These hold at time $t=T_{k-1}$ by construction of the initializations and the fact that $E_{+2}(R_i) \cap E_{+2}(R_j) = \emptyset$ for $i\ne j$. Suppose it holds at $t^-$ and at time $t$ there is an update at vertex $v$. 
    \begin{itemize}
    \item If $v$ is at graph distance at least $2$ from $\bigcup_{i} R_i$, then all its neighbors are the same in $W^{\min}_{t^-}$ and in $U_{t^-}$ so the update is coupled perfectly. 
    \item If $v$ is at distance one from the boundary of $E_{+2}(R_i)$, then by $\mathsf{Sandwich}(W^{(i)})$, one has  $W_{t^-}^{(i)}(w) = U_{t^-}(w)$ for $w\sim v: w\notin E_{+2}(R_i)$ and by inductive assumption that is also equal to $W_{t^-}^{\min}(w)$; for $w\sim v: w\in E_{+2}(R_i)$. At the same time, one has $W_{t^-}^{\min}(w) = W_{t^-}^{(i)}(w)$ by the second inductive assumption. 
    
    Thus, one perfectly couples $W_{t}^{\min}(v) = W_t^{(i)}(v)$. If $v\in E_{+2}(R_i)$ then that is what we want to show at time $t$. If $v\notin E_{+2}(R_i)$, by $\mathsf{Sandwich}(W^{(i)})$ it is also equal to $U_{t}(v)$ as wanted. 
    \item If $v\in E_{+2}(R_i)$ and all its neighbors are in $E_{+2}(R_i)$, then the second inductive assumption implies its neighbors are the same in $W_{t^-}^{\min}$ and in $W_t^{(i)}$ so we retain $W_{t}^{\min}(v)= W_t^{(i)}(v)$. 
    \end{itemize}
    Having established the above two equalities, adding in the first part of $\mathsf{Sandwich}(W^{(i)})$ for every $i$ says that $W_{T_k}^{(i)}(E_{+2}(R_i)) = U_{T_k}(E_{+2}(R_i))$  so altogether, $W_{T_k}^{\min} = U_{T_k}$ in all of $E_{+4}(B)$. 
\end{proof}

The last lemma we need will show how to go from $\mathsf{LocCoup}_k(B)$ of~\eqref{eq:loc-coup} with the bounding $V$ processes, to the $\mathsf{Sandwich}(W^{(i)})$ event of the above observation for $i=1,...,K$. 

\begin{lem}\label{lem:sandwiching-couplings}
    Fix $B\in \mathscr{B}_k$ and $R_i \in \mathscr{R}_{k-1}(B)$, and history $\mathcal F_{T_{k-1}}$. Let $U_t,W_t^{(i)}$ be as in Observation~\ref{obs:local-sandwiching} with initializations $U_{T_{k-1}} = U_{T_{k-1}}^{B,\pi}$ (and with $-1$ in $R_i$ respectively). If $\mathsf{Sandwich}(B,R_i)$ from~\eqref{eq:sandwich-B-R} holds, then so does $\mathsf{Sandwich}(W^{(i)})$. 
\end{lem}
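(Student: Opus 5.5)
The plan is to prove both parts of $\mathsf{Sandwich}(W^{(i)})$ using only monotonicity of the grand coupling (Definitions~\ref{def:initialization-coupling}--\ref{def:dynamics-coupling}): I will place $W^{(i)}$ and $U$ between the two stationary $V$-processes on the buffer, and then use the coalescence of $V^{R_i,\pm}$ inside. Write $R=R_i$ and $\mathcal A = E_{+2}(R)\setminus E_{+1}(R)$.

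\emph{Step 1 (ordering on $\mathcal A$).} I will show that for all $t\in[T_{k-1},T_k]$, on the respective common domains,
\begin{align*}
V_t^{B\setminus R,\pi} \le W_t^{(i)} \le U_t \le V_t^{R,\pi}\,.
\end{align*}
\begin{itemize}
\item The middle inequality is immediate: $W^{(i)}_{T_{k-1}}\le U_{T_{k-1}}$, both chains have the same $+$ boundary condition, and monotonicity preserves the order.
\item For the right inequality, $\pi^+_{E_{+4}(B)}\preceq \pi^+_{E_{+4}(R)}$ on $E_{+4}(R)$ because $E_{+4}(R)\subset E_{+4}(B)$. The time-zero coupling therefore gives $U^{B,\pi}_0\le V^{R,\pi}_0$ on $E_{+4}(R)$. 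The chain with the smaller domain has its frozen boundary at $+$, which dominates, so the order persists for all times and in particular on $[T_{k-1},T_k]$.
\item For the left inequality, $V^{B\setminus R,\pi}$ has $-$ frozen on $R$ and $+$ outside $E_{+4}(B)$, so it is initialized below $\pi^+_{E_{+4}(B)}$. Hence $V^{B\setminus R,\pi}_{T_{k-1}}\le U_{T_{k-1}}=W^{(i)}_{T_{k-1}}$ off $R$. On $R$, $V^{B\setminus R,\pi}$ is frozen at $-1$, which is the minimal value. So the order holds at time $T_{k-1}$ and is preserved by the grand coupling.
\end{itemize}
The second event in $\mathsf{Sandwich}(B,R)$ says the two outer processes agree on $\mathcal A$ throughout $[T_{k-1},T_k]$. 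Therefore $W^{(i)}_t=U_t$ on $\mathcal A$ for all such $t$.

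\emph{Step 2 (agreement outside $E_{+2}(R)$).} On $E_{+4}(B)\setminus E_{+2}(R)$, the chains $W^{(i)}$ and $U$ start equal at $T_{k-1}$, since they differ only on $R$. Their updates there depend only on spins in $E_{+4}(B)\setminus E_{+1}(R)$, and the outer $+$ boundary is common. The same induction over clock rings as in the proof of Observation~\ref{obs:local-sandwiching} then shows they agree on $E_{+4}(B)\setminus E_{+1}(R)$ for all $t$: updates of a vertex in that region see only the region itself and $\mathcal A$, and the two chains agree on $\mathcal A$ by Step~1. This gives the second half of $\mathsf{Sandwich}(W^{(i)})$.

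\emph{Step 3 (curing inside at time $T_k$).} Let $Y_t$ (respectively $Y'_t$) be Glauber dynamics on $E_{+4}(R)$ with $+$ boundary, started at $T_{k-1}$ from $W^{(i)}_{T_{k-1}}$ (respectively $U_{T_{k-1}}$) restricted to $E_{+4}(R)$.
\begin{itemize}
\item By monotonicity, $V^{R,-}\le Y,Y'\le V^{R,+}$. The first event in $\mathsf{Sandwich}(B,R)$ then forces $Y_{T_k}=Y'_{T_k}$.
\item Also $W^{(i)}\le Y$, because the initial configurations agree on $E_{+4}(R)$ and $Y$ has a $+$ boundary dominating $W^{(i)}$ outside $E_{+4}(R)$. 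For the upper bound I must use $V^{R,\pi}$, not $V^{R,+}$. Since $U_{T_{k-1}}\le V^{R,\pi}_{T_{k-1}}$ on $E_{+4}(R)$ by Step~1, and $W^{(i)}_{T_{k-1}}\le U_{T_{k-1}}$, we get $Y\le V^{R,\pi}$.
\item Hence $V^{B\setminus R,\pi}\le W^{(i)}\le Y\le V^{R,\pi}$ on $\mathcal A$, so $Y=W^{(i)}$ on $\mathcal A$ for all $t$.
\item Inside $E_{+1}(R)$, $Y$ and $W^{(i)}$ have the same initial configuration and see the same spins on $\mathcal A$, so they agree on $E_{+2}(R)$ throughout. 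Identically, $Y'=U$ on $E_{+2}(R)$.
\end{itemize}
Combining, $W^{(i)}_{T_k}=Y_{T_k}=Y'_{T_k}=U_{T_k}$ on $E_{+2}(R)$. Together with Step~2, this gives $W^{(i)}_{T_k}=U_{T_k}$ on all of $E_{+4}(B)$.

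The main obstacle I expect is the bookkeeping in Steps~1 and~3. The orderings among chains on different domains, with different frozen boundaries, and started either at time $0$ (the stationary ones) or at $T_{k-1}$, must all follow from the time-zero monotone coupling together with the grand coupling. In particular, the ordering $U^{B,\pi}\le V^{R,\pi}$ of the stationary processes has to be genuinely pointwise, not merely stochastic. This is exactly what Definition~\ref{def:initialization-coupling} supplies, via the same uniforms and the FKG ordering $\pi^+_{E_{+4}(B)}\preceq\pi^+_{E_{+4}(R)}$.
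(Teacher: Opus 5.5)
Your proof is correct and follows the same route as the paper. You order $V^{B\setminus R,\pi}\le W^{(i)}\le U\le V^{R,\pi}$, use the annulus part of $\mathsf{Sandwich}(B,R)$ to force $W^{(i)}=U$ off $E_{+1}(R)$, and then squeeze $E_{+2}(R)$ between $V^{R,-}$ and $V^{R,+}$ at time $T_k$. The only difference is cosmetic and sits in the last step: the paper derives $V^{R,-}_t\le W^{(i)}_t$ on $E_{+2}(R)$ directly by treating the annulus as an ordered time-varying boundary, and then uses $U\le V^{R,\pi}\le V^{R,+}$, whereas you pass through the localized copies $Y,Y'$ on $E_{+4}(R)$ so that the comparison with $V^{R,\pm}$ becomes plain same-domain monotonicity in the initial condition.
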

\begin{proof}
    For ease of notation, drop $i$ sub and superscripts. Recall the bounding $V$ processes from~\eqref{eq:V-processes} out of which $\mathsf{Sandwich}(B,R_i)$ is defined. First observe 
    \begin{align}\label{eq:W-le-U-le-V}
        W_t(E_{+4}(R)) \le U_t(E_{+4}(R)) \le V_t^{R,\pi}(E_{+4}(R)) \qquad t\in [T_{k-1},T_k]\,,
    \end{align}
    where the first inequality is because it has a more minus initialization at time $T_{k-1}$, and the second is because it has closer plus boundary conditions at $E_{+4}(R)$ (causing both $V_{T_{k-1}}^{R,\pi}\ge U_{T_{k-1}}$ and that inequality to persist). At the same time, 
    \begin{align}\label{eq:V-le-W-le-U}
        V_t^{B\setminus R,\pi}(E_{+4}(B)) \le  W_t(E_{+4}(B))\le U_{t}(E_{+4}(B))\,,
    \end{align}
    where the first inequality is because $V_{T_{k-1}}^{B\setminus R,\pi} \le W_{T_{k-1}}$ by monotonicity, and the fact that it keeps the minuses on $R$ frozen. On the event $\mathsf{Sandwich}(B,R)$ we have for all $t\in [T_{k-1},T_k]$ that 
    \begin{align}
        V_{t}^{R,\pi}(E_{+2}(R)\setminus E_{+1}(R)) = V_{t}^{B\setminus R,\pi}(E_{+2}(R) \setminus E_{+1}(R))\,,
    \end{align}
    so by sandwiching, in fact, for all $t\in [T_{k-1},T_k]$
    \begin{align}\label{eq:W-U-equal-on-annulus}
        V_{t}^{B\setminus R,\pi}(v) = W_t(v) = U_t(v) = V_t^{R,\pi}(v) \qquad \forall v\in E_{+2}(R)\setminus E_{+1}(R)\,.
    \end{align}
    This in particular implies 
    \begin{align*}
        \{\forall t\in [T_{k-1},T_k] : W_t(E_{+4}(B) \setminus E_{+1}(R)) = U_t(E_{+4}(B)\setminus E_{+1}(R))\}\,,
    \end{align*}
    since for all times between $T_{k-1},T_k$, any update outside $E_{+2}(R)$ will have the same neighbors under $W_{t^-}$ and $U_{t^-}$. This is exactly the second constituent event of $\mathsf{Sandwich}(W^{(i)})$. 
    
    It remains to show that $\{W_{T_k} = U_{T_k}\}$ on $E_{+4}(B)$. By monotonicity, for $t\in [T_{k-1},T_k]$, 
$$V_t^{R,-}(E_{+4}(R)) \le V_{t}^{R,\pi}(E_{+4}(R)) \le V_t^{R,+}(E_{+4}(R))\,.$$ Therefore, on $\mathsf{Sandwich}(B,R)$, by~\eqref{eq:W-U-equal-on-annulus}, for $t\in [T_{k-1},T_k]$, 
\begin{align*}
    V_t^{R,-}(E_{+2}(R)\setminus E_{+1}(R)) \le V_t^{R,\pi}(E_{+2}(R)\setminus E_{+1}(R)) = W_t(E_{+2}(R) \setminus E_{+1}(R))\,.
\end{align*}
Since also at time $T_{k-1}$, one has $V_{T_{k-1}}^{R,-}(E_{+2}(R))\le W_{T_{k-1}}(E_{+2}(R))$ the monotonicity of the coupling implies that 
\begin{align}\label{eq:V-le-W-all-time}
    V_{t}^{R,-}(E_{+2}(R)) \le W_t(E_{+2}(R)) \qquad \forall t\in [T_{k-1},T_k]\,.
\end{align}
This allows us to sandwich $W$ and $U$ by the local-scale Markov chains $V_t^{R,-},V_t^{R,+}$ which couple in the mixing time of $R$ (assumed in the first part of $\mathsf{Sandwich}(B,R)$). Namely, we therefore get 
\begin{align*}
    V_{t}^{R,-}(E_{+2}(R)) \le W_{T_k}(E_{+2}(R)) \le U_{T_k}(E_{+2}(R))\le V_{T_k}^{R,\pi}(E_{+2}(R)) \le V_{T_k}^{R,+}(E_{+2}(R))\,,
\end{align*}
where the first inequality was~\eqref{eq:V-le-W-all-time}, the second and third~\eqref{eq:W-le-U-le-V} and the last monotonicity of initialization. The first part of $\mathsf{Sandwich}(B,R)$ thus ensures the left and right sides equal each other, so that indeed $W_{T_k}(E_{+2}(R)) = U_{T_k}(E_{+2}(R))$. Since they agreed on $E_{+4}(B) \setminus E_{+1}(R)$ for all $t\in [T_{k-1},T_k]$ by~\eqref{eq:W-U-equal-on-annulus}, we deduce that $W_{T_k} = U_{T_k}$ on all of $E_{+4}(B)$. 
\end{proof}

We now combine all the ingredients to establish the main Proposition~\ref{prop:Dtilde-implies-coupling}. 

\begin{proof}[\textbf{\emph{Proof of Proposition~\ref{prop:Dtilde-implies-coupling}}}]
        We prove inductively over $k$ that for all $B\in \mathscr{B}_k$, we have $\{\widetilde D_k(B)=0\}$ implies $U_{T_{k}}^{B,\pi \wedge Q}(B) = U_{T_k}^{B,\pi}(B)$. The base case holds because $T_0 = 0$ and $\widetilde D_0$ is $1$ anywhere $Q= -1$. Now assume it holds for $k-1$, and show it holds for $k$.

        Fix $B\in \mathscr{B}_k$. Introduce a new process $\widetilde{U}^{B,\pi}_t$ as the variant of $U_{t}^{B, \pi}$ that at time $T_{k-1}$ sets all vertices in $\mathsf{Dis}_{k-1}(B)$ to minus $-1$, and otherwise evolves according to the coupled Glauber dynamics. Evidently, 
        \begin{align}
        \widetilde{U}_{t}^{B,\pi} \le U_{t}^{B,\pi} \qquad t\in [T_{k-1},T_k]\,.
        \end{align}
        If $\widetilde D_{k}(B) = 0$, then $|\mathsf{Dis}_{k-1}(B)|\le s_k$, so we can use Claim~\ref{cl:covering} to construct a cover $R_1,...,R_K\in \mathscr{R}_{k-1}(B)$ with $K\le s_k$ and distance at least $10\ell_{k-1}$ between $R_i, R_j$. We can then define the process $W_{t}^{\min}$ which is initialized at time $T_{k-1}$ from $U_{T_{k-1}}^{B,\pi}$ but with minuses on all $(R_i)_{i=1}^K$. 
        
        Since $\widetilde D_{k}(B) = 0$, $\mathsf{LocCoup}_k(B)$ also holds, which means for all such $R_i$, we have $\mathsf{Sandwich}(B,R_i)$. Lemma~\ref{lem:sandwiching-couplings} implies that in fact we get $\mathsf{Sandwich}(W^{(i)})$ for all $i=1,...,K$. Observation~\ref{obs:local-sandwiching} then implies $$W_{T_k}^{\min}(E_{+4}(B)) = U_{T_k}^{B,\pi}(E_{+4}(B)) \qquad \text{and} \qquad W_t^{\min}(E_{+4}(B)\setminus E_{+2.5}(B)) = U_t^{B,\pi}(E_{+4}(B)\setminus E_{+2.5}(B))\,,$$
        for all $t\in [T_{k-1},T_k]$ (as every $E_{+2}(R_i) \subset E_{+2.5}(B)$). Since the tilde-process $\widetilde U_t^{B,\pi}$ is sandwiched between these processes everywhere, it implies 
        \begin{align}\label{eq:Utilde-equals-not-tilde}
            \widetilde U_{T_k}^{B,\pi}(E_{+4}(B)) = U_{T_k}^{B,\pi}(E_{+4}(B))\,.
        \end{align}

        Now for each $B' \in \mathscr{B}_{k-1}: B' \subset E_{+3}(B)$, since $\mathsf{Bad}_{k-1}(B) = \emptyset$, we have that $\mathsf{StatEquiv}_{k-1}(B')$ holds, and therefore 
        \begin{align*}
            U_{T_{k-1}}^{B',\pi}(B') = U_{T_{k-1}}^{B,\pi}(B')\,.
        \end{align*}
        At the same time, 
        \begin{align}\label{eq:inductive-step-that-uses-Q}
            U_{0}^{B',\pi \wedge Q}(E_{+1}(B')) = U_0^{B,\pi \wedge Q}(E_{+1}(B'))\,,
        \end{align}
        and therefore $\mathsf{InfProp}_{k-1}(B')$ holding implies by Observation~\ref{obs:information-propagation}, 
        \begin{align}\label{eq:inductive-step-that-uses-Q-2}
            U_{T_{k-1}}^{B',\pi \wedge Q}(B') = U_{T_{k-1}}^{B,\pi \wedge Q}(B')\,.
        \end{align}
        By the inductive hypothesis, the above imply that for every $B' \in \mathscr{B}_{k-1}$ such that $B' \subset E_{+3}(B)$ having $\widetilde{D}_{k-1}(B')=0$, one has 
        \begin{align*}
            U_{T_{k-1}}^{B,\pi \wedge Q}(B') = U_{T_{k-1}}^{B,\pi}(B')\,.
        \end{align*}
        Since everywhere $\widetilde D_{k-1}(B')= 1$, we covered by minuses at time $T_{k-1}$ in $\widetilde U$, we therefore have 
        \begin{align*}
            \widetilde U_{T_{k-1}}^{B,\pi} (E_{+2}(B)) \le U_{T_{k-1}}^{B,\pi \wedge Q}(E_{+2}(B)) \le U_{T_{k-1}}^{B,\pi}(E_{+2}(B))\,. 
        \end{align*}
        At the same time, by $\mathsf{InfProp}_k(B)$, no information propagates from $E_{+2}(B)^c$ to $E_{+1}(B)$ in time $t\in [T_{k-1},T_k]$ so this ordering is retained in $E_{+1}(B)$ for that period of time, and 
        \begin{align*}
            \widetilde U_{T_{k}}^{B,\pi} (E_{+1}(B)) \le U_{T_{k}}^{B,\pi \wedge Q}(E_{+1}(B)) \le U_{T_{k}}^{B,\pi}(E_{+1}(B))\,. 
        \end{align*}
        Combined with~\eqref{eq:Utilde-equals-not-tilde}, we conclude that $\widetilde D_k(B) = 0$ implies the desired 
        \begin{align}\label{eq:U-couple-on-B}
            U_{T_k}^{B,\pi \wedge Q} (E_{+1}(B)) = U_{T_k}^{B,\pi}(E_{+1}(B))\,.
        \end{align}

        We now establish the moreover statement. On the event $\mathsf{StatEquiv}_{\ge k}(B)$, if $(B_{\ell})_{\ell \ge k+1}$ are a sequence in $\mathscr{B}_\ell$ with $B \subset B_{\ell}$ for all $\ell$, then 
        \begin{align*}
            X_0^\pi (E_{+3}(B)) &  = \lim_{\ell \to\infty} U_0^{B_\ell,\pi}(E_{+3}(B)) = U_0^{B,\pi}(E_{+3}(B))\,. \\ 
            X_0^{\pi \wedge Q}(E_{+3}(B))  & = \lim_{\ell \to\infty} U_0^{B_\ell, \pi \wedge Q}(E_{+3}(B)) = U_0^{B,\pi \wedge Q}(E_{+3}(B))\,.
        \end{align*}
        Using $\mathsf{InfProp}_k(B)$, which holds because $\widetilde D_k(B) =0$, the exterior of $E_{+3}(B)$ does not affect the configuration in $E_{+1}(B)$ at time $t\le T_k$ in any Glauber dynamics chains, so 
        \begin{align*}
            X_{T_k}^{\pi \wedge Q}(E_{+1}(B)) = U_{T_k}^{B,\pi \wedge Q}(E_{+1}(B)) = U_{T_k}^{B,\pi}(E_{+1}(B)) = X_{T_k}^\pi(E_{+1}(B))
        \end{align*}
        where the middle equality used~\eqref{eq:U-couple-on-B}. 
\end{proof}

\subsection{Proof of main theorem assuming Proposition~\ref{prop:Dtilde-probability-bound}}
Given Proposition~\ref{prop:Dtilde-implies-coupling}, if we assume the probability bound Proposition~\ref{prop:Dtilde-probability-bound} (which will be proved in the following section), we can conclude Theorem~\ref{thm:main-general}.

\begin{proof}[\textbf{\emph{Proof of Theorem~\ref{thm:main-general}: infinite volume}}]
    For each $k$, and $B \in \mathscr{B}_k$, we have by Proposition~\ref{prop:Dtilde-probability-bound} (and its Corollary~\ref{cor:statequiv-atleastk-bound}) that 
    \begin{align*}
        \mathbb P(\widetilde D_k(B) = 1) + \mathbb P(\mathsf{StatEquiv}_{\ge k}(B)^c) \le q_k + \sum_{r \ge k} q_r \le \frac{2}{\ell_{k+3}}\,.
    \end{align*}
    Together with a union bound over $k \ge k_0$ where $k_0$ is large enough that $\Lambda \subset B$ for $B \in \mathscr{B}_{k_0}$, we have by Proposition~\ref{prop:Dtilde-implies-coupling} that 
    \begin{align*}
        \mathbb P(X_{t}^{\pi \wedge Q}(\Lambda) \ne X_{t}^\pi(\Lambda)) \le O( e^{ - (\log (M t))^{3M}})\,,
    \end{align*}
    so long as $t$ is bigger than $T_{k_0}$. Since $T_{k_0}$ is comparable to  $\diam(\Lambda)$, the above bound applies for $t = \Omega( \diam(\Lambda))$. 
    Since this is under the grand coupling, we have  
    \begin{align*}
        X_{t}^{\pi \wedge Q}(\Lambda) \le X_t^{Q}(\Lambda) \le X_t^+(\Lambda)\,,
    \end{align*}
    and it follows from dynamically easier arguments (see e.g.,~\cite[Corollary 1.5]{GhSi22}---most of the work in that paper was having the result down to criticality in $d\ge 3$, which is not as relevant to us) that as long as $\beta>\beta_c$ under Assumption~\ref{assump:uniform-mixing-time}, 
    \begin{align*}
        \mathbb P(X_t^+(\Lambda) \ne X_{t}^\pi(\Lambda)) \le \, |\Lambda| \exp( - e^{\sqrt{\log t}/C})\,.
    \end{align*}
    Combining these two, by Markov's inequality, we have that except with probability $e^{ - \Omega( \log t)^{3M})}$, $x_0 \sim Q$ is such that
    \begin{align*}
        \mathbb P(X_t^{x_0} (\Lambda) \ne X_t^\pi(\Lambda) ) \le \mathbb P( X_t^{\pi \wedge x_0}(\Lambda)\ne X_t^+(\Lambda))  \le e^{ - \Omega((\log t)^{3M})}\,,
    \end{align*}
    for $t \ge \Omega(\diam(\Lambda))$. Taking $\Lambda = \Lambda_R$ and noting that the probability of bad $x_0 \sim Q$ goes to $0$ as $R\to \infty$ because $t \ge \Omega(R)$, we conclude. 
\end{proof}

\subsection{The finite domain $(\mathbb Z/n\mathbb Z)^d$ case} 

In this subsection, instead of infinite-volume, let $X_t^{\pi \wedge Q}, X_t^{\pi}$ denote the Glauber dynamics on finite volume $\mathbb T_n^d$, with $\pi$ representing $\pi^+$, the stationary distribution conditioned on positive magnetization. Here, we have to modify the above slightly for two reasons, there is a maximal spatial scale before we ``wrap around" the torus, and the process $X_{t}^{\pi}$ is not stationary.

\begin{proof}[\textbf{\emph{Proof of Theorem~\ref{thm:main-general}: finite torus}}]
        Fix a vertex $v$ in $\mathbb T_n^d = (\mathbb Z/n\mathbb Z)^d$ as the origin (by transitivity) and embed the graph in $\mathbb Z^d$ so that as long as $\ell_k < n/10$, if $B\in \mathscr{B}_r$ for $r\le k$, centered at $v$, $E_{+4}(B)$ is not distorted by this embedding. Our aim is to show that if $\pi = \pi_{\mathbb T_n^d}(\cdot \mid \Omega_+)$, where $\Omega_+$ is the set of configurations with non-negative magnetization, then under the grand coupling, 
        \begin{align}\label{eq:want-to-show-torus}
            \mathbb P(X_{T}^{\pi \wedge Q} (v) \ne X_T^{\pi}(v)) \le o\Big(\frac{1}{n^{d+10}}\Big)\,,
        \end{align}
        for $T\in [n^{o(1)},e^{ O(n^{d-1})}]$. 
        
        In order to show this, take $k$ such that $\ell_k = \exp( (\log n)^{2/3M})$. This scale is chosen such that $\ell_k$ is sub-polynomial in $n$, while $q_k = \frac{1}{\ell_{k+3}} = e^{ - (\log n)^2}$ decays faster than any polynomial in $n$. Then for $T = T_k = O(\exp((\log n)^{2/3M}))$, we have by the first part of Proposition~\ref{prop:Dtilde-implies-coupling} that 
        \begin{align*}
           \mathbb P( U_{T_k}^{B,\pi \wedge Q} (B)  \ne U_{T_k}^{B,\pi}(B)) \le n^{-10d}\,.
        \end{align*} 
        Define an extra event 
        \begin{align*}
            \mathsf{StatEquiv}^{\mathbb T}_k(B) = \{\forall t\in [0,T_k]: X_t^{\pi}(E_{+3}(B)) = U_{t}^{B,\pi}(E_{+3}(B))\}\,. 
        \end{align*}
        The proof that it has probability at least $1-q_k/10$ is nearly identical to the proof of Corollary~\ref{cor:statequiv-atleastk-bound} for $\mathsf{StatEquiv}_k(B)$ having high probability,  we defer it to Lemma~\ref{lem:Torus-statequiv}. 

        On the intersection of $\widetilde {D}_k(B) = 0$ with $\mathsf{StatEquiv}^{\mathbb T}_k(B)$, we also have  $$X_{T_k}^{\pi}(B)  = U_{T_k}^{B,\pi}(B) \qquad \text{and} \qquad X_{T_k}^{\pi \wedge Q}(B) = U_{T_k}^{B,\pi \wedge Q}(B)$$ 
        (first equality being by the torus stationary equivalence, and the second because $X_{0}^{\pi \wedge Q}( E_{+3}(B)) = U_{0}^{B,\pi \wedge Q}(E_{+3}(B))$ by torus stationary equivalence, combined with $\mathsf{InfProp}_k(B)$ ensuring that the equality persists on $B$ until time $T_k$. We therefore deduce that 
        \begin{align*}
            \mathbb P(X_{T}^{\pi}(B) \ne X_{T}^{\pi \wedge Q}(B))  \le q_k +  n^{-10d} \le n^{-9d}\,.
        \end{align*}
        Again, sandwiching $X_{t}^{\pi \wedge Q} \le X_{t}^{Q} \le X_t^+$, and using the dynamically simpler proof of convergence from the $+$-initialization (see Proposition 3.3 of~\cite{GhSi22}), under Assumption~\ref{assump:uniform-mixing-time}, 
        \begin{align*}
            \mathbb P(X_{T}^+(B)\ne X_T^{\pi}(B)) \le  \sum_{v\in B} \Big(\mathbb P(X_{T}^+(v) = +1) - \mathbb P(X_{T}^\pi(v) = +1)\Big) \le C \ell_k^d e^{ - (e^{ \sqrt{\log T}})/C} 
        \end{align*}
        which since $T \ge \Omega(\exp((\log n)^{2/3M}))$,  gives a right-hand side of $C n^d e^{ - e^{ (\log n)^{1/3M}}}$ which is smaller than any polynomial in $n$. Therefore, by the sandwiching, we also get 
        \begin{align*}
            \mathbb P(X_{T}^Q(B) \ne X_{T}^\pi(B)) \le  n^{ - 8d}\,.
        \end{align*}
        
        Finally, by Lemma~\ref{lem:torus-bottleneck} (which is just the well-known existence of a low-temperature bottleneck between $\Omega_+$ and $\Omega_-$), one has $\|X_{T}^{\pi} - \pi(\cdot \mid \Omega_+)\|_{\tv} = e^{ - \Omega(n^{d-1})}$ for all $T = \exp( o(n^{d-1}))$. Combining that with the above display, and observing that $n^{-8d} = o(n^{-d -10})$ when $d\ge 2$, we conclude~\eqref{eq:want-to-show-torus} 
        for all $T \in [T_k,e^{ o(n^{d-1})}]$ where $T_k = e^{(\log n)^{2/3M}} = n^{o(1)}$. 
\end{proof}

\section{Bounding the probabilities of bad blocks}
Our aim in this section is to prove Proposition~\ref{prop:Dtilde-probability-bound}, showing the probability of a bad event on a block at scale $k$, i.e., $\widetilde{D}_k(B) =1$, is at most $q_k$. 

\subsection{Linear speed of information propagation}
Here, we give a simple claim about the speed at which disagreements can possibly propagate, showing that with exponentially high probability, it takes order $L$ linear time for a disagreement to propagate a distance $L$. This is a standard bound that has appeared many places including~\cite{Martinelli-notes,DSVW}. 

\begin{lemma}\label{lem:information-propagation}
    There exists $M_0(d),\ell_0(d)$ such that as long as $M$ in~\eqref{eq:spatial-scale} is larger than $M_0$, then for every $k$ and every $B\in \mathscr{B}_k$, the probability 
    \begin{align*}
        \mathbb P(\mathsf{InfProp}_k(B)^c )\le  e^{ - \ell_k/10}\,.
    \end{align*}
\end{lemma}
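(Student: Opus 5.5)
The plan is a union bound over lattice paths combined with a Poisson tail estimate. An $(L,T)$-propagating chain in $A=E_{+4}(B)$, with $L=\ell_k/10$ and $T=T_k$, consists of a nearest-neighbour path $v_1\sim v_2\sim\cdots\sim v_L$ in $A$ together with clock rings $t_1\le \cdots\le t_L$ in $[0,T]$, where $t_i$ is a ring of the clock at $v_i$.

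First I fix the path. The number of such paths is at most $|A|(2d)^{L-1}$, and $|A|\le (2\ell_k)^d$. Repeated vertices are allowed in the path, so I cannot use independence of distinct clocks directly. Instead I use the standard fact that, for a fixed path, the existence of increasing rings $t_1\le\cdots\le t_L$ with $t_i\in\mathcal T_{v_i}$ is dominated by the event that a single rate-one Poisson process has at least $L$ points in $[0,T]$. To see this, explore the path greedily: let $\tau_1$ be the first ring of $v_1$, and let $\tau_{i+1}$ be the first ring of $v_{i+1}$ after $\tau_i$. By the strong Markov property of the independent Poisson clocks, the increments $\tau_{i+1}-\tau_i$ are i.i.d.\ $\mathrm{Exp}(1)$. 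This holds even when $v_{i+1}=v_i$, since the next ring strictly after a ring of the same clock is again exponential. A propagating chain along the path exists if and only if $\tau_L\le T$. (If one insists on strict inequalities, simultaneous rings have probability zero.) Hence the probability for a fixed path is $\mathbb P(\mathrm{Poi}(T)\ge L)$.

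Next comes the tail estimate. Since $T_k=\sum_{i\le k} t_i=\frac1M\sum_{i\le k}\ell_i\le \frac{2}{M}\ell_k$ (the scales grow super-exponentially), we have $T\le \frac{20}{M}L$. The Chernoff bound gives $\mathbb P(\mathrm{Poi}(T)\ge L)\le (eT/L)^L\le (20e/M)^L$. Multiplying by the path count yields
\[
\mathbb P(\mathsf{InfProp}_k(B)^c)\le (2\ell_k)^d\,\Big(\frac{40 d e}{M}\Big)^{\ell_k/10}.
\]
Choosing $M_0(d)$ so that $40de/M\le e^{-3}$ bounds this by $(2\ell_k)^d e^{-3\ell_k/10}$. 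Choosing $\ell_0(d)$ large enough that $(2\ell_k)^d\le e^{2\ell_k/10}$ for all $\ell_k\ge\ell_0$ then gives the claimed bound $e^{-\ell_k/10}$.

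The only mildly delicate step is the domination by a single Poisson process when the path revisits vertices; the greedy exploration argument above handles it. The rest is bookkeeping of constants: the ratio $T_k/\ell_k=O(1/M)$ is what makes the exponential rate beat the entropy factor $(2d)^L$.
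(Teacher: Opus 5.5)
Your proof is correct, and it takes a different route from the paper's for the per-path estimate. The overall structure is the same in both: a union bound over paths, $T_k=O(\ell_k/M)$, and $M$ large enough to beat the entropy factor $(2d)^{\ell_k/10}$.

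The paper superposes all clocks in $E_{+4}(B)$. It first discards, by a Poisson Chernoff bound costing $(e/4)^{T_k|E_{+4}(B)|}$, the event that more than $2T_k|E_{+4}(B)|$ rings occur. Then, for a fixed starting vertex and path, it union-bounds over which $\ell_k/10$ of those rings form the chain. Since the vertex labels of the superposed rings are i.i.d.\ uniform on $E_{+4}(B)$, each choice matches $v_1,\ldots,v_L$ with probability $|E_{+4}(B)|^{-\ell_k/10}$. The estimate $\binom{n}{L}\le (en/L)^L$ then cancels the volume factors and leaves a per-path factor $(2eT_k/L)^L$.

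You instead compute the per-path probability exactly, as $\mathbb P(\mathrm{Poi}(T_k)\ge L)$, via greedy exploration and the strong Markov property. This gives essentially the same factor $(eT_k/L)^L$. Your route needs no truncation of the total ring count and no second error term, and it handles revisited vertices transparently. The paper's route avoids stopping-time arguments and is purely combinatorial once the superposed process is given.

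One minor point: since the definition requires $v_i\sim v_{i+1}$, the case $v_{i+1}=v_i$ never arises. What your argument actually needs to cover is non-consecutive revisits, and the strong Markov property at $\tau_i$ handles those equally well. Distinct clocks almost surely never ring simultaneously, so the weak versus strict inequality issue is immaterial.
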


\begin{proof}
    In order for $\mathsf{InfProp}_k(B)$ to fail, there must exist a sequence of vertices $v_1,v_2,...$ of length $\ell_k/10$ in $E_{+4}(B)$ with $v_{i} \sim v_{i+1}$ for all $i$, and associated clock rings on those vertices $t_1,t_2,...$ such that $t_1 <t_2<...$. We can obtain by a union bound that  
    \begin{align*}
       \mathbb P(\mathsf{InfProp}_k(B)^c) & \le  |E_{+4}(B)|  (2d)^{\ell_k/10} \binom{2T_k |E_{+4}(B)|}{\ell_k/10}  \frac{1}{|E_{+4}(B)|^{\ell_k/10}} + (e/4)^{T_k |E_{+4}(B)|} \,,
    \end{align*}
    where the second term comes from the probability of more than $2T_k |E_{+4}(B)|$ many clock rings in time $T_k |E_{+4}(B)|$ by Poisson Chernoff bound, and the first term is a union bound over $v_1$, a union bound over paths of length $\ell_k/10$ started from $v$, a union bound over which of the at most $2T_k |E_{+4}(B)|$ clock rings occur at those vertices, and the probability that the first of them occured at $v_1$, the next at $v_2$, and so on. 

    Then, we can bound this by 
    \begin{align*}
          \mathbb P(\mathsf{InfProp}_k(B)^c) & \le (4\ell_k)^d 4^{\ell_k/10} \exp\Big( \frac{\ell_k}{10} \log (\frac{20}{M} |E_{+4}(B)|)\Big) \exp\Big( - \frac{\ell_k}{10} \log (|E_{+4}(B)|)\Big) + e^{ - \ell_k^{d+1}/M}\,,
    \end{align*}
    which as long as $M$ is large enough (e.g., $M_0 = 100$) and $\ell_0$ is larger than a large universal constant ($10$ or so should already suffice) the first term is at most $e^{ - \ell_k/10}$. 
\end{proof}

\subsection{Rarity of disagreement regions at a scale below}
We next use show that if the probability of a bad block at the lower scale is bounded as in Proposition~\ref{prop:Dtilde-probability-bound}, then the number of bad blocks in $\mathscr{B}_{k-1}$ internal to $B$ are indeed at most $s_{k}$ from~\eqref{eq:localization-scale}. 

\begin{lem}\label{lem:rarity-of-disagreement-regions}
    There exists $M_0(d)$ such that for all $M\ge M_0$ the following holds. Suppose that $\mathbb P(\widetilde{D}_{k-1}(B') =1)\le q_{k-1}$ for all $B' \in \mathscr{B}_{k-1}$. Then for every $B \in \mathscr{B}_k$, one has 
    \begin{align*}
        \mathbb P(|\mathsf{Dis}_{k-1}(B)|>s_k ) \le q_k/10\,.
    \end{align*}
\end{lem}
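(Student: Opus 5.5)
The plan is to use the finite-range dependence of the scale-$(k-1)$ indicators given by Corollary~\ref{cor:measurable}, and then run a union bound over sparse subfamilies. By that corollary, $\widetilde D_{k-1}(B')$ is measurable with respect to the randomness on $E_{+4}(B')\times[0,T_{k-1}]$. Here $E_{+4}(B')$ is $B'$ enlarged by $0.4\,\ell_{k-1}$ in each direction. Hence if $B'_1,\dots,B'_m\in\mathscr{B}_{k-1}$ are pairwise at $\ell^\infty$-distance at least $\ell_{k-1}$ (equivalently, their indices in $\ell_{k-1}\mathbb Z^d$ differ by at least $2$ in some coordinate), the enlarged domains are disjoint. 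The events $\{\widetilde D_{k-1}(B'_i)=1\}$ then depend on disjoint randomness and are independent. Note that the initialization uniforms $\mathcal U^\pi_{v,0},\mathcal U^Q_{v,0}$ are used only within each local box's own sequential sampling, so they count as local randomness as well.

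Concretely, partition $\mathscr{B}_{k-1}$ into $2^d$ (or $3^d$) sublattice classes according to the residue of the block index mod $2$ (or mod $3$). Within one class, distinct blocks are separated by at least one full block, so they are independent by the above. The number $N$ of blocks $B'\subset E_{+2}(B)$ satisfies
\begin{align*}
N \le \Big(\frac{1.4\,\ell_k}{\ell_{k-1}}\Big)^d \le \ell_k^d .
\end{align*}
If $|\mathsf{Dis}_{k-1}(B)|>s_k$, then some class contains at least $m:=\lceil s_k/3^d\rceil$ disagreeing blocks. A union bound over classes and over $m$-subsets, together with independence, gives
\begin{align*}
\mathbb P(|\mathsf{Dis}_{k-1}(B)|>s_k) \le 3^d\binom{N}{m} q_{k-1}^m \le 3^d (N q_{k-1})^{m}.
\end{align*}

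It remains to compare the scales. Write $L=\log\ell_{k-1}$, so that $\log\ell_k=L^M$, $\log q_{k-1}=-L^{M^4}$ (since $q_{k-1}=\exp(-(\log\ell_{k-1})^{M^3})$ relative to scale $k-1$; in any case $\log(1/q_{k-1})=(\log \ell_{k-1})^{M^3}$), and $s_k=(\log\ell_k)^{M^3}=L^{M^4}$. Then
\begin{align*}
\log(Nq_{k-1}) \le dL^{M}-L^{M^3}\le -\tfrac12 L^{M^3},
\end{align*}
and therefore
\begin{align*}
\log\mathbb P \le \log 3^d-\tfrac{1}{2\cdot 3^d}L^{M^4}L^{M^3}.
\end{align*}
The target is $\log(q_k/10)=-(\log\ell_k)^{M^3}-\log 10=-L^{M^4}-\log 10$. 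Our exponent carries an extra factor $L^{M^3}/(2\cdot3^d)$, which is large once $\ell_0$ is large (depending on $M,d$), so the bound follows. The only obstacle I foresee is bookkeeping. One must make sure the independence claim genuinely holds, i.e.\ that $E_{+4}$ enlargements of nonadjacent blocks in a class are disjoint; this is why I use spacing $2$ or $3$ rather than $1$. One must also keep track of which exponent convention ($M^3$ in $q$ versus in $s_k$) is in force. Since the gain is super-polynomial in every parameter, there is ample room, and $M_0(d)$ need only ensure $L^{M^3}\gg dL^M$.
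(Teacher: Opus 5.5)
Your proposal is correct and is essentially the paper's argument: the locality of $\widetilde D_{k-1}$ from Corollary~\ref{cor:measurable}, extracting a well-separated (hence independent) subfamily of at least $s_k/3^d$ disagreeing blocks, a union bound over such subfamilies, and comparing $\log(1/q_{k-1})=(\log\ell_{k-1})^{M^3}$ against $d\log\ell_k$ and $s_k$. Your mod-$2$/mod-$3$ residue classes make explicit, including for diagonally adjacent blocks, the disjointness of the $E_{+4}$ enlargements that the paper's ``$s_k/2d$ non-adjacent blocks'' step takes for granted. The only slip is the stray ``$\log q_{k-1}=-L^{M^4}$'', which should read $-L^{M^3}$; your subsequent computation already uses the correct value.
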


\begin{proof}
    Corollary~\ref{cor:measurable} said that $\widetilde{D}_{k-1}(B')$ is measurable with respect to the randomness on $E_{+4}(B')$. The set $E_{+4}(B')$ is disjoint from $E_{+4}(B'')$ for all $B'' \in \mathscr{B}_{k-1}(B)$ that is not adjacent to $B'$ in the graph of $\mathscr{B}_{k-1}$. In order for $|\mathsf{Dis}_{k-1}(B)| \ge s_k$, it must have a subset of size at least $s_k/2d$ such that no two blocks in it are $\mathscr{B}_{k-1}$-adjacent. There are at most $\binom{(2\ell_k/\ell_{k-1})^d}{s_k/2d}$ many such possible subsets, and the probability of one of them having $\widetilde{D}_{k-1}(B')$ for all $B'$ in it, is at most $q_{k-1}^{s_k/2d}$. Therefore, plugging in for the quantities $\ell_k, q_k$ and $s_k$, 
    \begin{align*}
        \mathbb P(|\mathsf{Dis}_{k}(B)| \ge s_k ) \le \binom{2^d \ell_k^d}{s_k/2d} e^{ - s_k(\log \ell_{k})^{M^2}/2d} \le \exp\Big( \frac{(\log \ell_k)^{M^3}}{2d} \Big(2^d d \log \ell_k\ - (2d)^{-1}(\log \ell_k)^{M^2}\Big)\Big)\,,
    \end{align*}
    which so long as $\ell_k$ is larger than an absolute constant (even $2$ is enough) and $M$ is larger than a dimension-dependent constant $M \ge M_0$, satisfies that it is at most $\frac{1}{10}\exp( - (\log \ell_k)^{M^3})$. 
\end{proof}

\subsection{Equilibrium coupling estimates}

In this subsection, we will establish various equilibrium estimates showing that boxes with $+$ boundary conditions at different distances away, as well as an annulus with $-$ boundary conditions on its interior and $+$ boundary conditions on its exterior, can be coupled to the plus phase distribution except with probability decaying exponentially in the distance to the boundary. Since it is important that the spatial scale $\ell_k$ and bad-block probability $q_k$ are uniform in $\beta$, we are careful that all the bounds in this subsection are only improving as $\beta$ gets large. 

In what follows, we use the shorthand $\cdot \restriction_{A}$ as the marginal on a set $A$. The first of such bounds is a standard consequence of the Peierls bound; we provide a proof for completeness.

\begin{lem}\label{lem:stationary-plus-bc-stationary-mixing}
    Consider the Ising model with $+$ boundary conditions on a centered box of side-length $r$, denoted $\Lambda_r \subset \mathbb Z^d$. There exists $C(d)>0, \beta_0(d)<\infty$, such that for all $\beta>\beta_0$, one has  
    \begin{align*}
        \|\pi_{\Lambda_r}^+(\cdot \restriction_{\Lambda_{r- \ell}}) - \pi_{\mathbb Z^d}^+(\cdot \restriction_{\Lambda_{r- \ell}})\|_{\tv} \le C r^{d-1} e^{ - \beta \ell}\,.
    \end{align*}
\end{lem}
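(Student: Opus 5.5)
Our plan is to build a monotone coupling of $\pi_{\Lambda_r}^+$ and $\pi_{\mathbb Z^d}^+$ and show that the two samples agree on $\Lambda_{r-\ell}$ unless a Peierls contour of the larger sample separates $\Lambda_{r-\ell}$ from $\partial\Lambda_r$. Since $\pi_{\mathbb Z^d}^+$ is the decreasing limit of $\pi^+_{\Lambda_{r'}}$ as $r'\to\infty$, it suffices to compare $\pi^+_{\Lambda_r}$ with $\pi^+_{\Lambda_{r'}}$ for a fixed $r'\gg r$, with bounds uniform in $r'$. The key tool is the standard disagreement-percolation / monotone-coupling fact (available, e.g., from the grand coupling of Definition~\ref{def:initialization-coupling} run from the boundary inward, or equivalently from the Markov property plus FKG): couple $\sigma\sim\pi^+_{\Lambda_{r'}}$ and $\sigma'\sim\pi^+_{\Lambda_r}$ with $\sigma\le\sigma'$.

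Fix $\sigma\sim \pi^+_{\Lambda_{r'}}$ and let $\mathcal C$ be the set of vertices of $\Lambda_{r-\ell}$ not $*$-separated from $\Lambda_r^c$ by a closed $*$-circuit (in $d=2$; a closed $(d-1)$-dimensional plaquette surface in general) of $+$ spins of $\sigma$ lying in $\Lambda_r\setminus\Lambda_{r-\ell}$. If the outermost such $+$ surface $\Gamma$ exists, then by the domain Markov property, conditioned on $\Gamma$ and the outside, both $\sigma$ and $\sigma'$ restricted to the interior of $\Gamma$ are distributed as the Ising measure with $+$ boundary on $\Gamma$. Revealing $\Gamma$ from outside (an exploration that is measurable with respect to spins outside and on $\Gamma$) makes this a valid conditioning, so the coupling can be chosen to agree on the interior of $\Gamma$, hence on $\Lambda_{r-\ell}$. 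Therefore the total variation distance is at most the probability under $\pi^+_{\Lambda_{r'}}$ that no such $+$ separating surface exists, which is bounded, by monotonicity, by the probability under $\pi^+$ that there is a $*$-connected path of $-$ spins from $\Lambda_{r-\ell}$ to $\partial \Lambda_r$ (in $d=2$), or more generally that some vertex within distance $\ell$ of the boundary is enclosed in a Peierls contour of diameter at least $\ell$ touching both.

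The Peierls step: any such obstruction implies the existence of a contour $\gamma$ (a closed connected set of dual plaquettes separating $+$ from $-$) of size $|\gamma|\ge \ell$ which intersects the region adjacent to some vertex at $\ell^\infty$-distance roughly $\ell$-shell; more precisely, we union bound over a starting plaquette in a face-adjacent layer of $\partial\Lambda_{r-\ell}$, of which there are $O(r^{d-1})$, and over contours of size $m\ge\ell$ through it, at most $c_d^m$ of them, each with Peierls probability at most $e^{-2\beta m}$. This gives
\begin{align*}
\sum_{m\ge \ell} C r^{d-1} c_d^m e^{-2\beta m}\le C r^{d-1}e^{-\beta \ell}
\end{align*}
once $\beta_0$ is large enough that $c_de^{-\beta}\le 1/2$, with $C$ independent of $\beta$.

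The main obstacle is the topological step in general $d$: making precise that failure of a $+$-separating surface forces a single contour of size at least $\ell$ meeting a fixed $O(r^{d-1})$-size family of plaquettes (rather than merely a chain of small contours). In $d=2$ this is the standard $*$-connectivity duality; in $d\ge3$ I would instead use that a $-$ path from $\Lambda_{r-\ell}$ to $\partial\Lambda_r$ in $\pi^+_{\Lambda_{r'}}$ must lie inside the interior of some contour, and the outermost contour containing its starting vertex then has diameter, hence size, at least $\ell$ whenever the path reaches distance $\ell$, or else argue at the vertex level: each vertex in the shell disagrees only if enclosed by a contour of diameter $\ge \ell$ reaching $\partial\Lambda_r$, and union bound over such contours through the layer.
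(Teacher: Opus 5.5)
Your proposal is correct and follows essentially the same route as the paper: a monotone coupling with $\sigma\le\sigma'$, an outside-in exploration followed by the domain Markov property and the identity coupling inside, and a Peierls bound on a minus crossing from $\Lambda_{r-\ell}$ to $\Lambda_r^c$ (your outermost plus circuit is exactly the outer boundary of the minus clusters attached to $\Lambda_r^c$ that the paper reveals). One small correction to your union bound: the large contour need only enclose a vertex of the layer at $\partial\Lambda_{r-\ell}$, not pass through it, so sum over that vertex and a crossing point on a ray from it, which costs only a harmless extra factor of $m$; and in $d\ge 3$ your single-contour step follows from connectivity of the outer boundary of a connected set, a point the paper also takes for granted.
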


\begin{proof}
It is sufficient to consider coupling $\pi_{\Lambda_r}^+$ and $\pi_{\Lambda_R}^+$ for $r \le R$, as then taking $R \to\infty$ gives the above because $\lim_{R\to\infty} \pi_{\Lambda_R}^+ = \pi_{\mathbb Z^d}^+$. 

Suppose $\sigma \sim \pi_{\Lambda_R}^+$ and $\sigma'\sim \pi_{\Lambda_r}^+$ are coupled via the monotone coupling (the coupling of Definition~\ref{def:initialization-coupling}) such that $\sigma \le \sigma'$. Let $\mathcal E_{r,\ell}$ be the event that in $\sigma$, there is a minus path from $\Lambda_{r}^c$ to $\Lambda_{r-\ell}$. By exponential tails on connected minus regions (using e.g., a Peierls argument), the probability $$\pi_{\Lambda_R}^+(\mathcal E_{r,\ell}) \le C r^{d-1} e^{ - \beta \ell}\,.$$ 

Now expose, under a monotone coupling, the set of all minus connected components of $\sigma$ incident to $\Lambda_{r}^c$, revealing in particular, their outer boundary to be entirely plus in $\sigma$ and therefore also in $\sigma'$. This exposed set of pluses, call it $\gamma$ forms the boundary of the unrevealed set of vertices in $\Lambda_{r}$, and is measurable with respect to itself and its exterior. Therefore, the domain Markov property ensures that both $\sigma$ and $\sigma'$ have the same distribution on the un-revealed set of vertices in $\Lambda_r$ whose boundary is $\gamma$. Therefore, that interior can be coupled using the identity coupling. 

On the event $\mathcal E_{r,\ell}^c$, the revealed set of vertices does not reach $\Lambda_{r-\ell}$ and therefore the above-described coupling leaves $\sigma(\Lambda_{r- \ell}) = \sigma'(\Lambda_{r- \ell})$ except with probability $\pi_{\Lambda_R}^+(\mathcal E_{r,\ell})$.  
\end{proof}

Let $\pi^{\pm}_{A_{r,R}}$ denote the Ising distribution on the annulus $A_{r,R} = \Lambda_R\setminus \Lambda_r$ with $\pm$ boundary conditions, by which we mean $+$ on $\Lambda_R^c$ and $-$ in $\Lambda_r$. 

Our aim is to show that already at a sub-linear distance $r^{0.9}$ away from $\Lambda_r$, the measure under $\pi^\pm_{A_{r,R}}$ looks like the plus measure. Towards that goal, we first show that it suffices to work with annuli whose inner and outer box side-lengths are comparable to one another.

\begin{lem}\label{lem:annulus-coupling-estimate-linear}
Fix $r/100 \le m \le  (R-r)/2$. There exist $C(d), \beta_0(d)<\infty$ such that the following holds for all $\beta>\beta_0$:
    \begin{align*}
        \|\pi_{A_{r,R}}^\pm ( \cdot \restriction_{\Lambda_{r+m}}) - \pi_{A_{r,r+2m}}^\pm (\cdot \restriction_{\Lambda_{r+m}})\|_{\tv}  \le Ce^{ - \beta m}\,.
    \end{align*}
\end{lem}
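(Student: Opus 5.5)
We will mimic the proof of Lemma~\ref{lem:stationary-plus-bc-stationary-mixing}, now exposing clusters from the outside of the smaller region inward. Write $\Lambda_{r+2m}\subset\Lambda_R$. Both measures $\pi^\pm_{A_{r,R}}$ and $\pi^\pm_{A_{r,r+2m}}$ carry $-$ boundary on $\Lambda_r$, and they differ only in where the $+$ boundary sits. By monotonicity (FKG), $\pi^\pm_{A_{r,R}}\preceq \pi^\pm_{A_{r,r+2m}}$, since fixing the spins on $\Lambda_R\setminus\Lambda_{r+2m}$ to $+$ only raises the configuration. So we couple $\sigma\sim\pi^\pm_{A_{r,R}}$ and $\sigma'\sim \pi^\pm_{A_{r,r+2m}}$ monotonically, with $\sigma\le\sigma'$, using Definition~\ref{def:initialization-coupling}.

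\textbf{Step 1 (exploration).} In $\sigma$, expose all $*$-connected minus components, in the appropriate lattice connectivity for $d$, that intersect $\Lambda_{r+2m}^c$, i.e.\ the minus region attached to $\Lambda_{r+2m}^c$. Their outer boundary $\gamma$ inside $\Lambda_{r+2m}$ consists of $+$ spins in $\sigma$, and therefore also of $+$ spins in $\sigma'$. The set $\gamma$ is measurable with respect to itself and the spins outside it. Because both boundary conditions agree on $\Lambda_r$, the domain Markov property gives the same conditional law in the two measures on the unrevealed region enclosed by $\gamma$. That region has $+$ on $\gamma$ and $-$ on $\Lambda_r$, and we couple it by the identity coupling.

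\textbf{Step 2 (failure event).} The coupling can fail on $\Lambda_{r+m}$ only if the exploration reaches $\Lambda_{r+m}$. That happens only if $\sigma$ contains a minus path from $\Lambda_{r+2m}^c$ to $\Lambda_{r+m}$ lying in $A_{r,R}$, of length at least $m$.

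We must rule out that this path connects to the minus boundary $\Lambda_r$. If the minus component reaching $\Lambda_{r+m}$ is the one attached to $\Lambda_r$, the exploration could merge the inner and outer minus regions. To handle this, define the failure event as the existence of a minus $*$-path crossing the annulus $\Lambda_{r+2m}\setminus\Lambda_{r+m}$. Under $\pi^\pm_{A_{r,R}}$ such a crossing forces a contour of size at least $m$ that is not the natural interface around $\Lambda_r$.

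The main obstacle is making this Peierls estimate precise while an interface enclosing $\Lambda_r$ is present. We will use the Peierls/contour representation with $\pm$ boundary. The configuration decomposes into the unique contour separating $\Lambda_r$ from $\Lambda_R^c$, together with closed contours. A minus crossing of width $m$ means one of two things:
\begin{itemize}
\item a closed contour of diameter at least $m$ exists, which by a Peierls bound has probability at most $C(R^{d}) e^{-\beta m}$ summed over locations, and this needs care, or
\item the separating interface makes an excursion of size at least $m$ away from $\partial\Lambda_r$.
\end{itemize}
For the second case we compare the interface with one shifted back, which removes at least order $m$ plaquettes (an excess area at least $m$). Since $m\ge r/100$, the number of base points is at most $Cr^{d-1}\le Cm^{d-1}$. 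The entropy factor $e^{cm}$ is absorbed for $\beta>\beta_0$, giving $C e^{-(\beta - c) m}$.

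To avoid the polynomial prefactor in $R$ coming from the first case, we restrict attention to contours intersecting $\Lambda_{r+2m}\setminus\Lambda_{r+m}$, whose count is $O(m^{d})$. Then $\beta_0$ large absorbs the polynomial and entropy factors, leaving $Ce^{-\beta m}$ after adjusting constants. This is where we expect the most work: a clean energy estimate for the excursion of the separating interface, uniform in $R$ and in large $\beta$.
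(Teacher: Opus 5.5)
Your Step~1 is fine. The monotone coupling $\sigma\le\sigma'$ of $\pi^\pm_{A_{r,R}}$ with $\pi^\pm_{A_{r,r+2m}}$ explores the minus clusters attached to $\Lambda_{r+2m}^c$, and its failure event is a minus crossing of $\Lambda_{r+2m}\setminus\Lambda_{r+m}$. This is exactly the coupling in the last step of the paper's proof. The gap is Step~2(b): bounding the probability that the interface $\mathcal I$ (the boundary of the minus component of $\Lambda_r$) reaches $\Lambda_{r+m}^c$. You defer this as ``the most work,'' and what you sketch is not an argument. The entropy of whatever piece of $\mathcal I$ you modify is exponential in \emph{that piece's length}, not in $m$. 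Nothing in your sketch relates that length to the energy you save. For example, $\mathcal I$ can sit at distance about $m$ along entire sides (say $\mathcal I=\partial\Lambda_{r+m}$), so there is no localized ``excursion'' to shift back. Shifting one whole side back saves about $2m$ while modifying a piece of length about $r+2m$. That trade is affordable only because $r\le 100m$. So the hypothesis $m\ge r/100$ must be used to control entropy, not just to count base points as you do.

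The missing idea, in $d=2$, is to make the comparison global and let the linear-scale hypothesis pay for the entropy. Replace all of $\mathcal I$ by $\partial\Lambda_r$; the energy drops by $L:=|\mathcal I|-|\partial\Lambda_r|\ge 2m$. Since $|\partial\Lambda_r|\approx 4r\le 400m\le 200L$, you get $|\mathcal I|\le 201L$. So the number of candidate circuits with excess $L$ is $e^{O(L)}$ with a $\beta$-independent constant, and
$\pi^\pm_{A_{r,R}}(\mathcal I\not\subset\Lambda_{r+m})\le\sum_{L\ge 2m}e^{O(L)}e^{-2\beta L}\le Ce^{-\beta m}$.

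The paper's ordering also avoids your case~(a) Peierls bound under $\pm$ boundary conditions, which has its own problem. There is no unique contour separating $\Lambda_r$ from $\Lambda_R^c$: a minus ring not attached to $\Lambda_r$ produces further separating contours, and flipping the interior of such a contour would flip $\Lambda_r$. Instead, on $\{\mathcal I\subset\Lambda_{r+m}\}$ the paper explores the minus component of $\Lambda_r$ to couple $\pi^\pm_{A_{r,R}}$ with $\pi^+_{\Lambda_R}$ on $\Lambda_{r+m}^c$. It then bounds the crossing probability by a plain Peierls estimate in the plus measure.

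In $d\ge 3$ one has $|\partial\Lambda_r|\asymp r^{d-1}\gg m$, so even the global count is not $e^{O(L)}$; think of $\sim L$ unit bumps placed among $r^{d-1}$ sites. A local, Dobrushin-type wall argument is then genuinely needed. Your ``shift back'' points in that direction, but that is precisely the hard part.
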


\begin{proof}
    As a first step, we will couple $\pi_{A_{r,R}}^\pm$ to $\pi_{\Lambda_{R}}^+$ on the complement of $\Lambda_{r+m}$. 
    In the configuration $\sigma$ on an annulus $A_{r, R}$ with $\pm$ boundary conditions, let $\cI(\sigma)$ be the Ising interface surrounding the minus boundary conditions on $\Lambda_r$. (Formally, this can be taken to be the dual-plaquette boundary of the minus connected component of $\Lambda_r$.) 
    Couple $\sigma, \sigma'$ under a  monotone coupling for $\sigma \sim \pi_{A_{r,R}}^\pm$ and $\sigma' \sim \pi_{\Lambda_{R}}^+$ so that $\sigma \le \sigma'$ deterministically.

    Let $\mathcal E_{m}$ be the event that $\cI(\sigma) \cap \Lambda_{r+m}^c \ne \emptyset$ for $\sigma \sim \pi_{A_{r,R}}^\pm$. We bound the probability of $\mathcal E_m$ by a Peierls argument. Consider any $\sigma$ with interface $\cI(\sigma) = I$, and consider the operation $\Phi$ that replaces $I$ by a simpler contour given by the boundary $\partial \Lambda_r$. More precisely, take the set of disagreeing dual plaquettes (those separating differing spins in $\sigma$), remove from it all of the plaquettes in $I$, and perform the XOR operation with the dual plaquettes along $I_0 = \partial \Lambda_r$.

    Since $\cI(\sigma) = I$ had more than $2d r^{d-1} + 2(d-1)m$ many edges, and $I_0$ only has $2d r^{d-1}$ many edges, this operation decreases the energy of the associated configuration by at least $|I| - 2d r^{d-1} := K\ge 2m$. The enumeration over the number of pre-images whose energy change was $L\ge K$ is bounded by the number of connected sets of dual plaquettes confining $\Lambda_r$ in its interior, of which there are $C^L$ for a lattice dependent, $\beta$-independent, constant $C$. Therefore, by the usual Peierls bound, this implies that 
    \begin{align*}
        \pi^{\pm}_{A_{r,R}}( \mathcal E_m) \le Ce^{ - 2(d-1)(\beta - C)m}\,.
    \end{align*}

    Expose the connected component of minuses of $B_r$ under $\sigma \sim \pi^{\pm}_{A_{r,R}}$, note that its outer boundary is all-plus (and by monotonicity of the coupling, also is under a coupled sample from $\pi_{\Lambda_R}^+$). The identity coupling on its exterior, by the Markov property, leads to $\sigma(\Lambda_{r+m}^c) = \sigma'(\Lambda_{r+m}^c)$ on $\mathcal E_{m}^c$. This implies  
    \begin{align}\label{eq:annulus-close-to-box-bound}
        \|\pi_{A_{r,R}}^\pm (\cdot \restriction_{\Lambda_{r+m}^c})- \pi_{\Lambda_R}^+(\cdot \restriction_{\Lambda_{r+m}^c} ) \|_{\tv} \le   \pi_{A_{r,R}}^\pm (\mathcal E_m) \le Ce^{ - 2(d-1)(\beta - C) m}\,.
    \end{align}
    Now consider $\sigma'' \sim \pi_{A_{r,r+2m}}^{\pm}$, and note that by the same coupling arguments, the total-variation distance 
    \begin{align*}
        \|\pi_{A_{r,R}}^\pm (\cdot \restriction_{\Lambda_{r+m}}) - \pi_{A_{r,r+2m}}^\pm(\cdot \restriction_{\Lambda_{r+m}}) \|_{\tv} \le \pi_{A_{r,R}}^\pm(\mathcal E'_{m\leftrightarrow2m})\,,
    \end{align*}
    where we use $\mathcal E'_{m\leftrightarrow 2m}$ to denote a minus path from $\Lambda_{r+m}$ to $\Lambda_{r+2m}^c$. This last probability, is, by~\eqref{eq:annulus-close-to-box-bound},  bounded by its probability under $\pi_{\Lambda_R}^+$ which is at most $Ce^{ -  2(\beta -C) m}$, by a Peierls bound. A triangle inequality then gives the claimed bound for $\beta$ large. 
\end{proof}

The next lemma says that in the annuli with $-1$ boundary conditions on the inner box, we can couple them $o(r^{0.51})$ away from the $-$'s in the center to a plus  boundary condition measure. This lemma uses the good understanding of the typical height of a Dobrushin Ising interface above a hard floor, at sufficiently low temperatures.    

\begin{lem}\label{lem:coupling-annulus-away-from-minus}
   There exists $C(d)>0$ such that for every $\beta>\beta_0(d)$, $\ell \le r/100$, and $R\ge r$,  
    \begin{align*}
        \| \pi^{\pm}_{A_{r,R}}(\cdot \restriction_{\Lambda_{r+\ell}^c}) - \pi^+_{\Lambda_R}(\cdot \restriction_{\Lambda_{r+\ell}^c})\|_{\tv} \le \begin{cases}
            C r^C e^{- \ell^2/Cr}+ C e^{ - \beta r} & d=2 \\ C r^{d-1} e^{ - \beta \ell} & d\ge 3
        \end{cases}\,.
    \end{align*}
\end{lem}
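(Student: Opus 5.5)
The plan is to follow the template of Lemma~\ref{lem:annulus-coupling-estimate-linear}. Take the monotone coupling of $\sigma\sim\pi^\pm_{A_{r,R}}$ and $\sigma'\sim\pi^+_{\Lambda_R}$ with $\sigma\le\sigma'$. Let $\cI(\sigma)$ be the outer boundary of the minus cluster of $\Lambda_r$, and let $\mathcal E_\ell=\{\cI(\sigma)\not\subset\Lambda_{r+\ell}\}$. Exposing the minus cluster of $\Lambda_r$ from the inside reveals an outer boundary that is all plus in $\sigma$, hence also in $\sigma'$. By the domain Markov property, the identity coupling then applies on its exterior. So the total-variation distance in the statement is bounded by $\pi^\pm_{A_{r,R}}(\mathcal E_\ell)$, and everything reduces to a tail bound on how far the interface around the inner minus box can wander.

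\emph{Case $d\ge3$.} Here the energetic cost is linear in $\ell$. Reaching distance $\ell$ forces an excess area of order $\ell$ beyond $\partial\Lambda_r$. More precisely, I will localize: if $\cI$ reaches $\Lambda_{r+\ell}^c$ near a point $x$ of $\partial\Lambda_r$, then in $d\ge3$ a ``spike'' of height $\ell$ costs $\gtrsim\ell$ plaquettes. I will apply the Peierls map $\Phi$ of Lemma~\ref{lem:annulus-coupling-estimate-linear}, which flattens $\cI$ to $\partial\Lambda_r$. One must check that the energy gain is at least $c\ell$, which holds since any connected surface enclosing $\Lambda_r$ and exiting $\Lambda_{r+\ell}$ has area at least $|\partial\Lambda_r|+c\ell$. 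Combining this with the $C^L$ entropy count and a union bound over the $r^{d-1}$ possible locations of the first exit gives $Cr^{d-1}e^{-\beta\ell}$ for $\beta$ large.

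\emph{Case $d=2$.} Here a linear bound is false, since the interface genuinely fluctuates by $\sqrt r$. I would split into two events. First, the event that $\cI$ is far from the box, e.g.\ has length exceeding $|\partial\Lambda_r|+r$. This has probability $\le Ce^{-\beta r}$ by the same Peierls argument, which gives the second term. Second, on the complement, $\cI$ decomposes along the four sides into pieces, each behaving like a Dobrushin interface above a hard floor (the minus box) in a rectangle of width $\sim r$. I will bound the probability that a given side's interface reaches height $\ell$ using Lemma~\ref{lem:informal-max-bound} (i.e.\ Proposition~\ref{prop:vertical-maximum-bound-replacement}) in its Gaussian-tail form $\P(\text{height}\ge h)\le Cr^Ce^{-h^2/(Cr)}$. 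This form comes from the $\beta$-uniform Ornstein--Zernike bound \eqref{eq:intro-version-of-sharp-OZ}: the ratio of two-point functions through a point at height $\ell$, compared with the direct one, produces $e^{-c\ell^2/r}$ up to polynomial prefactors. Monotonicity is used to compare each side to a rectangle with plus boundary on three sides, since the floor only pushes the interface up relative to free, and the comparison needs the correct direction. A union bound over the four sides and over the polynomially many positions of the corners then gives $Cr^Ce^{-\ell^2/Cr}$.

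The main obstacle is the $d=2$ reduction from the annulus interface to side-wise rectangle interfaces. The corners interact, and conditioning on the interface's corner crossings must preserve a domain where the height estimate applies uniformly in $\beta$. I would handle this by conditioning on the four points where $\cI$ crosses the diagonals and using the domain Markov property. I would then dominate each piece, via monotonicity in boundary conditions, by a Dobrushin rectangle of width $\le 3r$. The polynomially many choices of crossing points are absorbed into $r^C$.
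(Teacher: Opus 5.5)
Your first step (monotone coupling, exposing the minus cluster of $\Lambda_r$, reducing to a bound on $\pi^\pm_{A_{r,R}}(\mathcal E_\ell)$) is the paper's. Both dimension-specific arguments have problems, and the one in $d\ge3$ is a genuine gap.

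\textbf{The case $d\ge 3$.} You reuse the global flattening map $\Phi$ of Lemma~\ref{lem:annulus-coupling-estimate-linear} together with a ``$C^L$ entropy count''. But $\Phi$ erases the \emph{whole} interface, so its preimages with gain $L$ are all enclosing surfaces of excess area $L$. When $\ell\ll r$ there are far more than $C^L$ of these. A unit bump (excess $2(d-1)$) can sit at any of $\asymp r^{d-1}$ places on $\partial\Lambda_r$, so the sum of Peierls weights over surfaces reaching distance $\ell$ is at least of order $e^{-c\beta\ell}\sum_k\binom{cr^{d-1}}{k}e^{-2(d-1)\beta k}\approx e^{-c\beta\ell}\exp(c\,r^{d-1}e^{-2(d-1)\beta})$. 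This is useless for fixed $\beta$ and large $r$, which is exactly the multiscale regime. Fixing the exit location does not help, since all other excitations are still erased and counted. A global count is $C'^L$ only when $L\gtrsim|\partial\Lambda_r|$; that is why it works for your $e^{-\beta r}$ term in $d=2$, but not here.

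What is needed is a \emph{local} surgery that removes only the excitation reaching distance $\ell$, while keeping control of the rest of the interface and the bubbles. That is Dobrushin's wall/ceiling rigidity theory. On top of it you need an argument handling the interaction with the hard floor formed by the minus box, which produces entropic repulsion. This is why the paper, after a monotone reduction to a prism with a minus floor, invokes \cite{Dobrushin72a} and \cite[Corollary 2.12]{chen2024logarithmicdelocalizationlowtemperature}, with Remark~\ref{rem:on-the-beta-dependence-in-higher-dim} for uniformity in $\beta$.

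\textbf{The case $d=2$.} Your mechanism (ratio of two-point functions through a point at height $\ell$, plus the sharp triangle inequality with $\kappa_\beta\ge1/3$) is the right one, but the input you cite does not deliver it.
\begin{itemize}
\item Proposition~\ref{prop:vertical-maximum-bound-replacement} requires width at least $\beta e^{2\beta}$ and only reaches heights $e^{-\beta}\sqrt{\beta r\log r}$. Beyond that it gives only $r^{c_1-c_2\beta}$, nowhere near $e^{-\ell^2/Cr}$. The regime $\ell^2/r\gg\log r$ is precisely what Corollary~\ref{cor:statequiv-annulus-bound} uses. The correct input is Lemma~\ref{lem:maximal-vertical-oscillations-with-floor}, valid for all $h$.
\item The ratio you describe is a floorless estimate. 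With the minus box acting as a floor, you must also divide by the probability that the free interface stays above it. That is the $\beta$-uniform polynomial bound of Lemma~\ref{lem:thm:5.1-replacement}, available (like Lemma~\ref{lem:maximal-vertical-oscillations-with-floor}) only for $r\ge e^{\beta/5}$.
\item For $r\le e^{\beta/5}$ the paper uses a separate Peierls iteration giving $e^{-3\beta\ell}$, which your plan lacks.
\end{itemize}

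\textbf{The corners.} The issue you single out as the main obstacle never needs to arise, and your fix would not work as stated. The points where $\cI$ crosses the diagonals are not a spin-measurable separating set, so the domain Markov property cannot be applied there. Neither conditioning on the non-local ``short interface'' event nor monotonicity can shrink the width to $3r$: the event is decreasing, so you may only add minus boundary conditions.

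The paper instead reduces to $R=2r$ via Lemma~\ref{lem:annulus-coupling-estimate-linear}, which is where the $Ce^{-\beta r}$ term comes from. It then union bounds over the four faces. For the north face it uses FKG to force minus on all of $\Lambda_{2r}$ lying below the top side of $\Lambda_r$; the new event contains yours and is only more likely. What remains is a single rectangle with minus floor and plus on the other three sides, with no corners at all.
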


\begin{proof}
    Let $\mathcal E_\ell$ be the event, as in the previous proof, that the interface $\mathcal I(\sigma)$ for $\sigma \sim \pi^\pm_{A_{r,R}}$ does not intersect $\Lambda_{r+\ell}^c$. As in that proof, we have 
    \begin{align*}
        \|\pi_{A_{r,R}}^\pm (\cdot \restriction_{\Lambda_{r+\ell}^c} ) - \pi_{\Lambda_R}^+(\cdot \restriction_{\Lambda_{r+\ell}^c})\|_{\tv} \le \pi_{A_{r,R}}^\pm (\mathcal E_\ell)\,.
    \end{align*}
    The event $\mathcal E_{\ell}$ is measurable with respect to the configuration on $\Lambda_{r+\ell}$ and therefore, by Lemma~\ref{lem:annulus-coupling-estimate-linear} it is sufficient to bound $\pi_{A_{r,R}}^\pm(\mathcal E_\ell)$ on the right-hand side under $R = 2r$, up to an error of $Ce^{ - \beta r/2}$.

    For this, it is sufficient to union bound over the probability that the interface $\mathcal I(\sigma)$ intersects any of the $2d$ planes $\{-r-\ell\} \times \mathbb Z^{d-1}$, $\{r+\ell\} \times \mathbb Z^{d-1}$, $\mathbb Z \times \{-r-\ell\} \times \mathbb Z^{d-2}$, etc. By rotational symmetry, these are all the same, so it suffices to consider one of them, let's say the north one $\mathbb Z^{d-1} \times \{r+ \ell\}$. Namely, if $\mathcal E_{\ell}^{\textsc{n}}$ is the event that $\mathcal I(\sigma)$ intersects $\mathbb Z^{d-1} \times \{r+\ell\}$, we have 
    \begin{align*}
        \pi_{A_{r,R}}^\pm (\mathcal E_\ell) \le 2d  \pi_{A_{r,R}}^\pm(\mathcal E_\ell^{\textsc{n}})\,.
    \end{align*}
    By monotonicity, we only increase the probability on the right-hand side if we increase the minus boundary to be all vertices in $\Lambda_R$ below height $r$, leaving us with a rectangular prism $[-r, r]^{d-1}\times [r, 3r]$ with $\pm$ boundary conditions that are $-$ on the southern face, and $+$ on all the other faces. On this domain, we are then asking the probability on a box with $-$ boundary conditions on its bottom, and $+$ boundary conditions on the other sides, that the interface reaches a height of $\ell/2$. 

    In dimension $d=2$, this maximal height oscillation of the interface is known to have a Gaussian tail, with the probability being at most $C_\beta \exp( - \kappa_\beta \ell^2 /r)$ for a sharp triangle inequality constant $\kappa_\beta$ and a $\beta$-dependent constant $C_\beta$ per e.g.,~\cite[Theorem 5.3]{LMST}. However, it is important to have a $\beta$-uniform version of such a statement. This is obtained as a step of our work in Section~\ref{sec:uniform-mixing-time} to get the uniformly quasi-polynomial mixing time, specifically in Lemma~\ref{lem:maximal-vertical-oscillations-with-floor} where it is shown (taking $N$ there to be our $r$ and $h$ there to be our $\ell$)  that for a $C$ independent of $\beta$, one has that if $r \ge e^{\beta/5}$, 
    \begin{align*}
        \pi_{A_{r,R}}^\pm (\mathcal E_\ell^{\textsc{n}}) \le C r^C e^{ - \kappa_\beta \ell^2/2r}
    \end{align*}
    which since $\kappa_\beta \ge 1/3$ for all large $\beta$ by~\eqref{eq:tau''-lower-bound}, is in particular at most $C r^C e^{ - \ell^2/Cr}$. On the other hand, if $r \le e^{\beta/5}$ then the probability of having a vertical oscillation of height $\ell$ can be bounded by a Peierls argument by $e^{-3\beta \ell}$ as follows. In an $r\times h$ box with $-$ boundary conditions on its bottom side and plus boundary conditions on its other sides, the probability of the interface reaching height $l$ given it reaches height $l-1$ is bounded by the probability of reaching height $l$ if its first $l-1$ heights are filled by the all-minus configuration, which is the probability of the interface deviating from the ground state flat interface in an $r\times h'$ box with $\pm$ boundary conditions. This latter probability is bounded, via a union bound, by $e^{ - (4-C)\beta}r \le e^{-3\beta}$ for $\beta>\beta_0$. The claimed bound of $e^{-3\beta\ell}$ follows by applying this iteratively.

    In dimension $d\ge 3$, this maximal height oscillation has an exponential tail beyond the $\log r$ scale due to rigidity of the interface at sufficiently low temperatures~\cite{Dobrushin72a}. However, this exact implication needs some extra work to deal with the interaction of the interface with the nearby floor with minus boundary conditions: see the bound of~\cite[Corollary 2.12]{chen2024logarithmicdelocalizationlowtemperature} and the following Remark~\ref{rem:on-the-beta-dependence-in-higher-dim} for a comment on the uniformity in large $\beta$ of that bound. 
\end{proof}

    \begin{remark}\label{rem:on-the-beta-dependence-in-higher-dim}
            Let us justify that the argument of Section 2.2 of~\cite{chen2024logarithmicdelocalizationlowtemperature} leads to $\beta$-independent prefactors. The first step therein is getting a bound with a ``soft floor", i.e., when the interface is only conditioned to be above height $0$, but there are no boundary pinnings at height $-1$. This is Lemma~2.8 therein, which only uses Theorem 2.5 therein, and which in turn has all its $\beta$-dependencies originating in the cluster-expansion based bound of~\cite{Dobrushin72a}. Therefore the constants involved therein only improve with $\beta \to\infty$. In order to translate this to a bound in the hard floor setup, the authors use a stochastic ordering saying the interface with soft boundary conditions is above the one with hard boundary conditions (their Proposition~2.9) to get \cite[Corollary 2.12]{chen2024logarithmicdelocalizationlowtemperature} together with its implicit tail bound. Evidently, in the stochastic comparison between the two distributions, there is no extra $\beta$-dependencies introduced. 
    \end{remark}

We conclude by using the above to bound the probability of not having the $\mathsf{StatEquiv}_k(B)$ event. 

\begin{cor}\label{cor:statequiv-bound}
    There exists $\beta_0(d)<\infty$ such that for every $\beta>\beta_0$, for every $k$, every $B\in \mathscr{B}_k$, 
    \begin{align*}
        \mathbb P(\mathsf{StatEquiv}_k(B)^c )\le \frac{1}{(2\ell_{k+1})^d} \frac{q_{k+1}}{10}\,.
    \end{align*}
\end{cor}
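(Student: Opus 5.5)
We bound $\mathbb P(\mathsf{StatEquiv}_k(B)^c)$ by a union bound over the at most $(2\ell_{k+1}/\ell_k)^d$ blocks $B'\in\mathscr{B}_{k+1}$ with $B\subset E_{+3}(B')$. For each such $B'$ we show that the two stationary processes $U_t^{B,\pi}$ and $U_t^{B',\pi}$ agree on $E_{+3}(B)$ throughout $[0,T_k]$, except with probability $e^{-c\beta\ell_k}$. Since $q_{k+1}=\exp(-(\log\ell_{k+1})^{M^3})=\exp(-(\log\ell_k)^{M^4})$, such a bound is far smaller than $\ell_{k+1}^{-2d}q_{k+1}/10$. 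Indeed $\beta\ell_k\gg(\log\ell_k)^{M^4}$ as soon as $\ell_0$ is large enough in terms of $M$, and $\ell_{k+1}^{-2d}$ costs only $\exp(-2d(\log\ell_k)^M)$.

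\emph{Step 1 (time zero).} Since $E_{+4}(B)\subset E_{+4}(B')$, the plus boundary of $E_{+4}(B)$ is closer, so $U_0^{B,\pi}\ge U_0^{B',\pi}$ under the monotone coupling of Definition~\ref{def:initialization-coupling}. We reveal the minus $*$-clusters of $U_0^{B',\pi}$ touching $\partial E_{+4}(B)$, exactly as in the proof of Lemma~\ref{lem:stationary-plus-bc-stationary-mixing}. Their outer boundary is plus in both samples, and the domain Markov property lets us couple identically inside. Off the Peierls event that some minus path crosses from $\partial E_{+4}(B)$ to distance $\ell_k/20$ from it, the two configurations agree on $E_{+3.5}(B)$. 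This event has probability at most $C\ell_k^{d-1}e^{-\beta\ell_k/20}$, uniformly in $\beta>\beta_0$.

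\emph{Step 2 (propagating over time).} A single time-zero coupling does not survive to time $T_k$, because $T_k\approx\ell_k/M$ is comparable to the buffer width and disagreement could enter from $\partial E_{+4}(B)$. We therefore use stationarity. Partition $[0,T_k]$ into $O(\ell_k^{d+2})$ short intervals of length $\ell_k^{-(d+1)}$, say. At the start of each interval both processes are stationary; we can't re-couple them afresh, so instead we argue monotonically. We have $U_t^{B,\pi}\ge U_t^{B',\pi}$ for all $t$, and a disagreement on $E_{+3}(B)$ at time $t$ requires, via the path-of-updates argument behind Observation~\ref{obs:information-propagation}, a chain of disagreements connected back to $\partial E_{+4}(B)$ or to a time-zero disagreement. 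The cleanest route is to bound instead the event that at some time $t\in[0,T_k]$ the lower process $U_t^{B',\pi}$ contains a minus $*$-path from $\partial E_{+4}(B)$ to $E_{+3.5}(B)$. This event can be handled as follows.
\begin{itemize}
\item By stationarity of $U^{B',\pi}$, at any fixed time it has the Peierls probability bound from Step 1.
\item A union over the interval endpoints, together with a Poisson bound that at most $\ell_k^{d}$ updates occur in any interval, controls all times, at a polynomial cost in $\ell_k$.
\end{itemize}
On the complement of this event, together with the time-zero agreement, the plus $*$-circuit in $U^{B',\pi}$ separating $\partial E_{+4}(B)$ from $E_{+3.5}(B)$ is also plus in $U^{B,\pi}$ by monotonicity. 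Since this circuit may move over time, the rigorous form of this step is an induction over update times: each update inside the region enclosed by the current plus circuits sees identical neighborhoods in both chains.

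\emph{Main obstacle.} I expect Step 2 to be the main difficulty: the moving circuit argument must be made rigorous, and all constants must stay $\beta$-uniform. If the moving circuit argument proves awkward, the fallback is to add $\mathsf{InfProp}$-type control. On the event of no $(\ell_k/20,T_k)$-propagating chain in $E_{+4}(B)$, whose probability Lemma~\ref{lem:information-propagation} bounds by $e^{-\ell_k/20}$ up to constant changes, disagreements starting on $\partial E_{+4}(B)\cup(E_{+4}(B)\setminus E_{+3.5}(B))$ cannot reach $E_{+3}(B)$ by time $T_k$, because the distance is $\ell_k/20$. Then only the time-zero agreement on $E_{+3.5}(B)$ from Step 1 is needed. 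Summing $C\ell_k^{d-1}e^{-\beta\ell_k/20}+e^{-\ell_k/20}$ over the $(2\ell_{k+1}/\ell_k)^d$ blocks $B'$ gives the claimed bound for $\ell_0$ large.
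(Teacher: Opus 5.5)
The argument you actually close with is correct: agreement at time $0$ on $E_{+3.5}(B)$, together with the absence of an $(\ell_k/20,T_k)$-propagating chain in $E_{+4}(B)$. Two parts of the write-up need fixing first.

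\emph{Step 1.} You cannot \emph{reveal and then couple identically inside}. $\mathsf{StatEquiv}_k(B)$ is an event for the fixed grand coupling of Definition~\ref{def:initialization-coupling}, a sequential coupling along a fixed enumeration, so you are not free to re-couple interiors. Use only that this coupling is monotone, $U_0^{B',\pi}\le U_0^{B,\pi}$. Then the disagreement probability on $A=E_{+3.5}(B)$ is at most $\sum_{v\in A}\big[\pi^+_{E_{+4}(B)}(\sigma_v=+)-\pi^+_{E_{+4}(B')}(\sigma_v=+)\big]\le |A|\,\|\pi^+_{E_{+4}(B)}(\cdot\restriction_A)-\pi^+_{E_{+4}(B')}(\cdot\restriction_A)\|_{\tv}$. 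Bound the total variation by the revealing argument of Lemma~\ref{lem:stationary-plus-bc-stationary-mixing}; this costs only a factor $(2\ell_k)^d$.

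\emph{Step 2.} Discard the moving-circuit argument; it is not merely informal, the implication it needs is false. A disagreement need not stay inside a boundary-connected minus cluster of $U^{B',\pi}$. The cluster can retract and leave an isolated minus of $U^{B',\pi}$ that is plus in $U^{B,\pi}$, and such a droplet can migrate inward through successive updates. So ruling out minus $*$-paths from $\partial E_{+4}(B)$ to $E_{+3.5}(B)$ at all times does not rule out disagreements in $E_{+3}(B)$. The light-cone fallback is what works. Since $T_k\le 2\ell_k/M$, the path count of Lemma~\ref{lem:information-propagation} with $L=\ell_k/20$ gives probability at most $|E_{+4}(B)|(2deT_k/L)^L\le e^{-\ell_k/20}$ once $M$ is a large multiple of $d$. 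The backward trace behind Observation~\ref{obs:information-propagation} then applies at every $t\le T_k$, and to two chains on different domains that both contain $E_{+3.5}(B)$.

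This is a different route from the paper's, which uses no light cone. The paper notes that the configuration on $E_{+3}(B)$ changes only at clock rings in $E_{+3}(B)$, and union-bounds over the $O(T_k|E_{+3}(B)|)$ rings off a Poisson large-deviation event. Even conditionally on the clock process, both chains are exactly stationary at each ring time (heat-bath updates preserve $\pi$) and monotonically ordered. So each ring time contributes at most $|E_{+3}(B)|$ times the total-variation bound of Lemma~\ref{lem:stationary-plus-bc-stationary-mixing}.

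Your version uses stationarity only at time $0$ and is slightly more elementary. Its cost is that it relies on the horizon $T_k$ being a small multiple of the buffer width, i.e.\ on $M$ being large relative to $d$. The paper's version pays only a factor linear in the time horizon and does not care how that horizon compares with the buffer. That is why the identical template also proves Corollary~\ref{cor:statequiv-annulus-bound}, where the window $t_k=\ell_k/M$ is vastly larger than the buffers of width $O(\ell_{k-1})$ around $R$, and a light-cone argument would fail.
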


\begin{proof}
    We begin with a union bound, 
    \begin{align*}
        \mathbb P(\mathsf{StatEquiv}_k(B)^c) \le (2\ell_{k+1})^d \max_{B' \in \mathscr{B}_{k+1} : B\subset E_{+3}(B')} \mathbb P(\exists t\in [0,T_k]: U_t^{B,\pi}(E_{+3}(B)) \ne U_{t}^{B',\pi}(E_{+3}(B)))\,.
    \end{align*}
    Now introduce the event $\mathcal B(B)$ as the bad event that more than $2T_k |E_{+3}(B)|$ many clock rings occur in $E_{+3}(B)$ in times $[0,T_k]$. Notice that the status of the event whose probability is being taken on the right cannot change without a clock ring in $E_{+3}(B)$. 
    
    By Poisson tail bounds, the probability $\mathbb P(\mathcal B(B))$ is at most $\exp( - T_k |E_{+3}(B)|/C)$ for a universal constant $C$. Thus, up to that error, we can intersect the probabilities on the right-hand side with the event $\mathcal B(B)^c$. Next condition on the clock ring sequence in $E_{+3}(B)\times [0,T_k]$ on the event $\mathcal B(B)^c$. For any sequence of at most $2 T_k |E_{+3}(B)|$ clock rings in $[0,T_k]$, denoted $\mathcal T_{B \times [0,T_k]}$, we have 
    \begin{align*}
        \mathbb P(\exists t\in [0,T_k]: & U_t^{B,\pi}(E_{+3}(B)) \ne U_{t}^{B',\pi}(E_{+3}(B)), \mathcal B(B)^c) \\
        & \le 2T_k|E_{+3}(B)| \cdot \sup_{t\in [0,T_k]} \mathbb P(U_t^{B,\pi}(E_{+3}(B)) \ne U_{t}^{B',\pi}(E_{+3}(B)) \mid \mathcal T_{B\times [0,T_k]})\,.
    \end{align*}
    Since these chains are initialized at stationarity, even conditional on the clock ring times $\mathcal T_{B \times [0,T_k]}$ their distributions are $\pi_{E_{+4}(B)}^+$ and $\pi_{E_{+4}(B')}^+$ respectively. Furthermore, they are monotonically ordered. Therefore, the disagreement probability above is at most 
    \begin{align}\label{eq:disagreement-probability-above-bd}
        2T_k |E_{+3}(B)| \|\pi_{E_{+4}(B)}^+(\cdot \restriction_{E_{+3}(B)}) - \pi_{E_{+4}(B')}^+(\cdot \restriction_{E_{+3}(B)})\|_{\tv}\,.
    \end{align}
    By Lemma~\ref{lem:stationary-plus-bc-stationary-mixing}, the right-hand side is at most $2C T_k |E_{+3}(B)|^2e^{ -  \beta \ell_{k}/10}$. In total, we get 
    \begin{align*}
        \mathbb P(\mathsf{StatEquiv}_k(B)^c) \le e^{ - T_k |E_{+3}(B)|/C} + 2C T_k |E_{+3}(B)|^2 e^{ - \beta \ell_k/10}\,.
    \end{align*}
    Plugging in for $T_k$ of order $\ell_k$ and $|E_{+3}(B)| \le 2\ell_k$, the right-hand side is exponentially small in $\beta \ell_k/C$ for some universal constant $C$. Since $\ell_{k+1}$ is only quasi-polynomially large in $\ell_k$ and $q_{k+1}$ is only quasi-polynomially small in $\ell_k$, as long as $\ell_k$ is at least a sufficiently large (only depending on $C, d, M$) constant, the claimed inequality holds. 
\end{proof}

\begin{proof}[\textbf{\emph{Proof of Corollary~\ref{cor:statequiv-atleastk-bound}}}]
    By definition of $\mathsf{StatEquiv}_{\ge k}(B)$ with a union bound, we have 
    \begin{align*}
        \mathbb P(\mathsf{StatEquiv}_{\ge k}(B)^c) \le \sum_{k'\ge k} \sum_{B'\in \mathscr{B}_{k'}: B\subset B'} \mathbb P(\mathsf{StatEquiv}_{k'}(B')^c)\,.
    \end{align*}
    Bounding the number of summands, and applying Corollary~\ref{cor:statequiv-bound} on each summand, we get 
    \begin{align*}
        \mathbb P(\mathsf{StatEquiv}_{\ge k}(B)^c) \le \sum_{k' \ge k} (2\ell_{k'})^d \frac{1}{(2\ell_{k'+1})^d} \frac{q_{k'+1}}{10} \le  C q_{k+1}\,,
    \end{align*}
    for a universal constant $C$, since the sequence $(q_{k'})_{k'}$ is summable, decaying faster than any polynomial. This concludes the proof so long as $\ell_0$ is a large enough constant (only depending on $C,d$). 
\end{proof}

\begin{cor}\label{cor:statequiv-annulus-bound}
For any $B \in \mathscr{B}_k$ and any $R \in \mathscr{R}_{k-1}(B)$, 
    \begin{align*}
        \mathbb P\big(\exists t\in [T_{k-1},T_k] : V_t^{R,\pi}(E_{+2}(R) \setminus E_{+1}(R)) \ne V_t^{B\setminus R,\pi}(E_{+2}(R)\setminus E_{+1}(R)\big)\le \frac{1}{(2\ell_{k})^{d+1}}  \frac{q_{k+1}}{10}\,.
    \end{align*}
\end{cor}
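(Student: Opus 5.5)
I will follow the template of the proof of Corollary~\ref{cor:statequiv-bound}: reduce the dynamical statement to a static, equilibrium coupling estimate, and then apply the annulus lemmas.

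\textbf{Reduction to fixed times.} Let $\mathcal B(R)$ be the event that more than $2t_k|E_{+2}(R)|$ clock rings occur in $E_{+2}(R)$ during $[T_{k-1},T_k]$. By a Poisson Chernoff bound, $\mathbb P(\mathcal B(R))\le e^{-t_k|E_{+2}(R)|/C}$. This is exponentially small in $\ell_k$, which is far smaller than the target.

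Now work on $\mathcal B(R)^c$ and condition on the clock ring times. The equality on the annulus $E_{+2}(R)\setminus E_{+1}(R)$ can only change at a ring in $E_{+2}(R)$. So it suffices to check equality at time $T_{k-1}$ and just after each of the at most $2t_k|E_{+2}(R)|$ rings, and I union bound over these times.

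Both $V^{R,\pi}_t$ and $V^{B\setminus R,\pi}_t$ are stationary Glauber chains coupled monotonically, with $V^{B\setminus R,\pi}\le V^{R,\pi}$. Indeed, the former has $-$ frozen on $R$ and a farther $+$ boundary; this should follow from monotonicity in boundary conditions and in the initial coupling of Definition~\ref{def:initialization-coupling}. Conditionally on the clock times, each chain at each time is distributed according to its stationary measure. By ordering, the probability of disagreement on the annulus is at most
\[
\sum_{v\in E_{+2}(R)\setminus E_{+1}(R)} \Big(\pi^+_{E_{+4}(R)}(\sigma_v=+1)-\pi^{\pm}_{E_{+4}(B)\setminus R}(\sigma_v=+1)\Big),
\]
which is bounded by $|E_{+2}(R)|$ times the total-variation distance between the two marginals on the annulus.

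\textbf{Equilibrium estimate.} I compare both measures to $\pi^+_{\mathbb Z^d}$ restricted to $E_{+2}(R)\setminus E_{+1}(R)$. Let $r\asymp \mathrm{side}(R)$, which lies between $\ell_{k-1}$ and $100s_k\ell_{k-1}$. The annulus is at distance $\ell_{k-1}/10$ from $R$ and at distance $\ell_{k-1}/5$ from $\partial E_{+4}(R)$.
\begin{itemize}
\item For $\pi^+_{E_{+4}(R)}$, Lemma~\ref{lem:stationary-plus-bc-stationary-mixing} gives a bound $Cr^{d-1}e^{-\beta\ell_{k-1}/5}$.
\item For the annulus measure with $-$ on $R$ and $+$ outside $E_{+4}(B)$, Lemma~\ref{lem:coupling-annulus-away-from-minus} applies with $\ell=\ell_{k-1}/10$. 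Note that $\ell\le r/100$ fails, since $\ell$ can be as large as $r/10$. I handle this by using the lemma with $\ell' = \min(\ell, r/100)$, which is monotone in the right direction, or by the trivial shrinking of the excluded region. The outer box $E_{+4}(B)$ is not centered at $R$; I sandwich it between boxes centered at $R$ via monotonicity, or use the Markov-property coupling through the outermost $+$ circuit as in that lemma's proof.
\end{itemize}
In $d=2$, Lemma~\ref{lem:coupling-annulus-away-from-minus} yields $Cr^Ce^{-\ell^2/Cr}+Ce^{-\beta r}$. With $\ell\gtrsim r/(100 s_k)$, this is $\exp(-\Omega(\ell_{k-1}/s_k^2))$ up to polynomial factors. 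In $d\ge3$, it yields $Cr^{d-1}e^{-\beta \ell}$.

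\textbf{Main obstacle: comparing scales.} The remaining work is checking that
\[
t_k|E_{+2}(R)|^2\,e^{-\ell_{k-1}/(C s_k^2)}\le (2\ell_k)^{-(d+1)}q_{k+1}/10 .
\]
Here I expect trouble, because $q_{k+1}=\exp(-(\log\ell_{k+1})^{M^3})$ is quasi-polynomial in $\ell_k$, hence enormously small compared with anything exponential in $\ell_{k-1}$. Since $\log\ell_k=(\log\ell_{k-1})^M$, the quantity $e^{-\ell_{k-1}^{1-o(1)}}$ is only about $e^{-\exp((\log \ell_k)^{1/M})}$. This is stretched-exponential in $\log\ell_k$, so it beats $\exp(-(\log\ell_k)^{M^5})$ only if $\exp((\log\ell_k)^{1/M})\gg(\log \ell_k)^{M^5}$. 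That holds once $\ell_0$ is large enough depending on $M$. So the plan is to verify this inequality carefully and choose $\ell_0(M)$ accordingly. A fallback, if the annulus distance is instead measured at scale $\ell_{k-1}$ with the polylog factor $s_k$ entering only polynomially, also works by the same comparison.
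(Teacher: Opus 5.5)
Your proposal is correct and follows essentially the same argument as the paper. The steps match: a Poisson bound on the number of rings, a union bound over ring times using conditional stationarity and the monotone coupling, a triangle inequality through a plus-boundary measure controlled by Lemmas~\ref{lem:stationary-plus-bc-stationary-mixing} and~\ref{lem:coupling-annulus-away-from-minus}, and the scale comparison $e^{-\ell_{k-1}^{1-o(1)}}\ll q_{k+1}(2\ell_k)^{-(d+1)}$ for $\ell_0$ large depending on $M$. Where you go further, it is to fill in details the paper leaves implicit: the hypothesis $\ell\le r/100$ via $\ell'=\min(\ell,r/100)$, the non-concentric outer box $E_{+4}(B)$ via sandwiching, and the per-vertex factor $|E_{+2}(R)|$ from the monotone coupling.
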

\begin{proof}
    Similar to the proof of Corollary~\ref{cor:statequiv-bound}, introduce the event $\mathcal B(B)$ that there are more than $2T_k|E_{+2}(R)|$ many clock rings in $E_{+2}(R) \times [0,T_k]$. The probability of $\mathcal B(B)^c$, by Poisson tail bounds, is at most $e^{ - T_k |E_{+2}(R)|/C}$ for a universal constant $C$. Conditional on any clock ring sequence, the status of the event in the probability above only can change at one of the clock ring times. Moreover, even conditional on the clock ring sequence, the law of the two processes involved are exactly $\pi_{E_{+4}(R)}^+$ and $\pi_{E_{+4}(B)\setminus R}^{\pm}$. Therefore, following the proof of Corollary~\ref{cor:statequiv-bound}, we can bound the above by 
    \begin{align*}
        e^{ - T_k|E_{+2}(R)|/C} + 2T_k |E_{+2}(R)|\|\pi_{E_{+4}(R)}^+(\cdot \restriction_{E_{+2}(R)\setminus E_{+1}(R)}) - \pi_{E_{+4}(B) \setminus R}^\pm(\cdot \restriction_{E_{+2}(R)\setminus E_{+1}(R)})\|_{\tv}\,.
    \end{align*}
    Note here that $R$ has side-length at most $100 s_k \ell_{k-1}$, but its enlargement is only by $4\ell_{k-1}/10$. 
    By a triangle inequality
\begin{align*}
    \|\pi_{E_{+4}(R)}^+(\cdot \restriction_{E_{+2}(R)\setminus E_{+1}(R)})   - \pi_{E_{+4}(B) \setminus R}^\pm & (\cdot \restriction_{E_{+2}(R)\setminus E_{+1}(R)})\|_{\tv} \\
    & \le \|\pi_{E_{+4}(R)}^+ (\cdot \restriction_{E_{+2}(R)})  - \pi_{E_{+4}(B)}^+ (\cdot \restriction_{E_{+2}(R)})\|_\tv \\
    & \quad  + \|\pi_{E_{+4}(B)}^+ (\cdot \restriction_{E_{+1}(R)^c} ) - \pi_{E_{+4}(B) \setminus R}^\pm(\cdot \restriction_{E_{+1}(R)^c})\|_\tv\,.
\end{align*}
The first distance above is bounded by Lemma~\ref{lem:stationary-plus-bc-stationary-mixing} by $C(100 s_k \ell_{k-1})^{d-1}e^{ - \beta \ell_{k-1}}$. The second distance is bounded above by Lemma~\ref{lem:coupling-annulus-away-from-minus} by the following sum, for a $C(d)$:  
$$C(s_k \ell_{k-1})^{C} e^{ -  \ell_{k-1}^2/C s_k \ell_{k-1}} +  C ( s_k \ell_{k-1})^{d-1}e^{ - \beta \ell_{k-1}}\,.$$
 Since the localization scale $s_k$ was chosen to be polylogarithmic in $\ell_k$, as long as $\ell_0$ is larger than an absolute constant (only depending on $M, C,d$), all the above terms are at most $e^{ - \ell_{k-1}^{0.9}}$, and in turn since $q_{k+1}$ is at least quasi-polynomially small in $\ell_k$, the claimed inequality holds.     
\end{proof}

\subsection{Equilibrium estimates for the conditional finite-domain distribution}

Let $\pi_{\mathbb T}^+$ be the plus phase measure on the torus $\mathbb T = (\mathbb Z/n\mathbb Z)^d$, meaning, $\pi_{\mathbb T}^+ = \pi_{\mathbb T} ( \cdot \mid \sum_v \sigma_v \ge 0)$. With minor modifications of the proof of Lemma~\ref{lem:stationary-plus-bc-stationary-mixing} (the additional step being first ruling out topologically non-trivial interfaces separating plus and minus spins before implementing the Peierls argument)  one arrives at the following.

\begin{lem}[See Theorem 1.4 of~\cite{GhSi22}; though easier Peierls-style arguments suffice at low enough temperatures]\label{lem:Torus-statequiv}
For every $d\ge 2$, there exists $C(d),c(\beta,d)>0$ (going to infinity as $\beta \uparrow \infty$) such that for all $\beta>\beta_c(d)$ the following holds. Suppose $\Lambda_r \subset \mathbb T_n^d$ is a box of side-length $r$, and let $\Lambda_{r/2}$ be the concentric box of side-length $r/2$. 
    $$\|\pi_{\mathbb T}^+( \cdot \restriction_{\Lambda_{r/2}}) - \pi_{\Lambda_r}^+(\cdot \restriction_{\Lambda_{r/2}})\|_\tv \le C e^{ -  c r}\,.$$ 
\end{lem}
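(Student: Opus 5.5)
We prove the bound by the same monotone-coupling/Markov-property scheme as in Lemma~\ref{lem:stationary-plus-bc-stationary-mixing}, with one extra preliminary step that rules out topologically non-trivial configurations on the torus. Throughout we take $r\le n/2$, so that $\Lambda_r$ embeds in $\mathbb T_n^d$ without wrap-around; otherwise the claim is vacuous after adjusting $C$.

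\textbf{Sandwiching.} Since $\pi_{\mathbb T}^+$ is a conditional measure, it is not directly comparable to $\pi_{\Lambda_r}^+$. We therefore compare both measures to $\pi_{\Lambda_r}^{+}$ from below and from above, using a good event. Call $\sigma\in\{\pm1\}^{\mathbb T}$ \emph{good} if the annulus $\Lambda_r\setminus\Lambda_{r/2}$ contains a $*$-connected closed surface of plus spins (a plus circuit when $d=2$) separating $\Lambda_{r/2}$ from $\Lambda_r^c$. On this event, take the outermost such plus surface $\gamma$ inside $\Lambda_r$. It is measurable with respect to the spins on and outside $\gamma$. By the domain Markov property, conditionally on $\gamma$ and on the outside configuration, the law of $\sigma$ inside $\gamma$ under $\pi_{\mathbb T}$ is the Ising measure with $+$ boundary on $\gamma$.

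The conditioning on $\{\sum_v\sigma_v\ge 0\}$ is the one non-local issue, since it does not factor through $\gamma$. Here we use that the interior of $\gamma$ has at most $r^d\ll n^d$ sites. Under $\pi_{\mathbb T}^+$, with probability $1-e^{-cn^{d-1}}$ the magnetization is at least $m^*n^d/2$, and then no interior configuration can change its sign. So, up to an error $e^{-cn^{d-1}}\le e^{-cr}$, the conditional interior law is exactly $\pi^+_{\mathrm{int}(\gamma)}$. The same exploration applied to $\sigma'\sim\pi_{\Lambda_r}^+$, under the monotone coupling of Definition~\ref{def:initialization-coupling} ordered via the outermost-surface exploration, gives an identical interior law. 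Identity-coupling the two interiors then yields agreement on $\Lambda_{r/2}$ on the good event.

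\textbf{Bounding the bad event.} It remains to show that $\pi_{\mathbb T}^+(\text{not good})+\pi_{\Lambda_r}^+(\text{not good})\le Ce^{-cr}$. If no separating plus surface exists, there is a $*$-connected minus path from $\Lambda_{r/2}$ to $\Lambda_r^c$, crossing distance at least $r/4$.

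For $\pi_{\Lambda_r}^+$ this is exactly the Peierls bound already used in Lemma~\ref{lem:stationary-plus-bc-stationary-mixing}, giving $Cr^{d-1}e^{-\beta r/4}$.

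For $\pi_{\mathbb T}^+$ we first rule out topologically non-trivial interfaces, i.e. contours winding around the torus. Any such contour has size at least $n^{d-1}$. A standard Peierls/contour-counting bound (at $\beta>\beta_0$; for general $\beta>\beta_c$ we instead cite \cite[Theorem 1.4]{GhSi22}) gives probability at most $e^{-c n^{d-1}}$ for their existence. Once they are excluded, the configuration is a plus sea containing finite contours; conditioning on positive magnetization only selects the plus sea, up to another $e^{-cn^{d-1}}$ error. A long minus crossing must then lie inside a trivial contour of diameter at least $r/4$, and a Peierls union bound over contours touching $\Lambda_{r/2}$ gives $Cr^{d-1}\sum_{L\ge r/4}C^Le^{-2\beta L}\le Ce^{-cr}$, with $c$ growing linearly in $\beta$.

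\textbf{Main obstacle.} The hardest step is the handling of the global conditioning on $\sum_v\sigma_v\ge0$. It interacts both with excluding topologically non-trivial interfaces and with the domain Markov step. I would resolve it as above: show that the magnetization is bounded away from the threshold with probability $1-e^{-cn^{d-1}}$, so that conditioning is a negligible perturbation on events supported in $\Lambda_r$.
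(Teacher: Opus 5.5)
Your overall route is the one the paper indicates: it gives no proof beyond citing \cite{GhSi22} and saying to rerun Lemma~\ref{lem:stationary-plus-bc-stationary-mixing} after ruling out topologically non-trivial interfaces. You are also right that the conditioning on $\Omega_+=\{\sum_v\sigma_v\ge 0\}$ needs attention. But the coupling step does not work as written.

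Identity-coupling the interiors requires a \emph{single} separating surface valid for both samples. Separate explorations give different surfaces, since the outermost plus surface of the higher sample is in general further out than that of the lower one. The mechanism of Lemma~\ref{lem:stationary-plus-bc-stationary-mixing} is a \emph{joint} exploration from the outside in, under which the lower sample's surface is automatically plus in the upper one. This requires, at every step, that the conditional law of the next spin under the lower measure, given the revealed partial configuration, be dominated by that under the upper measure. That holds for Markov fields with ordered boundary conditions, which is all Definition~\ref{def:initialization-coupling} covers. But $\pi^+_{\mathbb T}$ is not such a field: $\Omega_+$ is an increasing, non-local event. There is no a priori ordering between $\pi^+_{\mathbb T}$ restricted to $\Lambda_r$ and $\pi^+_{\Lambda_r}$; at $\beta=0$, for instance, the one-site marginals of the former exceed $1/2$ while those of the latter equal $1/2$. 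Your magnetization argument enters only after $\gamma$ has been found, which is too late: it identifies the interior law but does not justify the exploration that produced $\gamma$.

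The repair uses your own ingredient, applied before exploring. Let $G=\{\sum_{v\notin\Lambda_r}\sigma_v\ge|\Lambda_r|\}$, which is measurable with respect to the exterior of $\Lambda_r$. Since $G\subset\Omega_+$, one has $\pi^+_{\mathbb T}(\cdot\cap G)=\pi_{\mathbb T}(\cdot\cap G)/\pi_{\mathbb T}(\Omega_+)$. So given an exterior configuration $\eta\in G$, the law of $\sigma_{\Lambda_r}$ is exactly $\pi^\eta_{\Lambda_r}\preceq\pi^+_{\Lambda_r}$. The exploration of Lemma~\ref{lem:stationary-plus-bc-stationary-mixing} then runs verbatim with the torus sample as the lower one, and the bad-event bound is needed only for that sample.

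The price is $\pi^+_{\mathbb T}(G^c)$. Bounding it needs a large-deviation bound on the magnetization in the plus phase (more than Lemma~\ref{lem:torus-bottleneck} gives), together with $|\Lambda_r|\le c\,m^*(\beta)\,n^d$. In particular, your reduction to $r\le n/2$ on the grounds that the bound is otherwise vacuous is false, since $Ce^{-cr}$ is tiny there. Your argument really covers only $r\le c(\beta)n$. This suffices for the paper's use, where $r=n^{o(1)}$, but it should be stated.

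Two smaller points. First, \emph{winding} contours should mean non-null-homologous ones; those do have at least $n^{d-1}$ plaquettes. In $d\ge 3$, a tube of $4n$ plaquettes around a wrapping line of minuses winds, but it is an ordinary flippable contour. Second, every Peierls step needs $\beta>\beta_0$, not just the exclusion of non-trivial interfaces. So for $\beta$ near $\beta_c$ the entire statement must come from \cite{GhSi22}, as the paper does.
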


\begin{lem}[See~\cite{Pisztora96,Bodineau05}, though easier Peierls-style arguments suffice at low enough temperatures]\label{lem:torus-bottleneck}
    For every $d\ge 2$ and $\beta>\beta_c(d)$, there exists $C(d)$ and $c(\beta,d)$ (going to inifinity as $\beta\uparrow \infty$) such that 
    \begin{align*}
        \pi_{\mathbb T^d}\Big( |\sum_{v} \sigma_v|\le 2 \Big) \le C e^{ - cn^{d-1}}\,.
    \end{align*}
\end{lem}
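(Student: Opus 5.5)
The plan is a Peierls-type energy/entropy argument on the torus $\mathbb T = \mathbb T_n^d$, organized in two stages. The event $\{|\sum_v \sigma_v| \le 2\}$ forces roughly half the vertices to be minus. The key fact to extract is that any configuration with magnetization near zero must contain a large total amount of contour boundary, namely at least order $n^{d-1}$ disagreeing edges. This holds because either the contours are topologically trivial and enclose half the volume, or there is a topologically nontrivial interface that wraps around the torus.

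\textbf{Step 1 (isoperimetry).} Let $\Gamma(\sigma)$ be the set of dual plaquettes separating unequal spins, and let $S_- = \{v : \sigma_v = -1\}$. If $|\sum_v \sigma_v| \le 2$, then $|S_-|$ and $|S_-^c|$ both lie in $[n^d/2 - 1, n^d/2 + 1]$. The edge-isoperimetric inequality on the torus (for sets of size about half the volume, the edge boundary is at least $c_d n^{d-1}$) then gives $|\Gamma(\sigma)| \ge c_d n^{d-1}$.

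\textbf{Step 2 (energy vs.\ entropy).} I would bound $\pi_{\mathbb T}(|\Gamma| = L)$ directly rather than via a single contour.
\begin{itemize}
\item \emph{Energy.} Any configuration with $|\Gamma| = L$ has weight $e^{-2\beta L}$ relative to the all-plus configuration.
\item \emph{Entropy.} Decompose $\Gamma$ into connected components $\gamma_1, \ldots, \gamma_m$ with sizes $L_i$. Each $\gamma_i$ is rooted at some plaquette and has at most $C_0^{L_i}$ shapes for a lattice constant $C_0(d)$. Summing over roots gives a factor $n^d$ per component. Since each component has size at least $2d$, the number of components is at most $L/(2d)$.
\item \emph{Reconstruction.} The configuration is determined up to a global spin flip by $\Gamma$: on the torus, $\Gamma$ determines the sign pattern given the spin at one vertex, whenever $\Gamma$ is consistent.
\end{itemize}
Together these give
\begin{align*}
\pi_{\mathbb T}(|\Gamma| = L) \le 2 \sum_{m} \binom{\cdot}{\cdot} \, n^{dm} C_0^{L} e^{-2\beta L}.
\end{align*}

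\textbf{Main obstacle: the $n^{dm}$ root factor.} This factor could overwhelm the energy term when many small components are present. I would handle it as follows.

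First, the Gibbs measure is normalized by $Z \ge \sum$ over configurations obtained by adding small contours. So rather than comparing against the all-plus configuration, I use a flip/removal map. Given $\sigma$ in the event, identify the union $G$ of the ``large'' components of $\Gamma$: those of size at least $n^{(d-1)/2}$ (or simply at least $K\log n$), together with all nontrivial ones.

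\emph{Claim.} Small components enclose total volume at most $\sum_i L_i^{d/(d-1)}$. Using the bound $\le L_i \cdot K\log n$ for small sizes, this volume is negligible compared to $n^d/2$ unless their total size is at least $c n^{d-1}/\log n$. The next two cases then cover all possibilities.

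\emph{Case (a): the large components carry total size at least $c' n^{d-1}$.} Erase them with the standard Peierls map: flip the spins inside the regions they bound, or for nontrivial wrapping interfaces, flip a slab. The map's multiplicity is at most $C_0^{|G|}$ times $n^{d\cdot \#\text{large}}$. Since $\#\text{large} \le |G|/(K\log n)$, the root cost is at most $e^{|G|\, d/K}$, which is absorbed into $e^{-(2\beta - \log C_0 - d/K)|G|}$.

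\emph{Case (b): the small components have total size at least $c n^{d-1}/\log n$.} Here the minus phase is ``dusty'' in a way that is exponentially rare under the measure conditioned on the large contours. Standard cluster/Peierls bounds show that each vertex is enclosed by a contour of size $L$ with probability at most $e^{-(2\beta - C)L}$, uniformly. A large-deviation (Chernoff over disjoint contours) bound for the total enclosed volume then gives $e^{-c n^{d}/\mathrm{polylog}}$, which is even better than needed.

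Combining the two cases yields $\pi_{\mathbb T}(|\sum_v \sigma_v| \le 2) \le C e^{-c(\beta) n^{d-1}}$ with $c(\beta) \approx 2\beta c_d - O(1) \to \infty$. This proves the statement for $\beta$ large, which is all the application requires. For all $\beta > \beta_c$ I would simply invoke \cite{Pisztora96,Bodineau05}, as the lemma statement indicates.
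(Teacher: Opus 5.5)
Your overall plan, a Peierls argument for large $\beta$ with \cite{Pisztora96,Bodineau05} covering all $\beta>\beta_c$, is what the paper's remark indicates; the paper gives no further proof. Your Case (a) is sound. Because $G$ contains every homologically nontrivial component, $\Gamma\setminus G$ is a boundary, so the flip map exists and loses exactly $2\beta|G|$, and the root cost $n^{d|G|/(K\log n)}=e^{d|G|/K}$ is absorbed.

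The gap is in Case (b). The event you state there, that the small components have total size at least $cn^{d-1}/\log n$, is typical, not rare. $\pi_{\mathbb T}$ has about $e^{-4d\beta}n^d$ isolated flipped spins, so this event has probability tending to $1$ and cannot be bounded by $e^{-cn^{d-1}}$. Correcting the slip in your Claim (volume at most $sK\log n$ is negligible unless $s\gtrsim n^d/\log n$) does not help either: once $n$ is large, any threshold on total contour \emph{size} below $e^{-4d\beta}n^d$ is still typical. The real omission is that the magnetization constraint never enters Case (b). Your Claim bounds the volume of the small contours from above, which is the direction that does not help.

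The fix is to apply isoperimetry to $G$ itself.
\begin{enumerate}
\item Since $\Gamma$ and $\Gamma\setminus G$ are boundaries, $G=\partial A$ for some vertex set $A$.
\item If $|G|<c'n^{d-1}$ with $c'=c'(d,\epsilon)$ small, torus isoperimetry gives $\min(|A|,|A^c|)\le\epsilon n^d$.
\item Up to a global sign, $\sigma$ agrees with the configuration that is $-$ on $A$ and $+$ off $A$, except on the mod-$2$ union of the interiors of the small contours. Each small contour is trivial, with $|\mathrm{int}\,\gamma|\le C|\gamma|^{d/(d-1)}$.
\item Hence $|\sum_v\sigma_v|\le 2$ forces $V:=\sum_{\gamma\text{ small}}|\mathrm{int}\,\gamma|\ge(\tfrac12-\epsilon)n^d-1$.
\end{enumerate}
This volume event is the one to bound, and a per-vertex marginal does not suffice for it. Use instead the Peierls product bound $\pi(\text{every }\gamma\in F\text{ is a component of }\Gamma)\le e^{-2\beta\sum_{\gamma\in F}|\gamma|}$ for compatible families $F$. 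Expanding $e^{\lambda V}=\prod_\gamma\big(1+(e^{\lambda|\mathrm{int}\,\gamma|}-1)\mathbf 1_{\gamma\text{ is a component}}\big)$ then gives
\[
\mathbb E[e^{\lambda V}]\le\exp\Big(\sum_\gamma(e^{\lambda|\mathrm{int}\,\gamma|}-1)e^{-2\beta|\gamma|}\Big).
\]
Take $\lambda\asymp\beta(K\log n)^{-1/(d-1)}$, so that $\lambda|\mathrm{int}\,\gamma|\le\beta|\gamma|$ for every small $\gamma$. Keep the centering $e^x-1\le xe^x$. Without it, single-site contours contribute $\asymp n^de^{-4d\beta}$, which exceeds $\lambda n^d$ once $n$ is large, since $\lambda\to0$. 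With it you get $\pi(V\ge n^d/3)\le e^{-c\beta n^d/(\log n)^{1/(d-1)}}$. Together with Case (a), this completes the large-$\beta$ proof.
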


\subsection{Concluding Proposition~\ref{prop:Dtilde-probability-bound}}

\begin{proof}[\textbf{\emph{Proof of Proposition~\ref{prop:Dtilde-probability-bound}}}]
    We prove this bound inductively. For the base case, $k=0$, by Definition~\ref{def:Dtilde}, for every $B' \in \mathscr{B}_0$ that $\widetilde D_0(B')$ is $1$ if some vertex $v$ in $B'$ (of which there are $\ell_0^d$) has $Q(v) = -1$, so its probability is at most $q_0$ so as long as $1-p_0 \le \frac{1}{\ell_0^d \ell_3}$ (a $\beta$-independent threshold because $\ell_0$ is large independently of $\beta$).
    
    Now assume the bound of Proposition~\ref{prop:Dtilde-probability-bound} holds for $k-1 \ge 0$ and show it holds for $k$. We start with the union bound 
    \begin{align}\label{eq:decomposing-Dtilde-probability}
           \mathbb P(\widetilde{D}_k(B) =1) \le \mathbb P(|\mathsf{Dis}_{k-1}(B)| > s_k) + \mathbb P(\mathsf{Bad}_{k-1}(B) \ne \emptyset) + \mathbb P(\mathsf{InfProp}_k(B)^c) + \mathbb P(\mathsf{LocCoup}_k(B)^c)\,.
    \end{align}
    By the inductive assumption and Lemma~\ref{lem:rarity-of-disagreement-regions}, the first term above is bounded by $q_k/10$. For the second term in~\eqref{eq:decomposing-Dtilde-probability}, by a union bound over $B'\in \mathscr{B}_{k-1}$ such that $B' \subset E_{+3}(B)$, we have 
    \begin{align*}
        \mathbb P(|\mathsf{Bad}_{k-1}(B)| \ne \emptyset) \le (2\ell_k)^d \max_{B'\in \mathscr{B}_{k-1}} \Big(\mathbb P(\mathsf{StatEquiv}_{k-1}(B')^c) + \mathbb P(\mathsf{InfProp}_{k-1}(B')^c)\Big)\,.
    \end{align*}
    By Lemma~\ref{lem:information-propagation}, $\mathbb P(\mathsf{InfProp}_{k-1}(B')^c) \le e^{ - \ell_{k-1}/10}$ which is smaller than $q_k/(10 (2\ell_k)^d)$ so long as $\ell_{k-1}$ is at least a large enough constant (as a function of $M$). For the other term, above, we use Corollary~\ref{cor:statequiv-bound}, to get that term two of~\eqref{eq:decomposing-Dtilde-probability} is bounded by $q_{k}/5$. 

    Next, on term three of~\eqref{eq:decomposing-Dtilde-probability}, by Lemma~\ref{lem:information-propagation}, $\mathbb P(\mathsf{InfProp}_k(B)^c) \le e^{ - \ell_k/10}$, which is smaller than $q_k/10 = \frac{1}{10} e^{ - (\log \ell_k)^{M^3}}$ so long as $\ell_k$ is at least a large enough constant (as a function of $M$). 

    Finally, for the fourth term of~\eqref{eq:decomposing-Dtilde-probability}, by a union bound, 
    \begin{align*}
        \mathbb P(\mathsf{LocCoup}_k(B)^c) \le \sum_{R\in \mathscr{R}_{k-1}(B)} \mathbb P(\mathsf{Sandwich}(B,R)^c)\,.
    \end{align*}
    There are at most $\ell_k^d \cdot (100 s_k \ell_{k-1}) \le \ell_{k}^{d+1}$ many choices for the $R$. For each one, bounding the probability of $\mathsf{Sandwich}(B,R)^c$ by a union bound and Corollary~\ref{cor:statequiv-annulus-bound}, we get   
    \begin{align*}
        \mathbb P(\mathsf{LocCoup}_k(B)^c) \le \ell_{k}^{d+1} \cdot \Big( \max_{R\in \mathscr{R}_{k-1}(B)} \mathbb P(V_{T_k}^{R,+}\ne  V_{T_k}^{R,-}) + \frac{1}{(2\ell_{k})^{d+1}} \frac{q_{k+1}}{10}\Big)\,.
    \end{align*}
    The processes $V_{T_k}^{R,-}$ and $V_{T_k}^{R,+}$ are Glauber dynamics on $E_{+4}(R)$ with plus boundary conditions, run for time $t_k = \ell_k/10$. By monotonicity of the coupling, we can bound 
    \begin{align*}
        \mathbb P(V_{T_k}^{R,+}\ne  V_{T_k}^{R,-}) \le  (200 s_k \ell_{k-1})^d \max_{\iota \in \{+,-\}} \|  \mathbb P(V_{T_k}^{R,\iota} \in \cdot) - \pi_{E_{+4}(R)}^+\|_{\tv}\,.
    \end{align*}
    By sub-multiplicativity of total-variation distance to stationarity after the mixing time, as long as $t_k \ge \tmix(E_{+4}(R), +1)$ (the mixing time on $E_{+4}(R)$ with $+1$ boundary conditions), by Assumption~\ref{assump:uniform-mixing-time}, this is at most 
    \begin{align*}
        \mathbb P(V_{T_k}^{R,+}\ne  V_{T_k}^{R,-}) \le  (200 s_k \ell_{k-1})^d e^{ - t_k / C\exp((\log 100 s_k \ell_{k-1})^C)}\,,
    \end{align*}
    for a $\beta$-independent constant $C$. Using $200 s_k \le \ell_{k-1}$, say, and then that $\log \ell_{k-1} = (\log \ell_k)^{1/M}$, this is at most 
    \begin{align*}
        \ell_{k-1}^{2d} \exp \Big( - \frac{\ell_k}{C M e^{2 (\log \ell_k)^{C/M}}}\Big)\,.
    \end{align*}
    As long as $M \ge M_0(C,d)$ and in turn $\ell_0$ is a sufficiently large constant, this decays as a stretched exponential in $\ell_k$ and therefore is bounded by $\frac{1}{(2\ell_k)^{d+1}} \frac{q_k}{10}$ as well. Plugging back in to the bound on $\mathbb P(\mathsf{LocCoup}_k(B)^c)$, we get that that is also at most $q_k/5$. 
    In total, we have shown that each of the four terms in~\eqref{eq:decomposing-Dtilde-probability} are at most $q_k/5$, concluding the proof. 
\end{proof}

\subsection{Proof of Theorem~\ref{thm:main}}

Given the above, we have fully established Theorem~\ref{thm:main-general}. To deduce Theorem~\ref{thm:main}, it suffices to handle the case of $\beta \in (\beta_c,\beta_0)$ by taking $p_0$ sufficiently large and using a significantly simpler argument as the initialization stochastically dominates the target stationary distribution. 

\begin{proof}[\textbf{\emph{Proof of Theorem~\ref{thm:main}}}]
    Let $\beta_0$ be the larger of the constants from Theorem~\ref{thm:main-general} and Theorem~\ref{thm:beta-independent-2D-mixing-time}. By Theorem~\ref{thm:beta-independent-2D-mixing-time}, Assumption~\ref{assump:uniform-mixing-time} holds in $d=2$ for that $\beta_0$, and therefore by Theorem~\ref{thm:main-general} the claimed result of Theorem~\ref{thm:main} holds for the $p_{0}(2)$ of Theorem~\ref{thm:main-general}, for all $\beta>\beta_0$. 
    
    It now suffices to show that the claims of Theorem~\ref{thm:main} hold for a different $p_0'<1$, for all $\beta \in (\beta_c,\beta_0]$ (then concluding by taking $p_0 \gets\max\{p_0(2),p_0'\}$). By the Ising single-site marginals  (e.g., as in~\eqref{eq:Glauber-update-rule}), setting $p_0'$ to be the maximal probability of a site being plus conditional on its neighbors, there exists a $p_0' = \frac{1}{1+e^{-8\beta_0}}<1$ such that $\pi_{\mathbb Z^2}^+ \preceq \bigotimes_{\Z^d} \text{Rad}(p)$ for all $\beta \le \beta_0$ and all $p>p_0'$. 
    
    \smallskip 
    \noindent 
    \textbf{Infinite volume}. We begin with the infinite-volume case. By the domination above, for any $v$, 
    \begin{align*}
        \pi_{\mathbb Z^2}^+(\sigma_v=+1) \le \mathbb P_{\otimes \text{Rad}(p)}(X_t(v)  =+1) \le \mathbb P_{+}(X_t(v) = +1)\,.
    \end{align*}
    Let $X_t^+$ denote the chain from the all-plus initialization, and $X_t^Q$ be the one from the Rademacher initialization. 
    Under the grand monotone coupling, we have the bound 
    \begin{align*}
        \mathbb P \big( X_t^\pi(\Lambda_r) \ne X_t^Q(\Lambda_r)\big)  & \le \E[X_t^{\pi}(\Lambda_r) \ne X_t^1(\Lambda_r) ]  \le r^d \mathbb E[X_t^\pi (v) \ne X_t^1 (v)]\,.
    \end{align*}
    Then by e.g.,~the proof of Proposition~3.3 of~\cite{GhSi22} (using spatial mixing within a phase when $\beta>\beta_c$ and monotonicity of dynamics to move to a dynamics on a box of radius $r$ for time $t$ with all-plus boundary conditions), for any $r,t$ this right-hand side is at most  $C_\beta e^{- r/C_\beta} +  r^d e^{ - t/\tmix(\Lambda_r^+)}$. Choosing $r = (\log t)^{100}$ or $t = e^{ r^{1/100}}$ (so that by~\cite{LMST}, $t/\tmix(\Lambda_r^+) \ge C_\beta^{-1} e^{ r^{1/100}- C_\beta(\log r)^2} = e^{\Omega(r^{1/100})}$), then 
    \begin{align*}
        \mathbb P \big( X_t^\pi(\Lambda_r) \ne X_t^Q(\Lambda_r)\big)  \le e^{ - \Omega((\log t)^{100})} + r^d e^{ - e^{\Omega( r^{1/100})}} \le r^d e^{ - \Omega( (\log t)^{100})}\,.
    \end{align*}
    Evidently, since $t \ge r$ then the $r^d$ prefactor can be absorbed into the constant in the exponential and this is at most $e^{ - \Omega((\log t)^{100})}$.     
    By Markov's inequality therefore, if $t \ge r$, we have for all $s$, 
    \begin{align*}
        \P(x_0 : \mathbb P(X_t^{x_0}(\Lambda_r) \ne X_t^\pi(\Lambda_r)) \ge s)  &\le \frac{\E_{x_0 \sim Q}[\P(X_t^{x_0}(\Lambda_r) \ne X_t^{\pi}(\Lambda_r))]}{r} \\
        & \le s^{-1}e^{ - \Omega((\log t)^{100})}\,.
    \end{align*}
    By taking $s = e^{ - \Omega((\log t)^{100}})$ in such a way that the right-hand above is itself $e^{ - \Omega((\log t)^{100})}$ (just taking half the constant in the exponent), we conclude. 

    \smallskip
    \noindent \textbf{Finite volume}. The proof is essentially identical to the infinite-volume one with a need to do the stochastic domination step with respect to a restricted chain that rejects updates that take it to negative magnetizations. 
This chain is identical to the unrestricted one until the hitting time of zero-magnetization, which occurs on $e^{\Omega(n^{d-1})}$ timescales, which is the source of the upper bound on the times $t$ for which it applies.  
    
    Indeed, by Eq.~(3.4) of~\cite{GhSi22} and the bounds following it in in the two subsequent sentences, for all $t \le e^{ O(n^{d-1})}$, with the choice of $g_n(t) = e^{ \Omega((\log t)^{1/2})}$ due to the mixing time result of~\cite{LMST}, 
    \begin{align*}
    	 \|\mathbb P(X_t^+(v)\in \cdot) - \pi^+_{\mathbb T}(\sigma_v \in \cdot)\|_{\tv} \le t e^{ - \Omega(n^{d-1})} + n^d e^{ - e^{\Omega((\log t)^{1/2})}} 
    \end{align*}
    which is at most $n^{-10}$ for $t  \in [n^{o(1)}, e^{ O(n^{d-1})}]$, for a suitably chosen $n^{o(1)}$ sequence. 
	\end{proof}

\subsection{Extension to other biased initializations}
In this section, we show how to modify the above argument to allow $Q$ to instead be drawn from $\pi_{\Z^2,\beta',h}$ with a large enough external field $h$ and any $\beta'$, or from a different low-temperature plus phase distribution $\pi^+_{\Z^2,\beta'}$ for $\beta'>\beta_0$. The idea will be  instead of using a single $Q$ process, to also consider a sequence of $Q$ processes, each of which are stationary samples $\pi_{E_{+4}(B),\beta',h}$ for each $B$. 

\begin{proof}[\textbf{\emph{Proof of Theorem~\ref{cor:mixing-from-other-temp}}}]
For each $B \in \mathscr{B}_k$, let $Q_B$ be a stationary sample from $\pi_{E_{+4}(B),\beta',h}^+$, i.e., stationary on $E_{+4}(B)$ with plus boundary conditions and external field $h>h_0$. The random variables $(Q_B)_{B}$ are all put into the same probability space by an independent use of the coupling of stationary samples from Definition~\ref{def:initialization-coupling}.

Observe that one has the inequalities $Q_{B'}\le Q_B$ if $B \subset B'$, and for $B^1 \subset B^2 \subset \cdots$ with $B^i \in \mathscr{B}_i$, then $\lim_{i\to\infty} Q_{B^i}(u)$ exists and is drawn from $\pi_{\mathbb Z^2,\beta',h}$. In particular, one still has for fixed $t$ that~\eqref{eq:i-to-infty-limit-of-localized-dynamics} still holds in the sense that 
\begin{align*}
    X_t^{\pi \wedge Q}(u) = \lim_{i\to\infty} U_t^{B^i, \pi \wedge Q_{B^i}}(u)\,.
\end{align*}
We now describe the (natural and minor) modifications to the proof that would give the same result of Theorem~\ref{thm:main-general} when the initial process $Q$ is $\pi_{\Z^2,\beta',h}$. Whenever we are at scale $B\in \mathscr{B}_k$, and the process $Q(u)$ is invoked, it is evaluated using $Q_B(u)$. 

Define the analogue of $\mathsf{StatEquiv}$ for the $Q$-process: 
\begin{align*}
    \mathsf{StatEquiv}^Q_{k}(B) = \bigcap_{\substack{B'\in \mathscr{B}_{k+1} \\ B \subset E_{+3}(B') }} \{Q_B(E_{+3}(B)) = Q_{B'}(E_{+3}(B))\}\,. 
\end{align*}
Continuing along the lines of the proof, note that in Lemma~\ref{lem:events-are-measurable}, we could add that if $B'\in \mathscr{B}_{k-1}$ such that $B'\subset E_{+3}(B)$, then $\mathsf{StatEquiv}_{k-1}^Q(B')$ is measurable with respect to the randomness of $E_{+4}(B)\times \{0\}$. 

Turning now to Definition~\ref{def:Dtilde}, the adjustment to the definition would be that in~\eqref{eq:Bad-set}, there would be an extra union with $\mathsf{StatEquiv}_{k-1}^Q(B')^c$. In words that is to say that a block is also called bad if the local version of the $\pi_{\beta',h}$ initialization does not agree with the global version.  

Corollary~\ref{cor:measurable} holds because of the addition of $\mathsf{StatEquiv}_{k-1}^Q(B')$'s measurability to Lemma~\ref{lem:events-are-measurable}.  

In Proposition~\ref{prop:Dtilde-implies-coupling}, the statement becomes $U_{T_k}^{B,\pi \wedge Q_B}(B)$, and $\mathsf{StatEquiv}_{\ge k}(B)$ is further intersected with 
\begin{align*}
    \mathsf{StatEquiv}_{\ge k}^{Q}(B) = \bigcap_{k'\ge k} \bigcap_{B' \in \mathscr{B}_{k'}: B \subset B'} \mathsf{StatEquiv}^Q_{k'}(B')\,.
\end{align*}
Towards the proof of Proposition~\ref{prop:Dtilde-implies-coupling}, 
the preliminary steps of Claim~\ref{cl:covering}--Lemma~\ref{lem:sandwiching-couplings} are all unchanged as they do not involve the $Q$ process. 
In its proof, the steps to consider are those that involve the $Q$ process. For the base case, it becomes: for every $B \in \mathscr{B}_0$, $U_{0}^{B,\pi \wedge Q_B}(B) = U_0^{B,\pi}(B)$ by construction $\widetilde  D_0(B)= \mathbf 1\{\bigcup_{v\in B} Q_B(v) = -1\}$. 

For the inductive step, when going to show~\eqref{eq:inductive-step-that-uses-Q}--\eqref{eq:inductive-step-that-uses-Q-2}, the first equality holds because we have added the event $\mathsf{StatEquiv}_{k-1}^Q(B')$ into $\mathsf{Bad}_{k-1}(B) = \emptyset$, and the implication still follows from the information propagation bound. 

For the ``moreover" statement, recall that under $\mathsf{StatEquiv}_{\ge k}^Q(B)$, if $(B_{\ell})_{\ell \ge k+1}$ are a sequence in $\mathscr{B}_{\ell}$ with $B \subset B_\ell$, then 
\begin{align*}
    X_0^{\pi \wedge Q} (E_{+3}(B)) = \lim_{\ell \to \infty} U_0^{B_\ell, \pi\wedge Q_{B_\ell}}(E_{+3}(B))= U_{0}^{B, \pi \wedge Q_{B}}(E_{+3}(B))\,.
\end{align*}
From there, the conclusion follows identically. 

The final things to generalize are Proposition~\ref{prop:Dtilde-probability-bound} and Corollary~\ref{cor:statequiv-atleastk-bound}. For the base case of Proposition~\ref{prop:Dtilde-probability-bound}, we just note that as long as $h \ge h_0$ is sufficiently large (depending on $d$ but not depending on $\beta$), for any $\beta'>0$, one can have that the probability of some minus site in $B'\in \mathscr{B}_0$ by a union bound is at most $\frac{1}{\ell_3}$ by the fact that the magnetization of $\pi_{\mathbb Z^2,\beta',h}$ goes to $1$ as $h\to\infty$. 

For both of these, all that is required to add in is to absorb a further bound on the probability of $\mathsf{StatEquiv}_{k-1}^Q(B)$ analogous to Corollary~\ref{cor:statequiv-bound}. But this reduces to the stationary estimate analogous to~\eqref{eq:disagreement-probability-above-bd} for Ising models with a large external field. The exponential decay of such models is classical and holds at large $h$ for every $\beta'>0$. 
Thus, the $Q$-process analogue for~\eqref{eq:disagreement-probability-above-bd} is at most $|E_{+3}(B)|e^{ - c\beta \ell_k/10} \le q_k/10$ as there. Since there was room for an extra $q_k/10$ in the proof of Proposition~\ref{prop:Dtilde-probability-bound} the same inductive proof goes through.  

The proof of Theorem~\ref{thm:main-general} then goes mutatis mutandis, yielding the desired generalization. 
\end{proof}

\begin{remark}\label{rem:other-low-temp-initialization}
    The case of initializations from other low-temperature plus phase measures, i.e., $\pi_{\Z^2,\beta'}^+$ proceeds in the same way. Indeed, the only places properties of the initial distribution were used above were its translation invariance, its exponential decay of correlations, and its having a magnetization sufficiently close to $1$. All of these are achieved by $\pi^+_{\Z^2,\beta'}$ for $\beta'>\beta_0$. 
\end{remark}

\appendix

\section{Uniform quasi-polynomial mixing with plus boundary in 2D}\label{sec:uniform-mixing-time}

In this section, we show that the 2D Ising model Glauber dynamics satisfies Assumption~\ref{assump:uniform-mixing-time}. Namely, our aim in this section is to establish Theorem~\ref{thm:beta-independent-2D-mixing-time}.

A quasipolynomial bound on the mixing time was established in~\cite{LMST} for the 2D Ising Glauber dynamics with $+$ boundary conditions; however the dependency of the constant in the bound on $\beta$ was not tracked, and in fact, relies subtly on the $\beta$-dependencies of various equilibrium estimates on Gaussian tail behavior of the interface. In the below we track the chain of dependencies, give some more precise $\beta$-dependent bounds on interface fluctuations, and show a quasi-polynomial mixing time bound with plus boundary that does not deteriorate as $\beta \uparrow \infty$.

\subsection{We only need to consider $n$ at least exponential in $\beta$ }
We first show that in order to establish Theorem~\ref{thm:beta-independent-2D-mixing-time}, it is sufficient to give a quantitative bound on the large-$\beta$ dependence of the constants in the bound of~\cite{LMST} only when $n\ge e^{\Omega(\beta)}$. 

\begin{lem}\label{lem:small-n-mixing-time}
    There exists $C>0$ and $\beta_0$ such that if $\beta >\beta_0$, and $n\le n_0:= e^{\beta/3}$ then the mixing time on $\Lambda_n$ with $+$ boundary conditions is at most $C n^{2} (\log n)^C$. 
\end{lem}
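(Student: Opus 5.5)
We prove this by comparing the low-temperature dynamics for $n\le e^{\beta/3}$ with zero-temperature dynamics. The comparison is good because, at this scale, the total rate of energy-increasing ("thermal") flips over the relevant time horizon is tiny. The zero-temperature input is the polynomial bound of~\cite{lacoin2014zero-2D}: with $+$ boundary conditions on $\Lambda_n$, $\beta=\infty$ majority dynamics started from any configuration hits all-plus within time $Cn^2(\log n)^C$ with probability at least $1-n^{-10}$. By monotonicity it suffices to couple the chains started from all-minus and all-plus, and then to show the all-minus chain reaches all-plus by time $t_* = Cn^2(\log n)^C$ with probability bounded away from $0$.

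We realize both the $\beta$ dynamics and the $\beta=\infty$ dynamics through the grand coupling of Definition~\ref{def:dynamics-coupling}. A Glauber update at $v$ can differ between the two only when the update is not forced at zero temperature, and this happens in two cases.
\begin{itemize}
\item If the local field is nonzero, the $\beta$ chain moves against the majority with probability at most $e^{-4\beta}/(1+e^{-4\beta})\le e^{-4\beta}$.
\item If the local field is zero (a tie), both chains flip a fair coin, so they can share the same uniform variable $\mathcal U_{v,t}$.
\end{itemize}
Hence the two trajectories agree up to time $t_*$ unless some update in $\Lambda_n\times[0,t_*]$ uses a uniform variable below $e^{-4\beta}$ or above $1-e^{-4\beta}$. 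The number of clock rings is at most $2n^2t_*$ except with exponentially small probability. A union bound then gives a disagreement probability of at most
\[
2n^2 t_* e^{-4\beta}\le C n^4(\log n)^C e^{-4\beta}.
\]
Since $n\le e^{\beta/3}$, this is at most $C\beta^C e^{-8\beta/3}$, which is $o(1)$ uniformly for $\beta>\beta_0$.

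There is one subtlety. The zero-temperature result is about the $\beta=\infty$ chain, and once that chain is at all-plus, the $\beta$ chain also sits at all-plus on the coupling event. Both chains started from all-minus and from all-plus then equal all-plus at time $t_*$ (the one from all-plus stays there under the zero-temperature coupling). So with probability at least $1/2$ the grand coupling has coalesced by time $t_*$. The standard coupling characterization of mixing then gives $\tmix\le C n^2(\log n)^C$; to reach total-variation $1/4$ one iterates twice, which changes only the constant.

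The main obstacle is having the zero-temperature bound in the right form: a high-probability absorption time of $O(n^2 (\log n)^C)$ from arbitrary (in particular all-minus) initializations on the box with $+$ boundary and continuous-time rate-one clocks, with constants that are universal. Lacoin's result~\cite{lacoin2014zero-2D} for a shrinking minus droplet gives this, with tie-breaking by fair coins matching the $\beta$ Glauber rule. If only an expectation bound were available, Markov's inequality at time $2\E[\tau]$ would still suffice. Beyond this input, the only point to check is that the error budget $n^4 e^{-4\beta}$ stays small. This is exactly why the threshold $n_0=e^{\beta/3}$ (rather than $e^{\beta}$) is imposed, since $n_0^{4}=e^{4\beta/3}\ll e^{4\beta}$ with room for the polylogarithmic factors.
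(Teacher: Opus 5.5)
Your proposal is correct and follows essentially the same route as the paper: couple the $\beta$-dynamics to the $\beta=\infty$ majority dynamics through the grand coupling, take a union bound over the $O(n^4(\log n)^C)$ updates using $n\le e^{\beta/3}$, and feed in the polynomial zero-temperature absorption bound. The only difference is the endgame, where you conclude by coalescence of the all-plus and all-minus chains under the monotone grand coupling (your good event depends only on the uniforms, so it works for all initializations at once), whereas the paper bounds $\max_{x_0}\mathbb P_{x_0}(X_t\not\equiv +1)$ and adds a separate Peierls estimate $\|\pi-\delta_{+1}\|_{\tv}\le 4n^2e^{-2\beta}$, which your coupling argument makes unnecessary.
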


\begin{proof}
    By~\cite[Theorem 1.3]{FoScSi02} (see also~\cite{Lacoin-Lifshitz-any-dimension} for the analogous result in $\mathbb Z^d$), the ``$\beta = \infty$" time to absorption in the all-$+$ configuration is at most $C n^2 (\log n)^C$ except with probability $1/20$, for some $C(d)$. (The $\beta = \infty$ Markov chain assigns each vertex a rate-1 Poisson clock, then when a vertex's clock rings, updates by taking the majority spin of its neighbors, and if it is a tie, then flipping a fair coin to decide its new spin. This is the $\beta \to\infty$ limit of~\eqref{eq:Glauber-update-rule}.)
    
    We couple the low-temperature dynamics with the zero-temperature one. A single update of the two agrees except with probability 
    \begin{align*}
        \max_{(\sigma_w)_{w\sim v}} \|\pi_\beta(\sigma_v\in \cdot \mid (\sigma_w)_{w\sim v}) - \lim_{\beta \uparrow \infty} \pi_{\beta}(\sigma_v \in \cdot \mid (\sigma_w)_{w\sim v})\|_{\tv}\le  1- \min\Big\{ \frac{1}{1 + e^{-2\beta}}, \frac{1}{1+e^{-4\beta}}\Big\} \le e^{-2\beta}\,.
    \end{align*}
    For a universal large constant $C'$, with probability $1-(1/20)$ in continuous time $C \cdot  n^2 (\log n)^C$, at most $C \cdot C'\cdot  n^4 (\log n)^C$ clock rings occur. Thus, so long as \begin{align}\label{eq:zero-temp-positive-coupling-no-of-steps}e^{ - 2\beta} \cdot C\cdot C' \cdot  n^4 (\log n)^C \le 1/20\end{align} then by a union bound, one has that the positive temperature and zero-temperature chains agree on their entire trajectory for time $C \cdot C' \cdot n^2 (\log n)^C$ except with probability $1/20$. In particular,  assuming~\eqref{eq:zero-temp-positive-coupling-no-of-steps}, 
    one has for the finite-$\beta$ Glauber dynamics that 
    \begin{align*}
        \max_{x_0} \mathbb P_{x_0}( X_t \not \equiv +1) \le \frac{1}{10}\,,\qquad \text{for $t= C n^2 (\log n)^C$} \,.
    \end{align*}
    To argue that this bounds the mixing time, we also show that the stationary distribution puts most of its mass on the all-plus configuration as follows: 
    \begin{align*}
        \|\pi - \delta_{+1}\|_{\tv} \le 2 \pi( \sigma \not \equiv +1) \le 2 n^2 \sum_{k\ge 4} 4^k e^{ - \beta k}\le 2n^2 \frac{1}{1-e^{- 4(\beta - \log 4)}} e^{- 4 (\beta - \log 4)} \le 4 n^2 e^{ - 2\beta}\,,
    \end{align*}
    by a Peierls bound sending all configurations to the all-plus one, where the last inequality used $\beta \ge \log 4$. We therefore obtain that 
    \begin{align*}
        e^{ - 2\beta} \cdot C \cdot C'\cdot n^4 (\log n)^C \vee 4n^2 e^{-2\beta} \le \frac{1}{20} \implies \tmix(\Lambda_n^+ ) \le C n^2 (\log n)^C\,.
    \end{align*}
    The relations on $\beta, n$ hold if $n\le n_0:= e^{\beta/3}$ and $\beta$ is a big enough constant. 
\end{proof}

This reduces the task of proving Theorem~\ref{thm:beta-independent-2D-mixing-time} into proving that for all $n\ge e^{\beta/3}$ that for some $C$ and all $\beta>\beta_0$, the mixing time $\tmix(\Lambda_n^+)\le \exp(C\beta (\log n)^2)$. As described in the proof sketch, this entails modifying various of the key equilibrium estimates that were inputs into~\cite{LMST}, and then explaining how with those new equilibrium estimates the proofs can be adapted to have this kind of tame $\beta$-dependency. 

In what follows, our notation will change from subsection to subsection to mirror the notation of the respective paper being followed (and refined in terms of $\beta$-dependencies) therein. $C$ will be used to denote a constant that does not depend on $\beta$ (and can change from line to line).

\subsection{Large-$\beta$ behavior of limiting surface tension}
Many of the key estimates on Ising interfaces used for the mixing time with plus boundary conditions are expressed in terms of the surface tension function, whose exact expression will actually be used for us. In this section, we describe this and other infinite-volume quantities that arise naturally in the study of low-temperature Ising interfaces, and study their large $\beta$ asymptotics. 

In this subsection, for an angle $\theta \in [- \frac{\pi}{4}, \frac{\pi}{4}]$, define the Ising model with $\pm$-boundary conditions at angle $\theta$ as the one on $\Lambda_{N,N'} = ([0,N]\times [-N'/2,N'/2])\cap \mathbb Z^2$ with boundary conditions $(\pm,\theta)$ that are $+$ on all sites above the line $L_\theta(x) = (x,x\tan\theta)$, and minus on all sites at or below $L_\theta(x)$. Let $Z_{\pm,\theta,N,N'}$ denote the corresponding partition function, while $Z_{+,N,N'}$ is the partition function on the same domain but with all-plus  boundary conditions. 

Define the surface tension at angle $\theta\in (-\frac{\pi}{2},\frac{\pi}{2})$ as the limit 
\begin{align}\label{eq:surface-tension-def}
    \tau_\beta(\theta) := \lim_{N\to\infty} \lim_{N'\to\infty} -\frac{1}{\|L_\theta(N)\|} \log \frac{{Z}_{\pm,\theta,N,N'}}{Z_{+,N,N'}}\,.
\end{align}
which is to say that it is the exponential rate for the unlikeliness of seeing an interface at angle $\theta$ in a box of width $N$. By e.g., Lemma 3.1.1 of~\cite{VelenikThesis}, the limit $\tau$ exists.

\begin{lem}\label{lem:uniform-in-beta-bounds-on-surface-tension-and-stiffness}
    There exists $\beta_0$ such that for all $\beta>\beta_0$, 
    \begin{enumerate}
        \item Scaling of the surface tension in $\beta$: For all $\theta\in [-\frac{\pi}{4}, \frac{\pi}{4}]$  one has 
        \begin{align*}
            \tau_{\beta}(\theta) \ge (1.9)\beta\,.
        \end{align*}
        \item Uniform positive stiffness: for all $\theta\in [-\frac{\pi}{4}, \frac{\pi}{4}]$, one has
        \begin{align*}
            \tau_\beta''(\theta) + \tau_\beta(\theta)> 1/3\,.
        \end{align*}
    \end{enumerate}
\end{lem}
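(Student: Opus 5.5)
The plan is to use the exact Onsager-type formula for the 2D Ising surface tension, or equivalently its representation through Kramers--Wannier duality as the inverse correlation length of the dual high-temperature model. Concretely, the classical exact expression (e.g.\ from McCoy--Wu, or in the form used in~\cite{PfisterVelenik}) says that for $\theta$ with unit normal direction $(\cos\theta,\sin\theta)$,
\begin{align*}
  \tau_\beta(\theta) = |\cos\theta|\,\operatorname{arcsinh}(\alpha\cos\theta\cdot \cdot) \ldots
\end{align*}
More precisely, and more usefully, I would work with the parametric description of the Wulff-type dual curve: $\tau_\beta(\theta) = \cos\theta\, x + \sin\theta\, y$ maximized over $(x,y)$ on the convex curve $\{\cosh x + \cosh y = c_\beta\}$, where $c_\beta = \cosh(2\beta)\coth(2\beta)$. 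Then $\tau_\beta$ is the support function of a convex set $W_\beta$, and $\tau''+\tau$ is exactly the radius of curvature of $\partial W_\beta$ at the point with outward normal $\theta$.

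For item (1), I would use the support-function representation directly: taking the point $(x_0,0)$ with $\cosh x_0 = c_\beta - 1$, the support function in direction $\theta$ is at least the value along the normal, and more carefully for $\theta\in[-\pi/4,\pi/4]$ I would bound $\max(\cos\theta\, x+\sin\theta\, y)$ from below by evaluating at $(x,y)=(u,u\tan\theta)$-type points or simply at the point of $\partial W_\beta$ in direction $\theta$. Since $c_\beta = \tfrac12 e^{2\beta}(1+O(e^{-4\beta}))$, one has $x_0 = \log(2c_\beta)+o(1) = 2\beta + o(1)$, and for the diagonal direction the point $(u,u)$ with $2\cosh u = c_\beta$ gives $\tau(\pi/4)\ge \sqrt2\, u = \sqrt2(2\beta - O(1))$; in between, convexity of $\{\cosh x+\cosh y\le c\}$ gives $\tau_\beta(\theta)\ge (2\beta - O(1))(|\cos\theta|+|\sin\theta|)\cdot(1-o(1))\ge 2\beta - O(1)$, which exceeds $1.9\beta$ for $\beta$ large. (As a sanity check, the Peierls/zero-temperature value is $2\beta(|\cos\theta|+|\sin\theta|)$.)

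For item (2), compute the curvature of the curve $\cosh x+\cosh y = c$: with $F=\cosh x+\cosh y$, the radius of curvature is $\rho = |\nabla F|^3/|F_{xx}F_y^2+F_{yy}F_x^2|= (\sinh^2x+\sinh^2y)^{3/2}/(\cosh x\sinh^2 y+\cosh y\sinh^2 x)$. The main obstacle is a uniform lower bound on $\rho$ over normals in $[-\pi/4,\pi/4]$ as $\beta\to\infty$; the danger is near $\theta=0$, where the curve is nearly flat (large $\rho$, fine) versus the corners near the diagonal. I would split: if both $|x|,|y|\ge 1$, then $\sinh\approx\cosh$ up to constants and $\rho\gtrsim (\sinh^2x+\sinh^2y)^{3/2}/(\cosh x\cosh y(\cosh x+\cosh y))\ge$ a universal constant times $\min(\cosh x,\cosh y)^{0}$, actually $\gtrsim 1$ by homogeneity-type comparison; if $|y|\le1$ (normal near $0$), then $\sinh^2 x\approx c^2/4$ dominates and $\rho\approx \sinh^3 x/(\cosh y\sinh^2x)\approx \sinh x\to\infty$. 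A short explicit estimate in each regime should give $\rho>1/3$ for $\beta\ge\beta_0$, with care that normals in $[-\pi/4,\pi/4]$ correspond to $|y|\le|x|$; the intermediate regime $|y|\in[1,\cdot]$ is where I would check constants most carefully.
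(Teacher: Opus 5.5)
Your proposal is correct and is in substance the paper's proof. The support function of $\{\cosh x+\cosh y\le c_\beta\}$ with $c_\beta=M=\cosh^2(2\beta)/\sinh(2\beta)$ is exactly the Akutsu--Akutsu formula the paper uses: the maximizer is $(\eta_1,\eta_2)$ with $\sinh\eta_1=\alpha_\theta\cos\theta$ and $\sinh\eta_2=\alpha_\theta\sin\theta$. Likewise, your radius-of-curvature expression is identical to the paper's stiffness formula for $\tau+\tau''$.

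The differences are only in execution. For item (1), evaluating the support function at $(u,\pm u)$ is a tidier route than the paper's asymptotics of $\eta_1,\eta_2$. For item (2), the paper uses the one-line bound $\rho\ge\alpha_\theta/(1+2\alpha_\theta)$; your two regimes also close explicitly. When $|y|\ge 1$ you get $\rho\ge\sqrt2\tanh^3(1)\approx 0.62$; here $x\ge 1$ is automatic, since $|y|\le|x|$ forces $\cosh x\ge c_\beta/2$. When $|y|\le 1$ you get $\rho\to\infty$.
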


\begin{proof}
We use exact formulas for the two-dimensional Ising model (see e.g.,~\cite{McCoyWu}), surface tension $\tau_\beta(\theta)$ to derive these uniform-in-$\beta$ bounds. 
We copy below the exact computation of Eqs.\ (24a)--(24e) of~\cite{AkutsuAkutsu}, with the mapping $\beta \mapsto 1, K \mapsto \beta, \gamma \mapsto \tau$ of the surface tension at fixed $\beta>\beta_c$ (see also~\cite{AbrahamReed1977,RottmanWortis} which they cite for the exact calculation of the surface tension):
\begin{align*}
    \tau_\beta (\theta)&  = \eta_1 \cos \theta + \eta_2 \sin \theta \\ 
    \eta_1 & = \sinh^{-1}(\alpha_\theta \cos \theta) \qquad \text{and} \qquad \eta_2 = \sinh^{-1}(\alpha_\theta \sin \theta) \\ 
    \alpha_\theta&  = M(1-(2/M)^2)^{1/2}(1+(\sin^2(2 \theta)+ (2/M)^2 \cos^2(2\theta))^{1/2})^{-1/2} \\ 
    M &= \cosh^2(2\beta)/\sinh(2\beta)\,.
    \end{align*}
    The first bound to check is the behavior of the surface tension itself as $\beta$ grows. Towards that, notice first that for large $\beta$, $M  = \cosh(2\beta) \coth(2\beta)$ is asymptotic to $\frac{1}{2}(1+o_\beta(1)) e^{ 2\beta}$. In turn that leads to behavior of $\alpha_\theta$ as $\beta \to\infty$ that is 
    \begin{align}\label{eq:alpha-theta-bound}\alpha_\theta = \frac{1}{2}(1+o_\beta(1)) e^{2\beta}(1+|\sin(2\theta)|)^{-1/2}\end{align}
    In turn, for $\theta \in (-\frac{\pi}{2},\frac{\pi}{2})$, the asymptotics of $\sinh^{-1}(\alpha_\theta \cos\theta)$ as $\beta \to\infty$ are 
    \begin{align*}
        \eta_1 = (1+o_\beta(1))\log (2 \alpha_\theta \cos \theta) = 2\beta (1+o_\beta(1))\,,
    \end{align*}
    where the $o_\beta(1)$ is uniform over compacts of $\theta$. 
    For $\theta \in [0, \pi/2)$, the second quantity $\eta_2$ satisfies a lower bound of 
    \begin{align*}
        \eta_2 \ge \begin{cases}\frac{1}{2} e^{2\beta} \theta (1+o_\beta(1)) & \theta \lesssim e^{-2\beta} \\ 
        2\beta  (1+o_\beta(1)) & \theta \gtrsim e^{-2\beta}\end{cases}
    \end{align*}
    In total, we get the asymptotics for every $\theta\in [0,\pi/2)$ that 
    \begin{align}\label{eq:tau-asymptotics}
        \tau_\beta(\theta) = 2\beta (\cos \theta  + \sin\theta) (1+o_\beta(1))\,.
    \end{align}
    In particular, it satisfies the desired lower bound, and that holds for all $\theta \in (-\pi/2,\pi/2)$ by symmetry of $\tau_\beta$. 
        
    We now move to the uniform positive stiffness. Differentiating $\tau$ twice, and subsequently using that $\cosh(\sinh^{-1}(x)) = \sqrt{1+x^2}$, we get that the stiffness $\tau''_\beta(\theta) + \tau_\beta(\theta)$ is given by 
    \begin{align}\label{eq:stiffness-asymptotics}
        \tau(\theta) + \tau''(\theta) = \frac{\alpha_\theta}{\sin^2\theta (1+\alpha^2_{\theta}\cos^2 \theta )^{1/2} + \cos^2\theta (1+ \alpha^2_{\theta}\sin^2 \theta )^{1/2}}\,.
    \end{align}
    To see that this is uniformly bounded by a constant independent of both $\theta$ and of $\beta$,  we can use the upper bound on the denominator by $\sqrt{a + b} \le \sqrt{a} + \sqrt{b}$, to get 
    \begin{align*}
        \tau(\theta) + \tau''(\theta) \ge \frac{a_\theta}{ 1 + \alpha_\theta \sin^2 \theta |\cos \theta|  + \alpha_\theta \cos^2 \theta |\sin\theta|} 
    \end{align*}
    Bounding the sine and cosine terms all by $1$, we get $\tau(\theta) + \tau''(\theta) \ge \alpha_\theta/(1+2 \alpha_\theta)$ which for $\alpha_\theta\ge 1$ (which it will be for all $\beta$ large) is at least $1/3$ say. 
\end{proof}

The first thing to deduce is that the uniformity of the positive stiffness bound (item 2 in Lemma~\ref{lem:uniform-in-beta-bounds-on-surface-tension-and-stiffness}) implies a uniform \emph{sharp triangle inequality}. Indeed, the proof of this implication is a statement in convex geometry, and therefore the change in constant between one to the other is fully $\beta$-independent, as seen in Lemma~2.1 of~\cite{IoffeLD} as well as Proposition 2.1 of~\cite{PfisterVelenik}. By the latter, with item 2 of Lemma~\ref{lem:uniform-in-beta-bounds-on-surface-tension-and-stiffness}, we have for all $\beta>\beta_0$ and all vectors $x,y$, that 
    \begin{align}\label{eq:sharp-triangle-ineq}
         \tau (x) +  \tau(y) -  \tau(x+y) \ge \kappa_\beta (\|x\| + \|y\| - \|x+y\|)\,, \qquad \text{for a } \quad \kappa_\beta \ge \frac{1}{3}\,.
    \end{align}
    where for vector $x$, we are using $\tau(x) := \|x\|\tau_\beta(\frac{x}{\|x\|})$. 

    For the specific case where $x+y$ forms a horizontal base of a triangle, when $\beta$ is large, deviations from the straight line should be exponentially in $\beta$ unlikely, whereas the positive stiffness above just will imply they are uniformly unlikely. Indeed, if the angle is 45 degrees, then as $\beta\to\infty$ the deviations do not become more unlikely, because the law converges to the uniform distribution over up-right paths, or equivalently simple random walk. Conversely, it is easy to see that $\tau_\beta''(0) \asymp e^{2\beta}$ for $\beta$ large. The following will be used to show that deviations from flat interfaces are not more unlikely as $\beta\to\infty$ than this amount.   

    \begin{lem}\label{lem:tau-double-prime-near-zero-angle}
        For all $\beta>\beta_0$, 
        \begin{align}\label{eq:tau''-upper-bound}
            \sup_{\theta \in [-\pi/4,\pi/4]} |\tau''_\beta(\theta)| \le e^{ 2\beta } \,.
        \end{align}
        In the other direction, we have for sufficiently small $c$ (independent of $\beta$), for all $\beta>\beta_0$, 
        \begin{align}\label{eq:tau''-lower-bound}
            \inf_{\theta \in [-ce^{ - 2\beta},ce^{- 2\beta}]} \tau_{\beta}''(\theta) \ge \frac{1}{8} e^{2\beta}
        \end{align}
    \end{lem}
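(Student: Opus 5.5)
We work directly from the exact formula quoted in the proof of Lemma~\ref{lem:uniform-in-beta-bounds-on-surface-tension-and-stiffness}. Write $a(\theta) := \alpha_\theta$. The stiffness identity \eqref{eq:stiffness-asymptotics} gives
\begin{align*}
\tau''_\beta(\theta) = S(\theta) - \tau_\beta(\theta)\,, \qquad S(\theta) := \frac{a}{\sin^2\theta\,(1+a^2\cos^2\theta)^{1/2} + \cos^2\theta\,(1+a^2\sin^2\theta)^{1/2}}\,.
\end{align*}
So it suffices to control $S$ from above and below, together with the elementary bound $0\le\tau_\beta(\theta)\le 2\beta(\cos\theta+\sin|\theta|)(1+o_\beta(1))\le 3\beta$ from \eqref{eq:tau-asymptotics}. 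Here $\tau_\beta\ge 0$ holds since each $\eta_i$ has the sign of its argument.

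For the upper bound \eqref{eq:tau''-upper-bound}, drop all but the $\cos^2\theta\cdot 1$ term in the denominator. On $[-\pi/4,\pi/4]$ we have $\cos^2\theta\ge 1/2$, so $S(\theta)\le 2a$. Since $(1+|\sin 2\theta|)^{-1/2}\le 1$, the estimate \eqref{eq:alpha-theta-bound} gives $a\le \frac12(1+o_\beta(1))e^{2\beta}$, hence $S\le (1+o_\beta(1))e^{2\beta}$. This is slightly too weak by a $(1+o(1))$ factor. To fix it, we will use the exact $M=\cosh^2(2\beta)/\sinh(2\beta)$ and the factor $(1-(2/M)^2)^{1/2}$ to get $a\le M\le \frac12 e^{2\beta}+e^{-2\beta}$. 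We also keep the $\sin^2\theta(1+a^2\cos^2\theta)^{1/2}\ge a\sin^2\theta\cos\theta$ term, which shows that $S$ is genuinely smaller than $2a$ away from $\theta=0$, while at $\theta=0$ it equals $a$ exactly. Concretely, the denominator is at least $\cos^2\theta+a\sin^2\theta\cos\theta\ge \cos^2\theta$. For $|\theta|\le \pi/4$ this gives $S\le a/\cos^2\theta$, which is at most $a(1+2\theta^2)\le a(1+\pi^2/8)$ near $0$. Once $\theta$ is not $O(a^{-1/2})$, the $a\sin^2\theta$ term dominates and $S\le \max(2a\cdot a^{-1}\theta^{-2}, \ldots)$ is tiny. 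Combining, $\sup S\le a(1+O(a^{-1/2}))\le \frac12(1+o(1))e^{2\beta}$, and subtracting $\tau_\beta\ge0$ yields $|\tau''|\le e^{2\beta}$ for $\beta$ large. We also need $-\tau''\le \tau\le 3\beta\le e^{2\beta}$ for the absolute value.

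For the lower bound \eqref{eq:tau''-lower-bound}, take $|\theta|\le ce^{-2\beta}$. Then $a\sin|\theta|\le a|\theta|\le c/2\cdot(1+o(1))$, so $(1+a^2\sin^2\theta)^{1/2}\le 1+c^2$. Also $\sin^2\theta(1+a^2\cos^2\theta)^{1/2}\le \theta^2(1+a)\le c^2e^{-4\beta}(1+e^{2\beta})\le 2c^2e^{-2\beta}$. Hence the denominator is at most $1+2c^2$. Meanwhile $|\sin 2\theta|\le 2ce^{-2\beta}$, so \eqref{eq:alpha-theta-bound} gives $a\ge \frac12(1-o(1))e^{2\beta}$. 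Therefore $S\ge \frac{1}{2}(1-o(1))e^{2\beta}/(1+2c^2)\ge \frac14 e^{2\beta}$ for $c$ small and $\beta$ large. Subtracting $\tau_\beta\le 3\beta\le \frac18 e^{2\beta}$ gives $\tau''_\beta(\theta)\ge \frac18 e^{2\beta}$.

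The only delicate point is the precise constant $1$ in \eqref{eq:tau''-upper-bound}. The crude bound $S\le 2a$ only gives $(1+o(1))e^{2\beta}$, so we need the sharper pointwise analysis of the denominator described above. That analysis shows $\sup_\theta S$ is essentially $a(0)\approx \frac12 e^{2\beta}$, which leaves a factor of $2$ of room. Everything else reduces to asymptotics of $M$ and $a$ that are already recorded in the paper.
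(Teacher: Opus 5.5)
Your argument is correct. For the lower bound it takes a genuinely different, and simpler, route than the paper. The paper Taylor-expands $\tau''$ about $0$, using evenness to kill $\tau'''(0)$, and computes $\tau''(0)=\alpha_0-\tau(0)\approx\frac12 e^{2\beta}$. It then has to bound $\sup|\tau^{(4)}|\lesssim e^{6\beta}$ on $|\xi|\le ce^{-2\beta}$, which requires a lengthy estimation of $\alpha',\alpha'',D',D''$. You instead read the identity $\tau+\tau''=\alpha_\theta/D(\theta)$ of \eqref{eq:stiffness-asymptotics} directly on the window. There $\alpha_\theta|\theta|=O(c)$, so $D\le 1+2c^2$ while $\alpha_\theta\ge\frac12(1-o(1))e^{2\beta}$, and subtracting $\tau\le 3\beta$ finishes. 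This is legitimate because \eqref{eq:stiffness-asymptotics} is an exact identity: it is the radius of curvature of the convex curve $\cosh\eta_1+\cosh\eta_2=M$, and $\tau_\beta$ is the support function of that curve. Your route removes all the derivative bookkeeping and shows that the window is precisely the scale $|\theta|\lesssim\alpha_\theta^{-1}\asymp e^{-2\beta}$; any $c$ below an absolute constant works. The Taylor route buys nothing extra here.

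For the upper bound you follow the same plan as the paper: $\tau''=S-\tau$, $S\lesssim\alpha_\theta$, and $0\le\tau\le 3\beta$ to handle the absolute value. However, the ``delicate point'' you identify is not actually there. Both square roots in $D$ are at least $1$, so $D\ge\sin^2\theta+\cos^2\theta=1$ and $S\le\alpha_\theta$ immediately; this is exactly what the paper uses. Your case split at $|\theta|\asymp\alpha_\theta^{-1/2}$ is correct but unnecessary, since your own bound $D\ge\cos^2\theta+\alpha_\theta\sin^2\theta\cos\theta$ is already at least $1$ once $\alpha_\theta\ge\sqrt2$.

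There is also a small slip. Since $M=\sinh 2\beta+1/\sinh 2\beta\approx\frac12e^{2\beta}+\frac32e^{-2\beta}$, the claim $M\le\frac12e^{2\beta}+e^{-2\beta}$ is not quite right. It is harmless. In fact, keeping the factor $(1-(2/M)^2)^{1/2}$ gives $\alpha_\theta\le\sqrt{M^2-4}=\sinh 2\beta-1/\sinh 2\beta<\frac12 e^{2\beta}$ exactly. So even your crude bound $S\le 2\alpha_\theta<e^{2\beta}$ would have closed the argument.
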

    \begin{proof}
        For the upper bound~\eqref{eq:tau''-upper-bound}, recall the large $\beta$ asymptotics from~\eqref{eq:tau-asymptotics}--\eqref{eq:stiffness-asymptotics}    \begin{align*}
        \tau''(\theta) = \frac{\alpha_\theta}{\sin^2\theta (1+\alpha^2_{\theta}\cos^2 \theta )^{1/2} + \cos^2\theta (1+ \alpha^2_{\theta}\sin^2 \theta )^{1/2}} - 2\beta (\cos\theta + \sin\theta)(1+o_\beta(1))
    \end{align*}
    We can trivially lower bound the denominator of the first term by $1$, and upper bound the second term by $2\sqrt{2}\beta(1+o_\beta(1))$, to get for large $\beta$ that 
    \begin{align*}
        \tau''(\theta) \le \alpha_\theta + 4\beta\le \frac{1}{2}(1+o_\beta(1))e^{2\beta} 
    \end{align*}
    yielding the claimed bound. 

       For the lower bound~\eqref{eq:tau''-lower-bound}, we will use a quartic Taylor expansion of the function~$\tau$:
        \begin{align}\label{eq:tau''-expansion}
            \tau''(\theta) \ge \tau''(0) - \frac{1}{2}\sup_{\xi \in [-\theta,\theta]} | \tau^{(4)}(\xi) | \theta^2
        \end{align}
        By its exact expression, 
        \begin{align*}
            \tau''(0) = \alpha_0 - \tau(0) = \frac{1}{2}(1+o_\beta(1)) e^{2\beta}\,.
        \end{align*}
        We now investigate the behavior of the fourth derivative. Let 
        \begin{align*}
            Q(\theta) &= \sin^2(2\theta) + (2/M)^2 \cos^2(2\theta)\,, \\ 
            Q'(\theta) &= 4\sin(2\theta)\cos(2\theta) (1-(2/M)^2)\,, \\ 
            Q''(\theta) & = 8 \cos (4\theta) (1-(2/M)^2) \,.
        \end{align*}
        Then, in terms of these derivatives of $Q$, we have 
        \begin{align*}
            \alpha_\theta & =  \frac{M}{(1-(2/M)^2)^{1/2}} (1+\sqrt{Q})^{-1/2}\\ 
            \alpha'_\theta & = - \frac{1}{4} \frac{M}{(1-(2/M)^2)^{1/2}} \frac{Q'}{\sqrt{Q}(1+\sqrt{Q})^{3/2}} \\ 
            \alpha''_\theta & = \frac{M}{(1-(2/M)^2)^{1/2}} \Big(\frac{Q'^2(2+5\sqrt{Q})}{16 Q^{3/2}(1+\sqrt{Q})^{5/2}} -  \frac{Q''}{4\sqrt{Q}(1+\sqrt{Q})^{3/2}}\Big)
        \end{align*}
        In particular, as long as $0\le \theta \le e^{-2\beta}$ so that $\theta M\le 1$, since $Q(\theta)  = (1+o_{\beta,\theta}(1))( 4\theta^2 + (2/M)^2)$, the first term in $\alpha''_\theta$ satisfies
    \[
    \frac{M}{(1-(2/M)^2)^{1/2}} \frac{Q'^2(2+5\sqrt{Q})}{16 Q^{3/2}(1+\sqrt{Q})^{5/2}}\asymp \frac{M \theta^2}{(\theta^2 + (2/M)^2)^{3/2}}\,,
    \]
where the implicit constants in $\asymp$ are uniform over $\beta > \beta_0$ and $|\theta| \le e^{-2\beta}$. The second term in $\alpha''_\theta$ is $
    \asymp M/(\theta^2 + (2/M)^2)^{1/2}$. 

    Now towards $\tau^{(4)}$, define 
    \begin{align*}
        D(\theta) = \sin^2(\theta) (1+ \alpha_\theta^2 \cos^2 \theta)^{1/2} + \cos^2 \theta(1+\alpha_\theta^2\sin^2 \theta)^{1/2} \asymp 1+ \theta M + \theta^2 M
    \end{align*}
    If $\theta \in [0,e^{-2\beta}]$ then $D\asymp 1$. 
    With that notation, note that (dropping $\theta$ arguments for readability), 
    \begin{align*}
        \tau^{(4)}(\theta) = \frac{d^2}{d\theta^2}  \frac{\alpha}{D} - \tau'' =   \frac{\alpha'' D - \alpha D''}{D^2} - 2\frac{(\alpha' D - \alpha D')D'}{D^3}\,.
    \end{align*}
    Consider the asymptotics of each of these terms: firstly, 
    \begin{align*}
        \alpha' \asymp M \frac{\theta}{(\theta + (2/M))}
    \end{align*}
   If $\theta \in [0,e^{-2\beta}]$ then $\alpha'\asymp M^2 \theta$. Next,
    \begin{align*}
         \alpha'' \asymp \frac{M\theta^2}{(|\theta| + (2/M))^{3}}  + \frac{M}{(|\theta| + (2/M))} 
    \end{align*}    
    If $\theta \in [0,e^{-2\beta}]$ then $\alpha''\asymp M^4 \theta^2 + M^2\asymp M^2$. Next,  
    \begin{align*}
        D' & \asymp  \theta \alpha + \theta^2 \frac{\alpha \alpha' - \alpha^2 \theta}{(1+\alpha)} + \theta (1+\theta \alpha) + \frac{ \alpha \alpha' \theta^2 + \alpha^2 \theta}{(1+\alpha \theta)} \\&  \lesssim \theta e^{2\beta} + \theta^2 e^{2\beta}\frac{e^{2\beta}\theta}{(|\theta| + e^{-2\beta})} +  \theta^2 e^{2\beta}  + \theta e^{4\beta} \lesssim e^{4\beta}\theta + \theta^2 e^{2\beta}\frac{e^{2\beta}\theta}{(|\theta| + e^{-2\beta})}
    \end{align*}
    where constants in $\lesssim$ are uniform in $\beta>\beta_0$ and $\theta \le e^{-2\beta}$. 
    If $\theta \in [0,e^{-2\beta}]$ then $D'\lesssim M^2 \theta + M^3 \theta^3\lesssim M^2\theta$. 
    Finally, explicitly calculating $D''$ and doing the same asymptotics gives 
    \begin{align*}
        D'' & \lesssim (1+ \theta^2)(1+M +M \theta) + \theta \Big(\frac{M \alpha' + M^2 \theta}{M} + \frac{M \alpha' \theta^2 + M^2 \theta}{1 + M \theta}\Big) \\ & \qquad + \theta^2 \frac{\alpha'^2 + M \alpha'' + M \alpha'\theta + M^2}{M} + \theta^2 \frac{(M \alpha' + M^2 \theta)^2}{M^3} \\
        &  \qquad + \frac{\alpha'^2 \theta^2 + M \alpha '' \theta^2 + M \alpha' \theta + M^2}{1+ M\theta} + \frac{(M \alpha'\theta^2 + M^2 \theta)^2}{(1+ M \theta)^3}
    \end{align*}
    Working through this term by term and using the above asymptotics, we find that if $\theta \in [0,e^{-2\beta}]$ then $D'' \lesssim M^2$
    where the constant hidden in the $\lesssim$ is universal.  
    
    We thus get for constant $c$ that for some $C(c)$ independent of $\beta$, we have  
    \begin{align*}
        \sup_{\xi \in [-ce^{ - 2\beta},ce^{-2\beta}]}| \tau^{(4)}(\xi)| \le C(c) \cdot M^3 \le C e^{6\beta}\,.
    \end{align*}
    When multiplied by $\theta^2$ for $|\theta|\le \delta e^{- 2\beta}$ for $\delta$ universal only depending on $C(c)$, this is less than $\frac{1}8 e^{2\beta}$ say and in~\eqref{eq:tau''-expansion} is at most $(1/2)\tau''(0)$.  
    \end{proof}

By the standard random-line representation of the Ising model and the Kramers--Wannier duality, the pre-limiting expression for the low-temperature surface tension in~\eqref{eq:surface-tension-def} is exactly given by a two-point correlation function of the Ising model at the dual (high) temperature $\beta^*$ defined by $\sinh(2\beta) \sinh(2\beta^*) =1$. Namely, for all $\beta>\beta_c$, 
\begin{align}\label{eq:interface-two-pt-function-duality}
    \frac{{Z}_{\pm,\theta,N,N'}}{Z_{+,N,N'}} = \langle \sigma_{(0,0)} \sigma_{(x,x\tan\theta)}\rangle_{\beta^*,\Lambda_{N,N'}^*}
\end{align}
where the right-hand side $\langle \cdot \rangle_{\beta^*,\Lambda^*}$ denotes the Gibbs expectation on the planar dual of $\Lambda$. We will move back and forth between these representations, and refer the reader to e.g.,~\cite{PfisterVelenik} for details. For ease of notation, with $\beta$ understood from context, we often drop $\beta^*$ from the two-point function. 

Our next aim is to give finite-$N$ lower and upper bounds on two-point functions appearing in~\eqref{eq:interface-two-pt-function-duality}, bounding the corrections to $e^{ - \tau(\theta) \|(x,x\tan \theta)\|}$. Since point-to-plane connectivities are sub-additive, $e^{ - \tau(\theta_v)\|v\|}$ always provides a (non-asymptotic) upper bound on $\langle \sigma_0 \sigma_v\rangle $ always: 
\begin{align*}
    \langle \sigma_0 \sigma_v\rangle_{\beta^*,(\Z^2)^*}  \le \exp( - \tau_\beta (\theta_v) \|v\|)
\end{align*}
where $\theta_v$ denotes the angle the vector $v$ makes. 
However, getting a finite-$N$ lower bound without a coefficient that deteriorates with $\beta\to \infty$ is formally more challenging to establish. Indeed, the argument surrounding the lower bound is that by random walk heuristics, there is a $1/\sqrt{\|v\|}$ chance that an interface following angle $\theta_v$ ends up passing exactly through the vertex $v$. A naive approach of just having a variance bound, and then forcing the path into the vertex $v$ would cost an amount that deteriorates exponentially quickly as $\beta \to \infty$.

Thus, we must follow the derivation of the Ornstein--Zernike asymptotics of~\cite{DKS,VelenikThesis,CIV03} more carefully, and in steps where there is a constant $c(\beta)$, replace it with a constant that is uniform in $\beta$. That is the subject of the following subsection.

\subsection{Uniform-in-$\beta$ two-point function estimates}

Our main aim in this subsection is to establish the following asymptotics on two-point functions (which by the random line representation of the Ising model are dual to interface probabilities at low temperature, and in fact our proof will leverage this duality). In the statement of the following lemma, let $S_n$ be the strip $\{1,...,n\} \times \mathbb Z$, and let the vertex $0^* = (\frac{1}{2},\frac{1}{2}) \in S_n^*$.

\begin{lem}\label{lem:OZ-asymptotics-beta-indep}
     There exists $C,\beta_0$ such that the following holds for all $n \ge 1$ and all $\beta>\beta_0$. For all $v^* = (n,n \tan \varphi) + (\frac{1}{2},\frac{1}{2}) \in (\Z^2)^*$ for $\varphi \in [-\frac{\pi}{4}, \frac{\pi}{4}]$, one has 
    \begin{align*}
   \frac{C^{-1}}{\sqrt{1+ne^{-2\beta}+n\tan \varphi}} e^{ - \tau_\beta(\varphi) \|v\|} \le \langle \sigma_{0^*} \sigma_{v^*} \rangle_{\beta^*, S_n^*}\le \frac{C}{\sqrt {1+ne^{-2\beta}+n\tan \varphi}} e^{-\tau_\beta (\varphi ) \|v\|}\,.
    \end{align*}
\end{lem}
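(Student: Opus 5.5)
We plan to follow the Ornstein--Zernike route of~\cite{CIV03,VelenikThesis}, carried out in the dual low-temperature (interface) language of~\eqref{eq:interface-two-pt-function-duality}, and tracking every constant so that none depends on $\beta$. The dual two-point function in the strip equals the partition-function ratio of open contours $\gamma$ from $0^*$ to $v^*$ in $S_n^*$, weighted by $e^{-2\beta|\gamma|}$ times a cluster-expansion correction $\exp(\Phi(\gamma))$. At large $\beta$ the correction is an absolutely convergent sum of local terms of size $O(e^{-c\beta})$ per edge. The contour splits at \emph{cone points}, meaning horizontal cut positions through which $\gamma$ passes exactly once. Between consecutive cone points sit irreducible pieces. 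The key uniform-in-$\beta$ fact is that an irreducible piece of horizontal length $\ge 2$ requires a backtrack, which costs at least $e^{-2\beta}$ relative to the straight path. This cost is only $\beta$-dependent in the favorable direction.

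The first step is to build the renewal structure. Tilt by the dual vector $h_\varphi$ of $\tau_\beta$ at angle $\varphi$, so that $\mathbb P(X=(1,y)) \propto \exp(-2\beta(1+|y|)+\langle h_\varphi,(1,y)\rangle + \text{corrections})$, with total mass $1$. Here $\tau_\beta(\varphi)\|v\|=\langle h_\varphi,v\rangle$ by convex duality. Each step has horizontal length $1$ except with probability $O(e^{-c\beta})$, and the pieces are i.i.d.\ up to boundary pieces whose laws have exponential tails with $\beta$-uniform rates. We then write
\begin{align*}
\langle\sigma_{0^*}\sigma_{v^*}\rangle = e^{-\langle h_\varphi,v\rangle}\sum_{\text{boundary}} \mathbb P\big(\text{walk with i.i.d.\ steps } X_i \text{ hits } v^* - (\text{boundary pieces})\big)\,,
\end{align*}
and the strip restriction only removes paths that exit horizontally, which cone points forbid. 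The upper and lower bounds then reduce to showing that a random walk of about $n$ steps hits a given vertical level with probability of order $1/\sqrt{1+\operatorname{Var}\text{ of the vertical displacement}}$.

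The second step is the variance computation. The vertical increment $Y$ of a step under the tilted law has variance $\sigma^2(\beta,\varphi)$, and by the Gaussian curvature relation $\sigma^2 \asymp 1/(\tau_\beta+\tau_\beta'')$, using~\eqref{eq:tau''-upper-bound} and~\eqref{eq:tau''-lower-bound}. More concretely, for $\varphi$ near $0$ a step moves vertically with probability $\asymp e^{-2\beta}$, so $\sigma^2\asymp e^{-2\beta}$. For $\tan\varphi\gtrsim e^{-2\beta}$ the increments behave like geometric variables with mean $\tan\varphi$ and variance $\asymp \tan\varphi$. This gives a total variance $n\sigma^2 \asymp ne^{-2\beta}+n\tan\varphi$ uniformly, which matches the claimed denominator.

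The third step, the hitting-probability estimate, is where we expect the main obstacle. We need a local limit theorem for a lattice walk whose increments are nearly degenerate: when $n e^{-2\beta}\ll 1$ the walk is almost deterministic and the hitting probability is $\asymp 1$, which is consistent with the ``$1+$'' in the denominator. Since the classical LCLT constants depend on the increment law, we will instead prove a uniform local bound directly. For the upper bound we will use a concentration-function (Kolmogorov--Rogozin) inequality, $\sup_y \mathbb P(S_n=y)\le C/\sqrt{1+n\,\mathbb P(Y\neq Y')}$, with a universal $C$. For the lower bound we will use a Chebyshev bound to place $S_{n-m}$ within $O(\sqrt{1+n\sigma^2})$ of the target with probability $\ge 1/2$. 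We then use the last $m$ steps, each of which moves by $\pm1$ with comparable probabilities, to hit exactly. This needs an aperiodicity and spread bound on $Y$ whose ratio is $\beta$-uniform, and it is the step where care is required. Along the way, the mass contributed by irreducible pieces of horizontal length $\ge 2$ and by the cluster-expansion corrections must be absorbed into constants by their $\beta$-uniform exponential tails, and the boundary pieces must be summed.
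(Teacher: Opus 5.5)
There is a genuine gap, and it sits in two places.

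\textbf{The lower bound on the hitting probability.} Your argument does not work as described. Chebyshev only tells you that $S_{n-m}$ (or $S_n$) puts mass $\ge 1/2$ in a window of width $K\sqrt{1+n\sigma^2}$. To get mass $\gtrsim (1+n\sigma^2)^{-1/2}$ at the specific target you need some regularity of the law, and the ``last $m$ steps'' device does not supply it.
\begin{itemize}
\item Near slope $0$, a step moves at all only with probability $\asymp e^{-2\beta}$. A bounded number of final steps therefore cannot absorb an offset of order $\sqrt{ne^{-2\beta}}$.
\item Taking $m\asymp n$ instead requires a local lower bound for $S'_m$ uniformly over a window of that width. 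That is a stronger form of the statement you are trying to prove.
\item For $\varphi$ away from $0$, the premise that $\pm1$ moves have comparable probabilities is false. Under the tilt, an up-step has relative weight $e^{-\beta(2-H)}$, which is of order one near $\varphi=\pi/4$. A down-step has weight $e^{-\beta(2+H)}\le e^{-2\beta}$.
\end{itemize}
The ingredient the paper uses here is log-concavity. The tame increments are differences of independent geometric variables, so $S^\infty_{N,H}$ is log-concave. Two results then combine:
\begin{itemize}
\item the Bobkov--Marsiglietti--Melbourne bound $\max_y P(Y=y)\ge(1+12\var(Y))^{-1/2}$;
\item the Melbourne--Palafox-Castillo comparison of the mass at $\lfloor \E Y\rfloor,\lceil \E Y\rceil$ with the maximum.
\end{itemize}
Together they give the two-sided bound $\asymp(1+ND_H^\infty)^{-1/2}$ at the mean with absolute constants (Lemma~\ref{l:log.concave}). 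Your Kolmogorov--Rogozin upper bound is fine for a genuinely i.i.d.\ walk. A Bernoulli-part decomposition could also stand in for log-concavity, but some such shape input is indispensable.

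\textbf{The reduction to a walk with i.i.d.\ steps.} This is asserted rather than constructed, and the stated justification fails. The cluster-expansion weight $\exp(\Phi(\gamma))$ does not factor over the pieces between cut points: clusters touching several pieces couple them. These couplings are extensive, of total size $O(ne^{-4\beta})$; compare the bound $|\log\hat\Xi(N,H)|\le Ne^{-4(\beta-\beta_0)}$ from \cite{DKS} used in Lemma~\ref{l:wild.coupling}. They therefore cannot be ``absorbed into constants'' once $n\gg e^{4\beta}$. Without independence, neither the concentration inequality nor any Chebyshev-plus-regularity argument applies directly.

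The paper avoids building an exact renewal by splitting at $N=e^{11\beta/5}$:
\begin{itemize}
\item Below this scale, all non-tame effects cost only $O(Ne^{-4\beta})\le\frac{1}{100}N^{-2/3}$ in total variation, which is negligible at the $N^{-1/2}$ scale. One can then work with the exactly i.i.d.\ tame walk (Lemma~\ref{l:wild.coupling}).
\item Above it, the variance $D_{N,H}$ is large (at least of order $e^{\beta/5}$). The interactions are controlled through the analytic expansion of $\log\chi_{N,H}$ from \cite{DKS}, whose remainder is linear in $N$ with $\beta$-explicit constants. This gives a Fourier local CLT with error $e^{-\beta/100}$ (Lemma~\ref{l:lclt.fourier}).
\end{itemize}
To keep your cone-point route, you need either an exact renewal construction with $\beta$-uniform constants, or a comparable regime split.
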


    The above is capturing the transition between two regimes, where if the interface is (approximately) at slope zero, the variance for the effective random walk is of order $ne^{-2\beta}$, whereas if it is at strictly positive slope, there is residual variance even as $\beta \uparrow \infty$.

\begin{remark}\label{rem:applicability-of-OZ-lemma}
    Next, note that by the GKS inequality, the upper bound of Lemma~\ref{lem:OZ-asymptotics-beta-indep} also holds for all domains $\Lambda^* \subset S_n^*$. It also holds on all bigger domains $\Lambda^* \supset S_n^*$, because by cluster expansion enlarging the domain only costs a $|1\pm \epsilon_\beta| \le 2$ factor.  Thus, Lemma~\ref{lem:OZ-asymptotics-beta-indep} gives the uniform-in-$\beta$ and $n$ analogues of both Lemma 2.1 and Eq.~(2.3) of~\cite{LMST} which are used extensively there for the ultimate mixing time goal. In particular, if $n\ge e^{ \beta/10}$, 
    \begin{align*} 
           \frac{C^{-1} e^{-\beta}}{\sqrt{n}} e^{ - \tau_\beta(\varphi) \|v\|} \le \langle \sigma_{0^*} \sigma_{v^*} \rangle_{\beta^*, S_n^*}\le C\Big(\frac{ e^{\beta}}{\sqrt {n}}\vee 1\Big) e^{-\tau_\beta (\varphi ) \|v\|}\,.
    \end{align*}
\end{remark}

Though we stated the above lemma in the dual, high-temperature form, we will prove it using cluster expansion and in the low-temperature form~\eqref{eq:interface-two-pt-function-duality}. We follow the proof of~\cite{DKS}, and largely borrow the Chapter 4 of~\cite{DKS} for ease of comparison. Let us recall some of this notation. After rewriting the law of Ising interfaces, with some cluster expansion and viewing it as a random walk with ``decorations", they unpin the right endpoint, so that they are considering a directed polymer model starting at $0 \in \mathbb Z^2$, and of length $N$. In order to direct the endpoint in the direction $\varphi$, they introduce a tilt of the measure on polymers for the slope, parametrized by $H$. Then 
\begin{itemize}
    \item $P_{N,H}$ is the law of this Ising interface polymer $S_{N,H}$ for $x$-axis distance $N$, with tilt $H$;
    \item The quantity $h$ is the right endpoint of the walk, i.e., $h = S_{N,H}(N)\cdot e_2$. 
    \item The expected value of $h$ is denoted $M_{N,H}$, and the variance of $h$ is denoted $D_{N,H}$. 
\end{itemize}  

The proof of Lemma~\ref{lem:OZ-asymptotics-beta-indep} goes by showing local central limit theorem behavior, with the correct finite-$N$ variance, for this tilted polymer. 
We split the consideration into two regimes, one where $N\gg e^{2\beta}$, and the other where $N\ll e^{3\beta}$. The following lemma is the analogue of Proposition 4.10 of~\cite{DKS} for $N\ge e^{2^+ \beta}$ with its $\beta$-dependencies quantified.

\begin{lem}\label{l:lclt.fourier}
    There exists $\beta_0>0$ such that for all $\beta>\beta_0$ and $N \ge e^{\frac{11}{5}\beta}$ and $|H|\leq 2-\beta^{-1}\delta$, 
    \begin{align*}
        \sup_{k\in \mathbb Z} \Big| D_{N,H}^{1/2}  P_{N,H}(h(S_{N,H}) = k)  - e^{ - \frac{1}{2D_{N,H}} (k - M_{N,H})^2}\Big| \le C(\delta) e^{-\beta/100} \,, 
    \end{align*}
    for a constant $C(\delta)$ only depending on $\delta$. 
\end{lem}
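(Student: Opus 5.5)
We prove the local CLT by Fourier inversion, following the proof of Proposition~4.10 of~\cite{DKS}. Throughout, every constant is tracked in $\beta$. We use the DKS representation: under $P_{N,H}$ the interface is a sequence of i.i.d.\ increments (horizontal steps, each carrying a vertical displacement) with an exponential tilt in $H$. These are dressed by decorations that cluster expansion controls with activities $e^{-c\beta}$. The characteristic function therefore nearly factorizes:
\begin{align*}
    \phi_{N,H}(s) := E_{N,H}\big[e^{ish}\big] = \exp\big(N g_H(s) + \Delta_N(s)\big)\,,
\end{align*}
where $g_H(s) = \log \E_H[e^{is\xi}]$ is the log-characteristic function of a single tilted vertical increment $\xi$. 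The correction satisfies $|\Delta_N(s)| \le C e^{-c\beta}(1 + N|s|^2 e^{-c\beta})$ in a neighborhood of $s=0$. We read off $M_{N,H} = N g_H'(0)/i + O(e^{-c\beta})$ and $D_{N,H} = N\sigma_H^2(1+O(e^{-c\beta}))$, where $\sigma_H^2 = \var_H(\xi)$.

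The next step is a quantitative lower bound on the increment variance. For $|H|\le 2-\beta^{-1}\delta$, a single vertical increment is nonzero with probability of order $e^{-2\beta}e^{|H|\beta}$, up to $\delta$-dependent constants. This gives $\sigma_H^2 \ge c(\delta)e^{-2\beta}$. The restriction on $H$ ensures the tilt never makes steps free, so $\sigma_H^2 \le C(\delta)$, and the third absolute moment satisfies $\E|\xi-\E\xi|^3 \le C(\delta)\sigma_H^2$. Consequently $D_{N,H}\ge c(\delta)Ne^{-2\beta}\ge c(\delta)e^{\beta/5}$, using $N\ge e^{11\beta/5}$. This surplus factor $e^{\beta/5}$ is what will produce the $e^{-\beta/100}$ error.

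Fourier inversion then gives
\begin{align*}
    D^{1/2}P(h=k) = \frac{D^{1/2}}{2\pi}\int_{-\pi}^{\pi} e^{-isk}\phi_{N,H}(s)\,ds\,,
\end{align*}
and we compare this with the Gaussian integral, splitting the $s$-range into three pieces.
\begin{itemize}
    \item \emph{Small $s$}, namely $|s|\le D^{-1/2}D^{\epsilon}$. A third-order Taylor expansion of $g_H$ gives an error of $N|s|^3\E|\xi|^3\lesssim D^{-1/2+3\epsilon}$. After the change of variables $u=D^{1/2}s$ this contributes $O(D^{-1/2+3\epsilon})=O(e^{-\beta/20})$. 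The $\Delta_N$ term contributes $O(e^{-c\beta})$.
    \item \emph{Intermediate $s$}, namely $D^{-1/2+\epsilon}\le|s|\le \eta$. Here we use $\mathrm{Re}\, g_H(s)\le -c\,\sigma_H^2 s^2$, which yields a bound $e^{-cD^{2\epsilon}}$.
    \item \emph{Large $s$}, namely $\eta\le|s|\le\pi$. We need aperiodicity uniformly in $\beta$: $|\E e^{is\xi}|\le 1-c\,P(\xi\neq 0)\le 1-c(\delta)e^{-2\beta}$. Raised to the power $N$, this is at most $e^{-cNe^{-2\beta}}\le e^{-ce^{\beta/5}}$. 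Multiplying by $D^{1/2}\le CN^{1/2}$ still leaves a superexponentially small term.
\end{itemize}
Choosing $\epsilon$ small, the total error is at most $C(\delta)e^{-\beta/100}$.

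The main obstacle is the first step: showing that the decoration correction $\Delta_N(s)$ and its $s$-derivatives are genuinely $e^{-c\beta}$-small uniformly in $N$ and $H$. In particular, its contribution to the variance must be small relative to $Ne^{-2\beta}$, not merely relative to $N$. Otherwise cluster-expansion corrections of order $e^{-4\beta}N$ from decorated steps could compete with the main variance. We would handle this by verifying that every non-trivial cluster containing a vertical displacement pays at least $e^{-2\beta}$ times an extra $e^{-c\beta}$, so that its contribution is $O(e^{-(2+c)\beta})$ per unit length.
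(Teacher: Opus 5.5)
Your outline is the paper's route: Fourier inversion as in Proposition~4.10 of \cite{DKS}, a split of the frequency range (the paper's $J_1,\dots,J_4$, with cutoff $A=e^{\beta/100}$ in place of your $D^{\epsilon}$), and the gain coming from $D_{N,H}\ge \frac18 Ne^{-\beta(2-|H|)}\gtrsim e^{\beta/5}$. However, the decomposition that everything rests on is wrong as stated.

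The decoration correction $\Delta_N(s)=\log\phi_{N,H}(s)-Ng_H(s)$ is a difference of two extensive free energies. Cluster expansion ((4.8.1) and Proposition~4.9 of \cite{DKS}) controls it only per unit length: its derivatives at $s=0$ are $N$ times per-step cumulant corrections of size $O(e^{-4\beta})$, and these do not vanish in general for $H\neq0$. So $\Delta_N$ is not $O(e^{-c\beta})$ uniformly in $N$. Likewise, $M_{N,H}-Ng_H'(0)/i$ is a priori of order $Ne^{-4\beta}$, not $O(e^{-c\beta})$. Since $D_{N,H}\le N$, such a shift is not $o(D_{N,H}^{1/2})$ uniformly in $N$. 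An expansion organized around the tame mean therefore cannot be matched to the Gaussian centered at $M_{N,H}$.

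The same problem affects your large-$s$ range. There you bound only the tame factor $|\E e^{is\xi}|^N$, but $|\phi_{N,H}(s)|$ also carries $e^{\mathrm{Re}\,\Delta_N(s)}$. This factor can be as large as $e^{CNe^{-4\beta}}$, and you only asserted a bound on $\Delta_N$ near $s=0$.

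The repair is the per-unit-length statement your last paragraph gestures at, made systematic; it is exactly how the paper proceeds.
\begin{itemize}
\item \emph{Small frequencies.} Do not split off $Ng_H$. Instead expand the full normalized log-characteristic function, $\log\hat\chi_{N,H}(t)=-t^2/2+\frac{t^3}{6}D_{N,H}^{-3/2}R_{N,H}(t)$, so the first two coefficients are the true $M_{N,H}$ and $D_{N,H}$ by construction. Bound $R_{N,H}$ by Cauchy estimates from $|\log\chi_{N,H}(t)|\le C(\delta)N\bigl(e^{-\beta(2-|H|)}+e^{-4(\beta-\beta_0)}\bigr)$ on the strip $|\mathrm{Im}\,t|\le\delta/3$, which only shifts $H$ by $O(\delta/\beta)$. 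This bound is $O(D_{N,H})$, so the cubic error is $\lesssim|t|^3D_{N,H}^{-1/2}$. The surplus $D_{N,H}\gtrsim e^{\beta/5}$ then makes it $e^{-c\beta}$-small on $|t|\le e^{\beta/100}$.
\item \emph{Intermediate frequencies.} The same expansion gives $|\hat\chi_{N,H}(t)|\le e^{-t^2/4}$ for $|t|\le\alpha D_{N,H}^{1/2}$, which replaces your intermediate-range bound.
\item \emph{Far frequencies.} Use (4.10.29) of \cite{DKS}, $|\hat\chi_{N,H}|\le(1-\alpha^2/(4e^{2\beta}))^N$. This is a bound on the full decorated measure: the $O(e^{-4\beta})$ per-step decoration effect is dominated by the $e^{-2\beta}$ per-step tame gap.
\end{itemize}
With these inputs, your small/intermediate/large bookkeeping goes through essentially as you wrote it.
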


\begin{proof}
    We follow Proposition 4.10 of~\cite{DKS}, describing the quantitative bounds on each of the terms that were only implicit in that proof. The left-hand side of (4.10.3) of~\cite{DKS} which (by their (4.10.21)--(4.10.22)) is given by 
    \begin{align*}
        \sup_{q\in \Z}  \Big| (D_{N,H})^{1/2} P_{N,H}(h=q) - \frac{1}{\sqrt{2\pi}} e^{ - \frac{1}{2 D_{N,H}}(q-M_{N,H})^2}\Big| \le \frac{1}{2\pi} (J_1 + J_2 + J_3 + J_4 ) \,,
    \end{align*}
    where for constants $A=e^{\beta/100}$ and $\alpha$, we define 
    \begin{align*}
        J_1 & = \int_{-A}^A |\hat \chi_{N,H}(t) - \exp( - t^2/2)| dt \,,  & \qquad   J_2 &  = \int _{|t|>A} \exp( - t^2/2) dt\,, \\ 
        J_3 & =  \int_{A\le |t|\le \alpha (D_{N,H})^{1/2}} |\hat \chi_{N,H}(t)|dt\,,  &  J_4 & = \int_{\alpha(D_{N,H})^{1/2} \le |t|\le \pi (D_{N,H})^{1/2}} |\hat \chi_{N,H}(t)| dt\,. 
    \end{align*}
    where $\hat \chi_{N,H}(t)$ is the normalized characteristic function 
    \begin{align*}
        \hat \chi_{N,H}(t) = \chi_{N,H}(t (D_{N,H})^{-1/2})  \exp( - it M_{N,H}(D_{N,H})^{-1/2}) \qquad \text{where} \qquad \chi_{N,H} = \sum_{q\in \Z} e^{ itq} P_{N,H}(q)\,.
    \end{align*}
     Note that as in (4.10.6)--(4.10.19) of~\cite{DKS}, for a remainder term $R_{N,H}(t)$, one has 
     \begin{align}\label{eq:log-characteristic-function-expansion}
         \log \hat \chi_{N,H}(t) &  = - \frac{t^2}{2} + \frac{t^3}{6} (D_{N,H})^{-3/2} R_{N,H}(t) \\ 
         |R_{N,H}(t)| &\le  C(\delta) \sup_{(t,H)\in  G(\delta)} |\log \chi_{N,H}(t)| \\ & \le C(\delta) \sup_{|H|< 2-\beta^{-1}\delta/2} N \Big(\tilde C(\delta) e^{ - \beta ( 2- |H|)} + 2e^{- 4(\beta - \beta_0)}\Big)  \nonumber\,,
     \end{align}
     where $G(\delta) = \{(t,H): H\in \mathbb R\,,\, |H| < 2-\beta^{-1} \delta/2\,,\, t\in \mathbb C\,,\,|\text{Im}(t)|\le \frac{\delta}{3}\}$. 

    Let us now bound each of $J_1,...,J_4$ for $\beta$ sufficiently large, and all $N \ge e^{\frac{11}{5}\beta}$. 
    By (4.9.12) of~\cite{DKS}, 
    \[
        D_{N,H} \ge \frac{1}{8} N \exp( - \beta(2-|H|)) \ge \frac{1}{8} e^{\beta},
    \]
    and, therefore,
    \begin{align*} \sup_{\substack{t\in [-A,A],\\ |H|<2-\beta^{-1} \delta/2}} \frac{|t^3R_{N,H}(t)|}{6D_{N,H}^{3/2}} &\leq
    \sup_{|H|<2-\beta^{-1} \delta/2} \frac{C(\delta)  \Big(\tilde C(\delta) e^{ - \beta ( 2- |H|)} + 2e^{- 4(\beta - \beta_0)}\Big)}{6\cdot 8^{-3/2}\exp( - \frac32\beta(2-|H|))} \frac{e^{3\beta/100}N}{N^{3/2}}\\
    &\leq C'(\delta)\frac{e^{53\beta/100}}{N^{1/2}} \le \frac{1}{2}
    \end{align*}
    provided $\beta_0$ is large enough.   Since $|e^{x}-1|\le 2|x|$ provided $|x|\le 1/2$, combining the above equation with \eqref{eq:log-characteristic-function-expansion} gives

    \[
    |\hat \chi_{N,H}(t) - \exp( - t^2/2)|\leq 2e^{-\frac{t^2}{2}}\Big|\frac{t^3}{6} (D_{N,H})^{-3/2} R_{N,H}(t)\Big|\,,
    \]
    and so
    \begin{align*}
        J_1\leq \int_{-A}^A 2e^{-\frac{t^2}{2}}C'(\delta)\frac{e^{53\beta/100}}{N^{1/2}}  dt \leq C(\delta)e^{-\beta/4}\,.
    \end{align*}
    Next, observe that by standard Gaussian tail bounds, $$J_2 \le C e^{ - A^2/2}.$$  For the third term, by (4.10.23)--(4.10.25) of~\cite{DKS}, we have 
    \begin{align*}
        J_3 \le \int_{A\le |t|\le \alpha (D_{N,H}^{1/2})}e^{-t^2/4} dt \le C e^{ - A^2/4}
    \end{align*}
    so long as $\alpha$ is such that $C'(\delta) \alpha /6 \le 1/4$, i.e., as long as $\alpha$ is sufficiently small (not depending on $\beta,N$). 
Finally, for term $J_4$, equation (4.10.29) of~\cite{DKS}, we get 
    \begin{align*}
        J_4\leq\int_{\alpha (D_{N,H})^{1/2}\le |t|\le \pi (D_{N,H})^{1/2}} \Big( 1- \frac{\alpha^2}{4e^{2\beta}}\Big)^N dt \le \pi (D_{N,H})^{1/2} \Big( 1- \frac{\alpha^2}{4e^{2\beta}}\Big)^N
    \end{align*}
    By Proposition 4.9 and (4.9.11) in~\cite{DKS}, we get 
    \begin{align*}
        N^{-1} D_{N,H} \le \frac{1}{2} \frac{ e^{|H|\beta } + e^{ - |H|\beta}}{e^{2\beta} + e^{ - 2\beta}} + e^{ - 4(\beta - \beta_0)} + N^{-1} e^{ - 4(\beta - \beta_0)} \le \frac{1}{2} + 2e^{ - 4(\beta - \beta_0)} \le 1\,,
    \end{align*}
    for $\beta$ large (regardless of $N$). Therefore, so long as $N\ge e^{\frac{11}{5}\beta}$, 
    \begin{align*}
        J_4 \le \pi e^{ - \frac{\alpha^2}{4} Ne^{-\frac1{10}\beta}}\,,
    \end{align*}
    which for any fixed $\alpha$ (since $\alpha$ was only small depending on $\delta$, not $\beta,N$), is smaller than $e^{ - \beta}$ if $N\ge e^{\frac{11}{5}\beta}$, for universal large $\beta_0$. 
\end{proof}

On the other hand, for small $N$ relative to $\beta$, we compare to the polymer which only takes ``tame" increments, meaning the interface has no overhangs, and therefore is a true random walk with tilted, geometrically decaying, increments. The notation for the tame polymer has an $\infty$ superscript compared to the general polymer measure. 

\begin{lem}\label{l:wild.coupling}
    There exists $\beta_0'$ such that for all $\beta>\beta_0'$ and $N \le e^{\frac{11}{5}\beta}$, for some absolute constant $C$, $P_{N,H}$ has total-variation distance at most $CNe^{-4\beta}\leq \frac1{100}N^{-\frac{2}{3}}$ to $P_{N,H}^\infty$. Here, $P_{N,H}^\infty$ is the law of a sum of independent random variables $S_{N,H}^\infty = \sum_{i=1}^N X_i^\infty$ with $X_i^\infty$ having law proportional to $e^{ - 2\beta |k|+\beta Hk}$ for $k\in \Z$.
\end{lem}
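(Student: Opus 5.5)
The plan is to follow the decomposition of the interface polymer of~\cite{DKS} (Chapter 4) into a sequence of \emph{increments} between consecutive cut points (vertical ``basic'' steps) together with cluster-expansion decorations. Under $P_{N,H}$, the polymer is a product over columns of increment weights times a cluster-expansion correction factor. An increment is \emph{tame} if it is a single column step of vertical displacement $k\in\Z$ with no overhang and no decoration attached. The weight of a tame increment is exactly proportional to $e^{-2\beta|k|+\beta Hk}$ (horizontal bond cost $2\beta$ per unit after normalization, plus $2\beta|k|$ for vertical bonds, tilted by $H$). This is precisely the law of $X_i^\infty$. The untamed configurations are those in which some column contains an overhang, a multi-column increment, or a nontrivial cluster decoration.

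The key step is to bound, uniformly in $N$ and the history of the other columns, the probability that a given column is untamed by $Ce^{-4\beta}$. For an overhang or a backtracking increment, the interface needs at least two extra horizontal-type bonds relative to the tame increment with the same endpoints. That costs a factor $e^{-4\beta}$ relative to the tame weight, and the number of such shapes with excess length $m$ grows only like $C^m$, giving a summable series $\sum_m C^m e^{-2\beta m}\lesssim e^{-4\beta}$. For decorations, the cluster-expansion weights are bounded by the same $e^{-4\beta(\text{size})}$ Peierls-type bound. The smallest cluster, a flipped single spin adjacent to the interface, costs at least $e^{-4\beta}$ relative to the undecorated interface (the $2e^{-4(\beta-\beta_0)}$ in~\eqref{eq:log-characteristic-function-expansion} is exactly this term). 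The tilt $|H|<2$ only affects the vertical displacement and is absorbed since the normalization uses the same tilted tame weights. The bound must hold for $|H|$ close to $2$ only with a $\delta$-dependent loss, but the relevant comparison is of untamed to tame configurations with matching vertical displacement, so the tilt cancels.

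Next comes the coupling. Write $P_{N,H}$ as the tame product measure conditioned on, or reweighted by, the event that all columns are tame, plus the untamed part. A union bound over the $N$ columns gives
\[
P_{N,H}(\text{some column untamed})\le CNe^{-4\beta}.
\]
The cluster-expansion interaction factor between distinct columns, restricted to all-tame configurations, equals $1+O(Ne^{-4\beta})$, so the total-variation distance is bounded by $CNe^{-4\beta}$. Finally, for $N\le e^{\frac{11}{5}\beta}$ we have $N^{5/3}\le e^{\frac{11}{3}\beta}$, so
\[
CNe^{-4\beta}=CN^{5/3}e^{-4\beta}N^{-2/3}\le Ce^{-\beta/3}N^{-2/3}\le \tfrac{1}{100}N^{-2/3}
\]
once $\beta_0'$ is large.

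The main obstacle is making the $e^{-4\beta}$ per-column bound genuinely uniform in $\beta$ and in $H$ near $\pm2$, where steep tame increments are cheap. I would handle this by comparing each untamed increment with its straightened tame counterpart at identical endpoint, so the tilt factor cancels, and by using the cluster-expansion bounds of~\cite{DKS}, whose constants only improve as $\beta\to\infty$.
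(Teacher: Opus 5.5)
Your outline is sound, but it reaches the key estimate by a different and heavier route than the paper. You bound $P_{N,H}(\text{not tame})$ locally, comparing untamed increments column by column with straightened tame ones (a Peierls-type comparison) and taking a union bound over the $N$ columns. You then separately control the cluster factor between columns on all-tame configurations.

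The paper instead uses a single global input, (4.8.1) of \cite{DKS}: $|\log(\Xi(N,H)/\Xi(N,H)^\infty)|\le Ne^{-4(\beta-\beta_0)}$, where $\Xi(N,H)^\infty$ is the partition function of the tame polymer. The argument then runs as follows.
\begin{enumerate}
\item Once $Ne^{-4(\beta-\beta_0)}\le 1/2$, one gets $P_{N,H}(\text{not tame})=1-\Xi(N,H)^\infty/\Xi(N,H)\le 2Ne^{-4(\beta-\beta_0)}$.
\item Since $P_{N,H}(\cdot\mid\text{tame})=P^\infty_{N,H}$, the inequality $\|\mu-\mu(\cdot\mid A)\|_{\tv}\le 2\mu(A^c)$ gives the bound $4Ne^{-4(\beta-\beta_0)}$.
\item The final arithmetic with $N\le e^{11\beta/5}$ is the same as yours.
\end{enumerate}

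The paper's route is two lines. It sidesteps exactly what you identify as the main obstacle: making a per-column bound uniform in the presence of cluster interactions between columns and for $|H|$ near $2$. All of that is already absorbed into the DKS estimate and its $\beta_0$. Your route is more self-contained and makes visible where the rate $e^{-4\beta}$ comes from (two extra horizontal bonds for an overhang). However, the per-column bound is only sketched as written, and making it rigorous essentially amounts to reproving (4.8.1).

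Two smaller remarks. A union bound only needs marginal per-column bounds, not bounds uniform in the history of the other columns. Also, a single flipped spin next to the interface is a tame height-one bump, not a decoration, though this does not affect your upper bound.
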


\begin{proof}  
    The probability under $P_{N,H}$ of the interface not being tame is bounded in terms of corresponding partition functions as 
    \begin{align*}
       \frac{\Xi(N,H) - \Xi(N,H)^\infty}{\Xi(N,H)} =  1- \frac{\Xi(N,H)^\infty}{\Xi(N,H)}  = 1-(\Xi(N,H)/\Xi(N,H)^\infty)^{-1}= 1-e^{-\log (\Xi(N,H)/\Xi(N,H)^\infty)}\,.
    \end{align*}
    Writing the partition function ratio as $\hat \Xi(N,H) =\Xi(N,H)/\Xi(N,H)^\infty$ and using the bound of (4.8.1) of~\cite{DKS} that $|\log \hat \Xi(N,H)|\le N e^{ - 4(\beta - \beta_0)}$, we deduce that as long as $N e^{ - 4(\beta - \beta_0)} \le \frac{1}{2}$, then 
    \begin{align*}
        P_{N,H}(\text{not tame}) =  \frac{\Xi(N,H) - \Xi(N,H)^\infty}{\Xi(N,H)} \le 2N e^{ - 4(\beta - \beta_0)}\,.
    \end{align*}
    Since $2Ne^{- 4(\beta - \beta_0)} \le \frac{1}{2}$ if $N \le e^{11\beta/5}$ for large enough $\beta$ (not depending on $N$) and since
$\|\mu - \mu(\cdot \mid A)\|_{tv} \le 2\mu(A^c)$, we get that the total variation distance is at most $4N e^{ - 4(\beta - \beta_0)}$.  Since $N \le e^{\frac{11}{5}\beta}$ we have that
    \[
    4N e^{ - 4(\beta - \beta_0)}\leq 4e^{4\beta_0}N^{-\frac23}e^{-\beta/3}\leq \frac1{100} N^{-\frac23}.
    \]
    for large enough $\beta$.
\end{proof}

Since for $N\le e^{11\beta/5}$ the law is within total-variation $o(N^{-1/2})$ of a random walk, we can use more classical local limit theorems to treat that case. As in~\cite{DKS}, we use $D_{H}^\infty$ for $\lim_N \frac{1}{N} D_{N,H}^\infty$, and similarly define $M_H^\infty, M_H,D_H$. 

\begin{lem}\label{l:log.concave}
    There exists an absolute constant $C>0$ such that with $X_i^\infty$ i.i.d.\ random variables with law proportional to $e^{ - 2\beta |k|+\beta Hk}$ for $k\in \Z$ and $S_{N,H}^\infty = \sum_{i=1}^N X_i^\infty$, if $\E[S_{N,H}^\infty]=k\in\Z$ then
    \[
        \frac{C^{-1}}{\sqrt{1+ND_H^\infty}}\leq P_{N,H}^\infty(h(S_{N,H}^\infty) = k) \leq \frac{C}{\sqrt{1+ND_H^\infty}}
    \]
\end{lem}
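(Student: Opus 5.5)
The key structural fact is that each $X_i^\infty$ has a two-sided geometric (discrete Laplace) law, $p(k)\propto e^{-(2\beta-\beta H)k}$ for $k\ge0$ and $\propto e^{-(2\beta+\beta H)|k|}$ for $k\le 0$. This is a log-concave sequence on $\Z$. Sums of independent log-concave integer random variables are log-concave, since discrete convolution preserves log-concavity. So $S_{N,H}^\infty$ is log-concave on $\Z$ with variance $\sigma^2 = N D^\infty_{N}$, where $D^\infty_N:=\var(X_1^\infty)$; I read $ND_H^\infty$ in the statement as this variance. The plan is to invoke the standard fact for log-concave integer laws: if $Y$ has mean $\mu$ and variance $\sigma^2$, then
\begin{align*}
\max_j P(Y=j)\asymp \frac{1}{\sqrt{1+\sigma^2}}\,,
\end{align*}
and moreover $P(Y=j)\ge c/\sqrt{1+\sigma^2}$ for all $j$ with $|j-\mu|\le \sqrt{1+\sigma^2}$, with absolute constants. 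This is the discrete analogue of Bobkov's bound $\|f\|_\infty\asymp 1/\sigma$ for log-concave densities. Applied at $j=k=\E[S_{N,H}^\infty]$, it gives both inequalities with $C$ independent of $\beta,H,N$.

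For the upper bound I would argue directly. Let $m$ be a mode and $p_{\max}=P(Y=m)$. Log-concavity makes $p$ nonincreasing away from $m$, and the ratio $p(j+1)/p(j)$ is monotone. If $p_{\max}$ is large compared to $1/\sqrt{1+\sigma^2}$, then $p$ must be dominated by a two-sided geometric profile with ratio bounded by $1-c\,p_{\max}$. Indeed, the mass within distance $r$ of $m$ is at most $(2r+1)p_{\max}$, and log-concavity propagates the decay. That domination forces $\sigma^2\lesssim p_{\max}^{-2}$, hence $p_{\max}\lesssim 1/\sqrt{1+\sigma^2}$.

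For the lower bound, Chebyshev puts mass at least $3/4$ in $[\mu-2\sigma,\mu+2\sigma]$. If $\sigma<1$, some atom in this window of at most five points carries mass $\ge 3/20$, and unimodality plus log-concavity pass a constant lower bound to the point $\mu=k$, which lies in the interval between that atom and the mode. If $\sigma\ge1$, I would use that log-concave laws have the mode and mean within $O(\sigma+1)$ of each other, together with the upper bound on $p_{\max}$. These give $p(j)\ge c p_{\max}$ for $|j-m|\le \sigma$: were $p$ to drop by a factor $c$ within distance $\sigma$ on one side, the monotone ratio would make that tail geometrically thin. Checking that the two tails cannot both be thin while the variance stays $\sigma^2$ then gives the bound. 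Combined with $p_{\max}\ge (3/4)/(4\sigma+1)$, this yields $P(Y=k)\ge c/\sqrt{1+\sigma^2}$.

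The main obstacle is making the lower bound at the specific point $k=\E S$ fully uniform, including when $\sigma$ is tiny and the law is nearly a point mass. In that regime $P(X_1^\infty\ne0)\approx 2e^{-2\beta}$ for $H$ near $0$. The hypothesis $\E S\in\Z$ matters here: combined with the one-step comparison, it forces the mean to be the mode. As a fallback for this degenerate regime, I would compute directly with the explicit geometric laws. A sum of $N$ i.i.d. discrete Laplace variables can be written as a difference of two independent negative binomials, which allows an explicit check of the small-variance case.
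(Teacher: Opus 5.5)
Your overall plan follows the same route as the paper: log-concavity survives convolution, $\var(S_{N,H}^\infty)=ND_H^\infty$, the largest atom of a log-concave integer law is of order $(1+\sigma^2)^{-1/2}$, and this is then transferred to the atom at the integer mean $k$. Your upper bound and the Chebyshev bound $p_{\max}\ge\frac34(4\sigma+1)^{-1}$ are fine in substance. For the upper bound, the working mechanism is that $p$ must fall below $p_{\max}/2$ within about $2/p_{\max}$ steps of the mode on each side, after which the ratios are at most $1-c\,p_{\max}$.

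\textbf{The gap is the transfer to $k$}, which is the only nontrivial part of the lower bound.

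First, the ``standard fact'' you quote is false as stated. For $H=0$ and $N=1$ one has $\mu=0$ and $\sigma^2\approx 2e^{-2\beta}$. Then $j=\pm1$ lies in the window $|j-\mu|\le\sqrt{1+\sigma^2}$, yet $\mathbb P(X_1^\infty=\pm1)\approx e^{-2\beta}$. Only the case $j=\mu\in\Z$ is true, and that is exactly what must be proved.

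Second, your sketch of that case does not go through. The intermediate claim $p(j)\ge c\,p_{\max}$ for $|j-m|\le\sigma$ fails for nearly one-sided laws, which occur inside this very lemma. Take $N=1$ and $H$ close to $2$, chosen so that $\E X_1^\infty=1$ (so $e^{-\beta(2-H)}\approx\frac12$). The mode is $0$ and $\sigma^2\approx2$, yet $p(-1)/p(0)=e^{-\beta(2+H)}\approx 2e^{-4\beta}$. More fundamentally, showing that the two tails cannot both be thin only tells you that one side of the mode is thick. It does not tell you that $\mu$ sits on that side, and no variance argument can.

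Likewise, for $\sigma<1$ the assertion that $\mu$ lies between the heavy atom and the mode is unjustified. Chebyshev with $\mu=k\in\Z$ gives $\mathbb P(Y\ne k)\le\sigma^2$, which handles $\sigma^2\le\frac12$ but not the rest. The negative-binomial fallback is only a suggestion, not an argument.

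The paper closes this step by citing two results:
\begin{itemize}
\item Melbourne--Palafox-Castillo: for integer-valued log-concave $Y$, $\max\{\mathbb P(Y=\lfloor\E Y\rfloor),\mathbb P(Y=\lceil\E Y\rceil)\}\ge e^{-1}\max_y\mathbb P(Y=y)$. So $\E Y=k\in\Z$ gives $\mathbb P(Y=k)\ge e^{-1}p_{\max}$.
\item Bobkov--Marsiglietti--Melbourne: $(1+12\sigma^2)^{-1/2}\le p_{\max}\le 2(1+4\sigma^2)^{-1/2}$.
\end{itemize}
Together these are the whole proof.

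If you want a self-contained argument, use the first moment rather than the variance. Suppose $m<\mu$, set $d=\mu-m$, and write $p(\mu)=\epsilon\,p_{\max}$.
\begin{itemize}
\item The ratios $p(j+1)/p(j)$ are nonincreasing, and their product over $m\le j<\mu$ equals $\epsilon$. Hence $p(\mu+i)\le\epsilon\,p_{\max}\rho^i$ with $\rho=\epsilon^{1/d}$, so $\sum_{j>\mu}(j-\mu)p(j)\le\epsilon\,p_{\max}\rho(1-\rho)^{-2}$.
\item Concavity of $\log p$ on $[m,\mu]$ gives $p(j)\ge p_{\max}\epsilon^{(j-m)/d}$ there.
\item Consequently $\sum_{j<\mu}(\mu-j)p(j)$ exceeds the bound in the first item once $\epsilon$ is below an absolute constant (e.g.\ $e^{-4}$). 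This contradicts $\E Y=\mu$.
\end{itemize}
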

\begin{proof}
Melbourne and Palafox-Castillo showed that \cite[Corollary 2.8]{MPC:23} for integer valued log-concave  distributions $Y$,
\[
\max\Big\{\P\big[Y=\lfloor\E[Y]\rfloor\big],\P\big[Y=\lceil\E[Y]\rceil\big]\Big\}\geq e^{-1}\max_y\P[Y=y]
\]
while Bobkov, Marsiglietti and Melbourne~\cite[Theorem 1.1]{BMM:22} showed that
\[
\frac{1}{\sqrt{1+12\var(Y)}} \leq \max_y\P[Y=y] \leq \frac{2}{\sqrt{1+4\var(Y)}} 
\]
Since $X_i^\infty$ is log-concave and log-concavity is preserved under convolution, $S_{N,H}^\infty$ is also log-concave and $\var(h(S_{N,H}^\infty))=ND_H^\infty$.  Hence we have that
\[
\frac{e^{-1}}{\sqrt{1+12ND_H^\infty}} \leq P_{N,H}^\infty (h(S_{N,H}^\infty)=k)\leq\max_y\P[Y=y] \leq \frac{2}{\sqrt{1+4ND_H^\infty}} \,.
\]
This implies the claim for a $\beta$-independent $C$ large enough.
\end{proof} 

The following estimate provides the necessary estimates for the variances. 

\begin{lem}\label{l:var.equiv}
There exists absolute constants $C,\beta_0>0$ such that $|H|\leq 2-\frac1{10\beta}$ and $\beta>\beta_0$ then
\[
\frac12(e^{-2\beta}+M_H^\infty) \leq D_{H}^\infty \leq C(e^{-2\beta}+M_H^\infty).
\]
The same bound holds for $D_{H}$ and $\frac1{N}D_{N,H}$ up to changing $\frac{1}{2}$ and $C$ to $\frac{1}{4}$ and $2C$.
\end{lem}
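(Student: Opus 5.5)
The plan is to compute $D_H^\infty$ in closed form from the explicit two-sided geometric law. After that, I transfer the bound to $D_H$ and $\frac1N D_{N,H}$ using the cluster-expansion comparison of the full polymer with the tame one from~\cite{DKS}.

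By the symmetry $k\mapsto -k$, $H\mapsto -H$, assume $H\ge 0$. Set $a=e^{-\beta(2-H)}$ and $b=e^{-\beta(2+H)}$. Then $X^\infty$ has $P(X^\infty=k)=a^k/Z$ for $k>0$, $P(X^\infty=-k)=b^{k}/Z$ for $k>0$, and $P(X^\infty=0)=1/Z$, where
\[
Z=1+\frac{a}{1-a}+\frac{b}{1-b}.
\]
The hypothesis $|H|\le 2-\frac{1}{10\beta}$ gives $a\le e^{-1/10}$, so $1-a$ is bounded below by an absolute constant. We also have $b\le e^{-2\beta}\le a$ and $ab=e^{-4\beta}$. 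Standard geometric-series identities then give
\[
M_H^\infty=\frac1Z\Big(\frac{a}{(1-a)^2}-\frac{b}{(1-b)^2}\Big),\qquad
\E[(X^\infty)^2]=\frac1Z\Big(\frac{a(1+a)}{(1-a)^3}+\frac{b(1+b)}{(1-b)^3}\Big).
\]
Both expressions are rational functions with all denominators bounded away from zero uniformly in $\beta$. Hence $M_H^\infty\asymp a-b$ and $\E[(X^\infty)^2]\asymp a+b$ with absolute constants. Moreover $a+b\asymp a\asymp (a-b)+e^{-2\beta}$: if $H\le 1/\beta$ then $a\asymp e^{-2\beta}$, and otherwise $a-b\ge a(1-e^{-2})$.

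The upper bound $D_H^\infty\le \E[(X^\infty)^2]\le C(e^{-2\beta}+M_H^\infty)$ is then immediate. For the lower bound, first write
\[
D_H^\infty=\sum_{k,k'}(k-k')^2 p_kp_{k'}/2\ \ge\ p_0(p_1+p_{-1})+\cdots,
\]
which gives $D_H^\infty\ge c\,(a+b)$. Getting the explicit constant $\frac12$ in front of $e^{-2\beta}+M_H^\infty$ is the only delicate point. I would split into two regimes:
\begin{itemize}
\item When $a\le \epsilon$ is small, $Z=1+O(a)$ and a direct Taylor expansion of the explicit formula gives $D_H^\infty=(a+b)(1+O(a))-(a-b)^2\ge \frac12(a+b)+\frac12(a-b)\ge \frac12(e^{-2\beta}+M_H^\infty)$, using $a+b\ge 2e^{-2\beta}$ (an AM--GM consequence of $ab=e^{-4\beta}$).
\item When $a\in[\epsilon,e^{-1/10}]$, $b$ is negligible and $X^\infty$ is essentially a one-sided geometric with parameter bounded away from $1$. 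The ratio $D_H^\infty/M_H^\infty$ is then an explicit continuous function of $a$ on a compact interval, and I would check it exceeds $\frac12$ by direct computation.
\end{itemize}
If the explicit constant fails, I would absorb the discrepancy by enlarging $\beta_0$, since $e^{-2\beta}$ is the subdominant term there.

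For the transfer to $D_H$ and $\frac1N D_{N,H}$, I would use the cluster-expansion estimates behind (4.8.1) and Proposition 4.9 of~\cite{DKS}, as already invoked in the proof of Lemma~\ref{l:lclt.fourier}. These show that the log-partition functions of the full and tame polymers differ by an analytic function of $H$ of size $N e^{-4(\beta-\beta_0)}$, uniformly on a complex neighborhood of the real $H$'s considered. Differentiating twice in $H$ via Cauchy estimates, the mean and variance per step differ from their tame counterparts by $O(e^{-4(\beta-\beta_0)})$, uniformly in $N$, up to the extra $N^{-1}e^{-4(\beta-\beta_0)}$ finite-$N$ term already visible in the proof of Lemma~\ref{l:lclt.fourier}. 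Since $e^{-4(\beta-\beta_0)}\le \frac14 e^{-2\beta}$ for $\beta$ large, this error is absorbed and degrades the constants only to $\frac14$ and $2C$.

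The main obstacle is making sure the Cauchy-estimate radius in $H$ is uniform down to $|H|$ near $2-\frac1{10\beta}$. The domain $G(\delta)$ in~\cite{DKS} only has width of order $\beta^{-1}$ near that edge, which costs a factor of order $\beta^2$; this is still harmless against $e^{-2\beta}$.
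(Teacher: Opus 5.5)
Your proposal is correct, but both halves take a different route from the paper's proof.

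\textbf{Tame part.} The paper notes that the law $\propto e^{-2\beta|k|+\beta Hk}$ is exactly that of $G_1-G_2$, with $G_1\sim\mathrm{Geom}(1-a)$ and $G_2\sim\mathrm{Geom}(1-b)$ independent on $\{0,1,2,\dots\}$. Hence $M_H^\infty=\frac{a}{1-a}-\frac{b}{1-b}$ and $D_H^\infty=\frac{a}{(1-a)^2}+\frac{b}{(1-b)^2}$ exactly. Your $Z$-normalized moment formulas reduce to these, since $Z=\frac{1-ab}{(1-a)(1-b)}$.

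In this form the constant $\frac12$ is a one-liner. The term $\frac{a}{(1-a)^2}$ is at least both $a$ (which is $\ge e^{-2\beta}$) and $\frac{a}{1-a}$ (which is $\ge M_H^\infty$). Hence $D_H^\infty\ge\frac{a}{(1-a)^2}\ge\frac12(e^{-2\beta}+M_H^\infty)$. The upper bound follows from the identity $D_H^\infty=\frac{1}{1-a}M_H^\infty+\frac{b}{(1-a)(1-b)}+\frac{b}{(1-b)^2}$ together with $1-a\ge 1-e^{-1/10}$.

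So your two-regime analysis and the fallback of enlarging $\beta_0$ are not needed. They do go through; in the small-$a$ regime there is enough slack from $e^{-2\beta}\le\frac12(a+b)$. However, the unsigned expansion $(a+b)(1+O(a))-(a-b)^2$ does not by itself justify your intermediate claim $D_H^\infty\ge a$. That claim comes from the exact formula.

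\textbf{Transfer.} The paper simply cites Proposition~4.9 of~\cite{DKS} for $|D_H^\infty-D_H|\le e^{-4(\beta-\beta_0)}$ and $|D_H^\infty-\frac1N D_{N,H}|\le C_1e^{-4(\beta-\beta_0)}$. You instead re-derive these from the analytic bound on $\log\hat\Xi(N,H)$ via Cauchy estimates, which is essentially how that proposition is proved. Your version is more self-contained but rests on the same cluster-expansion input.

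Your anticipated $\beta^2$ loss does not actually occur. Mean and variance are derivatives of $\log\Xi$ in $\lambda=\beta H$, and the analyticity region has width of order one in $\lambda$ (cf.\ the strip $|\mathrm{Im}\,t|\le\delta/3$ in $G(\delta)$). The Cauchy estimate therefore costs only a $\delta$-dependent constant. As you say, a $\beta^2$ factor would be harmless against $e^{-2\beta}$ anyway.
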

\begin{proof}
Without loss of generality assume that $H\geq 0$. By explicit computation, the random variables $X_{i}^\infty$ have the distribution of the difference of independent $\text{Geom}(1-a)$ and $\text{Geom}(1-b)$, where
\[
a=e^{-\beta(2-H)},\qquad b=e^{-\beta(2+H)}\,.
\]
Therefore, we can compute the mean and variance as 
\[
M_H^\infty=\frac{a}{1-a}-\frac{b}{1-b},
\qquad
D_{H}^\infty=\frac{a}{(1-a)^2}+\frac{b}{(1-b)^2}.
\]
Since $\beta(2-H)\ge \frac{1}{10}$, we have $e^{-2\beta}\le a\le e^{-1/10}$
and $0<b=e^{-\beta(2+H)}\le e^{-2\beta}\leq\frac12$. 
Let $c_0:=1-e^{-1/10}>0$ so $1-a\ge c_0$.  
Hence for the lower bound,
\[
D_{H}^\infty\geq \frac{a}{(1-a)^2}\geq \frac12\Big(a + (\frac{a}{1-a}-\frac{b}{1-b})\Big) = \frac12(e^{-2\beta}+M_H^\infty).
\]
For the upper bound, since \(1-a\ge c_0\),
\[
D_{H}^\infty
= \frac1{1-a}M_H^\infty +\frac{b}{(1-a)(1-b)} +\frac{b}{(1-b)^2}
\leq \frac{4}{c_0}(e^{-2\beta}+M_H^\infty).
\]
for large enough $\beta$ since $1-b\geq \frac12$ and $b\leq e^{-2\beta}$. By Proposition~4.9 of~\cite{DKS}  we have that $|D_{H}^\infty-D_{H}| \le e^{-4(\beta - \beta_0)}$ and $|D_{H}^\infty-\frac1{N}D_{N,H}|\leq C_1 e^{-4(\beta- \beta_0)}$ which completes the proof.
\end{proof}

\begin{prop}\label{prop:local-CLT-DKS}
There exist absolute constants $C,\beta_0>0$ such that for all $N\geq 1, -N\leq k \leq N$ and $\beta>\beta_0$, if $NM_{N,H}=k$ then
\begin{equation}\label{eq:lclt1}
\frac{C^{-1}}{\sqrt{1+ND_H}}\leq P_{N,H}(h(S) = k) \leq \frac{C}{\sqrt{1+ND_H}} 
\end{equation}
and
\begin{equation}\label{eq:lclt2}
\frac{C^{-1}}{\sqrt{1+ND_H}}e^{N(F(H)-\beta H k)}\leq P_{N}(h(S) = k) \leq \frac{C}{\sqrt{1+ND_H}} e^{N(F(H)-\beta H k)}
\end{equation}
where $F(H) = \lim_{M} \frac{1}{M} \log \Xi(M,H)$.  
\end{prop}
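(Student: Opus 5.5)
The first estimate \eqref{eq:lclt1} will be proved by splitting into the two regimes already prepared, $N\ge e^{\frac{11}{5}\beta}$ and $N\le e^{\frac{11}{5}\beta}$. The second estimate \eqref{eq:lclt2} will then follow by undoing the tilt. Throughout we restrict to $|H|\le 2-\frac{1}{10\beta}$, which is where Lemma~\ref{l:var.equiv} applies. We will check that every slope $k/N\in[-1,1]$ is realised by such an $H$. Since $M_H$ is increasing in $H$ and $M_H\approx \frac{a}{1-a}$ with $a=e^{-\beta(2-H)}$, at $H=2-\frac{1}{10\beta}$ we have $M_H\ge \frac{e^{-1/10}}{1-e^{-1/10}}>1$. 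So the equation $NM_{N,H}=k$ has a solution $H$ in the allowed range for all $|k|\le N$, uniformly in $\beta$, after using $|M_{N,H}-M_H|\le Ce^{-4(\beta-\beta_0)}$ from \cite[Prop.~4.9]{DKS}. This fixes $\delta=1/10$ in Lemma~\ref{l:lclt.fourier}.

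\emph{Large $N$.} Take $N\ge e^{\frac{11}{5}\beta}$. Apply Lemma~\ref{l:lclt.fourier} at $q=k=M_{N,H}$, where the Gaussian factor equals $1$ (up to the $1/\sqrt{2\pi}$ normalisation). This gives $D_{N,H}^{1/2}P_{N,H}(h=k)=\frac{1}{\sqrt{2\pi}}+O(e^{-\beta/100})$, which lies in $[\frac{1}{2\sqrt{2\pi}},\frac{2}{\sqrt{2\pi}}]$ for $\beta_0$ large. By Lemma~\ref{l:var.equiv}, $D_{N,H}\asymp N D_H$ with absolute constants. Moreover $ND_H\ge \frac{1}{4}Ne^{-2\beta}\ge \frac14 e^{\beta/5}\ge 1$, so $D_{N,H}^{1/2}\asymp\sqrt{1+ND_H}$, which gives \eqref{eq:lclt1}.

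\emph{Small $N$.} Take $N\le e^{\frac{11}{5}\beta}$. By Lemma~\ref{l:wild.coupling}, $|P_{N,H}(h=k)-P^\infty_{N,H}(h=k)|\le \frac{1}{100}N^{-2/3}$. The main obstacle in this regime is that the target point $k=NM_{N,H}$ is the mean under $P_{N,H}$, not under $P^\infty_{N,H}$, so Lemma~\ref{l:log.concave} does not apply verbatim. To handle this:
\begin{itemize}
\item The two means differ by at most $NCe^{-4(\beta-\beta_0)}\le Ce^{-\beta/2}\ll 1$, so $k$ is within distance $o(1)$ of $\E S^\infty_{N,H}$.
\item Rerun the proof of Lemma~\ref{l:log.concave} with the Melbourne--Palafox-Castillo bound applied at the nearest integer to the mean. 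For log-concave laws, values at integers within distance $1$ of the mean are comparable to the maximum up to an absolute factor.
\end{itemize}
This yields $P^\infty_{N,H}(h=k)\asymp (1+ND^\infty_H)^{-1/2}$.

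It remains to see that the coupling error is negligible against this. We have $ND^\infty_H\le CN(e^{-2\beta}+M_H)\le C'N$, since $M_H\le 1$ in our range. Hence $(1+ND_H^\infty)^{-1/2}\ge cN^{-1/2}\gg \frac1{100}N^{-2/3}$, using $\frac{1}{100}$ against $c$. For very small $N$ the constants must be checked directly; note that $N^{-2/3}\le N^{-1/2}$ always. Finally, Lemma~\ref{l:var.equiv} converts $D_H^\infty$ to $D_H$, which proves \eqref{eq:lclt1}.

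\emph{Untilting.} For \eqref{eq:lclt2}, use the definition of the tilted measure: $P_{N,H}(S)=P_N(S)e^{\beta H h(S)}\,\Xi(N,0)/\Xi(N,H)$, with the normalisation conventions of \cite{DKS}. Therefore
\begin{align*}
P_N(h=k)=P_{N,H}(h=k)\,e^{-\beta Hk}\,\frac{\Xi(N,H)}{\Xi(N,0)}.
\end{align*}
Replace $\log\Xi(N,H)$ by $NF(H)$. The error is bounded uniformly in $N$ and $\beta$: write $\Xi=\Xi^\infty\hat\Xi$, where $\log\Xi^\infty(N,H)$ is exactly linear in $N$, and the cluster expansion behind (4.8.1) of \cite{DKS} gives $\log\hat\Xi(N,H)=N\hat f(H)+O(e^{-4(\beta-\beta_0)})$ with a boundary term that is bounded uniformly. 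The normalisation $\Xi(N,0)$ is handled the same way. Combining this with \eqref{eq:lclt1} yields \eqref{eq:lclt2} with absolute constants.
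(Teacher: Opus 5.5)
Your proof follows the paper's route. You split at $N=e^{\frac{11}{5}\beta}$. For large $N$ you use Lemma~\ref{l:lclt.fourier} at the mean together with Lemma~\ref{l:var.equiv}. For small $N$ you use the tame-walk coupling of Lemma~\ref{l:wild.coupling} with the log-concave bounds behind Lemma~\ref{l:log.concave}. You then untilt via $\log\Xi(N,H)=NF(H)+O(1)$, where the paper simply cites (4.8.6) of \cite{DKS}. You also observe that for small $N$ the point $k$ is the mean of $P_{N,H}$, not of $P^\infty_{N,H}$, so Lemma~\ref{l:log.concave} does not literally apply. The paper passes over this, and your repair is the right idea. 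However, the log-concavity fact you invoke for it is false as stated.

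It is not true that every integer within distance $1$ of the mean of a log-concave law carries mass comparable to the maximum. For $Y\sim\mathrm{Bernoulli}(\epsilon)$ the integer $1$ is within distance $1-\epsilon$ of the mean $\epsilon$, yet it has mass $\epsilon\ll 1-\epsilon$. What you need, and what is true, is the version for an integer $k$ with $|k-\mu|\le\frac12$, where $\mu=\E Y$ and $p$ is the mass function. This suffices here because $|k-\E S^\infty_{N,H}|\le CNe^{-4(\beta-\beta_0)}\ll1$ when $N\le e^{\frac{11}{5}\beta}$.

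Here is the argument. Suppose $\mu\in(k,k+\frac12]$; the case $\mu=k$ is Melbourne--Palafox-Castillo directly, and $\mu<k$ is symmetric. Melbourne--Palafox-Castillo gives $\max\{p(k),p(k+1)\}\ge e^{-1}\max p$. Suppose $p(k)<\frac{1}{2e}\max p$. Then $p(k+1)/p(k)>2$. Since $j\mapsto p(j+1)/p(j)$ is non-increasing, $p(k-i)\le 2^{-i}p(k)$ for all $i\ge0$, and hence
\begin{align*}
\tfrac12\le k+1-\mu\le \E[(k+1-Y)_+]=\sum_{i\ge 0}(i+1)\,p(k-i)\le 4p(k)\,.
\end{align*}
In either case $p(k)\ge\frac18\max p$. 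Combined with the Bobkov--Marsiglietti--Melbourne bound, this gives $P^\infty_{N,H}(h=k)\ge\frac18(1+12ND^\infty_H)^{-1/2}$, as you wanted.

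Two smaller points.

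\emph{Small-$N$ constants.} No separate constant check is needed for very small $N$. The bound in Lemma~\ref{l:wild.coupling} is really $CNe^{-4(\beta-\beta_0)}\le C'e^{-\beta/3}N^{-2/3}$ on $N\le e^{\frac{11}{5}\beta}$. This is below half of your lower bound $cN^{-1/2}$ for every $N\ge1$ once $\beta_0$ is large.

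\emph{Untilting convention.} The paper's convention is $P_{N,H}(h=k)=P_N(h=k)e^{\beta Hk}/\Xi(N,H)$, with $P_N$ the unnormalized weight, so no $\Xi(N,0)$ enters. If you normalize $P_N$ by $\Xi(N,0)$ as you do, your final bound carries an extra factor $e^{-NF(0)}$. That factor is not $O(1)$, so it must stay in the exponent and cannot be absorbed into $C$.
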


\begin{proof}
Without loss of generality assume, $k\geq 0$ and so $H\geq 0$ since $M_{N,0}=0$ and $M_{N,H}$ is increasing in $H$. We first show that the $H$ that achieves $N M_{N,H} = k$ is suitably bounded away from $2$. 
Note that for $H=2-(10\beta)^{-1}$, if $\beta\geq 2$ then with $a=e^{-1/10}, b=e^{-4\beta+1/10}$
\[
M_H^\infty = \frac{a}{1-a}-\frac{b}{1-b}\geq 9.
\]
By Proposition~4.9 of~\cite{DKS}, $|M_{N,H}-NM_{H}^\infty|\leq C_1e^{-4\beta}$ and so by monotonicity in $H$, if $NM_{N,H}=k\leq N$ then certainly $H\leq 2-(10\beta)^{-1}$.
By Lemma~\ref{l:lclt.fourier}, we have that when $N\geq e^{\frac{11}{5}\beta}$
\begin{equation}
\frac{C^{-1}}{\sqrt{D_{N,H}}}\leq P_{N,H}(h(S) = k) \leq \frac{C}{\sqrt{D_{N,H}}}.
\end{equation}
which implies \eqref{eq:lclt1} since by Lemma~\ref{l:var.equiv}, $D_{N,H}\geq \frac12e^{\frac15 \beta}\geq 1$ and by Proposition~4.9 of~\cite{DKS}, $\frac1{N}D_{N,H}$ is the same as $D_H$ up to a universal multiplicative constant.  

Now consider the case when $N\leq e^{\frac{11}{5}\beta}$. Combining Lemmas~\ref{l:wild.coupling} and~\ref{l:log.concave} we have that 
\begin{equation}
\frac{C_1^{-1}}{\sqrt{1 +ND_{H}^\infty}}-\frac{1}{100} N^{-2/3}\leq P_{N,H}(h(S) = k) \leq \frac{C_1}{\sqrt{1 +ND_{H}^\infty}}+\frac{1}{100} N^{-2/3}\,.
\end{equation}
By Lemma~\ref{l:var.equiv}, the condition $Ne^{-\frac{11}{5}\beta}\leq 1$, and large enough $\beta$ we have that
\[
\frac{C_1^{-1}}{\sqrt{1 +ND_{H}^\infty}} \geq C_3N^{-1/2}  \geq \frac{1}{50} N^{-2/3}\,.
\]
Hence
\begin{equation}
\frac{(2C_1)^{-1}}{\sqrt{1 +ND_{H}^\infty}}\leq P_{N,H}(h(S) = k) \leq \frac{2C_1}{\sqrt{1 +ND_{H}^\infty}}
\end{equation}
which implies the other case of \eqref{eq:lclt1} since $D_{H}^\infty$ is the same as $D_H$ up to a multiplicative constant (by Lemma~\ref{l:var.equiv}).  Finally~\eqref{eq:lclt2} follows from \eqref{eq:lclt1} by noting that
\[
 P_{N,H}(h(S) = k) = \frac{P_{N}(h(S) = k)e^{\beta H k}}{\Xi(N,H)}\,,
\]
and by (4.8.6) of~\cite{DKS},
\[
|NF(H)-\log \Xi(N,H)|\leq C_4 e^{-4\beta}. \qedhere
\]
\end{proof}

The following corollary  gives us exactly Lemma~\ref{lem:OZ-asymptotics-beta-indep} after changing notation back from~\cite{DKS}.  

\begin{cor}
    There exist absolute constants $C, \beta_0>0$ such that for all $N \ge 1$, $k \in \{-N,...,N\}$ and $\beta>\beta_0$, if $H$ is such that $N M_{N,H} = k = N \tan \varphi_{\mathbf{n}}$, 
    \begin{align*}
       \frac{C^{-1}}{\sqrt{1 + N e^{-2\beta} + k}}  e^{ - \tau_{\beta}(\varphi_{\mathbf{n}}) \|(N,k)\| }\le \Xi(N,\mathbf{n}) \le \frac{C}{\sqrt{1 + Ne^{ - 2\beta}  + k}} e^{ - \tau_\beta(\varphi_{\mathbf{n}})\|(N,k)\|}
    \end{align*}
\end{cor}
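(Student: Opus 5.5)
The plan is to deduce the corollary from~\eqref{eq:lclt2} of Proposition~\ref{prop:local-CLT-DKS}, which already gives the two-sided bound
\[
P_N(h(S)=k)\asymp \frac{1}{\sqrt{1+ND_H}}\,e^{N F(H)-\beta H k}
\]
with $\beta$-independent constants. Two identifications remain. First, $1+ND_H$ must be comparable to $1+Ne^{-2\beta}+k$. Second, the exponential factor must equal $e^{-\tau_\beta(\varphi_{\mathbf n})\|(N,k)\|}$, up to a $\beta$-uniform constant.

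For the prefactor, I will use Lemma~\ref{l:var.equiv}, which gives $D_H\asymp e^{-2\beta}+M_H$ uniformly; it applies because the proof of Proposition~\ref{prop:local-CLT-DKS} shows that the relevant $H$ satisfies $|H|\le 2-(10\beta)^{-1}$. Since $NM_{N,H}=k$ and $|NM_H-NM_{N,H}|\le Ce^{-4\beta}$ by Proposition~4.9 of~\cite{DKS}, we get $ND_H\asymp Ne^{-2\beta}+k+O(e^{-4\beta})$. After adding $1$, the error is absorbed, so $1+ND_H\asymp 1+Ne^{-2\beta}+k$. For $k<0$, use symmetry and replace $k$ by $|k|$.

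For the exponent, I will use the Legendre-duality identity from~\cite{DKS} Chapter~4. The surface tension is the Legendre transform of the free energy $F$: for $H$ with $M_H=F'(H)/\beta=\tan\varphi$,
\[
\tau_\beta(\varphi)\|(1,\tan\varphi)\|=\beta H\tan\varphi-F(H).
\]
Here $\Xi(N,\mathbf n)$ is identified with $P_N(h=k)$, the unnormalized weight of interfaces ending at height $k$. Then $NF(H)-\beta Hk=-N\bigl(\beta HM_H-F(H)\bigr)+\beta HN(M_H-M_{N,H})$. The first term is exactly $-\tau_\beta(\varphi')\|(N,NM_H)\|$, where $\varphi'$ is the angle with $\tan\varphi'=M_H$. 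The second term is at most $2\beta\cdot Ce^{-4\beta}$, hence $O(1)$ uniformly.

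The discrepancy between $\varphi'$ and $\varphi_{\mathbf n}$ (defined through $M_{N,H}$) is handled by the same estimate. The map $s\mapsto N\tau(1,s)$ has derivative $\beta H$ in $Ns$, so a shift of $Ns$ by $O(e^{-4\beta})$ changes the exponent by $O(\beta e^{-4\beta})=O(1)$. Alternatively, one can choose $H$ by $NM_H=k$ directly, apply the Proposition with that $H$, and note that it only needs $\mathbb E$ within $O(1)$ of $k$ through log-concavity.

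The main obstacle is checking that the Legendre duality $\tau=\sup_H(\beta Hs-F)$ holds exactly with the normalization of~\eqref{eq:surface-tension-def}, including the cluster-expansion boundary factor relating $\Xi$ to the partition function ratio in~\eqref{eq:interface-two-pt-function-duality}. That factor is $e^{O(e^{-4\beta})}$ by (4.8.x) of~\cite{DKS}, so it only changes constants, but it must be verified to be $N$-independent rather than growing with $N$. Once this is done, translating back via~\eqref{eq:interface-two-pt-function-duality} gives Lemma~\ref{lem:OZ-asymptotics-beta-indep}.
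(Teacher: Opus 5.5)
Your proposal is correct and follows the paper's route: combine~\eqref{eq:lclt2} with the Legendre-duality formula for $\tau$, then use Lemma~\ref{l:var.equiv} to replace $1+ND_H$ by $1+Ne^{-2\beta}+|k|$. The duality normalization you flag as the main obstacle is exactly what Proposition~4.12 of~\cite{DKS} supplies, and the paper simply cites it; your extra bookkeeping of the $O(\beta e^{-4\beta})$ gap between $M_H$ and $M_{N,H}$ and of the sign of $k$ covers points the paper's proof passes over silently, and you handle them correctly.
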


\begin{proof}
By Proposition~4.12 of~\cite{DKS}, $\tau_\beta(\mathbf{n})$ for $\mathbf{n}\in \mathbb S^1$ and can be associated with its angle, is  
\begin{align*}
    \tau(\mathbf{n}) = ( - \beta^{-1} F(H_{\mathbf{n}})  + H_{\mathbf{n}} \tan \varphi_n) \cos\varphi_n
\end{align*}
where $H_{\mathbf{n}}$ is the $H$ such that $N M_{N,H} = \tan \varphi_{\mathbf{n}}$. 
Then by the previous Proposition~\ref{prop:local-CLT-DKS}, we have 
\begin{align*}
    |\log \Xi(N,\mathbf{n})  - \log \Xi(N,H_{\mathbf{n}})  + \beta H_{\mathbf{n}} h(N,\mathbf{n}) + \frac{1}{2} \log (1+N D_H)| \le C
\end{align*}
for a universal constant $C$, for all $N$. By the inequality for $\log \Xi(N,H)$ to $N F(H)$, the bound of $|H|\le 2$ and $h(N,\mathbf{n}) = N\tan \varphi_{\mathbf{n}} = k$, this implies 
\begin{align*}
    |\log \Xi(N,\mathbf{n}) -  N F(H_{\mathbf{n}}) + \beta H_{\mathbf{n}}\tan\varphi_{\mathbf{n}} + \frac{1}{2}\log (1+ N D_H)| \le C + e^{ - 4(\beta -\beta_0)} \le C'\,.
\end{align*}
At this point, we observe that $d(N,\mathbf{n}) = N/\cos \varphi_{\mathbf{n}}$, and thus this implies 
\begin{align*}
    |\log \Xi(N,\mathbf{n}) + \beta d(N,\mathbf{n}) \tau(\mathbf n) + \frac{1}{2} \log (1+ D_H)| \le C'\,, 
\end{align*}
or in other words, together with Lemma~\ref{l:var.equiv}, we have the claimed bound. 
\end{proof}

\subsection{Uniform bound on vertical oscillations of interface} 
Having established the uniform Ornstein--Zernike asymptotics for low-temperature interfaces in Lemma~\ref{lem:OZ-asymptotics-beta-indep}, in
the next few subsections, we prove analogues of the key equilbrium estimates of~\cite{LMST}. In the analogue statements we prove, the large $\beta$-dependencies are explicit and under control. These subsections will largely follow the notation of~\cite{LMST}, especially its Sections 4--5, for ease of side-by-side comparison.

A slit-strip geometry was important to the interface estimates provided there. We will use $S$ to denote the infinite strip $\{1,...,\ell\}\times \Z$. 
We generally denote by $\eta =\mp$ the boundary conditions, which are $+$ on the boundary vertices on the upper half-space and $-$ on the boundary vertices in the lower half-space (including height zero). 
 Let $\bar S = \bar S(a,b)$ be $S$ setminus the two slits $\{1,...,a\}\times \{0,1\}$ and $\{b,...,\ell\}\times \{0,1\}$. Note that the $\mp$ boundary conditions induce minuses on the top of the slit and pluses on the bottom of the slit along with the boundary conditions on the verticals. 
Each configuration $\sigma$, with these boundary conditions, then induces, on the planar dual $S^*$, a unique open contour connecting $\{a-\frac{1}{2},\frac{1}{2}\},\{b + \frac{1}{2},\frac{1}{2}\}$.
 This open contour is called the \emph{interface} in $\bar S^*$ and will be denoted $\lambda(\sigma)$. For an open contour $\lambda$, $\partial \lambda$ denotes the two vertices of odd degree in $\lambda$. Finally, let $H_i^* = \{\frac{1}{2},...,\ell+ \frac{1}{2},\frac{1}{2}\} \times \{\frac{1}{2},i+\frac{1}{2}\}$ be the vertices at level $i$ in $S^*$. (For more background on the exact definition of these contours/interfaces, and the south-east and south-west splitting rules, we refer the reader to Section 2.4 of~\cite{LMST}.)

 Let us also recall the dual random-line representation of the high-temperature two-point functions $\langle \sigma_{x^*}\sigma_{y^*}\rangle_{\beta^*,\Lambda^*}$. For a finite subgraph $\Lambda^* \subset (\mathbb Z^2)^*$, and an compatible family of contours $\underline{\lambda}$, the weights $q_{\Lambda^*}(\underline{\lambda})$ at $\beta^*<\beta_c$ are defined as in (2.6) of~\cite{LMST}. Then, for $A \subset \Lambda^*$, as described in~(2.7) of~\cite{LMST}, we have the following representation of multi-point functions: 
\begin{align*}
    \pi_{\Lambda^*,\beta^*}\Big[\prod_{x\in A} \sigma_{x} \Big] = \sum_{\underline{\lambda}: \partial \underline{\lambda}= A} q_{\Lambda^*}(\underline{\lambda})\,,
\end{align*}

\begin{lemma}[Replacement for Theorem 5.3 of~\cite{LMST}]\label{lem:maximal-fluctuation-bound}
    There exists $C$ such that for all $\beta>\beta_0$, as long as $\ell \ge b-a\ge e^{\beta/5}$, one has for all $h$ that  
    \begin{align*}
        \pi_{\bar S}^\eta( \sigma: \lambda(\sigma) \text{ reaches } H^*_{h}) \le C (b-a)^{C}\exp( - \kappa_\beta (\tfrac{h^2}{b-a-1}\wedge h )) \qquad \text{for all }h
    \end{align*}
\end{lemma}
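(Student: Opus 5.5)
We follow the proof of Theorem~5.3 of~\cite{LMST}, replacing every $\beta$-dependent input with its uniform version: Lemma~\ref{lem:OZ-asymptotics-beta-indep} with Remark~\ref{rem:applicability-of-OZ-lemma}, and the sharp triangle inequality~\eqref{eq:sharp-triangle-ineq} with $\kappa_\beta\ge 1/3$. Write $L=b-a$. Through the random-line representation, the event that $\lambda(\sigma)$ reaches $H_h^*$ has probability equal to a ratio of weights. The numerator is the sum of $q_{\bar S^*}(\underline\lambda)$ over open contours from $x^*=(a-\frac12,\frac12)$ to $y^*=(b+\frac12,\frac12)$ that pass through some dual vertex $z^*\in H_h^*$. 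The denominator is the two-point function $\langle\sigma_{x^*}\sigma_{y^*}\rangle_{\bar S^*}$.

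\emph{Step 1 (splitting at $z^*$).} We union bound over the first-hit location $z^*\in H_h^*$. Horizontal positions of $z^*$ outside $[a-h, b+h]$ cost at least $e^{-\tau\cdot\text{dist}}$ extra and sum geometrically, so we may take the horizontal position in a window of size $O(L+h)$. For each $z^*$, splitting the contour at $z^*$ (the standard GKS / Simon--Lieb type splitting used in~\cite{LMST}, whose constants are $\beta$-free) gives
\[
\sum_{\underline\lambda\ni z^*} q(\underline\lambda)\le \langle \sigma_{x^*}\sigma_{z^*}\rangle\langle\sigma_{z^*}\sigma_{y^*}\rangle \le e^{-\tau(z-x)-\tau(y-z)},
\]
by the subadditive upper bound $\langle\sigma_0\sigma_v\rangle\le e^{-\tau_\beta(\theta_v)\|v\|}$. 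Monotonicity in the domain, via Remark~\ref{rem:applicability-of-OZ-lemma}, lets us pass from the slit strip to $S^*$ or to $(\Z^2)^*$ at a cost of at most a factor $2$.

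\emph{Step 2 (lower bound on the denominator and the exponent).} For the denominator, Lemma~\ref{lem:OZ-asymptotics-beta-indep} at angle $0$ together with Remark~\ref{rem:applicability-of-OZ-lemma} gives $\langle\sigma_{x^*}\sigma_{y^*}\rangle\ge C^{-1}e^{-\beta}L^{-1/2}e^{-\tau(y-x)}$. Here we use $L\ge e^{\beta/5}$, and we need a slit-domain version of this lower bound: since the slits lie below the straight path, we lose only a $\beta$-uniform constant via the cluster expansion comparison in that remark. This polynomial prefactor is where the hypothesis $L\ge e^{\beta/5}$ enters: it gives $e^{\beta}\le L^{5}$, so the prefactor is absorbed into $C L^C$. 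The ratio of Step~1 to the denominator is then at most $CL^{C}\exp\bigl(-[\tau(z-x)+\tau(y-z)-\tau(y-x)]\bigr)$. By~\eqref{eq:sharp-triangle-ineq}, the bracket is at least $\kappa_\beta(\|z-x\|+\|y-z\|-\|y-x\|)$. An elementary computation shows that for $z$ at height $h$, the quantity $\|z-x\|+\|y-z\|-L$ is at least $c\,(h^2/L\wedge h)$, minimized at the horizontal midpoint, and it grows linearly in the horizontal excess when $z$ lies outside $[a,b]$. Summing over $z^*$ costs another polynomial factor $O(L+h)$. For $h\ge L^{2}$ this factor is beaten by the $e^{-\kappa h}$ decay after adjusting constants; for smaller $h$ it is absorbed into $CL^C$.

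\emph{Main obstacle.} The delicate step is the uniform lower bound on the slit-strip two-point function, i.e.\ making Remark~\ref{rem:applicability-of-OZ-lemma} rigorous for $\bar S^*$ with constants independent of $\beta$. I would first try FKG/GKS monotonicity, restricting to contours staying above height $1$. If that fails, I would use the representation of~\cite{DKS} of the interface as a tilted polymer and apply Proposition~\ref{prop:local-CLT-DKS} directly, conditioning on the polymer avoiding the slits. This conditioning costs at most a polynomial factor in $L$, via a ballot-type estimate for the tame walk.
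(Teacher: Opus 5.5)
Your proposal is correct and follows essentially the same route as the paper. It uses the random-line representation, a union bound over $w\in H_h^*$ with the splitting $\sum_{\lambda\ni w}q\le\langle\sigma_u\sigma_w\rangle\langle\sigma_w\sigma_v\rangle$, the uniform lower bound of Lemma~\ref{lem:OZ-asymptotics-beta-indep} for the denominator, the sharp triangle inequality with $\kappa_\beta\ge 1/3$, and absorption of $e^{O(\beta)}$ via $b-a\ge e^{\beta/5}$; your crude bound $e^{-\tau}$ in the numerator, instead of the OZ upper bound, is a harmless simplification. The step you flag as the main obstacle is not one: the slits are not below the path but sit at the same height outside $[a,b]$, so $\bar S^*$ contains the full dual strip between the slit tips (up to an endpoint convention that costs only a constant), and the strip lower bound transfers to $\bar S^*$ by GKS monotonicity, which is all the paper uses when it bounds the denominator directly by Lemma~\ref{lem:OZ-asymptotics-beta-indep}.
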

\begin{proof}
    The bound is vacuous if $h\le \sqrt{b-a}$ so we can  assume $h\ge e^{\beta/10}$. 
    Since we (uniform-in-$\beta$) polynomial factors, we will use a union bound as in the remark following Theorem 5.3 of~\cite{LMST} rather than their more delicate multi-scale analysis. Let $u$, $v$ denote the endpoints of the unique open countour in $\bar S_n^*$ (the truncation at a large height $\pm n$ of $\bar S$), and let $\text{ht}(\lambda,x) = \max\{y: (x,y)\in \lambda\}$. 

    For any vertex $w$, as in (5.4) of~\cite{LMST} we get 
    \begin{align*}
        \pi_{\bar S_n}^\eta ( \sigma: w\in \lambda(\sigma))  = \Big( \sum_{\lambda: \partial\lambda= \{u,v\}\,,\,w\in \lambda} q_{\bar S_n^*}(\lambda)\Big)/\Big( \sum_{\lambda: \partial\lambda = \{u,v\}} q_{\bar S_n^*}(\lambda)\Big) 
    \end{align*}
    Following the steps of (5.4)--(5.5), replacing their input of Ornstein--Zernike asymptotics with our Lemma~\ref{lem:OZ-asymptotics-beta-indep} (see also Remark~\ref{rem:applicability-of-OZ-lemma}, applicable because the distance of $w$ to $u$ or $v$ is at least $e^{\beta/10}$), the numerator is at most 
    \begin{align*}
       \langle \sigma_u \sigma_w \rangle_{\bar S_n^*}  \langle \sigma_w \sigma_v\rangle_{\bar S_n^*} \le  \frac{C^2 e^{2\beta}}{\sqrt{|u-w||v-w|}} \exp( - \tau(u-w)  - \tau(v-w))
    \end{align*}
    To lower bound the denominator, it is lower bounded via Lemma~\ref{lem:OZ-asymptotics-beta-indep} by 
    \begin{align*}
        \langle \sigma_u \sigma_v \rangle _{\bar S_n^*} \ge \frac{C^{-1} e^{ - \beta}}{\sqrt{|u-v|}} \exp( - \tau(u-v))\,.
    \end{align*}
    Following the logic in~\cite{LMST} from (5.6)--(5.8), gives 
    \begin{align*}
        \pi_{\bar S}^\eta(w\in \lambda(\sigma)) \le \frac{C^3 e^{12\beta}\sqrt{|u-v|}}{\sqrt{|u-w| |v-w|}} \exp\big( - \kappa_\beta (|w-v| + |u-w| -|u-v|)\big)\,.
    \end{align*}
    We separately handle $w$ that have $x$-coordinate less $a$ or bigger than $b$ versus those between $a$ and $b$. Firstly, summing this over $w$ with $x$-coordinate less than $a$, using that for such $w$, $|w-v| \ge |u-v|$, and using from~\eqref{eq:sharp-triangle-ineq} that $\kappa_\beta \ge 1/3$, we get 
    \begin{align*}
        \sum_{w: w_1\le a+\frac{1}{2}}\pi_{\bar S}^\eta (w\in \lambda(\sigma)) \le C e^{ - \kappa_\beta h}
    \end{align*}
    for a $\beta$-independent constant $C$. The contribution from $w$ with $w_1 \ge b-\frac{1}{2}$ is similarly bounded. 
    
    Next for $w$ between $a$ and $b$, following the logic between (5.8) and (5.9) in~\cite{LMST}, we have 
    \begin{align*}
        \pi_{\bar S}^{\eta}( w\in \lambda(\sigma)) \le \frac{C^3 e^{3 \beta}\sqrt{|u-v|}}{\sqrt{|u-w||v-w|}} \exp\big( - \tfrac{6}{5} \kappa_\beta \big( \tfrac{h^2}{|u-v|}\wedge h)\big)
    \end{align*}
    Next, doing a union bound over $w$ whose $x$-coordinate is between $a$ and $b$, we conclude 
    \begin{align*}
        \pi_{\bar S}^\eta(\lambda(\sigma) \text{ reaches }H_h^*) \le \sum_{w\in H_h^*} \pi_{\bar S}^\eta(w\in \lambda(\sigma)) \le C^3 e^{3\beta} |u-v|^{3/2} \exp(- \tfrac{6}{5} \kappa_\beta(\tfrac{h^2}{|u-v|}\wedge h))\,.
    \end{align*}
    After using that $|u-v|\ge e^{ \beta/5}$ to absorb the $e^{3\beta}$ factor, this gives the desired bound. 
\end{proof}

\subsection*{Uniform polynomial lower bound on probability of being above a floor} 
In this section, we give a uniformly (in $\beta$) polynomial lower bound on the probability of the interface connecting $u$ to $v$ being entirely non-negative. 

\begin{lem}[Replacement for Theorem 5.1 of~\cite{LMST}]\label{lem:thm:5.1-replacement}
There exists a $C$ (independent of $\beta$) such that if we consider the infinite strip  $S$ of width $\ell$ with boundary conditions $\eta$, and let $\lambda = \lambda(\sigma)$ be its interface, the following holds. For $i \in \Z$, let $H_i^* = \{ \frac{1}{2},...,\ell + \frac{1}{2}\}\times \{i+\frac{1}{2}\}$. For every $\ell \ge e^{ \beta/5}$,  
\begin{align*}
    \pi_{S}^\eta (\lambda(\sigma) \text{ stays above $H_{-1}^*$}) \ge e^{ - C \beta}{\ell^{-1.1}}\,.
\end{align*}
\end{lem}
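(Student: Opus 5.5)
We will follow the strategy of Theorem~5.1 of~\cite{LMST}, which obtains the polynomial lower bound from a ratio of two-point functions, and track every constant using Lemma~\ref{lem:OZ-asymptotics-beta-indep} and Remark~\ref{rem:applicability-of-OZ-lemma} in place of the $\beta$-dependent Ornstein--Zernike inputs. In the random-line representation, the interface of $S$ with $\eta=\mp$ boundary conditions runs between the dual points $u=(\frac12,\frac12)$ and $v=(\ell+\frac12,\frac12)$. Its law is proportional to $q_{S^*}(\lambda)$, normalized by $\langle\sigma_u\sigma_v\rangle_{S^*}$.

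The numerator is the weight of contours staying above $H^*_{-1}$. We lower bound it by first forcing the interface to leave $u$ and enter $v$ through short deterministic upward stems of height about $\sqrt\ell$. This costs at most $e^{-C\beta}\cdot e^{-2\beta\sqrt\ell}$ at the level of energy, but that price is too large. So we instead force only $O(1)$ upward steps, costing $e^{-C\beta}$, and then need the walk started at height $1$ to stay nonnegative.

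This ballot-type estimate is the core. We carry it out in the tame-polymer picture of Lemma~\ref{l:wild.coupling} when $\ell\le e^{11\beta/5}$, and in the general polymer otherwise, via the positive-association decoupling used in~\cite{LMST}. Concretely, we split the strip at $x=\ell/2$ and use the GKS and FKG monotonicity of the event ``stays above $H^*_{-1}$''. This reduces the estimate to two half-problems, each of which compares the probability that a polymer of length $\ell/2$ from height $1$ stays positive and ends at height $\asymp\sqrt{\ell D}$ against the unconstrained point-to-point probability.

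The unconstrained point-to-point probabilities are given uniformly by Proposition~\ref{prop:local-CLT-DKS}. Each is of order $(1+\ell e^{-2\beta})^{-1/2}$, and their ratio with the endpoint-to-endpoint quantity contributes at most a factor of $\ell^{-1/2}\cdot e^{C\beta}$ by Remark~\ref{rem:applicability-of-OZ-lemma}.

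For the positivity of each half, we use a reflection/cycle-lemma argument for the effective random walk. For the tame walk with i.i.d.\ symmetric log-concave increments, the probability that a bridge of length $m$ stays positive is at least $c/m$ uniformly in the increment law, by the cycle lemma. For the true polymer, we transfer this by the decoration bounds $|\log\hat\Xi|\le N e^{-4(\beta-\beta_0)}$. Alternatively, when $\ell\ge e^{11\beta/5}$, we work on mesoscopic blocks of length $e^{11\beta/5}$, where the local CLT of Lemma~\ref{l:lclt.fourier} applies, and apply a coarse-grained ballot theorem.

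Combining these pieces gives the final bound. The cost $e^{-C\beta}$ comes from the endpoints and the $e^{\pm\beta}$ prefactors of Remark~\ref{rem:applicability-of-OZ-lemma}. The factor $\ell^{-1}$ comes from the ballot estimate. A further $\ell^{-0.1}$ slack absorbs the union bounds over intermediate heights and the mesoscopic matching.

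The main obstacle is the uniform-in-$\beta$ ballot estimate for the non-tame polymer when $\ell\gg e^{2\beta}$. There the walk is only approximately a random walk with variance $\ell e^{-2\beta}$, so constants in entropic repulsion arguments could a priori degrade with $\beta$. We will try first to rescale time by $e^{2\beta}$-blocks and invoke Lemma~\ref{l:lclt.fourier} with its $\beta$-free error $C(\delta)e^{-\beta/100}$.
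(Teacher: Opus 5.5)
Your proposal has a genuine gap, and it sits exactly at the step you flag as the main obstacle. What is missing is a $\beta$-uniform ballot (entropic-repulsion) estimate for the true interface when $\ell\ge e^{11\beta/5}$. That regime is required downstream, where $\ell$ ranges over all scales above $\beta e^{2\beta}$, and none of the tools you cite delivers it.
\begin{itemize}
\item Lemma~\ref{l:lclt.fourier} is a local limit theorem for the endpoint $h(S_{N,H})$ alone. It says nothing about path functionals such as the running minimum.
\item The DKS polymer is not a random walk. After cluster expansion the decorations couple distinct portions of the contour, so there is no Markov or renewal structure across blocks of length $e^{11\beta/5}$. There are therefore no independent coarse-grained increments to which a ballot theorem could be applied.
\item Any additive decoupling error of the type $Ne^{-4(\beta-\beta_0)}$ accumulates over the $\ell e^{-11\beta/5}$ blocks.
\end{itemize}
Completing your route would need something like a $\beta$-uniform Ornstein--Zernike renewal decomposition, followed by a ballot theorem for a very lazy effective walk (in the spirit of \cite{IOVW-invarianceprinciple}). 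That is a substantial missing piece, not a detail.

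Two smaller points. First, in the tame regime the transfer through Lemma~\ref{l:wild.coupling} costs an additive error of order $\ell e^{-4\beta}$, and this must be smaller than $P^\infty_{\ell,0}(\text{stays above},\,h=0)$. The crude cycle-lemma bound $\ell^{-1}P^\infty_{\ell,0}(h=0)$ no longer guarantees this once $\ell\asymp e^{2\beta}$. You would need the sharper conditional bound $(1+\ell e^{-2\beta})^{-1}$, for example by applying the cycle lemma to the skeleton of nonzero increments. Second, the reduction to two half-problems by splitting at $x=\ell/2$ is not justified as stated. The random-line representation gives the concatenation \emph{upper} bound $\sum_{\lambda\ni z}q(\lambda)\le\langle\sigma_u\sigma_z\rangle\langle\sigma_z\sigma_v\rangle$, but no matching lower bound for contours constrained to stay above the floor.

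The paper avoids any path-level random-walk estimate by adapting the proof of Theorem~5.1 of \cite{LMST}, in three steps.
\begin{enumerate}
\item The key input is the $\beta$-uniform version \eqref{eq:Lemma-5.4-equivalent} of their Lemma~5.4. It says the interface hits $H^*_{-h+C^*\log h}$ before $H^*_{h-C^*\log h}$ (or $v$) with probability at most $\frac12+\bar C^*/h$. This approximate up/down symmetry comes from the mirror relation between $\gamma_{\SE}$ and $\gamma_{\SW}$, together with uniform replacements of Claims~5.5, 5.6 and Lemma~5.7 of \cite{LMST}. Those replacements rest on the two-point bounds of Lemma~\ref{lem:OZ-asymptotics-beta-indep}, on $\tau_\beta(0)\gtrsim\beta$, and on monotonicity.
\item A doubling scheme follows. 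After an initial forcing costing $e^{-\bar c_0\beta}$, the interface is raised from height $w_j\approx 2^j$ to $w_{j+1}$ near both endpoints with probability at least $\frac14-\bar c\,2^{-j}$. Iterating this for $(\frac12+\frac1{100})\log_2\ell$ steps yields $\ell^{-1.1}$.
\item Once the interface sits at height of order $\ell^{1/2+1/100}$, Lemma~\ref{lem:maximal-fluctuation-bound} with $\kappa_\beta\ge\frac13$ keeps it off the floor with probability at least $\frac12$.
\end{enumerate}
This symmetry-plus-doubling argument plays the role of your ballot theorem, and it is built only from inputs that are already uniform in $\beta$.
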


\begin{remark}
        Note that $1.1$ could be any constant bigger than $1$. Moreover, Lemma~\ref{lem:thm:5.1-replacement} implies for all $\ell \ge 1$, that the probability of the interface being non-negative is at least $\ell^{ - C}$ for a possibly different $\beta$-independent $C$. (For $\ell \le e^{\beta/5}$ a trivial union bound of the event of the interface deviating from the ground state gives a constant probability of this event.)
\end{remark}

\begin{proof}
    We begin by claiming that in Lemma 5.4 of~\cite{LMST}, the constant $C^\star$ can be taken to be $\beta$-independent. Namely, there exists $\bar C^*$ such that for all $\beta>\beta_0$, all $\ell \ge e^{ \beta/10}$, and all $h$, 
    \begin{align}\label{eq:Lemma-5.4-equivalent}
        \pi_{\bar S}^\eta( \lambda \text{ hits $H^*_{-h + C^* \log h}$ before $H^*_{h - C^* \log h}$ or $(b-\tfrac{1}{2},\tfrac{1}{2})$}) \le \frac{1}{2}  + \frac{\bar C^*}{h}\,.
    \end{align}
    To show that, for Claim 5.5 of~\cite{LMST}, we replace it with the following bound: for all $c$, there exists $\bar C_1^*$ such that for all $\beta>\beta_0$ and for $m\ge e^{\beta/10}$, as long as $|u-v| = b-a \ge e^{\beta/5}$,  
    \begin{align}\label{eq:clm:5.5-replacement}
        \pi_{\bar S}^{\eta}( \text{gn}(\gamma, [a-m,a+m], c\log m) \ge \tfrac{\bar C_1^*}{\beta}\log m) \le m^{-10}\,.
    \end{align}
    where the gain $\text{gn}(A,I,m)$ is as defined in~\cite[(5.21)]{LMST}. 
    Indeed, in that proof, the first step that is asymptotic or has a hidden $\beta$ dependence is the display after (5.22). There, we use the upper bound from Lemma~\ref{lem:OZ-asymptotics-beta-indep}, where if the horizontal distance is at least $e^{\beta/10}$, we use the bound with the division by $\sqrt{n}$, and if less than $e^{\beta/10}$, the bound by $1$.  Either $b-a\ge 10m$, in which case $|z'-v| \ge |u-v| - 2m \ge \frac{1}{2} |u-v|$, or $b-a \le 10m$ in which case at least one of the horizontal distances among $|u-z|,|z'-v|$ is at least $e^{\beta/10}$ and within a factor of two of $|u-v|$. Regardless, we then get the following bound for $\beta$-independent $C$, to refine their display after (5.22):  
    \begin{align*}
        \sum_{\lambda: \partial\lambda = \{u,v\}\,,z,z'\in \lambda} q_{\bar S^*} (\lambda) \le \frac{C e^{ \beta} \exp( - (\tau(u-v) + \tau(z-z') + \tau(v-z'))}{ \sqrt{ |u-v|}} 
    \end{align*}
    Since $\tau(\theta) \ge \tau(0)$ we have the numerator is at most 
    \begin{align*}
        \sum_{x,x': |x-x'|\le c\log m, x\in [a-m,a+m]} \sum_{y} C e^{\beta} e^{ - \tau(0) |u-v|} e^{-c'\tau(0) |y|/m} \sum_{y': |y'-y|\ge C_1^\star \log m} e^{ - c'\tau(0) |y'-y|}
    \end{align*}
    where $c'$ is a universal (geometric) constant, so long as $C_1^* \ge c$. Then using the lower bound $\tau_\beta(0) \ge 3\beta$ from Lemma~\ref{lem:uniform-in-beta-bounds-on-surface-tension-and-stiffness},  we see that as long as $c \gtrsim 1/\beta$, for $C_1^\star  = \bar C_1^*/\beta$ for $\bar C_1^*$ sufficiently large, this is at most $$\frac{m^{-100}}{\sqrt{|u-v|}}  C e^{\beta} e^{ - \tau(0) |u-v|}\,.$$
    Dividing by the lower bound on the denominator of $\langle\sigma_u\sigma_v\rangle$ from Lemma~\ref{lem:OZ-asymptotics-beta-indep},  for any $\bar c$, for $\bar C_1^\star$ large enough (independent of $\beta$),  we get that~\eqref{eq:clm:5.5-replacement} holds for $m\ge e^{\beta/10}$.

    For Claim 5.6 of~\cite{LMST}, it can be written in a way to only improve with $\beta$ as follows: for all $m\ge 1$, 
    \begin{align}\label{eq:clm:5.6-equivalent}
        \pi_{\bar S}^\eta ( \text{every conn.\ comp.\ of $\gamma_{\SW}\!\setminus\! \gamma_{\SE}$ intersecting $([a-m,a+m]\times \mathbb Z)\cap \gamma_{\SE}$ }&\text{has diam.\ $> \tfrac{\bar C_2^*}{\beta} \log m$}) \nonumber\\ 
        & \le m^{-9} \,.
    \end{align}
    To see~\eqref{eq:clm:5.6-equivalent}, we follow the steps of the proof of Claim 5.6 in~\cite{LMST}. When they apply their Theorem~5.3, we apply our Lemma~\ref{lem:maximal-fluctuation-bound} to see that $\gamma_{\SE} \cup \gamma_{\SW} \subset \Lambda^*$ where $\Lambda = \{1,...,\ell\} \times \{-\ell,...,\ell\}$ except with probability $C\ell^C e^{ - \kappa_\beta \ell}\le e^{  \ell/4}$ for all large $\beta$ (because $\kappa_\beta$ is lower bounded by~\eqref{eq:sharp-triangle-ineq} and $\ell\ge e^{\beta/10}$ by assumption). The next step applies Claim 5.5, for which we apply our~\eqref{eq:clm:5.5-replacement} to confine their set $\mathcal I= ([a-m,a+m]\times \Z)\cap \gamma_{\SE}$ to $\frac{\bar C_1^*}{\beta} m \log m \le m^2$ many vertices, except with probability $m^{-10}$. (Note that if $m\le e^{\beta/10}$, then simply apply~\eqref{eq:clm:5.5-replacement} whence the gain is at most $\bar C_1^*$, still with probability $1-m^{-10}$.)
    Finally, their last inequality which is from their Lemma 2.6 has no $\beta$ dependence, except in $\tau_{\beta}(0)$, which by Lemma~\ref{lem:uniform-in-beta-bounds-on-surface-tension-and-stiffness} is at least $3\beta$. Putting those all together exactly as done in the proof of Claim 5.6 in~\cite{LMST} yields~\eqref{eq:clm:5.6-equivalent}.

    Lemma 5.7 of~\cite{LMST} gets replaced by the following bound: For any $w$, define the rectangle $\mathcal R = \{ a- \frac{1}{2}-w,...,a-\frac{1}{2}+w\}\times \{\frac{1}{2}-h,...,h+\frac{1}{2}\}$. Let $\mathcal B$ be the event that $\gamma_{\SE}$ (resp., $\gamma_{\SW}$) exits horizontally from $\mathcal R$ before exiting vertically: we claim that for all $w$,
    \begin{align}\label{eq:lem:5.7-equivalent}
        \pi_{\bar S}^\eta(\mathcal B) \le C e^{3\beta} \exp( -  w/ \bar C_3^\star e^{12\beta} h^2)
    \end{align}
    In the proof, the steps are unchanged with their $C_\beta$ applied from their Lemma 2.1 being $C e^{\beta}$ per our Lemma~\ref{lem:OZ-asymptotics-beta-indep}: then as long as $c(\beta) \ge (3 e \cdot e^{\beta})^2$ and $h\ge 1$, each of their horizontal distances $x_i - x_{i-1}$ are at least $e^{2\beta}$ and the upper bound of Lemma~\ref{lem:OZ-asymptotics-beta-indep} is applicable to give 
    \begin{align*}
        \sum_{\lambda: \partial \lambda = \{u,v\}\,,\, \xi\text{-admissible}} q_{\bar S^*}(\lambda) \le e^{\beta} (2/3)^M \frac{1}{\sqrt{\frac{1}{2} |b-a|}} e^{ - \tau_\beta(u-v)}\,.
    \end{align*}
     Then dividing by the lower bound on $\sum_{\lambda:\partial \lambda= \{u,v\}} q_{\bar S^*}(\lambda)$, using our Lemma~\ref{lem:OZ-asymptotics-beta-indep} in place of their asymptotic bound, using that $|b-a| \ge e^{\beta/10}$, and using that $M \ge w/4ch^2$, we get~\eqref{eq:lem:5.7-equivalent} for some $C, \bar C_3^*$ independent of $\beta$.  

    We now conclude the proof of our analogue to their Lemma 5.4 using the above ingredients. 
    If $|a-b|\le e^{\beta}$, then by a union bound, with high probability the interface will just be the straight horizontal line, and therefore~\eqref{eq:Lemma-5.4-equivalent} holds. Suppose now that $|a-b|\ge e^{\beta}$. 
    
    Using~\eqref{eq:lem:5.7-equivalent} in place of their Lemma 5.7, we get that the interface doesn't travel horizontally by $w=e^{3 \beta} h^4$ without moving vertically by $h$, except with probability $e^{ - h^2/ \bar C_3^*}$. Thus, we consider horizontal gains on distances of size $m=e^{3 \beta} h^4$, whence in~\eqref{eq:clm:5.5-replacement}--\eqref{eq:clm:5.6-equivalent}, the bounds on the gains and discrepancies between $\gamma_{\SW} \oplus \gamma_{\SE}$ will be of size $\bar c \log h$ for a uniform constant $\bar c$ (after the logarithm applied to $m$, the $\beta$ factor cancels with the $\frac{1}{\beta}$ in those equations). Thus, we conclude that for a $\bar C$ independent of $\beta$, the contour $\gamma_{\SE}$ hits $H^*_{-h + \bar C \log h}$  before hitting $H^*_{h- \bar C \log h}$ or $v$ with probability at most $\frac{1}{2} + h^{-8}$.

    At last, we can verify that with~\eqref{eq:Lemma-5.4-equivalent} in place of their Lemma 5.4, we can execute their proof of Theorem 5.1, lower bound (our Lemma~\ref{lem:thm:5.1-replacement}). Let $w_0$ be a large constant independent of $\beta$, and let $w_i = 2w_{i-1} - 2 \bar C\log w_{i-1}$, for $\beta$-independent $\bar C$ from our above. Then $w_j \ge \bar c2^j$ for a $\beta$-independent $\bar c$. The events $A, B$ are defined as there. 

    The forcing for $A_0$ is done as there, having a probability at least $(\frac{1}{2} e^{ - 8\beta})^{2w_0+2}   \ge e^{ -\bar c_0 \beta}$ for a $\beta$-independent $\bar c_0$. Next, we reason that the constant $c$ in their Claim 5.8 can be replaced by a $\beta$-independent $\bar c$. Indeed, all the steps in that proof are by monotonicity arguments, except when using their Lemma 5.4, we plug in our replacement~\eqref{eq:Lemma-5.4-equivalent} in its place, which has the $\beta$-independent $\bar C^*$ on the $1/h$. Thus, for $\pi_S^j$ as in their (5.26), we get for all $j$ that 
    \begin{align*}
        \pi_S^j(D_{j+1}^L \mid \sigma_{U_j} = \eta_{U_j})\ge \frac{1}{2} - w_j^{-10} \ge \frac{1}{2} - \bar c 2^{-j}\,.
    \end{align*}
    Now, instead of stopping at $K+ \frac{1}{2} \log_2 \ell$, we perform the recursion up to $j= (\frac{1}{2} +\frac{1}{100}) \log_2 \ell$ and observe that 
    \begin{align*}
        \pi_S(A_{(\frac{1}{2} + \frac{1}{10})\log_2 \ell} ) +  \pi_S(B_{( \frac{1}{2} + \frac{1}{100}) \log_2 \ell}) \ge e^{ - \bar c_0 \beta}\prod_{i=1}^{(\frac{1}{2} + \frac{1}{100}) \log_2 \ell} (\frac{1}{4} - \bar c 2^{-j}) \ge  \bar c'e^{ - \bar c_0 \beta}  \ell^{-1.1}\,.
    \end{align*}
    Finally, we show that 
    \begin{align*}
        \pi_S(B_{1+(\frac{1}{2} + \frac{1}{100})\log_2 \ell}\mid A_{(\frac{1}{2} + \frac{1}{100})\log_2 \ell}) \ge \frac{1}{2}\,.
    \end{align*}
    After the same monotonicity arguments as those for this step in~\cite{LMST}, this reduces to the complement of the probability of a contour reaching height $w_{j+1} - w_j > \ell^{\frac{1}{2} + \frac{1}{100}}$. By Lemma~\ref{lem:maximal-fluctuation-bound}, since $\ell \ge e^{\beta/5}$ and $\kappa_\beta \ge 1/3$ by~\eqref{eq:sharp-triangle-ineq}, this probability is going to $1$ in a $\beta$-independent manner for large $\ell$, which can be ensured by taking $\beta_0$ big. 
\end{proof}

\subsection{$\beta$-independent analogue of Proposition 4.4 of~\cite{LMST}}

As a corollary of Lemma~\ref{lem:thm:5.1-replacement}, we replace the first of the two main equilibrium estimates used in~\cite{LMST}, Proposition 4.4 of~\cite{LMST}, with the following version which identifies the exact exponential rate for the height fluctuations of a slope-$0$ Ising interface when looking at the moderate deviations regime. (When looking at the large deviations regime, the rate will actually cease to be exponential in $\beta$.) The key distinction in our proposition is the typical interface height exhibits the Gaussian tails at $e^{-\beta}\sqrt{\ell}$ rather than $\sqrt{\ell}$ reflecting the exponentially decaying variance when $\beta$ gets large.   

We borrow the notation of~\cite{LMST} that for rectangular domains, boundary conditions $(-,-,+,-)$ are used to denote ones that are all-$+$ on the south side, and all-$-$ on the other three sides. 

\begin{prop}[Replacement for Proposition 4.4 of~\cite{LMST}]\label{prop:vertical-maximum-bound-replacement}
    There exists $c_1,c_2$ such that for any $\beta>\beta_0$ the following holds. Let $R$ be a rectangle of width $\ell$ and height at least $e^{-\beta} \bar \alpha \sqrt{\ell}$. Then, for any $ \delta \bar \alpha \le \sqrt{\beta \log \ell}$, and every $\ell \ge \beta e^{2\beta}$, we have 
    \begin{align*}
        \pi^{(-,-,+,-)}(\lambda(\sigma) \text{ reaches }\delta e^{- \beta} \bar \alpha \sqrt{\ell})\le \ell^{c_1}e^{ - c_2 (\delta \bar \alpha)^2}\,.
    \end{align*}
\end{prop}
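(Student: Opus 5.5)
Our plan follows the proof of Proposition~4.4 in~\cite{LMST}: reduce the rectangle with $(-,-,+,-)$ boundary conditions to a slit-strip/interface estimate, and then plug in our uniform-in-$\beta$ inputs (Lemma~\ref{lem:OZ-asymptotics-beta-indep}, Remark~\ref{rem:applicability-of-OZ-lemma}, Lemma~\ref{lem:thm:5.1-replacement}, and the sharp triangle inequality~\eqref{eq:sharp-triangle-ineq} together with~\eqref{eq:tau''-lower-bound}). The new feature is the $e^{-\beta}$ factor in the Gaussian scale. Since $\tau''_\beta(0)\ge \frac18 e^{2\beta}$ for angles $|\theta|\le c e^{-2\beta}$, we will use the second-order expansion of $\tau$ near angle zero rather than the crude $\kappa_\beta\ge 1/3$ bound. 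An interface point at height $y=\delta e^{-\beta}\bar\alpha\sqrt{\ell}$ and horizontal position $x$ is seen at angle at most $y/x$. Because $\delta\bar\alpha\le\sqrt{\beta\log \ell}$ and $\ell\ge\beta e^{2\beta}$, this angle is at most $O(e^{-\beta}\sqrt{\beta\log\ell/\ell})$. Under this hypothesis on $\ell$, we expect this to fall within the window $|\theta|\le ce^{-2\beta}$, up to the logarithmic slack that the hypotheses are designed to absorb.

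\textbf{Step 1 (one-point bound).} Following the union-bound argument of Lemma~\ref{lem:maximal-fluctuation-bound}, we write $\pi(w\in\lambda)$ as the ratio of $\langle\sigma_u\sigma_w\rangle\langle\sigma_w\sigma_v\rangle$ to $\langle\sigma_u\sigma_v\rangle$. The prefactors are controlled by Lemma~\ref{lem:OZ-asymptotics-beta-indep}, giving at worst $\mathrm{poly}(\ell)\cdot e^{O(\beta)}$. Since $\ell\ge e^{2\beta}$, we have $e^{O(\beta)}\le \ell^{O(1)}$, so this is absorbed into $\ell^{c_1}$. The exponent is $\tau(w-u)+\tau(v-w)-\tau(v-u)$. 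For $w=(x,y)$ with $x,\ell-x\ge \ell/4$, a Taylor expansion of $\tau(\theta)/\cos$-form around $\theta=0$ gives this exponent as at least $\frac{1}{2}(\tau(0)+\tau''(\theta_*))\,y^2\big(\tfrac1x+\tfrac1{\ell-x}\big)$. Here we use the exact-formula bounds of Lemma~\ref{lem:tau-double-prime-near-zero-angle} to see that the stiffness $\tau+\tau''\ge \tfrac18 e^{2\beta}$ near zero. This yields $\exp(-c\,e^{2\beta}y^2/\ell)=\exp(-c(\delta\bar\alpha)^2)$. For $w$ close to the corners (say $x<\ell/4$), the exponent is controlled even more strongly, either by the same expansion with $1/x$ large, or by $\kappa_\beta\ge1/3$ when the angle leaves the window, giving $e^{-cy}$. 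Since $y\ge(\delta\bar\alpha)^2e^{-\beta}\sqrt{\ell}/\sqrt{\beta\log\ell}$ and $\ell\ge \beta e^{2\beta}$, this is also at most $e^{-c(\delta\bar\alpha)^2}$.

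\textbf{Step 2 (from strip to rectangle with a floor).} The interface in the rectangle with plus floor and minus elsewhere is an interface pinned at the bottom corners and conditioned to stay above the floor. As in~\cite{LMST}, we bound its law by the strip interface conditioned on staying above $H^*_{-1}$, using monotonicity and domain Markov. We then divide the Step~1 union bound (summed over $O(\ell^2)$ points $w$ at the target height) by the probability of that conditioning event. Lemma~\ref{lem:thm:5.1-replacement} lower bounds this probability by $e^{-C\beta}\ell^{-1.1}\ge \ell^{-C'}$. The hypothesis that the height of $R$ is at least $e^{-\beta}\bar\alpha\sqrt\ell$ enters the same way as in~\cite{LMST}: it allows comparison of the rectangle's top boundary to the infinite strip. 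For the regime $\delta\le 1$ we can simply take $c_1$ large to make the bound trivial.

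\textbf{Main obstacle.} The delicate step is the Taylor expansion of the surface-tension excess with constants uniform in $\beta$. The lower bound~\eqref{eq:tau''-lower-bound} only holds in an angular window of width $ce^{-2\beta}$. We must therefore check carefully that the relevant angles stay in this window under $\delta\bar\alpha\le\sqrt{\beta\log\ell}$ and $\ell\ge\beta e^{2\beta}$. Beyond the window, we must patch the argument with the linear-in-$y$ bound from $\kappa_\beta\ge 1/3$, while keeping the resulting loss polynomial in $\ell$.
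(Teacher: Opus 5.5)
Your route is the paper's route. You bound $\pi(w\in\lambda)$ by $\langle\sigma_u\sigma_w\rangle\langle\sigma_w\sigma_v\rangle/\langle\sigma_u\sigma_v\rangle$ on the strip (via GKS), take the prefactors from Lemma~\ref{lem:OZ-asymptotics-beta-indep}, get the Gaussian exponent from $\tau''_\beta\ge\frac18e^{2\beta}$ near angle $0$, and pay for the floor with Lemma~\ref{lem:thm:5.1-replacement}. One device you should adopt from the paper replaces your bulk/corner split. The homogeneous extension of $\tau_\beta$ is a norm invariant under $(a,b)\mapsto(a,-b)$. Hence $w\mapsto\tau(w-u)+\tau(v-w)$ is convex and symmetric under $w_1\mapsto\ell-w_1$, so on the line at height $h$ it is minimized at the midpoint, and only the single angle $2h/\ell$ matters. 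Your corner argument does not work as written. For $h\ll x\lesssim he^{2\beta}$ the angle $h/x$ has left the window, but $\kappa_\beta\ge\frac13$ only gives an excess of order $h^2/x$, not $ch$.

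The real gap is the one you flagged, and it cannot be closed under the stated hypotheses. Your second-order bound on the excess is fine, but it needs $2h/\ell=2\delta\bar\alpha e^{-\beta}/\sqrt{\ell}\le ce^{-2\beta}$, i.e.\ $\delta\bar\alpha\le\frac{c}{2}e^{-\beta}\sqrt{\ell}$. The hypotheses only give $\delta\bar\alpha\le\sqrt{\beta\log\ell}$ and $e^{-\beta}\sqrt{\ell}\ge\sqrt{\beta}$. The missing factor $\sqrt{\log\ell}$ multiplies $\delta\bar\alpha$ inside the exponent, so $\ell^{c_1}$ cannot absorb it. Your fallback $e^{-ch}$ likewise only yields $e^{-c(\delta\bar\alpha)^2/\sqrt{\log\ell}}$.

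Beyond the window the tail is Poissonian, not Gaussian, so the statement itself fails there. Take $\ell=\beta e^{2\beta}$ and $\delta\bar\alpha=\sqrt{\beta\log\ell}$, so $h=\beta\sqrt{\log\ell}\approx\sqrt{2}\,\beta^{3/2}$. Consider flat interfaces with $h$ unit up-steps in the left half and $h$ unit down-steps in the right half. Their total SOS weight is $\binom{\ell/2}{h}^2e^{-4\beta h}\ge(\ell e^{-2\beta}/2h)^{2h}$. The interface partition function is only $e^{O(\ell e^{-2\beta})}=e^{O(\beta)}$ relative to the flat one. So the probability is at least $\exp(-O(\beta^{3/2}\log\beta))$, whereas the claimed bound is $\exp(-(c_2\beta-c_1)\log\ell)=\exp(-(2c_2+o(1))\beta^2)$.

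You therefore need the extra hypothesis $\delta\bar\alpha\le c'e^{-\beta}\sqrt{\ell}$. Equivalently, $h$ is at most a constant times the expected number $\ell e^{-2\beta}$ of jumps. With it, your two steps go through. The paper's own check of this step invokes $\log\ell\le 3\beta$, which is not implied by the hypotheses, so it has the same defect. The restricted version does cover the uses in Claim~\ref{clm:Claim-3.6-analogue} and Lemma~\ref{lem:Lem6.1-replacement}, where $L_n\ge e^{2\beta}\lfloor\log L\rfloor^3$ and $\delta\bar\alpha=O(\sqrt{\bar\kappa\log L})$.
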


\begin{proof}
    The proof of this goes by following the proof of Proposition 4.4 of~\cite{LMST}, and replacing the two point function estimates applied by those of Lemma~\ref{lem:OZ-asymptotics-beta-indep}. We bound the numerator of their (6.1) as there by 
    \begin{align*}
        \sum_{\lambda \subset R^*: \partial \lambda = \{u,v\}\,,\,\lambda \text{ reaches }H_{\delta \alpha \sqrt{\ell}}^*} q_{R^*}(\lambda)  \le \sum_{z\in H_{\delta \alpha \sqrt{\ell}}^*}\sum_{\lambda\subset R^*: \partial \lambda = \{u,v\}\,,z\in \lambda} q_{R^*}(\lambda) \le \pi^*_{R^*}(\sigma_u \sigma_z)\pi^*_{R^*}(\sigma_z \sigma_v)
    \end{align*}
    which by the GKS inequality is at most $\pi^*_{S^*}(\sigma_u \sigma_z) \pi_{S^*}^*(\sigma_z\sigma_v)$. Having changed the domain to $S^*$, one follows the numerator and denominator bound from the proof of Lemma~\ref{lem:maximal-fluctuation-bound} (which use our Lemma~\ref{lem:OZ-asymptotics-beta-indep} instead of those of their Lemma 2.1 which did not quantify $\beta$-dependencies), to get 
    \begin{align*}
        \frac{\sum_{\lambda \subset R^*: \partial \lambda = \{u,v\}\,,\, \lambda \cap H^*_{\delta \alpha \sqrt{\ell}} \ne \emptyset} q_{R^*}(\lambda)}{\sum_{\lambda \subset S^*: \partial \lambda = \{u,v\}} q_{S^*}(\lambda)} \le C^3 e^{3\beta} \ell^{3/2}  \max_{w\in \lambda \cap H^*_{\delta \alpha \sqrt{\ell}} }\!\exp\Big( \!- \tau(u-w) - \tau(v-w) + \tau(u-v)\Big)\,.
    \end{align*}
    By convexity of the surface tension function as a function of a vector (not just an angle), 
    \begin{align*}
        \min (\tau(u-w)  + \tau(v-w)) = \tau(u - w_*) + \tau(v-w_*)
    \end{align*}
    where $w_*$ is the midpoint of $u,v$ plus height $h$, i.e., $u+ (\frac{L}{2},h) = v-(\frac{L}{2},h)$. For that point, we can write by second order Taylor expansion and $\tau'(0)= 0$, that 
    \begin{align*}
        \tau(u-w_*) = |u-w_*| \tau(\theta_{uw_*}) = |u-w_*| \tau''(\xi_{uw_*})\theta_{uw_*}^2
        \end{align*}
        for angle $\theta_{uw_*}$ formed by the vector $w_* -u$, and for some $\xi_{uw_*}\in [-\theta_{uw_*}, \theta_{uw_*}]$. Thus, if we have $\frac{2h}{\ell}\le ce^{-2\beta}$ for small $c$, we also will have $\theta_{uw_*} \le ce^{-2\beta}$ and using ~\eqref{eq:tau''-upper-bound} from Lemma~\ref{lem:tau-double-prime-near-zero-angle}, will get
        \begin{align*}
            \tau(u-w_*) + \tau(v-w_*) \ge (|u-w_*| + |v-w_*|)(\tau(0) + \frac{1}{8}e^{2\beta}\theta_{uw_*}^2) \ge \frac{1}{16} e^{2\beta} \frac{h^2}{n} + \tau(0) |u-v|\,.
        \end{align*}
        Since the height of $w$ is $\delta e^{-\beta} \bar \alpha \sqrt{\ell}$, we find that $2h/\ell = \delta e^{- \beta} \bar \alpha \ell^{-1/2}$. Since $\ell \ge \beta e^{ 2\beta}$, we see that this is at most $\delta  \beta^{-1/2} e^{ - 2\beta} \bar \alpha$. Now suppose that $\bar \alpha = \sqrt{C \log \ell}$ for a $\beta$-independent $C$. Since $\log \ell \le 3\beta$ (say) for large $\beta$, we are left with $2h/\ell \le 3\delta \beta^{-1/2} \sqrt{C}$. So long as $\delta \bar \alpha \le \sqrt{\beta \log \ell
        }$ therefore, e.g., this angle will indeed lie within $ce^{-2\beta}$ for small $c$ as long as $\beta>\beta_0$.

    Finally, the remaining factor to estimate is the following ratio, which was in their (6.3): 
    \begin{align}\label{eq:floor-domain-to-infinite-strip}
        \Big( \sum_{\lambda \subset R^*:\partial \lambda = \{u,v\}}q_{S^*}(\lambda) \Big/ \sum_{\lambda \subset S^*: \partial \lambda = \{u,v\}} q_{S^*}(\lambda)\Big) \ge e^{ - C \beta} \frac{1}{\ell^{1.1}}\,.
    \end{align}
    where this now used our replacement Lemma~\ref{lem:thm:5.1-replacement}, and the constant $C$ is uniform over large $\beta$. 
\end{proof}

By a simple monotonicity argument, we also get an upper bound on the vertical oscillations of an interface in smaller domains. 

    \begin{cor}\label{cor:short-rectangle-vertical-oscillation}
        In the context of Proposition~\ref{prop:vertical-maximum-bound-replacement}, suppose $\ell \le \beta^2 e^{2\beta}$. For large $\bar \kappa$ (independent of $\beta$) and all $\beta>\beta_0$, we have 
        \begin{align*}
            \pi^{(-,-,+,-)} (\lambda \text{ reaches height $\sqrt{\bar \kappa} \beta$}) \le  e^{ - c_2 \bar \kappa \beta/2}
        \end{align*}
    \end{cor}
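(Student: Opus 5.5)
The plan is to reduce to Proposition~\ref{prop:vertical-maximum-bound-replacement} by monotonicity, enlarging the domain to a width where that proposition applies.

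\textbf{Step 1 (enlarging the domain).} Let $R$ have width $\ell \le \beta^2 e^{2\beta}$. Embed $R$ into a rectangle $R'$ of width $\ell' = \beta^2 e^{2\beta}$, so that $\ell'\ge \beta e^{2\beta}$ and $\log \ell' \le 3\beta$. Choose $R'$ with the same bottom height, horizontally centered on $R$, and at least as tall. Put $(-,-,+,-)$ boundary conditions on $R'$, in the convention of the statement.

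\textbf{Step 2 (monotonicity).} The event that $\lambda$ reaches height $H$ is monotone in the spins. Extending the domain only replaces the frozen boundary spins on the sides of $R$ (which are of the sign favoring the interface staying low) by free spins. By FKG and the domain Markov property, this can only make it more likely that the interface reaches height $H$. Hence
\begin{align*}
\pi_R^{(-,-,+,-)}(\lambda \text{ reaches } H) \le \pi_{R'}^{(-,-,+,-)}(\lambda \text{ reaches } H)\,.
\end{align*}
The interfaces are not literally the same object in the two domains. To compare them, I will phrase the event as the existence of a crossing of the ``high'' spin from the bottom boundary up to height $H$ inside the column above $R$; this crossing event is measurable and monotone, so the comparison above applies to it.

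\textbf{Step 3 (applying Proposition~\ref{prop:vertical-maximum-bound-replacement} on $R'$).} Set the target height $H=\sqrt{\bar\kappa}\,\beta = \delta e^{-\beta}\bar\alpha\sqrt{\ell'}$. This means $\delta\bar\alpha = \sqrt{\bar\kappa}\,\beta e^{\beta}/\sqrt{\ell'} = \sqrt{\bar\kappa}$. For large $\beta$ the constraint $\delta\bar\alpha \le \sqrt{\beta\log\ell'}$ then holds trivially. The proposition therefore gives the bound $\ell'^{c_1} e^{-c_2 \bar\kappa}$.

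\textbf{Main obstacle.} This bound is too weak: it decays like $e^{-c_2\bar\kappa}$, but the target is $e^{-c_2\bar\kappa\beta/2}$. The fix is to exploit that the slack in the proposition's constraint is large. Take $\delta\bar\alpha = \sqrt{\bar\kappa\beta}$, which is still admissible since $\bar\kappa\beta\le \beta\log\ell'$ as $\log\ell' \ge 2\beta$. The corresponding height is $\sqrt{\bar\kappa\beta}\,e^{-\beta}\sqrt{\ell'} = \sqrt{\bar\kappa}\,\beta^{3/2}$.

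That height overshoots the target $\sqrt{\bar\kappa}\,\beta$, so I will instead choose $\ell'$ with $\beta e^{2\beta}\le \ell' \le \beta^2e^{2\beta}$ so that $e^{-\beta}\sqrt{\ell'} = \sqrt{\beta}$, i.e.\ $\ell' = \beta e^{2\beta}$. This is exactly the minimal allowed width, and it still contains $R$ whenever $\ell \le \beta e^{2\beta}$. With this choice, $\delta\bar\alpha=\sqrt{\bar\kappa\beta}$ gives height exactly $\sqrt{\bar\kappa}\,\beta$. The proposition then yields $\ell'^{c_1}e^{-c_2\bar\kappa\beta} \le e^{3c_1\beta - c_2\bar\kappa\beta} \le e^{-c_2\bar\kappa\beta/2}$ once $\bar\kappa \ge 6c_1/c_2$.

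\textbf{Remaining range.} For widths $\beta e^{2\beta}<\ell\le\beta^2e^{2\beta}$, apply the proposition directly on $R$ with $\delta\bar\alpha = \sqrt{\bar\kappa}\,\beta e^{\beta}/\sqrt{\ell}\in[\sqrt{\bar\kappa},\sqrt{\bar\kappa\beta}]$. I expect the exponent to lose a factor of $\beta$ near the top of this range. If so, I will handle it by splitting $R$ into subrectangles of width at most $\beta e^{2\beta}$ and using a union bound together with the monotone comparison of Step~2 on each piece, at the cost of a polynomial factor in $\beta$ that is absorbed for $\beta>\beta_0$.
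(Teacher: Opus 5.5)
Your main line, once you settle on $\ell'=\beta e^{2\beta}$ and $\delta\bar\alpha=\sqrt{\bar\kappa\beta}$, is exactly the paper's proof. You enlarge the domain by monotonicity to width $\beta e^{2\beta}$, apply Proposition~\ref{prop:vertical-maximum-bound-replacement} at height $\sqrt{\bar\kappa}\,\beta$, and absorb $\ell'^{c_1}\le e^{3c_1\beta}$ by taking $\bar\kappa\ge 6c_1/c_2$. Your phrasing of the comparison through a monotone plus-crossing event usefully sharpens the paper's one-line ``by monotonicity''. As you noticed, this only covers $\ell\le\beta e^{2\beta}$. The paper's proof has the same silent restriction: its ``increase'' of the width to $\beta e^{2\beta}$ is a decrease when $\ell>\beta e^{2\beta}$.

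The genuine gap is your patch for $\beta e^{2\beta}<\ell\le\beta^2 e^{2\beta}$, and it cannot be repaired near the top of that range.

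First, the splitting step runs the monotonicity the wrong way. Reaching a given height is an increasing event. Cutting $R$ into subrectangles with $(-,-,+,-)$ conditions amounts to freezing $-$ spins on new vertical walls, which can only decrease its probability. So subrectangle estimates give lower bounds for $R$, not terms of a union upper bound. Putting $+$ on the cut walls instead destroys the Dobrushin geometry and makes the event essentially certain.

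More decisively, the bound is false near the top of that range. At width $\ell\asymp\beta^2e^{2\beta}$ the fluctuation scale $e^{-\beta}\sqrt{\ell}$ is itself of order $\beta$: the interface is essentially an excursion of a walk making about $\ell e^{-2\beta}\asymp\beta^2$ unit steps. Reaching $\sqrt{\bar\kappa}\,\beta$ is then an $O(\sqrt{\bar\kappa})$-standard-deviation event of probability of order $e^{-C\bar\kappa}$, not $e^{-c\bar\kappa\beta}$. This is consistent with the $\ell^{c_1}e^{-c_2\bar\kappa}$ you obtained.

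The right repair is the computation you discarded. Enlarge any $\ell\le\beta^2e^{2\beta}$ to $\ell'=\beta^2 e^{2\beta}$ and take $\delta\bar\alpha=\sqrt{\bar\kappa\beta}$. This bounds the probability of reaching height $h=\sqrt{\bar\kappa}\,\beta^{3/2}$ by $e^{-c_2\bar\kappa\beta/2}$. Since $\sqrt{\bar\kappa}\,\beta^{3/2}\le\beta^2$ for large $\beta$, this appears to be all that the later use in the proof of Theorem~\ref{thm:beta-independent-2D-mixing-time} requires; the paper notes there that the relevant heights stay below $\beta^2$.

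This version has a further advantage. Its slope $2h/\ell'=2\sqrt{\bar\kappa/\beta}\,e^{-2\beta}$ lies in the window $|\theta|\le ce^{-2\beta}$ where the Taylor-expansion proof of Proposition~\ref{prop:vertical-maximum-bound-replacement} actually operates. At $\ell'=\beta e^{2\beta}$ the slope is $2\sqrt{\bar\kappa}\,e^{-2\beta}$, which is outside that window for large $\bar\kappa$. Indeed, there a count of unimodal staircase interfaces gives probability at least $e^{-2\sqrt{\bar\kappa}\beta\log(2\sqrt{\bar\kappa})-O(\beta)}$, which exceeds $e^{-c_2\bar\kappa\beta/2}$ once $\bar\kappa$ is large. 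So the main case, yours and the paper's alike, also leans on the proposition beyond what its proof supports.
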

    \begin{proof}
        By monotonicity in boundary conditions, the interface is only higher if we increase the domain from having width $\ell$ to $\ell' = \beta e^{2\beta}$. Then, taking $\delta \bar \alpha = \sqrt{\bar \kappa \beta}$ and applying Proposition~\ref{prop:vertical-maximum-bound-replacement}, we get a probability bound of $\ell'^{c_1} e^{ - c_2 \bar \kappa \beta}$. For large $\bar \kappa$ (independent of $\beta$) this is evidently bounded by $e^{ - c_2 \bar \kappa \beta/2}$. 
    \end{proof}

    Let us also at this point state a corollary that is the analogue of Lemma~\ref{lem:maximal-fluctuation-bound} in the presence of a floor (but not necessarily with the sharp exponential rate because it will be applied beyond the diffusive scale). 

    \begin{lem}\label{lem:maximal-vertical-oscillations-with-floor}
        Consider a $2N\times 2N$ box with boundary conditions that are $+$ on the bottom side, and $-$ on the other three sides, for $N\ge e^{\beta/5}$. There exists a $C$ (independent of $\beta$) such that for all $\beta>\beta_0$, for all $h$, 
        \begin{align*}
            \pi^{(-,-,+,-)}(\lambda(\sigma) \text{ reaches $H_h^*$}) \le C N^C \exp( - \kappa_\beta (\tfrac{h^2}{b-a-1} \wedge h))\,.
        \end{align*}
    \end{lem}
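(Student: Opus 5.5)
The plan is to reduce Lemma~\ref{lem:maximal-vertical-oscillations-with-floor} to the floorless maximal-fluctuation bound, Lemma~\ref{lem:maximal-fluctuation-bound}, by the same numerator/denominator scheme used in Proposition~\ref{prop:vertical-maximum-bound-replacement}. The only new ingredient is a polynomial (uniform-in-$\beta$) lower bound on the cost of imposing the floor, and that comes from Lemma~\ref{lem:thm:5.1-replacement}.

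\textbf{Representation and numerator.} First I write the interface law in the $2N\times 2N$ box $R$ via the random-line representation. The probability that $\lambda$ reaches $H_h^*$ is the ratio
\[
\sum_{\lambda\subset R^*:\partial\lambda=\{u,v\},\ \lambda\cap H_h^*\neq\emptyset} q_{R^*}(\lambda)\Big/\sum_{\lambda\subset R^*:\partial\lambda=\{u,v\}} q_{R^*}(\lambda),
\]
where $u,v$ are the dual corners at the endpoints of the $\pm$ boundary (here $b-a-1$ plays the role of the horizontal separation, of order $N$). For the numerator I union bound over $z\in H_h^*\cap R^*$ and bound each term by $\pi^*_{R^*}(\sigma_u\sigma_z)\pi^*_{R^*}(\sigma_z\sigma_v)$. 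By GKS (Remark~\ref{rem:applicability-of-OZ-lemma}) this is at most the same product on the strip $S^*$. I then apply the upper bound of Lemma~\ref{lem:OZ-asymptotics-beta-indep}, using the $\sqrt n$ form when the horizontal distance exceeds $e^{\beta/10}$ and the trivial bound otherwise, exactly as in the proof of Lemma~\ref{lem:maximal-fluctuation-bound}.

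\textbf{Denominator.} I compare the floored partition sum to the strip one. The ratio $\sum_{\lambda\subset R^*}q/\sum_{\lambda\subset S^*}q$ is the probability that the strip interface stays inside $R$, i.e.\ above the floor and below height $2N$. The lower bound on staying above the floor comes from Lemma~\ref{lem:thm:5.1-replacement}, giving at least $e^{-C\beta}N^{-1.1}$. The ceiling at height $2N$ costs at most a further factor $1-o(1)$, by Lemma~\ref{lem:maximal-fluctuation-bound} with $h=2N$. I combine this with the lower bound of Lemma~\ref{lem:OZ-asymptotics-beta-indep} on $\langle\sigma_u\sigma_v\rangle_{S^*}\ge C^{-1}e^{-\beta}N^{-1/2}e^{-\tau(u-v)}$. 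The result is a denominator of at least $e^{-C\beta}N^{-C}e^{-\tau(u-v)}$.

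\textbf{Combining.} Taking the ratio leaves $e^{C\beta}N^{C}\sum_{z}\exp(-\tau(u-z)-\tau(z-v)+\tau(u-v))$. The sharp triangle inequality~\eqref{eq:sharp-triangle-ineq} with $\kappa_\beta\ge 1/3$ bounds the exponent by $-\kappa_\beta(|u-z|+|z-v|-|u-v|)$, and elementary geometry gives $\gtrsim \tfrac{h^2}{b-a-1}\wedge h$. I handle points $z$ horizontally outside $[a,b]$ as in Lemma~\ref{lem:maximal-fluctuation-bound}, where they give $e^{-\kappa_\beta h}$. The sum over $z$ costs only a factor $N$. Finally, since $N\ge e^{\beta/5}$, the factor $e^{C\beta}$ is at most $N^{5C}$ and is absorbed into $CN^C$, which yields the claim.

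\textbf{Expected obstacle.} The main difficulty is the denominator step: making sure the floor-penalty bound from Lemma~\ref{lem:thm:5.1-replacement} applies in the box geometry, whose boundary conditions differ slightly from the strip's, with only polynomial and $e^{O(\beta)}$ loss. I would handle this by monotonicity in boundary conditions, as in the derivation of~\eqref{eq:floor-domain-to-infinite-strip}.
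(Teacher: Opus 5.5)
Your proposal is correct and follows essentially the same route as the paper: write the probability in the random-line representation, pass the numerator to the strip $S^*$ by GKS, and lower bound the floored denominator by the strip one times the floor penalty of Lemma~\ref{lem:thm:5.1-replacement}, i.e.\ \eqref{eq:floor-domain-to-infinite-strip}, using $q_{R^*}\ge q_{S^*}$ on contours in $R^*$. The remaining ratio is then exactly the strip estimate of Lemma~\ref{lem:maximal-fluctuation-bound}, which the paper cites directly rather than redoing the union bound and the sharp-triangle computation; like you, it absorbs the $e^{C\beta}$ loss into $N^C$ using $N\ge e^{\beta/5}$.
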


    \begin{proof}
        Up to changing the domain to the infinite strip $S^*$ of width $2N$, this is the same as the estimate of Lemma~\ref{lem:maximal-fluctuation-bound}. We claim that changing the domain to drop the floor incurs only a polynomial in $N$ cost. Indeed, writing the probability in terms of the random line function $q$ as in the proof of Proposition~\ref{prop:horizontal-splitting-beta-uniform}, and changing the domain to $S^*$ as there using the GKS inequality and monotonicities of the $q$ function, the only additional cost beyond the probability bound of Lemma~\ref{lem:maximal-fluctuation-bound} is division by the right-hand side of~\eqref{eq:floor-domain-to-infinite-strip} which is bounded by a uniform in $\beta$ polynomial of $N$. 
    \end{proof}

\subsection{$\beta$-independent analogue of Proposition 4.5 of~\cite{LMST}}

That last main equilibrium estimate to get a $\beta$-uniform version of, is the following. For a rectangle $R$, we follow the notation of~\cite{LMST} with the corners of $R$ labeled clockwise starting from $NW$ as $x,y,y',x'$. Also, there is an interval $\Delta$ with endpoints $u,v$  of length $s\bar \alpha^2$ centered along the south boundary of $R$. We use $(-,+,\Delta)$ to denote boundary conditions that are $-$ on the north boundary and on $\Delta$, and plus elsewhere. The event $\mathcal V$ is the event that the endpoints $u,v$ of $\Delta$ are connected to the corners $x,y$ through two contours that are confined to the left and right halves of $R$ respectively.

\begin{prop}[Replacement for Proposition 4.5 of~\cite{LMST}]\label{prop:horizontal-splitting-beta-uniform}
    There exists $C,c_1,c_2,s_0>0$ (independent of $\beta$) such that for all $\beta>\beta_0$ the following holds. If $R$ is an $\ell \times e^{ - \beta} \bar \alpha \sqrt{\ell}$ rectangle with $\sqrt{C \log \ell} \le\bar \alpha \le (1/ s)\sqrt{\ell}$, and $\Delta$ has length $s \bar \alpha^2$ for $s \ge s_0$, we have 
    \begin{align*}
        \pi^{(-,+,\Delta)}(\mathcal V^c) \le \ell^{c_1} e^{ - c_2 \bar\alpha^2}\,.
    \end{align*}
\end{prop}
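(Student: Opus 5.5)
Following the pattern used for Proposition~\ref{prop:vertical-maximum-bound-replacement}, we plan to run the proof of Proposition~4.5 of~\cite{LMST} line by line. At each place where that proof uses an Ornstein--Zernike input with a $\beta$-dependent constant, we substitute the $\beta$-uniform inputs already established: Lemma~\ref{lem:OZ-asymptotics-beta-indep} and Remark~\ref{rem:applicability-of-OZ-lemma}, the stiffness and curvature bounds~\eqref{eq:tau''-upper-bound}--\eqref{eq:tau''-lower-bound}, and the floor estimate Lemma~\ref{lem:thm:5.1-replacement}.

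\textbf{Step 1 (reduction to random lines).} Under $(-,+,\Delta)$ the configuration has two open contours, from $u$ to $x$ and from $v$ to $y$, or possibly pairings $u\leftrightarrow v$, $x\leftrightarrow y$. In the random-line representation, $\pi^{(-,+,\Delta)}(\mathcal V^c)$ is a ratio of sums of $q_{R^*}(\underline\lambda)$ over pairs of lines with $\partial\underline\lambda=\{u,v,x,y\}$. We split $\mathcal V^c$ into three events.
\begin{itemize}
\item[(a)] The pairing is $u\leftrightarrow v$, $x\leftrightarrow y$.
\item[(b)] The pairing is correct, but one line crosses the vertical midline of $R$.
\item[(c)] Some line exits through the sides of $R$. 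This is controlled by Proposition~\ref{prop:vertical-maximum-bound-replacement}.
\end{itemize}

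\textbf{Step 2 (denominator).} Restricted to the correct pairing, the denominator is bounded below by the product $\langle\sigma_u\sigma_x\rangle\langle\sigma_v\sigma_y\rangle$ on the two half-rectangles. Each contour travels horizontal distance about $\ell/2$ and vertical distance $e^{-\beta}\bar\alpha\sqrt\ell$. This uses the GKS and monotonicity manipulations of~\cite{LMST}, together with the uniform lower bound of Lemma~\ref{lem:OZ-asymptotics-beta-indep}. Confinement to the rectangle costs at most a polynomial factor $e^{-C\beta}\ell^{-C}$, by the argument of Lemma~\ref{lem:thm:5.1-replacement} and~\eqref{eq:floor-domain-to-infinite-strip}. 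Since $\ell\ge e^{\beta/5}$ in the relevant regime, factors $e^{C\beta}$ are absorbed into $\ell^{c_1}$.

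\textbf{Step 3 (numerators and energy comparison).}
\begin{itemize}
\item[(a)] The numerator is at most $\langle\sigma_u\sigma_v\rangle\langle\sigma_x\sigma_y\rangle\le e^{-\tau(0)(s\bar\alpha^2+\ell)}$. The competing cost is $2\tau(\theta)\,\ell/2$ with $\theta\approx 2e^{-\beta}\bar\alpha/\sqrt\ell\le ce^{-2\beta}$ (using $\bar\alpha\le\sqrt\ell/s$ and $\ell$ large). By~\eqref{eq:tau''-upper-bound}, this excess over $\tau(0)\ell$ is at most $\tfrac12 e^{2\beta}\theta^2\ell\lesssim\bar\alpha^2$. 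Hence taking $s\ge s_0$ large, independently of $\beta$, makes the ratio at most $\ell^{c_1}e^{-c_2\bar\alpha^2}$.
\item[(b)] We union bound over the crossing point $w$ on the midline, or over an intermediate point forcing a large horizontal excursion. We then compare $\tau(u-w)+\tau(w-x)$ with $\tau(u-x)$ via the sharp triangle inequality. The relevant comparison is in the small-angle regime, using~\eqref{eq:tau''-lower-bound}, exactly as in the Taylor expansion in the proof of Proposition~\ref{prop:vertical-maximum-bound-replacement}. Forcing an extra horizontal displacement of order $s\bar\alpha^2$ at height of order $e^{-\beta}\bar\alpha\sqrt\ell$ costs order $\bar\alpha^2$ uniformly in $\beta$.
\end{itemize}

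\textbf{Main obstacle.} The delicate point is Step~3(b): the $e^{2\beta}$ stiffness and the $e^{-2\beta}$ angular scale must cancel exactly, leaving a rate $c_2\bar\alpha^2$ with $c_2$ independent of $\beta$. To get this, we will keep all angles inside $[-ce^{-2\beta},ce^{-2\beta}]$, where both curvature bounds of Lemma~\ref{lem:tau-double-prime-near-zero-angle} apply. Paths leaving this cone will be handled separately by the crude bound $\tau_\beta\ge1.9\beta$ together with the $\kappa_\beta\ge1/3$ sharp triangle inequality~\eqref{eq:sharp-triangle-ineq}, which gives an even smaller contribution.
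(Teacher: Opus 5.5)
Your decomposition is the paper's. Step 3(a) is its replacement for Lemma~6.1 of \cite{LMST}: it compares $\tau_\beta(0)(\ell+s\bar\alpha^2)$ with $2\tau_\beta(x-u)$ via \eqref{eq:tau''-upper-bound}, losing only $O(\bar\alpha^2)$. Step 3(b) is its replacement for Lemma~6.2. The denominator uses Lemma~\ref{lem:OZ-asymptotics-beta-indep} and Lemma~\ref{lem:thm:5.1-replacement}.

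The argument goes through, but you have misplaced the difficulty. In 3(b) there is no cancellation between $e^{2\beta}$ and $e^{-2\beta}$ to arrange. Your primary plan of keeping all angles in $[-ce^{-2\beta},ce^{-2\beta}]$ is also not available: for a midline crossing point $w=(\tfrac{\ell}{2},y_w)$, the vector $u-w=(-\tfrac12 s\bar\alpha^2,-y_w)$ is typically steep, so \eqref{eq:tau''-lower-bound} does not apply. The paper's proof of this step is exactly what you list as the fallback. Since $u,v$ are symmetric about the midline, $|w-u|=|w-v|$, and hence
\[
|w-x|+|u-w|-|u-x|\;\ge\;|v-x|-|u-x|\;=\;\frac{s\bar\alpha^2\,\ell}{|u-x|+|v-x|}\;\ge\;\tfrac12 s\bar\alpha^2 ,
\]
where $\bar\alpha\le\sqrt\ell/s$ gives $|u-x|+|v-x|\le 2\ell$. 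The $\beta$-uniform sharp triangle inequality \eqref{eq:sharp-triangle-ineq} with $\kappa_\beta\ge 1/3$ then gives a factor $e^{-s\bar\alpha^2/6}$ per crossing point. A union bound over at most $\ell$ such points finishes. A midline crossing is a horizontal backtrack whose Euclidean excess is already of order $s\bar\alpha^2$, so neither the small-angle curvature bound nor $\tau_\beta\ge 1.9\beta$ is needed there.

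Three smaller corrections:
\begin{enumerate}
\item In 3(a), $\bar\alpha\le\sqrt\ell/s$ only gives $\theta\lesssim e^{-\beta}/s$, not $\theta\le ce^{-2\beta}$. This is harmless, because \eqref{eq:tau''-upper-bound} holds on all of $[-\pi/4,\pi/4]$.
\item Your event (c) is empty, since the lines live in $R^*$.
\item Proposition~\ref{prop:vertical-maximum-bound-replacement} is needed inside the denominator lower bound, in the analogue of (6.6) of \cite{LMST}. After relaxing the half-rectangle sum to the slit strip, it shows the relaxed lines still stay below height $e^{-\beta}\bar\alpha\sqrt\ell$ with probability at least $1/2$. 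This is where $\ell\ge\beta e^{2\beta}$ (not merely $\ell\ge e^{\beta/5}$) and $\bar\alpha\ge\sqrt{C\log\ell}$ enter. Lemma~\ref{lem:thm:5.1-replacement} only pays the polynomial price of the floor, so your Step~2 as written covers only half of the confinement cost.
\end{enumerate}
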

\begin{proof}
    The key distinction to Proposition 4.5 of~\cite{LMST} is that we will take our rectangles $R$ to be of size $\ell \times e^{-\beta} \bar \alpha \sqrt{\ell}$.

    The first main lemma of the proof shows that (due to the boundary modification on $\Delta$) it is likely that the two interface contours are connecting $x\leftrightarrow u$ and $y\leftrightarrow v$ as opposed to $x\leftrightarrow y$ and $u\leftrightarrow v$. The main difference is the $e^{-\beta}$ scaling of the rectangle height, and the $\beta$-independence of~$\bar \alpha$.
    
    \begin{lem}[Replacement for Lemma 6.1 of~\cite{LMST}]\label{lem:Lem6.1-replacement}
        For $R$ and $\Delta$ as in Proposition~\ref{prop:horizontal-splitting-beta-uniform}, there exists $c_3,c_4$ and $s_0$ (independent of $\beta)$ such that if $\beta>\beta_0, s\ge s_0$ and $\ell \ge \beta e^{2\beta}$, and $\bar \alpha \ge \sqrt{C \log \ell}$ for $C$ large independent of $\beta$, then 
        \begin{align*}
            \pi_R^\eta (\partial \lambda_1 =\{x,y\}\,,\,\partial \lambda_2 = \{u,v\}) \le \ell^{c_3} e^{ - c_4 \bar \alpha^2}\,.
        \end{align*}
    \end{lem}

\begin{proof}
    The first three displays up to the one preceding (6.5) proceed as in the proof of Lemma 6.1 of~\cite{LMST}, since these are all exact equalities/inequalities. When their Lemma 2.1 is applied to get their (6.5), we apply instead Lemma~\ref{lem:OZ-asymptotics-beta-indep}, to get for $\Psi_1$ as in their (6.4), that 
    \begin{align*}
        \Psi_1 \le Ce^{\beta} \frac{1}{\sqrt{\ell} } e^{- \tau_\beta(0) \ell} \cdot C e^{\beta} \frac{1}{\sqrt{s\bar \alpha^2}} e^{ - \tau_\beta(0) s\bar \alpha^2}\,.
    \end{align*}
    For the lower bound on the denominator $\Psi_2$, the inequalities that follow in~\cite{LMST} until (6.6) are unchanged as they are simply correlation inequalities; therefore, our first aim is to replace (6.6) and show that 
    \begin{align}\label{eq:(6.6)-equivalent}
        \frac{\bar X}{\bar Y}  \ge 1- (\ell/4)^{c_1} \exp( - c_2 \bar \alpha ^2)\,,
    \end{align}
    where $\bar X: = \sum_{\lambda \subset \mathcal G_1, \partial \lambda = \{x,z\}} q_{\bar S_*}(\lambda)$ and $\bar Y:= \sum_{\lambda \subset \bar S\,,\,\partial\lambda = \{x,z\}} q_{\bar S_*}(\lambda)$ as there. While there, this followed from the entropic repulsion bound of their Proposition 4.4, now our Lemma~\ref{prop:vertical-maximum-bound-replacement} ensures that even though the height of the rectangle is $e^{ - \beta} \bar \alpha \sqrt{\ell}$, the bound still holds, as $\ell \ge \beta e^{2\beta}$. In particular, as long as $\ell \ge \beta e^{2\beta}$, we have for $\beta$ large and $\bar \alpha \ge \sqrt{C \log \ell}$ that the probability on the right-hand side of~\eqref{eq:(6.6)-equivalent} is at least $1/2$ say. 

    Defining $Y$ as in their proof as relaxing to contours in all of $S$ (the doubly infinite strip), by Lemma~\ref{lem:thm:5.1-replacement} (replacing their use of Theorem~5.1), we have 
    \begin{align*}
        \frac{\bar Y}{Y} \ge e^{ - C \beta} \ell^{-1.1}\,.
    \end{align*}
    On the other hand, replacing their use of Formula 2.22 of~\cite{GreenbergIoffe}, by our Lemma~\ref{lem:OZ-asymptotics-beta-indep}, 
    \begin{align*}
        Y \ge \frac{e^{-\beta}}{C} \frac{1}{\sqrt{\ell}} \exp( - \tau_{\beta}(x-z))\,.
    \end{align*}
    Combining, we deduce the equivalent to their (6.9): 
    \begin{align}\label{eq:Psi2-lower-bound}
        \Psi_2 \ge C'^{-1} e^{-2\beta} \ell^{-3} \exp( - 2\tau_\beta(x-u))
    \end{align}
    To conclude the proof, as there, we must compare
    \begin{align*}
       - (\ell + s\bar \alpha ^2) \tau_\beta(0) \qquad \text{to} \qquad  2\tau_\beta(x-u))\,.
    \end{align*}
    Since $\tau_\beta(\theta)$ is analytic and even, we get from part (1) of Lemma~\ref{lem:tau-double-prime-near-zero-angle} that for all $\theta$, by second-order Taylor expansion in remainder form,
    \begin{align*}
        |\tau_\beta(\theta)  - \tau_\beta(0)| \le \frac{1}{2} e^{2\beta} \theta^2\,.
    \end{align*}
    Thus, since $\theta \le \arctan ( (e^{-\beta}\bar \alpha \sqrt{\ell})/(\ell/2))\le 2e^{-\beta} \bar \alpha \ell^{-1/2}$ it follows that 
     \begin{align*}
         \tau_\beta(x-u) \le \tau_\beta(0)|x-u| + \frac{1}{2}e^{2\beta} (e^{-\beta}\bar \alpha)^2 = \tau_\beta(0) |x-u|  + \frac{1}{2}\bar \alpha ^2
     \end{align*}
On the other hand, one has by expanding $|x-u| = \sqrt{(\ell - s \bar \alpha^2)^2/4  + e^{ -2\beta}\bar \alpha^2 \ell}$, that 
    \begin{align*}
         \ell + s\bar \alpha^2 - 2|x-u| \ge   (2s-  1)\bar\alpha^2 
     \end{align*}
     which is at least $s\bar \alpha ^2/2$ if $s\ge 1$. Combining these, we get 
     \begin{align*}
         \frac{\Psi_1}{\Psi_2} \le C'' e^{ 3 \beta} \ell^6 \exp( -\tau_\beta(0) (s/2) \bar\alpha ^2  + \bar \alpha^2)
     \end{align*}
    which gives the claimed bound since $\tau_\beta(0) \ge \beta$ per Lemma~\ref{lem:uniform-in-beta-bounds-on-surface-tension-and-stiffness}, as long as $s \ge \frac{C}{\beta}$ for large universal constant $C$. 
\end{proof}

The next lemma in the proof of Proposition~\ref{prop:horizontal-splitting-beta-uniform} is the following that confines the two interfaces to the left and right halves of $R$ respectively. 

\begin{lem}[Replacement for Lemma 6.2 of~\cite{LMST}]\label{lem:Lem6.2-replacement}
    Let $R_l, R_r$ be left and right halves of $R$. There exist $c_5, c_6$ (independent of $\beta$) such that for all $\beta>\beta_0$ and $\ell \ge \beta e^{2\beta}$, and $\sqrt{C \log \ell} \le\bar \alpha \le (1/s)\sqrt{\ell}$ for $C$ large independent of $\beta$,
    \begin{align*}
        \pi_R^\eta(\lambda_1 \subset R_l, \lambda_2 \subset R_r \mid \partial \lambda_1 = \{x,u\}\,,\,\partial\lambda_2 = \{y,v\}) \ge 1- \ell^{c_5} e^{ - c_6 s \bar \alpha^2}
    \end{align*}
\end{lem}

\begin{proof}
    The first two displays of the proof of Lemma 6.2 in~\cite{LMST} are exact inequalities, and therefore unchanged. When they applied their Lemma 6.1, we apply our Lemma~\ref{lem:Lem6.1-replacement}, to get
    \begin{align*}
        \frac{\Phi_2}{\Psi_2} \ge 1- \ell^{c_3} e^{ - c_4 \bar \alpha^2}
    \end{align*}
    for $\Phi_1,\Phi_2$ defined as in their proof of Lemma 6.2 of~\cite{LMST}. Using the lower bound on $\Psi_2$ from~\eqref{eq:Psi2-lower-bound}, 
    \begin{align*}
        \Phi_2 \ge (1- \ell^{c_3} e^{ - c_4\bar \alpha^2}) \ell^{ - 8} e^{ - \tau_\beta(x-u) - \tau_\beta(y-v)}\,.
    \end{align*}
    For the upper bound on $\Phi_1$, the steps in~\cite{LMST} are unchanged (as again, they only depend on exact, non-asymptotic, inequalities) up to 
    \begin{align*}
        \Phi_1 \le \sum_{z\in I} \exp( - \tau_\beta(z-x) - \tau_\beta(z-u) - \tau_\beta(y-v))
    \end{align*}
    Now when applying the sharp triangle inequality to lower bound the surface tension terms above, we use the sharp triangle inequality with uniform in large $\beta$ constant from Lemma~\ref{lem:uniform-in-beta-bounds-on-surface-tension-and-stiffness} to get 
    \begin{align*}
        \tau_\beta(z-x) + \tau_\beta(u-z) - \tau_\beta(u-x) \ge \frac{1}{3} (|z-x| + |u-z| - |u-x|)\,.
    \end{align*}
    Combining these bounds, and using that $\bar\alpha \ge \sqrt{C\log \ell}$ with large $\beta$-independent $C$, so that $(1-\ell^{c_3} e^{ - c_4 \bar \alpha^2})\ge 1/2$, we get that 
    \begin{align*}
        \frac{\Phi_1}{\Phi_2} \le 2 \ell^{8} \exp ( - \frac{1}{3} (|z-x| + |u-z| - |u-x|))\,.
    \end{align*}
    From this point, the geometric steps to upper bound this parallel~\cite{LMST}: firstly, note that
    \begin{align*}
        \min_{z\in I} (|z-x| + |u-z| - |u-x|) \le |v-x| -  |u-x|\,;
    \end{align*}
    recalling the dimensions of $R$ as now $e^{  - \beta} \bar \alpha \sqrt{\ell} \times \ell$ and $|u-v| = s\bar \alpha^2$, 
    \begin{align*}
        |v-x|^2 = \frac{1}{4} (\ell + s\bar \alpha^2)^2 + e^{ -\beta} \bar \alpha^2\ell \qquad |u-x|^2 = \frac{1}{4} (\ell - s\bar\alpha^2)^2 + e^{ -\beta} \bar \alpha^2 \ell\,.
    \end{align*}
    As long as $\bar \alpha \le (1/s) \sqrt{\ell}$, we get $|u-x| + |v-x|\le 2\ell$ and thus
    \begin{align*}
        |v-x| - |u-x| = \frac{s\bar \alpha^2 \ell}{|u-x| + |v-x|} \ge \frac{1}{2} s\bar \alpha ^2\,.
    \end{align*}
    Plugging this in, and summing over the $|I|\le \ell$ values that $z$ can take, $\frac{\Phi_1}{\Phi_2} \le 2\ell^9 \exp( - \frac{1}{6} s\bar \alpha ^2)$\end{proof}

    Lemmas~\ref{lem:Lem6.1-replacement} and~\ref{lem:Lem6.2-replacement} immediately imply Proposition~\ref{prop:horizontal-splitting-beta-uniform}. 
\end{proof}

\subsection{The $\beta$-uniform version of the recursive scheme}

The following will replace Theorem 4.2 of~\cite{LMST}, which is the main recursive scheme used to give quasi-polynomial mixing. Let $N$ be a large integer, let $L=L_N = 2^N -1 \ge \beta^2 e^{2\beta}$ and choose $N_0$ to be the smallest integer such that $L_{N_0}  \ge (\beta^2 e^{2\beta} )\vee e^{2\beta} (  \lfloor \log L\rfloor^3)$. 

For intermediate $n\in [N_0,N]$, define the rectangles $R_n$, $Q_n$ to have sides parallel to the coordinate axes of length $(L_n, \kappa_N\sqrt{L_n})$ and $(L_n, \kappa_N \sqrt{L_{n+1}})$ respectively,  where $L_n = 2^n-1$ and $\kappa_N = e^{-\beta}\sqrt{ \bar \kappa N}$ where $\bar \kappa$ is a constant independent of $\beta$ to be chosen later. Note that $\kappa_N \le \bar C e^{- \beta} \sqrt{\log L}$ for a $\bar C(\bar \kappa)$ (independent of $\beta$). Observe that this is much smaller than $\sqrt{L_n}$ for all $n\in [N_0,N]$, and in turn we have that $\kappa_N \sqrt{L_{n+1}}$ is much smaller than $L_n$.

\begin{definition}[Definition~3.1 of~\cite{LMST}]
    A distribution $\mathbf{P}$ of boundary conditions for a rectangle $R$ (either $R_n$ or $Q_n$) is in $\mathcal D(R)$ if the marginal on its north, west, and east boundaries is stochastically below $\pi_{\beta,\mathbb Z^2}^-$ and its marginal on south boundary stochastically dominates $\pi_{\beta,\mathbb Z^2}^+$. 
\end{definition}

\begin{definition}[Definition~3.2 of~\cite{LMST}]
    For $n\in \mathbb N$, $\delta>0$, $t>0$, consider the Ising model in $R_n$ with random boundary condition $\tau \sim \mathbf{P}$. We say $\mathcal A(L_n,t_n,\delta_n)$ holds if 
    \begin{align*}
        \mathbf{E}||\mu_{t_n}^\pm - \pi^\tau\|\le \delta_n \qquad \text{for all $\mathbf P\in \mathcal D(R_{L_n})$}\,.
    \end{align*}
    The statement $\mathcal B(L_n,t_n,\delta_n)$ is defined with $Q_{L_n}$ replacing $R_{L_n}$. 
\end{definition}

\begin{prop}[The starting point; analogue of Proposition 4.1 of~\cite{LMST}]\label{prop:starting-point}
    There exists $C>0$ such that for all $\beta$, for any $\ell \times h$ rectangle $R$ with $h \ge \log \ell$, with any boundary conditions $\tau$, has mixing time at most $e^{ 8 \beta h}$. As a consequence, for all $n \in [N_0,N]$, we have 
    \begin{align*}
        \mathcal A (L_n, t, e^{ - t e^{ - 8\beta \kappa_N \sqrt{L_n}}})  \qquad \text{and} \qquad \mathcal B(L_n, t, e^{-t e^{- 16\beta \kappa_N \sqrt{L_n}}}) \quad \text{hold}\,.
    \end{align*}
\end{prop}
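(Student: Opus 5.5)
The plan is to prove the mixing time bound for the $\ell\times h$ rectangle by a canonical-path / block-dynamics argument that sweeps column by column, which is where the $e^{O(\beta h)}$ cost comes from, and then to convert that into the statements $\mathcal A$ and $\mathcal B$ by sub-multiplicativity.

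\textbf{Step 1: reduce to the spectral gap of column block dynamics.} Tile the rectangle $R$ by its $\ell$ vertical columns, each of height $h$. Consider first the block dynamics that resamples one whole column at a time from its conditional law given the two neighbouring columns (and the boundary condition $\tau$). A column together with its neighbours has only $O(h)$ boundary edges, so the conditional law of one column changes by a factor of at most $e^{O(\beta h)}$ when the neighbouring columns change. This should let us couple two copies of the column dynamics as follows: run a left-to-right sweep of updates, and at each column the two copies agree with probability at least $e^{-O(\beta h)}$, uniformly in everything else. (An alternative is to use that the one-dimensional chain of column-spins is a Markov field with uniformly bounded influence, so its Dobrushin-type gap is at least $e^{-O(\beta h)}$.)

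\textbf{Step 2: estimate the single-column gap and combine.} Inside a single column of height $h$, with arbitrary boundary conditions, Glauber dynamics has relaxation time at most $e^{O(\beta h)}$, for instance by the path-flipping canonical paths argument: a path between two configurations flips spins one at a time, and the energy barrier along it is $O(\beta h)$. Combining this with Step 1 via the standard block-dynamics comparison (gap of Glauber $\ge$ gap of block dynamics $\times$ minimal gap of a block), we get $\mathrm{gap}\ge e^{-c\beta h}$. Then
\[
\tmix\le \mathrm{gap}^{-1}\log(1/\pi_{\min}) \le e^{c\beta h}\cdot O(\beta \ell h).
\]
We need the constants to close with $c\le 8$ once $\ell$ is absorbed. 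This is where the hypothesis $h\ge\log \ell$ enters: for $\beta$ large we have $\beta\ell h\le e^{\beta h}$, so the polynomial prefactor is absorbed and $\tmix\le e^{8\beta h}$. The main obstacle is keeping the exponent constant at most $8$ uniformly. Concretely, I would bound the barrier in both steps by the count of boundary edges of a column, which is $2h+2$ per side, and check that the total cost is $\le 2\beta(2h+2)\cdot 2$, which is at most $8\beta h$ plus lower-order terms.

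\textbf{Step 3: deduce $\mathcal A$ and $\mathcal B$.} Apply Step 2 to $R_n$, which has height $\kappa_N\sqrt{L_n}$, and to $Q_n$, which has height $\kappa_N\sqrt{L_{n+1}}\le 2\kappa_N\sqrt{L_n}$. The condition $h\ge\log\ell$ holds for $n\ge N_0$ by the choice $L_{N_0}\ge e^{2\beta}\lfloor\log L\rfloor^3$. Sub-multiplicativity of the distance to stationarity gives
\[
\|\mu_t^\pm-\pi^\tau\|\le e^{-\lfloor t/\tmix\rfloor},
\]
uniformly in $\tau$. Averaging over $\tau\sim\mathbf P$ then yields the stated values of $\delta$, namely $e^{-te^{-8\beta\kappa_N\sqrt{L_n}}}$ for $R_n$ and $e^{-te^{-16\beta\kappa_N\sqrt{L_n}}}$ for $Q_n$, up to the routine adjustment of constants.
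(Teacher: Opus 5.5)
Your Step 3 matches the paper, including the check that $h\ge\log\ell$ for $R_n$ via $L_{N_0}\ge e^{2\beta}\lfloor\log L\rfloor^3$. Step 1, however, has a genuine gap: nothing in it gives a gap for the column block dynamics that is uniform in $\ell$.

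\textbf{The sweep coupling.} In a left-to-right sweep each column is updated once. It agrees in the two copies with probability $\ge e^{-O(\beta h)}$ only given its right neighbour, which may still disagree. The sweep couples the copies only if all $\ell$ of these events occur, which has probability $e^{-O(\beta h)\ell}$. Since $\ell$ can be as large as $e^{h}$, this is useless.

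\textbf{The Dobrushin alternative.} This fails too, because the influence of a neighbouring column is not small. Take $\mp$ boundary conditions and put the left neighbour's interface at height $a$. Moving the right neighbour's interface from height $a$ to distance $\asymp h$ from $a$ changes the law of column $i$ from concentrated at height $a$ to nearly uniform over $\asymp h$ heights. That is a total-variation change close to $1$, so $\sum_j C_{ij}$ is close to $2$ rather than below $1$. A uniform bound on influences is not Dobrushin's criterion.

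\textbf{The constant $8$.} A uniform-in-$\ell$ gap for the column chain does hold, but it needs a real argument, and you would still have to track the exponent to reach $8$. Your accounting $2\beta(2h+2)\cdot 2$ is not derived from either step. Your single-column estimate $e^{O(\beta h)}$ is also far worse than the truth, which is $e^{O(\beta)}$ since a column is a one-dimensional chain in external fields; this only makes the constant harder to control.

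\textbf{What the paper does.} It bypasses block dynamics entirely and quotes the canonical-paths (cutwidth) bound of BKMP for single-site Glauber on the whole rectangle.
\begin{itemize}
\item Go from $\sigma$ to $\eta$ by flipping sites in column-major order.
\item Each intermediate $\xi=\eta_S\sigma_{S^c}$ and its complement $\hat\xi=\sigma_S\eta_{S^c}$ satisfy $\pi(\sigma)\pi(\eta)\le e^{4\beta(h+1)}\pi(\xi)\pi(\hat\xi)$. Only the at most $h+1$ edges crossing $(S,S^c)$ contribute; the boundary condition $\tau$ acts as a field and cancels.
\item The pair $(\sigma,\eta)$ is recovered from $(\xi,\hat\xi)$ and the transition, so the paths are injectively encoded.
\item This gives $\mathrm{gap}^{-1}\le \ell h\, e^{4\beta h+O(\beta)}$, with only polynomial dependence on $\ell$.
\item Then $\tmix\le \mathrm{gap}^{-1}\log(4/\pi_{\min})$, with $\log(1/\pi_{\min})=O(\beta \ell h)$ and $\ell\le e^{h}$, gives $e^{8\beta h}$ for $\beta$ large.
\end{itemize}
Your path-flipping idea from Step 2, applied to the whole rectangle instead of a single column, already proves the proposition and makes Step 1 unnecessary.
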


\begin{proof}
    A classical canonical paths bound (e.g., Proposition 1.1 of~\cite{BKMP-trees}) gives that the inverse spectral gap  of the Ising Glauber dynamics on a rectangle $\ell \times h$ for $h\le \ell$ is $\ell h\exp( 4\beta h)$; changing this into a mixing time bound and absorbing the polynomial prefactors into the exponent since $h\ge \log \ell$ gives the first claim. The second claim then holds by exponential decay of total variation distance after mixing, and then averaging over the random boundary conditions.   
\end{proof}

\begin{thm}[The inductive step; analogue of Theorem 4.2 of~\cite{LMST}]\label{thm:inductive-step}
    Fix $s$ (in the definition of $\Delta$) sufficiently large. There exist constants $c_1,c_2,c_3$ and $\bar \kappa_0$ such that for all $\beta>\beta_0$, $\bar \kappa \ge\bar\kappa_0$, all $L\ge \beta^2 e^{2\beta}$ and any $n\in [N_0,N]$, 
    \begin{align*}
        \mathcal A(L_n,t_n,\delta_n) \implies \mathcal B(L_n, t_n',\delta_n') \implies \mathcal A(L_{n+1}, t_{n+1},\delta_{n+1})\,,
    \end{align*}
    where 
    \begin{align*}
        \delta_n' &= c_1\big(\delta_n + 
       L_N^{-c_2 \bar \kappa} + L_n^2 e^{ - c_2 \log t_n}\big) \quad &&;\quad &t_n' & = 2t_n \,,\\ 
        \delta_{n+1} &= c_3 \big( \delta_n + L_N^{-c_2 \bar \kappa} \big) \quad & &; \quad& t_{n+1} & = e^{ c_3 \beta \bar \kappa N} t_n\,.
    \end{align*}
\end{thm}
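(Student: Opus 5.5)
The plan is to follow the proof of Theorem 4.2 of~\cite{LMST} essentially verbatim, changing only three things. First, the rectangles $R_n,Q_n$ now have height $\kappa_N\sqrt{L_n}$ with $\kappa_N = e^{-\beta}\sqrt{\bar\kappa N}$. Second, every equilibrium input is replaced by its $\beta$-uniform analogue: Proposition~\ref{prop:vertical-maximum-bound-replacement} replaces their Proposition~4.4, and Proposition~\ref{prop:horizontal-splitting-beta-uniform} replaces their Proposition~4.5. Third, the rare ``forcing'' events, whose cost grew with $\beta$, are now paid for by the time factor $e^{c_3\beta\bar\kappa N}$.

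\textbf{First implication, $\mathcal A \Rightarrow \mathcal B$.} I would run the censoring and monotone-coupling argument of~\cite{LMST} on $Q_n$. Its height $\kappa_N\sqrt{L_{n+1}}$ is larger than the height of $R_n$ by a factor $\sqrt2$. The steps are as follows.
\begin{itemize}
\item Run the dynamics on $Q_n$ for time $t_n$ with the top strip censored.
\item Use $\mathcal A(L_n,t_n,\delta_n)$ on the lower copy of $R_n$, whose boundary law lies in $\mathcal D(R_n)$ by monotonicity.
\item Show that under the stationary measure the interface stays below height $\kappa_N\sqrt{L_n}/2$. This follows from Proposition~\ref{prop:vertical-maximum-bound-replacement} applied with $\delta\bar\alpha\asymp\sqrt{\bar\kappa N}$, which gives the error $L^{c_1}e^{-c_2\bar\kappa N}\le L_N^{-c_2\bar\kappa}$ once $\bar\kappa\ge\bar\kappa_0$. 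The constraint $\delta\bar\alpha\le\sqrt{\beta\log\ell}$ is met because $\log L\lesssim\beta$ in the relevant range. Where instead $L_n\le\beta^2e^{2\beta}$, I would use Corollary~\ref{cor:short-rectangle-vertical-oscillation}.
\item Run the second period of length $t_n$ in the upper part, where $\mathcal A$ applies again. The term $L_n^2e^{-c_2\log t_n}$ comes from a union bound over sites combined with the polynomial-in-time decay used in~\cite{LMST}.
\end{itemize}

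\textbf{Second implication, $\mathcal B \Rightarrow \mathcal A(L_{n+1})$.} I would tile $R_{n+1}$ by two overlapping copies of $Q_n$ and apply the ``splitting'' step. I would place a $(-,+,\Delta)$ modification with $|\Delta| = s\bar\alpha^2$. Proposition~\ref{prop:horizontal-splitting-beta-uniform} (with $\bar\alpha^2\asymp\bar\kappa N$) shows that, except with probability $\ell^{c_1}e^{-c_2\bar\alpha^2}\le L_N^{-c_2\bar\kappa}$, the interfaces split into the left and right halves. This lets each half be handled by $\mathcal B$.

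The only step that costs time is driving the configuration on the short segment $\Delta$ (together with a window of height $\kappa_N\sqrt{L_{n+1}}$ above it) to the minus state. I would bound this probability below by $\exp(-C\beta\cdot s\bar\alpha^2)$, using a Peierls/forcing bound at rate $e^{-O(\beta)}$ per site on a region of size $O(s\bar\alpha^2)$. Because $\kappa_N\sqrt{L_{n+1}}$ is only polylogarithmic times $e^{-\beta}$, the needed area is $O(\bar\kappa N)$ up to $\beta$-independent factors. The probability is amplified by repeating over time blocks, or equivalently by the spectral-gap comparison in~\cite{LMST}. This gives $t_{n+1} = e^{c_3\beta\bar\kappa N}t_n$.

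\textbf{Main obstacle.} The main obstacle is checking that every auxiliary estimate in~\cite{LMST}'s inductive step uses only the replaced equilibrium bounds, and that no hidden $e^{\Theta(\beta)}$ prefactor appears other than the one absorbed into the exponent $c_3\beta\bar\kappa N$. The hypotheses $\ell\ge\beta e^{2\beta}$ in Propositions~\ref{prop:vertical-maximum-bound-replacement} and~\ref{prop:horizontal-splitting-beta-uniform} are guaranteed by the choice $L_{N_0}\ge\beta^2e^{2\beta}$. Stray polynomial factors such as $e^{3\beta}$ are absorbed into $L_N^{-c_2\bar\kappa}$ after enlarging $\bar\kappa$, since $\log L_N\ge 2\beta$.
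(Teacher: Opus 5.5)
Your overall plan is the paper's: rerun the \cite{MaTo}/\cite{LMST} induction with Propositions~\ref{prop:vertical-maximum-bound-replacement} and~\ref{prop:horizontal-splitting-beta-uniform} as the only interface inputs. However, the step you name as the source of $t_{n+1}=e^{c_3\beta\bar\kappa N}t_n$ fails as written.

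\textbf{Second implication.} The paper does no dynamical forcing. It changes the boundary condition on the base segment $\Delta$, with $|\Delta|=s\bar\kappa N$, and compares the two chains via Lemmas~2.8--2.9 of \cite{MaTo} (Corollary~\ref{cor:boundary-modification}). Stationary measures and Dirichlet forms differ by factors $e^{O(\beta|\Delta|)}$. So it suffices to prove mixing for the $(-,+,\Delta)$-modified rectangle, at the price of multiplying time by $4\beta|E(\Lambda)|\log 2\cdot e^{4\beta s\bar\kappa N}$ and the error by $8$, plus a doubly-exponentially small term.

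A rare event amplified over restarts is not equivalent to this. The minus boundary on $\Delta$ must be in force throughout a run of length $\gtrsim t_n$, not at one instant. Every heat-bath update sets a spin to $+$ with probability at least $e^{-O(\beta)}$, so keeping the spins next to $\Delta$ minus that long has probability about $\exp(-e^{-O(\beta)}|\Delta|\,t_n)$.

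Your forcing region is also miscomputed. You conflated $\kappa_N$ with $\kappa_N\sqrt{L_{n+1}}=e^{-\beta}\sqrt{\bar\kappa N L_{n+1}}$, which is polynomial in $L$, not polylogarithmic. Even a one-time forcing of that window would cost $\exp(-\beta\,\mathrm{poly}(L))$. Your parenthetical ``spectral-gap comparison'' is the right tool; it should replace the forcing picture, not be treated as equivalent to it.

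\textbf{First implication.} Your censoring scheme breaks down for both initializations.
\begin{itemize}
\item Under the $+$ start, freezing the top strip gives the lower $R_n$ an all-plus north boundary, which is not in $\mathcal D(R_n)$.
\item Under the $-$ start, the upper rectangle in the second period has a minus-ish south boundary, so $\mathcal A$ does not ``apply again''.
\end{itemize}
The paper treats the two starts separately.
\begin{itemize}
\item For the $+$ start, $\mathcal A$ is first applied to the upper rectangle, whose south boundary is then $+$. The remainder is Claim~\ref{clm:Claim-3.6-analogue}: random $\tau\sim\mathbf P$ is first replaced by deterministic $(-,-,+,-)$ on the $L_n$-enlargement using minus-phase Peierls decay (Lemma~\ref{lem:pure-phase-decay-of-correlations}, error $e^{-\beta L_n/2}$). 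Only then is Proposition~\ref{prop:vertical-maximum-bound-replacement} applied. You skip this $\tau$-replacement step.
\item For the $-$ start, the upper stage is local relaxation in boxes $K_r$ (Claim~\ref{clm:Claim-3.8-analogue}). The crude mixing bound $e^{8\beta r}$ worsens with $\beta$ while the decay $e^{-\beta r/2}$ improves. The choice $r=\frac{1}{8\beta}(\log t-\log\log t)$ balances them, and this is what makes the $L_n^2e^{-c_2\log t_n}$ term $\beta$-uniform.
\end{itemize}
That balancing is exactly the kind of hidden $\beta$-dependence you flag as the main obstacle, but your sketch does not locate it.

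\textbf{Role of $N_0$.} The analogous $\tau\to(-,+,\Delta)$ replacement in the second implication costs $e^{-\beta\kappa_N\sqrt{L_n}/2}$. This is small only because $L_n\ge e^{2\beta}\lfloor\log L_N\rfloor^3$. The same lower bound, i.e.\ $\sqrt{\bar\kappa N}\ll e^{-\beta}\sqrt{L_n}$ for large $\beta$, is what keeps the angle in the proof of Proposition~\ref{prop:vertical-maximum-bound-replacement} below $ce^{-2\beta}$. Your stated reason, ``$\log L\lesssim\beta$'', is false: $L\ge\beta^2e^{2\beta}$ is unbounded above.
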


\begin{cor}[Solving for the final scale; analogue of Corollary 4.3 of~\cite{LMST}]\label{cor:final-scale}
    In the setting of Theorem~\ref{thm:inductive-step}, there exists $c>0$ such that for all $\beta>\beta_0$, if $t_N = e^{ c \beta \bar \kappa N^2}$ and $\delta_N = c e^ { - c^{-1} \bar \kappa N}$, then for all $N$ such that $2^N\ge \beta^2 e^{2\beta}$, the statement $\mathcal A(L_N,t_N,\delta_N)$ holds. 
    
    At the same time, if $L\in [e^{\beta/10}, \beta^2 e^{2\beta}]$, then $\mathcal A(L, t, e^{ - t e^{- c \beta(\log L)^2}})$ holds for all $t$.
\end{cor}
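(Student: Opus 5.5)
The first claim of the corollary comes from iterating Theorem~\ref{thm:inductive-step} from the base scale $N_0$ up to $N$. The second claim (small $L$) follows directly from Proposition~\ref{prop:starting-point}.

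\textbf{Seeding the recursion.} At $n=N_0$, use Proposition~\ref{prop:starting-point} with a starting time $t_{N_0}$ large enough that $\delta_{N_0}\le L_N^{-c_2\bar\kappa}$. The relevant height is $\kappa_N\sqrt{L_{N_0}}=e^{-\beta}\sqrt{\bar\kappa N L_{N_0}}$. Since $L_{N_0}\asymp (\beta^2e^{2\beta})\vee e^{2\beta}(\log L)^3$, this height is at most $C\sqrt{\bar\kappa}\,(\beta+N^2)$. Hence the mixing-time exponent $8\beta\kappa_N\sqrt{L_{N_0}}$ is at most $C\beta\sqrt{\bar\kappa}(\beta+N^2)$. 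This is $O(c\beta\bar\kappa N^2)$ once $2^N\ge\beta^2e^{2\beta}$ (so $N\gtrsim\beta$). Taking $t_{N_0}=\exp(C'\beta\bar\kappa N^2)$ makes the base error $\exp(-t_{N_0}e^{-8\beta\kappa_N\sqrt{L_{N_0}}})$ super-exponentially small, so $\mathcal A(L_{N_0},t_{N_0},\delta_{N_0})$ holds with $\delta_{N_0}\le e^{-\bar\kappa N}$. Here I also need $h=\kappa_N\sqrt{L_{N_0}}\ge\log L_{N_0}$, which the choice of $N_0$ (the $(\log L)^3$ factor) is designed to ensure.

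\textbf{Iterating.} Apply Theorem~\ref{thm:inductive-step} $N-N_0\le N$ times. The times multiply: $t_N=t_{N_0}\,e^{c_3\beta\bar\kappa N(N-N_0)}\le e^{c\beta\bar\kappa N^2}$. The errors satisfy $\delta_{n+1}=c_3(\delta_n+L_N^{-c_2\bar\kappa})$. Because $c_3>1$, naive unrolling gives $c_3^{N}L_N^{-c_2\bar\kappa}$. With $L_N=2^N-1$ this equals $\exp(N(\log c_3-c_2\bar\kappa\log 2))$, which is at most $ce^{-\bar\kappa N/c}$ once $\bar\kappa\ge\bar\kappa_0$ is large relative to $\log c_3/c_2$. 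The intermediate $\delta_n'$ also contains the term $L_n^2e^{-c_2\log t_n}$. Since $t_n\ge t_{N_0}$ is $e^{\Omega(\beta\bar\kappa N^2)}$, this term is negligible, and it only feeds into $\mathcal A(L_{n+1},\cdot)$ through the constants. I would check this bookkeeping carefully: the constants in Theorem~\ref{thm:inductive-step} are $\beta$-independent, so $\delta_N$ is $\beta$-independent while $t_N$ carries exactly one factor of $\beta$.

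\textbf{Second claim and main obstacle.} For $L\in[e^{\beta/10},\beta^2e^{2\beta}]$, apply Proposition~\ref{prop:starting-point} directly to the rectangle $R_L$ of height $\kappa\sqrt L$, with $\kappa\asymp e^{-\beta}\sqrt{\bar\kappa\log L}$. The height is $O(\sqrt{\bar\kappa}\,\beta\log L)$, since $e^{-\beta}\sqrt L\le\beta$. The mixing time is therefore $e^{8\beta h}\le e^{c\beta(\log L)^2}$, because $\log L\ge\beta/10$. Averaging over boundary conditions gives $\mathcal A(L,t,e^{-te^{-c\beta(\log L)^2}})$. If the height falls below $\log L$, enlarge $h$ to $\log L$, which only changes constants. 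The main obstacle is the base step: confirming that $t_{N_0}$ and the requirement $h\ge\log\ell$ are compatible with the $N_0$ threshold, uniformly in $\beta$. This is exactly where the $\beta^2e^{2\beta}$ scale must be matched against $N^2$.
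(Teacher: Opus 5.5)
Your proposal is correct and follows the paper's route: seed at $N_0$ with Proposition~\ref{prop:starting-point}, iterate Theorem~\ref{thm:inductive-step} (times accumulate to $e^{O(\beta\bar\kappa N^2)}$, errors to $O(c_3^{N}L_N^{-c_2\bar\kappa})$, which is controlled by taking $\bar\kappa$ large), and handle $L\in[e^{\beta/10},\beta^2e^{2\beta}]$ directly with the crude $e^{8\beta h}$ bound. For that last case the paper uses an $L\times\beta^2$ rectangle, which matches the height $H_0(L)=\beta^2$ needed later and costs $e^{8\beta^3}\le e^{O(\beta(\log L)^2)}$, so your argument covers it as well.

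Your seed $t_{N_0}=e^{C'\beta\bar\kappa N^2}$ is in fact the right choice. The paper's $t_{N_0}=e^{c'\bar\kappa N^2}$ rests on writing $8\beta\kappa_N\sqrt{L_{N_0}}=8\beta e^{-\beta}\bar\kappa N^2$, which drops the factor $e^{\beta}$ in $\sqrt{L_{N_0}}$. The exponent is really of order $\beta\sqrt{\bar\kappa}\max(\beta\sqrt N,N^2)\asymp\beta\sqrt{\bar\kappa}N^2$; your intermediate $\beta+N^2$ should carry a $\sqrt N$ on $\beta$, but this is harmless since $N\gtrsim\beta$. Including $\beta$ in $t_{N_0}$ leaves the final bound $t_N\le e^{c\beta\bar\kappa N^2}$ unchanged.
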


\begin{proof}
    For the case $L\ge \beta^2 e^{2\beta}$, we apply Theorem~\ref{thm:inductive-step} as follows. Choose $t_{N_0} = e^{ c' \bar \kappa N^2}$ for some $c'$ independent of $\beta$. Applying Proposition~\ref{prop:starting-point}, since $t_{N_0} \gg e^{ 8 \beta \kappa_N \sqrt{L_{N_0}}} = e^{8 \beta e^{-\beta} \bar \kappa N^2}$, we get that $\mathcal A(L_{N_0}, t_{N_0},\delta_{N_0})$ holds for $\delta_{N_0}= e^{ - c' \bar \kappa N^2/2}$ for a $c'$ (independent of $\beta$). Theorem~\ref{thm:inductive-step} then implies that $\mathcal A(L_N, t_N,\delta_N)$ holds for $t_N = t_{N_0} 2^{(N-N_0)}e^{ c_3 \beta \bar \kappa N(N-N_0)}$ which is at most $e^{ c \beta \bar \kappa N^2}$ for some $c$ independent of $\beta$, and $\delta_{N} = (c_1 \vee c_3)^{N-N_0} (\delta_{N_0} +  L_N^{-c_2 \bar \kappa} + L_N^2 e^{ - c_2 c' \bar\kappa N^2})$. In turn, $\delta_N$ is at most the claimed $ce^{ - c^{-1}\bar\kappa N}$ for a $\beta$-independent $c$ so long as $\bar \kappa$ is sufficiently large (depending on $c_1,c_2,c_3$ but not on $\beta$). 

    For the second part of the corollary, if $L \in [e^{\beta/10}, \beta^2 e^{2\beta}]$. we have as in Proposition~\ref{prop:starting-point} that the mixing time for the $L \times \beta^2$ rectangle $\tilde R_L$ is at most $\exp(8\beta^3) \le \exp(16 \beta (\log L)^2)$. In particular, for every $L \in [e^{\beta/10},\beta^2 e^{2\beta}]$, one has $\mathcal A(L, t, e^{ - t e^{c \beta (\log L)^2}})$ for $\beta$-independent $c$. 
\end{proof}

\subsection{Proof of Theorem~\ref{thm:inductive-step}}

We now need to justify the proof of Theorem~\ref{thm:inductive-step} with the corresponding uniform-in-$\beta$ analogues: i.e., with our Propositions~\ref{prop:vertical-maximum-bound-replacement} and~\ref{prop:horizontal-splitting-beta-uniform} replacing Propositions 4.4--4.5 of~\cite{LMST}. In the $\beta$-dependent case, that implication is actually found in the older~\cite{MaTo}, and therefore we will explain those steps and how they interact with large $\beta$. These will be quite a bit less subtle than the preceding steps because the main static estimates in this section are (a) using couplings to domain enlargements to go from random boundary conditions in $\mathcal D(R)$ to deterministic plus/minus ones, where the closeness of the two measures is bounded by Peierls bounds and therefore strictly improving in $\beta$; (b) a vertical reduction of $L\times L$ boxes to $L\times \tilde O(\sqrt{L})$ rectangles, which uses upper bounds on fluctuations of horizontal interfaces (already shown to be improving in $\beta$ in Proposition~\ref{prop:vertical-maximum-bound-replacement}).

Our Theorem~\ref{thm:inductive-step}, which is the analogue of Theorem 4.2 in~\cite{LMST} is also the analogue of Theorem 3.2 of~\cite{MaTo} and that latter is whose proof we will now be adapting. In this section, we therefore adopt the notation of the proof of Theorem 3.2 in~\cite{MaTo}. We use $\mu_t^{\pm}$ to denote the law of the Ising dynamics at time $t$ started from all $+$ or $-$ respectively.

\medskip
\noindent\textbf{Proof of Theorem~\ref{thm:inductive-step}: first implication.}

\medskip
\noindent \emph{Analogue of Proof of Theorem 3.2: part (1), (i) in~\cite{MaTo}}. We begin with part (1) of Theorem~\ref{thm:inductive-step}, meaning the implication from $\mathcal A$ at scale $n$ to $\mathcal B$ at scale $n$, started from the all-plus initialization. The arguments are non-asymptotic and uniform in $\beta$ until the step bounding the four terms in ~(3.14) of~\cite{MaTo}. The first term of that equation is at most $\delta_n$ by the assumption of $\mathcal A(L_n,t_n,\delta_n)$. The second term only uses monotonicity to also reduce it to the assumption, and therefore bound it by $\delta_n$. The third and fourth terms on right of (3.14) of~\cite{MaTo} are the ones that need some care in tracking $\beta$-dependencies. As they note, the two go by the same argument and thus we present only the modification for the fourth term.

\subsubsection*{Domain enlargements and applying Proposition~\ref{prop:vertical-maximum-bound-replacement}}
We need to give the analogue of Claim 3.6 of~\cite{MaTo}. This will be where some coupling steps are utilized to change out random boundary conditions in $\mathcal D(Q_{L_n})$ to exactly $(-,-,+,-)$ boundary conditions, and then apply our $\beta$-uniform equilibrium estimate, Proposition~\ref{prop:vertical-maximum-bound-replacement}. Recall the notation of~\cite{MaTo} from their Definition 3.5 that $\tau \sim \mathbf{P}$ is a randomly drawn boundary, and $(\tau,-)$ is the boundary condition that is $\tau$ on the north, east, west sides of a rectangle and $-$ on the south. 

\begin{claim}[Analogue of Claim 3.6 of~\cite{MaTo}]\label{clm:Claim-3.6-analogue}
    For all large $\bar \kappa$ (independent of $\beta$), for all $\beta>\beta_0$, 
    \begin{align*}
        \mathbf {E} [ \pi^\tau(\sigma_x = +) - \pi^{\tau,-}(\sigma_x = +)] \le   e^{ - c \bar \kappa N} + e^{ - \beta L_n/2}
    \end{align*}
\end{claim}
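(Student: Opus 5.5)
We follow the proof of Claim~3.6 of~\cite{MaTo}, with two substitutions. First, the boundary-coupling steps are made $\beta$-monotone by using Peierls bounds. Second, the interface height estimate is replaced by Proposition~\ref{prop:vertical-maximum-bound-replacement} and Corollary~\ref{cor:short-rectangle-vertical-oscillation}.

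Here $x$ is a site in the relevant region of $Q_{L_n}$ at distance of order $\kappa_N\sqrt{L_{n+1}}$ above the south side. The quantity $\pi^\tau(\sigma_x=+)-\pi^{\tau,-}(\sigma_x=+)$ is nonnegative by monotonicity. Under the monotone coupling of $\pi^\tau$ and $\pi^{\tau,-}$, it is bounded by the probability that there is a disagreement path from the south side to $x$.

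\textbf{Step 1: replace the random boundary by deterministic minus boundary.} Since $\mathbf P\in\mathcal D(Q_{L_n})$, the marginal of $\tau$ on the north, east and west sides is stochastically below $\pi^-_{\beta,\mathbb Z^2}$. Monotonicity lets us raise $\tau$ on those sides up to the $\pi^-$ sample. We then realize $\pi^-_{\mathbb Z^2}$ as the limit of boxes with all-minus boundary, and use the enlargement coupling of Lemma~\ref{lem:stationary-plus-bc-stationary-mixing} (with roles of $\pm$ swapped). If there is no plus-$*$-path crossing a buffer of width of order $L_n$ around $Q_{L_n}$, the measure seen near $x$ equals that with deterministic $-$ boundary on the north, east and west sides. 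By Peierls, the failure probability is at most $L_n e^{-\beta L_n}\le e^{-\beta L_n/2}$, and this only improves as $\beta$ grows. This error term is the $e^{-\beta L_n/2}$ in the claim.

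\textbf{Step 2: reduce to a Dobrushin-type interface event.} With $\tau$ replaced by $-$ on three sides, we compare $(-,-,\xi,-)$ boundary conditions with $\xi$ varying on the south side. Monotonically, the worst case is $(-,-,+,-)$ against all-minus boundary. Disagreement at $x$ forces the $\pm$ interface $\lambda$ emanating from the south side to reach height $\operatorname{dist}(x,\text{south side})\gtrsim \kappa_N\sqrt{L_n}=e^{-\beta}\sqrt{\bar\kappa N}\sqrt{L_n}$. This uses the spin-flip symmetric version of the exposure argument in Lemma~\ref{lem:annulus-coupling-estimate-linear}: expose the component attached to the south side, whose outer boundary is minus. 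We can also enlarge the height of the rectangle, monotonically, to apply the proposition.

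\textbf{Step 3: bound the interface height.} There are two cases, and the main obstacle is matching the regimes of Proposition~\ref{prop:vertical-maximum-bound-replacement}.
\begin{itemize}
\item When $L_n\ge\beta e^{2\beta}$, apply Proposition~\ref{prop:vertical-maximum-bound-replacement} with $\ell=L_n$, choosing $\delta\bar\alpha=c\sqrt{\bar\kappa N}$ so that $\delta e^{-\beta}\bar\alpha\sqrt{\ell}$ is the height of $x$. Its hypothesis $\delta\bar\alpha\le\sqrt{\beta\log\ell}$ needs $\bar\kappa N\lesssim\beta\log L_n$. If this fails, use monotonicity in the height threshold to cap $\delta\bar\alpha$ at $\sqrt{\beta\log L_n}$, which still yields a bound of the required form $e^{-c\bar\kappa N}$ because $N\asymp\log L$. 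The proposition gives $L_n^{c_1}e^{-c_2\bar\kappa N}$. Since $L_n\le 2^N$, the polynomial prefactor is absorbed for $\bar\kappa$ large, giving $e^{-c\bar\kappa N}$.
\item When $L_n<\beta e^{2\beta}$ (the smallest scales, $L_{N_0}\ge\beta^2e^{2\beta}$ aside), use Corollary~\ref{cor:short-rectangle-vertical-oscillation}. Since $L_{N_0}\ge e^{2\beta}\lfloor\log L\rfloor^3$, the height $\kappa_N\sqrt{L_n}$ exceeds $\sqrt{\bar\kappa}\,\beta$, and the corollary gives $e^{-c_2\bar\kappa\beta/2}$. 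Comparing with $N\lesssim\beta$ in this range gives $\le e^{-c\bar\kappa N}$.
\end{itemize}

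Finally, taking expectation over $\tau$ and summing the two errors yields the claim.
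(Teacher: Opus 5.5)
Your skeleton matches the paper's. You dominate by an increasing interface event and swap the random $\tau$ on the north, east and west sides for deterministic minuses on an $L_n$-enlargement of $Q_{L_n}$, at Peierls cost $e^{-\beta L_n/2}$. The paper does this swap via Lemma~\ref{lem:pure-phase-decay-of-correlations} and $\pi_\infty^{(-,-)}$, which is the same estimate as your enlargement coupling. You then pass by monotonicity to $\pi^{(-,-,+,-)}_{E_{L_n}(Q_{L_n})}$ and apply Proposition~\ref{prop:vertical-maximum-bound-replacement} with $\delta\bar\alpha\asymp\sqrt{\bar\kappa N}$.

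\textbf{The gap: your fallback in Step~3 fails.} Suppose the stated side condition $\delta\bar\alpha\le\sqrt{\beta\log\ell}$ fails, i.e.\ $\bar\kappa N\gg\beta\log L_n$. Capping the threshold at $\sqrt{\beta\log L_n}$ then yields only $\ell^{c_1}e^{-c_2\beta\log L_n}$. In exactly that regime this is much larger than $e^{-c\bar\kappa N}$. Your justification ``$N\asymp\log L$'' concerns $L=L_N$, but your bound is in terms of $L_n$. And $\log L_n$ can be as small as $\log L_{N_0}=2\beta+O(\log\beta+\log\log L_N)$. At $n=N_0$ with $\log L_N\gg\beta^2$, you get roughly $e^{-2c_2\beta^2}(\log L_N)^{-3c_2\beta}$ rather than $L_N^{-c\bar\kappa}$. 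So the claim is left unproved at the small scales, which are precisely where the side condition fails.

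\textbf{The fix: no cap is needed.} In the proof of Proposition~\ref{prop:vertical-maximum-bound-replacement}, the side condition only serves to keep the slope $2h/\ell$ at the minimizing point $w_*$ within $ce^{-2\beta}$. There Lemma~\ref{lem:tau-double-prime-near-zero-angle} gives $\tau_\beta''\ge\frac18e^{2\beta}$. In your setting $h\asymp e^{-\beta}\sqrt{\bar\kappa N L_n}$ and $\ell\asymp L_n$. Using $L_n\ge L_{N_0}\ge e^{2\beta}\lfloor\log L_N\rfloor^3$, you get $h/\ell\asymp e^{-\beta}\sqrt{\bar\kappa N/L_n}\le e^{-2\beta}\sqrt{\bar\kappa N}\,\lfloor\log L_N\rfloor^{-3/2}$. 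Since $N\asymp\log L_N\ge 2\beta$, this is below $ce^{-2\beta}$ once $\beta>\beta_0$.

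This is the role of the $\lfloor\log L\rfloor^3$ in the definition of $N_0$. It is what licenses the paper's direct application (with $\ell=2L_n$), giving $(2L_n)^{c_1}e^{-c\bar\kappa N}\le L_N^{-c'\bar\kappa}$. Your second bullet is vacuous, since $n\ge N_0$ forces $L_n\ge\beta^2e^{2\beta}$. Its ``$N\lesssim\beta$'' also repeats the $L_n$ versus $L_N$ conflation.

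\textbf{Order of Steps~1 and~2.} Raising $\tau$ to a $\pi^-_{\mathbb Z^2}$ sample is valid only for functionals that are increasing in $\tau$. The quantity $\pi^\tau(\sigma_x=+)-\pi^{\tau,-}(\sigma_x=+)$, i.e.\ the disagreement probability at $x$, is a difference of two increasing functions and need not be monotone. So first, for fixed $\tau$, use your exposure argument to dominate it by $\pi^{\tau,+}(\text{the plus cluster of the south side reaches } x)$. This is the quantity $\pi^{\tau,+}(\Gamma^c)$ of \cite{MaTo}, and it is increasing in $\tau$. Only then raise $\tau$ and perform the Peierls swap, as the paper does.
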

\begin{proof}
    The inequalities up to (3.18) in~\cite{MaTo} are all by monotonicity and have no $\beta$-dependence. Now recall that we use the notation $E_{L_n}(Q_{L_n})$ for the enlargement (in all but the south direction) of $Q_{L_n}$ by $L_n$. If, as there, $\pi_\infty^{(-,-)}$ is $\pi_\infty^-$ conditioned on minuses on the north, east, west boundaries of $E_{L_n} (Q_{L_n})$, then we use that the exponential decay in the minus phase is bounded by a Peierls bound (and therefore is non-asymptotic and improving with $\beta$) as follows. 
    
    \begin{lem}\label{lem:pure-phase-decay-of-correlations}
    Let $\pi_\infty^-$ be the infinite volume minus measure, and fix some finite sets $U,V$. For $\beta>\beta_0$, we have 
    \begin{align*}
        \|\pi_\infty^- (\sigma(V) \in \cdot) - \pi_\infty^-(\sigma(V) \in \cdot \mid \sigma(U)\equiv -1)\|_{\tv}\le |U| e^{ - \beta d(U,V)}\,.
    \end{align*}
\end{lem}
\begin{proof}
    This is a consequence of the standard Peierls bound, together with a monotone coupling. Namely, the total-variation distance is bounded by the probability that under $\pi_\infty^-$, there is a $*$-connected $+1$-path connecting $U$ to $V$, whose probability is bounded by the right-hand side. 
\end{proof}

 Therefore, we get the following refinement of (3.18): for $\beta>\beta_0$, 
    \begin{align*}
        \pi_{\infty}^-[\pi^{\tau,+}(\Gamma^c)] \le \pi_\infty^{(-,-)}[\pi^{\tau,+}(\Gamma^c)] + e^{- \beta {L_n}/2}\,.
    \end{align*}
    The next steps use monotonicity to add back the $+$ boundary conditions on the south of $E_{L_n}(Q_{L_n})$ to reduce the estimate to $\pi^{(-,-,+,-)}_{E_{L_n}(Q_{L_n})}$. Their estimate (3.20) under $\pi^{(-,-,+,-)}_{E_{L_n}(Q_{L_n})}$ is exactly what was bounded in a uniform manner in Proposition~\ref{prop:vertical-maximum-bound-replacement}. Namely, applying Proposition~\ref{prop:vertical-maximum-bound-replacement} with $\ell = 2L_n$ and  $\delta \bar \alpha \sqrt{L_n} = \sqrt{\bar \kappa N} (\sqrt{L_{n+1}} - \sqrt{L_n})$, where we note that $\delta \bar \alpha \ge \frac{1}{2} \sqrt{\bar \kappa N} \ge \frac{1}{2} \sqrt{\bar \kappa} \sqrt{\log L_N}$ to get 
    \begin{align*}
        \pi_{E_L(Q_L)}^{(-,-,+,-)}(\gamma \text{ reaches height of South border of $A$}) \le (2L_n)^{c_1}e^{- c_2 \bar \kappa \log {L_N}} \le (L_N)^{-c_2'\bar \kappa}
    \end{align*}
    with the last inequality holding so long as $\bar\kappa$ is sufficiently large (independent of $\beta$). 
\end{proof}

Taking a union bound over $x$ in Claim~\ref{clm:Claim-3.6-analogue}, and the bounds on the other terms in (3.14) of~\cite{MaTo}, we will have obtained the desired for some other $c_2'$ independent of $\beta$:
\begin{align*}
    \mathbf{E}\|\mu_{2t}^+ - \pi^\tau\|\le 2\delta + 2L_N^{- c_2' \bar \kappa}\,.
\end{align*}

\medskip
\noindent \emph{Analogue of Proof of Theorem 3.2: part (1), (ii) in~\cite{MaTo}}. This corresponds to the minus initialization. As with the plus initialization, the censoring scheme is the same as in~\cite{MaTo}, all monotonicity inequalities are unchanged, and we arrive at their (3.22). The first term in that expression is at most $\delta$ by the assumption and the third term is at most $L_N^{- c_2'\bar\kappa}$ by the same argument as given above for the plus initialization. For the second term, the important quantity to consider is 
\begin{align*}
    \sum_{x\in A} \pi^\tau [\nu_2^{\eta_{A^c}}(\sigma_x =-) - \pi^\tau (\sigma_x =-)]
\end{align*}
Given $x\in A$ and $r\in \mathbb N$, let $K_r$ be the intersection of $A$ with a square of side length $2r+1$ (where our choice of $r$ will possibly be $\beta$ dependent now). By replacing their use of the crude mixing time bound (3.1) with the $\beta$-dependent form of it in the first part of Proposition~\ref{prop:starting-point}, we deduce that the following analogue of their equation preceding Claim 3.8 holds: 
\begin{align*}
    \nu_2^{\eta_{A^c}}(\sigma_x = -) - \pi^\tau(\sigma_x = -) \le e^{ - t e^{ - 8 \beta r}} + [\pi_r^{\tau,\eta_{A^c}}(\sigma_x = -) - \pi^\tau(\sigma_x = -)]
\end{align*}
where $\pi_r^{\tau, \eta_{A^c}}$ is the measure on $K_r$ with its induced boundary conditions. 

\begin{claim}[Analogue of Claim 3.8 of~\cite{MaTo}]\label{clm:Claim-3.8-analogue}
    For $\beta>\beta_0$ one has 
    \begin{align*}
        \mathbf{E}[\pi^\tau[\pi_r^{\tau,\eta_{A^c}}(\sigma_x =-)] - \pi^\tau(\sigma_x=-)] \le e^{ - \beta r/2 } + 2L_N^{- c_2'\bar\kappa}\,.
    \end{align*}
\end{claim}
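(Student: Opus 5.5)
The plan is to mirror the proof of Claim 3.8 in~\cite{MaTo}, keeping the same monotone couplings and tracking only the two places where $\beta$ enters. Write $\pi_r^{\tau,\eta_{A^c}}$ for the measure on $K_r$ whose boundary is inherited from $\tau$ on $\partial R\cap\partial K_r$ and from $\eta_{A^c}$, sampled from $\pi^\tau$, on the rest. The difference $\pi^\tau[\pi_r^{\tau,\eta_{A^c}}(\sigma_x=-)]-\pi^\tau(\sigma_x=-)$ is not of fixed sign in general, so the first step is to make it one. We dominate the boundary of $K_r$ from below by a more minus boundary, by monotonicity. Outside $A$ (in the region where the interface is expected to be), the boundary is replaced by all-minus. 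On the parts of $\partial K_r$ lying in the north, east and west boundaries of $R$, we keep $\tau$, which is stochastically below $\pi^-_{\beta,\Z^2}$. This reduces matters to bounding the probability that $\sigma_x$ feels the inner part of $\partial K_r$.

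Next we isolate two bad events, mirroring \cite{MaTo}. The first is that, under $\pi^\tau$ with $\tau\sim\mathbf P$, the $\pm$ interface reaches the region $A$ at all. Since $A$ sits above height $\kappa_N\sqrt{L_{n+1}}$ relative to the south boundary, this is exactly the event already controlled in Claim~\ref{clm:Claim-3.6-analogue}. There we used domain enlargement, Lemma~\ref{lem:pure-phase-decay-of-correlations}, and Proposition~\ref{prop:vertical-maximum-bound-replacement}, and those steps give a contribution $L_N^{-c_2'\bar\kappa}$; once for the outer and once for the inner measure gives the factor $2$. Off this event, $A$ is separated from the plus south boundary by a minus $*$-circuit. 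By the domain Markov property and a monotone coupling, both $\pi^\tau$ and $\pi_r^{\tau,\eta_{A^c}}$ restricted near $x$ are then sandwiched between minus-phase-type measures. One has minus boundary at distance $\ge r$ from $x$; the other has boundary conditions dominated by $\pi^-_\infty$.

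The final step is to compare the two sandwiching measures using Lemma~\ref{lem:pure-phase-decay-of-correlations} (or a direct Peierls bound, as in Lemma~\ref{lem:stationary-plus-bc-stationary-mixing}). The two spins at $x$ disagree only if there is a $*$-connected plus path from $\partial K_r$ to $x$ in the minus phase. Its probability is at most $\sum_{k\ge r}C^k e^{-\beta k}\le |\partial K_r|e^{-(\beta-C)r}\le e^{-\beta r/2}$ for $\beta>\beta_0$, uniformly in $r\ge1$. This constant only improves in $\beta$.

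The main obstacle is the first step: justifying that all boundary-condition replacements are monotone in the correct direction. The inequality needs to be one-sided, and $\pi_r$ must be compared with $\pi^\tau$ by moving the boundary on $\partial K_r\cap A$ down to minus. If this fails, I would instead take the expected absolute difference and bound both directions symmetrically by the same interface event plus Peierls estimate, which costs only a factor of $2$.
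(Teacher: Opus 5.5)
Your route is the paper's. You reduce to an increasing event under $\pi^\tau$. You replace the random $\tau$ by $(-,-,+,-)$ on $E_{L_n}(Q_{L_n})$ using Lemma~\ref{lem:pure-phase-decay-of-correlations}, at cost $e^{-\beta L_n/2}$. You remove the plus floor with Proposition~\ref{prop:vertical-maximum-bound-replacement}, at cost $L_N^{-c_2'\bar\kappa}$. You finish with the minus-phase Peierls bound $\pi^-_\infty(\Gamma^c)\le e^{-\beta r/2}$.

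\emph{The gap is in your first step, and it comes from misreading $\pi_r^{\tau,\eta_{A^c}}$.} That measure lives on $K_r=A\cap(\text{square of side }2r+1\text{ around }x)$. It has boundary condition $(\tau,\eta_{A^c})$ on $\partial K_r\cap\partial A$, and all-minus on the sides of the square that lie inside $A$; the minus b.c.\ there is exactly why the minus-started dynamics in $A$ dominates it. With this definition, DLR gives $\pi^\tau(\sigma_x=-)=\pi^\tau[\pi^{\tau,\eta}_{K_r}(\sigma_x=-)]$. The two measures on $K_r$ then differ only on those inner sides, which are at distance $\ge r$ from $x$. So the difference is nonnegative, contrary to your remark about sign. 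By the monotone coupling it is at most $\mathbf E\,\pi^\tau(\Gamma^c)$, where $\Gamma^c$ is the increasing event of a plus path in $K_r$ from $x$ to the inner sides. No further boundary replacement is needed.

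Your proposal instead replaces the part of $\partial K_r$ ``outside $A$'' (the $\eta_{A^c}$ part) by all-minus. That is not harmless. When $x$ is adjacent to the south edge of $A$, it changes the boundary at distance $1$ from $x$. A plus domino straddling that edge then already makes the resulting upper bound of order $e^{-c\beta}$, uniformly in $r$ and $L_N$. That is far larger than $e^{-\beta r/2}+2L_N^{-c_2'\bar\kappa}$ in the regime where the claim is used: $r\asymp\beta^{-1}\log t$ large, $L_N\ge\beta^2e^{2\beta}$, $\bar\kappa$ large. Summed over the $\asymp L_n$ sites of that row, it would also destroy the total-variation bound the claim feeds into. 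The replacement also contradicts your own Step 3, which counts only plus paths of length $k\ge r$. Your last paragraph names the right comparison, but that comparison is the definition of $\pi_r$, not an extra monotone step, and the $\eta_{A^c}$ boundary must be kept.

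A smaller bookkeeping point concerns the factor $2$. The interface event only needs to be controlled once, for the single sample $\sigma\sim\pi^\tau$ on which $\Gamma^c$ is evaluated. In the paper, the factor $2$ in front of $L_N^{-c_2'\bar\kappa}$ absorbs the $e^{-\beta L_n/2}$ from the random-to-deterministic boundary replacement, which is legitimate since $L_n\ge e^{2\beta}\lfloor\log L_N\rfloor^3$. It does not come from a second application of the interface bound for $\pi_r$.
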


Assuming the claim, if we choose $r = \frac{1}{8\beta} (\log t - \log \log t)$, we find that 
\begin{align*}
    \mathbf E[\pi^\tau[\|\nu_2^{\eta_{A^c}} - \pi_A^{\tau,\eta_{A^c}}\|]] \le L_n^2(e^{ - t e^{- 8\beta r}}+ e^{ - \beta r/2}) + 2L_N^{-c_2' \bar \kappa} \le L_n^2e^{ - \frac{1}{20}\log t} + 2L_N^{- c_2 ' \bar \kappa}
\end{align*}
as long as $t$ is at least a large ($\beta$-independent) constant, which it necessarily will be. 

\begin{proof}[\textbf{\emph{Proof of Claim~\ref{clm:Claim-3.8-analogue}}}]
    Again, the monotonicity steps remain intact as they are non-quantitative. Following the proof in~\cite{MaTo}, when replacing $\mathbf{E}[\pi^\tau(\Gamma^c)]$ by $\pi_{E_L(Q_L)}^{(-,-,+,-)}(\Gamma^c)$ in the first step, the error is in fact $e^{-\beta {L_n}/2}$ per Lemma~\ref{lem:pure-phase-decay-of-correlations} as in the proof of Claim~\ref{clm:Claim-3.6-analogue}. When replacing $\pi_{E_{L_n}(Q_{L_n})}^{(-,-,+,-)}$ with $\pi_{E_{L_n}(Q_{L_n})}^-$, by the argument of the next step in Claim~\ref{clm:Claim-3.6-analogue}, the error is at most  $L_N^{-c_2' \bar \kappa}$. Finally, again utilizing Lemma~\ref{lem:pure-phase-decay-of-correlations} for the minus-phase decay of correlations, 
the last inequality there is replaced by $\pi_{E_{L_n}(Q_{L_n})}^-(\Gamma^c) \le\pi_\infty(\Gamma^c) \le e^{ - \beta r/2}$. 
\end{proof}

\medskip
\noindent\textbf{Proof of Theorem~\ref{thm:inductive-step}: second implication.}
We begin with modifying the boundary conditions on the stretch $\Delta$ along the base of the rectangle $R_{L_n}$. This boundary condition modification introduces some $\beta$-dependencies that the below tracks. After presenting these equivalents of Section~2 material of~\cite{MaTo}, we return to conclude the proof of the second implication of Theorem~\ref{thm:inductive-step}.

\subsubsection*{Boundary condition modifications}

Following the presentation of~\cite{MaTo} chronologically and verifying the $\beta$-dependencies therein, we begin with the following bounds comparing the mixing times with modified boundary conditions. 

\begin{lem}[E.g., Lemma 2.8 of~\cite{MaTo}]
    Starting from a domain $\Lambda$ with boundary conditions $\tau$, if we change the boundary conditions on a subset $\Delta \subset \partial \Lambda$ of size $|\Delta|$ to get boundary conditions $\tau^{\Delta}$, then 
    \begin{align*}
        \tmix(\Lambda, \tau) \le  4\beta |E(\Lambda)|\log 2\cdot e^{4\beta |\Delta|}\cdot  \tmix(\Lambda, \tau^\Delta)
    \end{align*}
\end{lem}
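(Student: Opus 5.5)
The plan is to compare spectral gaps through the Dirichlet form, then convert to mixing times with the standard relation $\tmix \le \mathrm{gap}^{-1}\log(2/\pi_{\min})$.

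Write $\pi = \pi_\Lambda^\tau$ and $\pi' = \pi_\Lambda^{\tau^\Delta}$ for the two Gibbs measures on $\{\pm1\}^\Lambda$. The Hamiltonians differ only through the interaction terms between $\Delta$ and its neighbours in $\Lambda$. There are at most $|\Delta|$ such edges (one per boundary site in the lattice setting; in general, the count of boundary edges touching $\Delta$), and each contributes at most $2\beta$ in absolute value. Hence the unnormalised weights satisfy $e^{-2\beta|\Delta|}\le w(\sigma)/w'(\sigma)\le e^{2\beta|\Delta|}$. Since the partition functions inherit the same bounds, the density ratio satisfies
\[
e^{-4\beta|\Delta|}\le \pi(\sigma)/\pi'(\sigma)\le e^{4\beta|\Delta|}.
\]

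Next compare the Dirichlet forms. For heat-bath Glauber dynamics, $\mathcal E(f)=\frac12\sum_{\sigma}\sum_{v}\pi(\sigma)\,\mathrm{Var}_{\pi(\cdot\mid\sigma_{\Lambda\setminus v})}(f)$. Each single-site conditional variance involves a conditional law that changes by a bounded factor only at sites adjacent to $\Delta$. A simpler route is to write $\mathcal E(f)=\sum_{\sigma\sim\sigma'}\pi(\sigma)P(\sigma,\sigma')(f(\sigma)-f(\sigma'))^2/2$ and bound the edge weights $\pi(\sigma)P(\sigma,\sigma')$ against their $\pi'$ counterparts. The ratio of edge weights is at most $e^{4\beta|\Delta|}$ up to the local ratio, which is absorbed similarly, and variances change by at most a factor $e^{4\beta|\Delta|}$. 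This gives
\[
\mathrm{gap}(\Lambda,\tau)\ge e^{-c\beta|\Delta|}\,\mathrm{gap}(\Lambda,\tau^\Delta).
\]
I will need to be careful to get the exponent $4\beta|\Delta|$ rather than $8\beta|\Delta|$. One way is to split the factor, using $e^{2\beta|\Delta|}$ from the measure ratio in the Dirichlet form and $e^{2\beta|\Delta|}$ in the variance, taking the unnormalised weight comparison before normalising. Because normalisations cancel in $\mathcal E/\mathrm{Var}$, the constants come only from $w/w'$.

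Finally, convert back to mixing time using $\tmix(\Lambda,\tau)\le \mathrm{gap}^{-1}\log(4/\pi_{\min})$ and $\tmix\ge \mathrm{gap}^{-1}$ for the $\tau^\Delta$ chain, up to an absolute factor. The ground-state comparison gives $\log(1/\pi_{\min})\le 4\beta|E(\Lambda)|+|\Lambda|\log 2$, so $\log(1/\pi_{\min})\le 4\beta|E(\Lambda)|\log 2$ up to the stated constants when $\beta$ is large. Combining the three inequalities yields the claimed prefactor $4\beta|E(\Lambda)|\log 2\cdot e^{4\beta|\Delta|}$.

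The main obstacle is not conceptual but lies in the bookkeeping of the exponent, keeping it at $4\beta|\Delta|$, and of the $\pi_{\min}$ factor. If the exact constant proves awkward, a constant multiple in the exponent is harmless for every later use, since the lemma is applied only with $|\Delta| = s\bar\alpha^2=O(\log L)$.
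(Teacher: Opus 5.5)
Your proposal is correct and matches the paper's argument. The paper simply imports Lemma~2.8 of \cite{MaTo}, bounding its comparison factor by $e^{4\beta|\Delta|}$ and its prefactor by $\log(2/\min_\sigma\pi(\sigma))$; your spectral-gap comparison followed by the gap-to-mixing-time conversion is the content of that lemma, and your unnormalised-weight trick ($e^{-2\beta|\Delta|}$ in the Dirichlet form, $e^{2\beta|\Delta|}$ in the variance) is what gives exponent $4\beta|\Delta|$ rather than $8\beta|\Delta|$.

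The only slip is in the final bookkeeping. Each unnormalised weight lies in $[e^{-\beta|E(\Lambda)|},e^{\beta|E(\Lambda)|}]$, so $\log(1/\pi_{\min})\le 2\beta|E(\Lambda)|+|\Lambda|\log 2$. It is this bound, not your $4\beta|E(\Lambda)|+|\Lambda|\log 2$ (which already exceeds $4\beta|E(\Lambda)|\log 2$), that fits under the stated prefactor for large $\beta$.
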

\begin{proof}
    In their Lemma 2.8, the factor $M$ is bounded by $e^{4\beta |\Delta|}$ if $\tau$ and $\tau^\Delta$ differ on $|\Delta|$ many vertices. The constant $c$ is bounded by $\log (2/\min_{\sigma}\pi(\sigma))\le 4\beta|E(\Lambda)| \log2$. 
\end{proof}

As in~\cite{MaTo}, let $d^\pm(t) = \|\mu_t^\pm -\pi^\tau\|$ and let $\gamma(t) = \max\{d^+(t),d^-(t)\}$, and use $\Delta$ superscripts to denote the same things after the boundary modification to $\Delta$. Using the above explicit constants and following the next steps identically, we arrive at the following. 

\begin{lem}[Lemma 2.9 of~\cite{MaTo}]
    For distribution $\mathbf{P}$ on boundary conditions on $\partial \Lambda$, we have, 
    \begin{align*}
        \mathbf{E}[\gamma(t)] \le e^{ - e^{\beta |\Delta|}} + 8 \mathbf{E}^\Delta\Big[\gamma \Big(\frac{t}{4\beta |E(\Lambda)| \log2 e^{4\beta|\Delta|}}\Big)\Big]\,.
    \end{align*}
\end{lem}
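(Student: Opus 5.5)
The plan is to run the standard argument of Lemma 2.9 of~\cite{MaTo} (itself a Peres--Winkler / comparison type argument), using the explicit comparison constant from the preceding lemma (Lemma 2.8 analogue). The key structural fact is that both $\pi^\tau$ and $\pi^{\tau^\Delta}$ have Radon--Nikodym derivatives bounded by $e^{\pm 4\beta|\Delta|}$ relative to one another, and the Dirichlet forms are comparable with the same constant. So spectral gaps satisfy $\text{gap}(\Lambda,\tau)\ge e^{-4\beta|\Delta|}\text{gap}(\Lambda,\tau^\Delta)$ up to the factor already recorded.

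\emph{Step 1: from $\gamma^\Delta$ to a gap bound.} For each realization $\tau$, I convert the value $\gamma^\Delta(s)$ at $s = t/(4\beta|E(\Lambda)|\log 2\, e^{4\beta|\Delta|})$ into information on the relaxation of the $\tau^\Delta$ chain. The event $\{\gamma^\Delta(s)\le 1/8\}$ (say) forces $\tmix(\Lambda,\tau^\Delta)\le s$ by monotonicity: the chains from $\pm$ sandwich all others, so $\max_{x_0}\|\mu^{x_0}_s-\pi\|\le$ a constant times $\gamma^\Delta(s)$ summed over sites. Here I would follow \cite{MaTo} and use the monotone-coupling bound, where $\mathbf P(\text{coupling fails})\le \sum_x(\mu^+_s-\mu^-_s)(\sigma_x=+)\le 2\gamma$ per site in their normalization.

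\emph{Step 2: transfer to $\tau$.} On that event, the previous lemma gives $\tmix(\Lambda,\tau)\le 4\beta|E(\Lambda)|\log 2\,e^{4\beta|\Delta|}\,\tmix(\Lambda,\tau^\Delta)\le t/\text{const}$. Submultiplicativity then gives $\gamma(t)\le e^{-\text{(ratio)}}$. I would choose the constants so that this error is at most $e^{-e^{\beta|\Delta|}}$, which is where the generous first term comes from. On the complementary event I bound $\gamma(t)\le 1\le 8\gamma^\Delta(s)$ via Markov's inequality, since on that event $\gamma^\Delta(s)>1/8$.

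\emph{Step 3: average over $\tau\sim\mathbf P$.} Taking $\mathbf E$ yields $\mathbf E[\gamma(t)]\le e^{-e^{\beta|\Delta|}}+8\,\mathbf E^\Delta[\gamma^\Delta(s)]$. Here $\mathbf E^\Delta$ is the law of $\tau^\Delta$, a deterministic function of $\tau$, so the averaging is trivial.

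The main obstacle I expect is Step 2's bookkeeping. The issue is that the mixing-time comparison involves $\log(1/\min\pi)$, and one must ensure the resulting time $t$ is large enough relative to $\tmix(\tau)$ to produce the doubly exponential error $e^{-e^{\beta|\Delta|}}$ rather than a mere constant. I would try to extract the comparison at the level of spectral gaps, i.e., $L^2$ decay $e^{-t\,\text{gap}}$ times $\min\pi^{-1/2}$. That route gives the factor $4\beta|E(\Lambda)|\log 2$ and leaves the extra $e^{4\beta|\Delta|}$ slack to absorb into the exponent.
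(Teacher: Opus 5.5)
Your outline has a genuine gap in the ``good event'' case, exactly at the obstacle you flag.

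\emph{Step 1 does not follow.} The implication $\gamma^\Delta(s)\le 1/8\Rightarrow\tmix(\Lambda,\tau^\Delta)\le s$ is not justified. The monotone coupling bound you quote only gives $\max_{x_0}\|\mu^{x_0}_s-\pi^{\tau^\Delta}\|\le 2|\Lambda|\,\gamma^\Delta(s)$. You would therefore need a threshold of order $1/|\Lambda|$, and the Markov step would then cost a factor of order $|\Lambda|$ rather than $8$.

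\emph{Step 2 gives only a constant.} Even granting $\tmix(\Lambda,\tau^\Delta)\le s$, the preceding lemma yields $\tmix(\Lambda,\tau)\le 4\beta|E(\Lambda)|\log 2\, e^{4\beta|\Delta|}s=t$ exactly, not $t/\mathrm{const}$. The time rescaling in the statement is precisely the comparison constant. So you get only $\gamma(t)\le d(t)\le$ the mixing threshold, with no room for submultiplicativity. The constants are fixed by the statement and cannot be ``chosen'' to produce $e^{-e^{\beta|\Delta|}}$.

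Your closing idea also misplaces the slack. In a spectral-gap route, $e^{4\beta|\Delta|}$ is consumed by the gap comparison $\mathrm{gap}(\tau)\ge e^{-4\beta|\Delta|}\mathrm{gap}(\tau^\Delta)$. The factor $4\beta|E(\Lambda)|\log 2$, which dominates $\log(2/\min\pi^\tau)$, is consumed by the $L^2$ prefactor $(\min\pi^\tau)^{-1/2}$. Nothing is left over.

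\emph{The missing ingredient.} You need to turn smallness of $\gamma^\Delta(s)$, which concerns only the $\pm$ initializations, into a lower bound on $\mathrm{gap}(\tau^\Delta)$ with no $|\Lambda|$ loss. For a monotone reversible chain, the eigenfunction $f$ for the gap can be taken increasing. The reason is that the semigroup preserves the closed, pointed, generating cone of increasing mean-zero functions, so finite-dimensional Krein--Rutman applies. Then $\|f\|_\infty=\max_\pm|f(\pm)|$, and
$e^{-s\,\mathrm{gap}}\|f\|_\infty=\max_\pm|\mu^\pm_s(f)-\pi(f)|\le 2\|f\|_\infty\gamma(s)$.
That is, $e^{-s\,\mathrm{gap}(\tau^\Delta)}\le 2\gamma^\Delta(s)$.

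Combine this with the gap comparison and $\gamma(t)\le\frac12(\min\pi^\tau)^{-1/2}e^{-t\,\mathrm{gap}(\tau)}$. This gives $\gamma(t)\le\frac12(\min\pi^\tau)^{-1/2}(2\gamma^\Delta(s))^{c}$ with $c=4\beta|E(\Lambda)|\log 2\ge\log(2/\min\pi^\tau)$. On $\{\gamma^\Delta(s)\le 1/8\}$ the exponent $c$ kills the prefactor, leaving $\gamma(t)\le 2\gamma^\Delta(s)$. On the complement, $\gamma(t)\le 1\le 8\gamma^\Delta(s)$, as you wrote.

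Hence $\gamma(t)\le 8\gamma^\Delta(s)$ for every $\tau$, and averaging gives the lemma; the additive term is not even needed. The paper itself gives no argument beyond invoking Lemma~2.9 of~\cite{MaTo} with the explicit constants of the preceding lemma. The route above is one way to fill that in.
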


We then arrive at the following corollary describing how the events $\mathcal A$ and $\mathcal B$ change under boundary conditions modifications on $\Delta \subset \partial R_L$.  

\begin{cor}\label{cor:boundary-modification}
    Let $R_{L_n}$ be as earlier, i.e., $L_n \times e^{ - \beta} \sqrt{\bar \kappa N} \sqrt{L_n}$, and let $|\Delta| = s\bar \kappa N$. Assume that 
    \begin{align*}
        \mathbf{E}^\Delta[\|\mu_t^\pm - \pi^\tau\|] \le \delta \qquad \forall \mathbf{P}\in \mathcal D(R_{L_n})\,.
    \end{align*}
    Then $\mathcal A(L_n,\tilde {t}_n,\tilde \delta _n)$ holds with $\tilde \delta_n = 8\delta_n + e^{ - e^{8\beta s\bar \kappa N}}$ and $\tilde t_n = 4\beta |E(\Lambda)|  \log 2 \cdot t_n \cdot  e^{4\beta s \bar \kappa N}$. 
    Analogously, $\mathcal A(L_n,t_n,\delta_n)$ implies $\mathbf{E}^\Delta[\|\mu_{t_n}^\pm - \pi^\tau\|]\le \tilde \delta_n$, and similar statements hold if we replace $R_{L_n}$ by $Q_{L_n}$ and $\mathcal A(L_n,\tilde t_n,\tilde \delta_n)$ by $\mathcal B(L_n,\tilde t_n',\tilde \delta_n')$. 
\end{cor}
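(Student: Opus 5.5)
This corollary follows by plugging the explicit constants into the preceding Lemma (the analogue of Lemma 2.9 of~\cite{MaTo}) and averaging over boundary conditions. I treat the two directions separately. Each direction uses the fact that modifying the boundary on $\Delta$ changes the stationary measure and the dynamics by a bounded Radon--Nikodym factor $e^{O(\beta|\Delta|)}$.

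\emph{First direction.} Assume $\mathbf{E}^\Delta[\|\mu_t^\pm-\pi^\tau\|]\le \delta_n$ for all $\mathbf P\in\mathcal D(R_{L_n})$, where $t=t_n$. Then $\mathbf{E}^\Delta[\gamma(t_n)]\le 2\delta_n$, since $\gamma=\max(d^+,d^-)\le d^++d^-$. Apply the preceding Lemma at time $\tilde t_n = 4\beta|E(\Lambda)|\log 2\cdot e^{4\beta|\Delta|}\, t_n$, so that its rescaled argument is exactly $t_n$. The Lemma gives
\begin{align*}
\mathbf{E}[\gamma(\tilde t_n)] \le e^{-e^{\beta|\Delta|}} + 8\,\mathbf{E}^\Delta[\gamma(t_n)]\,.
\end{align*}
This must then be converted back to the form $\mathbf{E}\|\mu^\pm_{\tilde t_n}-\pi^\tau\|$. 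Substituting $|\Delta| = s\bar\kappa N$ yields $\tilde t_n$ exactly as stated. It yields $\tilde\delta_n$ of the form $8\delta_n$ (up to the harmless factor of $2$ from $\gamma$ versus $d^\pm$, which I absorb by working directly with $\mathbf{E}^\Delta\gamma\le\delta$ as in~\cite{MaTo}) plus a doubly exponentially small term. The stated $e^{-e^{8\beta s\bar\kappa N}}$ should be read as whatever doubly exponential term the Lemma provides. The key point is that it is $\beta$-uniformly negligible compared to the $L_N^{-c_2\bar\kappa}$ errors in Theorem~\ref{thm:inductive-step}. The one thing to verify is that $\mathbf P\in\mathcal D(R_{L_n})$ is preserved under the $\Delta$-modification. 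This holds because the modification only alters a segment of the south boundary, which only enters through the hypothesis on $\mathbf{E}^\Delta$.

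\emph{Converse direction.} Here I would show that $\mathcal A(L_n,t_n,\delta_n)$ implies $\mathbf{E}^\Delta[\|\mu_{t_n}^\pm-\pi^\tau\|]\le\tilde\delta_n$. The argument is symmetric: the comparison in the Lemma analogous to Lemma 2.8 of~\cite{MaTo} is symmetric in $\tau$ and $\tau^\Delta$, because the Radon--Nikodym bound $M\le e^{4\beta|\Delta|}$ holds in both directions. Rerunning the proof of Lemma 2.9 with the roles of $\tau$ and $\tau^\Delta$ exchanged gives the same inequality with $\mathbf E$ and $\mathbf E^\Delta$ swapped. Finally, the $Q_{L_n}$ and $\mathcal B$ versions follow verbatim, since nothing in the argument used the height of the rectangle beyond $|E(\Lambda)|$.

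The main obstacle is bookkeeping rather than any new idea. Each constant must enter only through $e^{O(\beta|\Delta|)}$ and polynomial factors in $|E(\Lambda)|$, and $|E(\Lambda)|\le L_n^2$ since the rectangle height is at most $L_n$. Then, in the recursion of Theorem~\ref{thm:inductive-step}, the time blow-up $\tilde t_n/t_n\le e^{c\beta\bar\kappa N}$ is absorbed into the factor $e^{c_3\beta\bar\kappa N}$ for a $\beta$-independent $c_3$.
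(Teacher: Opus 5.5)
Your proposal is correct and takes the same route as the paper. The paper obtains the corollary by applying the preceding analogue of Lemma~2.9 of~\cite{MaTo} pointwise in $\tau$ at time $\tilde t_n$ (so the rescaled time is exactly $t_n$), averaging over $\mathbf P$, and using the symmetric Lemma~2.8 comparison for the converse and for the $Q_{L_n}$ and $\mathcal B$ versions. As you partly note, the stated constants do not come out verbatim, but this is harmless once absorbed into the $\beta$-independent $c_3$ of Theorem~\ref{thm:inductive-step}:
\begin{itemize}
\item a black-box use gives $16\delta_n$, since $\gamma\le d^++d^-$;
\item the additive term is only $e^{-e^{\beta s\bar\kappa N}}$;
\item the converse holds at time $\tilde t_n$ rather than $t_n$;
\item the modified law is not in $\mathcal D(R_{L_n})$, nor does it need to be, since the comparison is pointwise in $\tau$.
\end{itemize}
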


With the above in hand, we return to the task of proving the second implication of Theorem~\ref{thm:inductive-step}. We begin by observing that by Corollary~\ref{cor:boundary-modification}, up to an additional factor of $e^{5 \beta s \bar \kappa N}$  in the time (assuming $s,\bar\kappa$ are sufficiently large $\beta$-independent constants), which gets absorbed into the choice of $c_3$ in $t_{n+1}$, we can modify the boundary conditions on $\Delta$ of size $s \bar \kappa N$. This also incurs a multiplicative factor of $8$ and an additive $e^{ - 8 \beta s \bar \kappa N}$ (which are naturally absorbed by the $c_3$ in the definition of $\delta_{n+1}$.  In other words, it is sufficient to bound $\mathbf{E}^\Delta[ \|\mu_{2t_{n+1}}^\pm - \pi^\tau\|]$ with some $\beta$-independent choice of $c_3$, in order to have proved the bound for $\mathbf{E}[\|\mu_{2t_{n+1}}^\pm - \pi^\tau\|] \le \delta_{n+1}$ with a different (still $\beta$-independent) choice of $c_3$.  

\medskip
\noindent \emph{Analogue of Proof of Theorem 3.2: part (2), (i) in~\cite{MaTo}}. In this case, the dynamics on $R_{L_{n+1}}$ begins from all-plus. The choice of the censoring scheme, and the monotonicity inequalities being unchanged, we arrive at equation (3.29) of~\cite{MaTo}. For the first term, since $t_{n+1}\ge 2\tilde{t}_n$ where $\tilde{t}_n$ is defined as in Corollary~\ref{cor:boundary-modification}, the first term in (3.29) is at most $\tilde \delta_{n} = 8 \delta_n + e^{ - e^{8\beta s \bar\kappa N}}$. For the second term, again the only quantitative (i.e., not monotonicity or exact equality/inequality estimate used) is their Corollary~2.10, which is replaced by our Corollary~\ref{cor:boundary-modification} to give that the second term is also at most $2\tilde \delta_n$. 

For the more delicate third and fourth terms in (3.29) of~\cite{MaTo}, again the first several steps are only using monotonicity. Since they are essentially the same argument, we focus on the third one (as in~\cite{MaTo}). The enlargement $\bar A$ is defined similarly to in that paper, meaning it doubles the height of $A$. The exponential decay of correlations in $\pi_\infty^-$ is bounded by a Peierls argument as in Lemma~\ref{lem:pure-phase-decay-of-correlations}, and thus (3.32) is replaced by 
\begin{align*}
    \mathbf{E}^\Delta[\pi_A^{\tau,+}(\Gamma^c)] \le e^{ - \beta e^{-\beta} \sqrt{\bar \kappa N} \sqrt{L_n}/2} + \pi_{\bar A}^{(-,+,\Delta)} (\Gamma^c)\,.
\end{align*}
Because we are assuming $L_n \ge e^{2\beta} (\lfloor \log L_N\rfloor^3)$, and $N \ge \frac{1}{2} \log L_N$, the first exponential is at most $e^{ - c \beta \sqrt{\bar \kappa} \log L_N}$ for a $\beta$-independent constant $c$.

For the term $\pi_{\bar A}^{(-,+,\Delta)}(\Gamma^c)$, which was bounded in Claim 3.10 in~\cite{MaTo}, for us the quantity is exactly what is bounded in a uniform-in-$\beta$ manner by our Proposition~\ref{prop:horizontal-splitting-beta-uniform}, with the choice of $\bar \alpha = \sqrt{\bar \kappa N}$, so long as the constant $s$ in the modified region $\Delta$ is large enough (that constant is $\beta$-independent and only appears in the $c_3$),  we end up with 
\begin{align*}
    \pi_{\bar A}^{(-,+,\Delta)} ( \Gamma^c) \le L_n^{c_1} e^{ - c_2 \bar \kappa N} \,.
\end{align*}
This is at most $L_N^{-c_2' \bar \kappa}$ as long as $\bar \kappa$ is a sufficiently large ($\beta$-independent) constant. The fourth term of their (3.29) handled identically (only the domain has width $2L_n+1$ instead of $L_n$).

\medskip
\noindent \emph{Analogue of Proof of Theorem 3.2: part (2), (ii) in~\cite{MaTo}}. Finally, suppose in the second implication that the initialization is all-minus. As in~\cite{MaTo}, this term is handled identically to the plus initialization modulo the obvious changes.

\begin{remark}\label{rem:general-side-lengths}
    Though the proof was presented for readability for integer side-lengths of the form $2^n-1$, it is fairly straightforward to replace with all integer side-lengths. Indeed, this is discussed in Remark 3.12 of~\cite{MaTo}, and the tweaks described therein apply mutatis mutandis (its tweaks would not induce any extra $\beta$ dependencies, as the families of domains at scale $n$ on which we recurse are within uniform-in-$\beta$ factors, in fact factors of $2$, of the specific domains $R_{L_n},Q_{L_n}$). 
\end{remark}

\subsection{Proof of uniform-in-$\beta$ quasipolynomial mixing time}

The final step to go through is to show that given the mixing time on $L \times \tilde O(\sqrt{L})$ regions, with $(-,-,+,-)$ boundary conditions, one can bound the mixing time on $L \times L$ boxes $\Lambda_L$. 

The below will follow the proof of Theorem 1.6 in~\cite{MaTo}. 

\begin{proof}[\textbf{\emph{Proof of Theorem~\ref{thm:beta-independent-2D-mixing-time}}}]
    Suppose that $L \ge e^{ \beta/10}$, as otherwise the bound has been shown in Lemma~\ref{lem:small-n-mixing-time}, possibly up to a change of the $\beta$-independent constant $C$. 

    \medskip
    \noindent \emph{Mixing time with ``$(-,-,+,-)$" boundary conditions}. 
    The main step is to establish for $$t_L = \exp(  \bar A\beta(\log L)^2)$$ for a large $\beta$-independent constant $\bar A$ to be determined, that for all $L\ge e^{\beta/10}$, we have  
    \begin{align}\label{eq:wts---+-bc}
        \mathbf E[\|\mu_{t_L}^\pm -\pi^\tau\|] \le L^{-6}\,.
    \end{align}
Define the quantity $H_0$ that will play the role of $L^{1/2 + \varepsilon'}$ from their paper as 
    \begin{align*}
        H_0(L) = \max\{\beta^{2}, e^{ - \beta} \sqrt{\bar \kappa \log L}\sqrt{L}\}
    \end{align*}
    Consider the evolution from the all-plus initialization. For all $i$, let 
    \begin{align*}
        h_i = H_0(L) +  i(H_0(2L+1) - H_0(L))
    \end{align*}
    and let $k$ be such that $h_{k-1} = L$. Evidently, $k\le \sqrt{L}$. Let $\Lambda_L^i$ be defined as the rectangle of height $h_i$ of the same base as $\Lambda_L$. 
The analogue of Lemma 4.1 of~\cite{MaTo} will be the following. 
    \begin{lem}[Analogue of Lemma 4.1 of~\cite{MaTo}]\label{lem:plus-start-final-step}
        The following holds for $0\le i\le k-1$ and for a $\beta$-independent constant $C$. If $\tau\sim \mathbf P$ for $\mathbf{P}\in \mathcal D(\Lambda_L^i)$, 
        \begin{align*}
            \mathbf{E}[\|\mu_{(i+1) t_L/k}^{+,i} - \pi_{\Lambda_L^i}^\tau \|] \le C(1+i) L^{-8}\,,
        \end{align*}
        where $\mu^{+,i}_t$ is the law of Ising dynamics in $\Lambda_L^i$ started from all plus. 
    \end{lem}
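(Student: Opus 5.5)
We will prove the lemma by induction on $i$, following the proof of Lemma 4.1 of~\cite{MaTo}: each step uses the block dynamics/censoring scheme, and the quantitative inputs are the $\beta$-uniform estimates established above.

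\textbf{Base case $i=0$.} $\Lambda_L^0$ is an $L\times H_0(L)$ rectangle. If $H_0(L)=e^{-\beta}\sqrt{\bar\kappa\log L}\sqrt L$, this is (up to the factor-$2$ adjustments of Remark~\ref{rem:general-side-lengths}) the rectangle $R_{L}$ at the final scale. Corollary~\ref{cor:final-scale} then gives $\mathcal A(L,t_N,\delta_N)$ with $t_N=e^{c\beta\bar\kappa N^2}\le t_L/k$ once $\bar A$ is large; since $k\le\sqrt L$ costs only a factor $e^{O(\log L)}$, this is absorbed. The error is $\delta_N=ce^{-c^{-1}\bar\kappa N}\le L^{-8}$ for $\bar\kappa$ large and $\beta$-independent. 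If instead $H_0(L)=\beta^2$ (so $L\lesssim\beta^2e^{2\beta}$), we use either the second part of Corollary~\ref{cor:final-scale} or directly the canonical-paths bound of Proposition~\ref{prop:starting-point}. That bound gives mixing time $e^{8\beta^3}\le e^{C\beta(\log L)^2}$, using $L\ge e^{\beta/10}$, so $t_L/k$ exceeds it by a factor giving error $\le L^{-8}$.

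\textbf{Inductive step $i\to i+1$.} Start the dynamics on $\Lambda_L^{i+1}$ from all-plus and censor as in~\cite{MaTo}. During $[0,(i+1)t_L/k]$, update only the bottom rectangle $\Lambda_L^i$. During the final $t_L/k$, update only the top rectangle $A$ of height $H_0(2L+1)$; this overlaps $\Lambda_L^i$ in a strip of height $H_0(2L+1)-H_0(L)$. Censoring from the plus start only increases the distance (Peres--Winkler), and monotonicity splits the error into three terms.
\begin{itemize}
\item The first term is handled by the inductive hypothesis, with the boundary conditions on $\Lambda_L^i$ induced by $\tau$ and plus above. This law is still in $\mathcal D$ after the monotone comparison, so the error is $C(1+i)L^{-8}$.
\item The second term is the equilibrium error of replacing the induced boundary condition on $A$ by the $(-,-,+,-)$-type conditional measure. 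It is bounded, exactly as in Claim~\ref{clm:Claim-3.6-analogue}, by Lemma~\ref{lem:pure-phase-decay-of-correlations} (cost $e^{-\beta H_0/2}$) together with Proposition~\ref{prop:vertical-maximum-bound-replacement}, or Corollary~\ref{cor:short-rectangle-vertical-oscillation} when $H_0=\beta^2$. The interface must rise by $H_0(2L+1)-H_0(L)\gtrsim e^{-\beta}\sqrt{\bar\kappa\log L}\sqrt L$, which has probability $L^{c_1}e^{-c_2\bar\kappa\log L}\le L^{-10}$ for $\bar\kappa$ large.
\item The third term is the mixing on $A$ within time $t_L/k$. Since $A$ has width $L$ and height $H_0(2L+1)$, this is again given by the base case.
\end{itemize}
A union bound over the $L^2$ sites adds at most $CL^{-8}$, which yields $C(1+(i+1))L^{-8}$.

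\textbf{Main obstacle.} The hardest step is the second term, specifically checking that the hypotheses of Proposition~\ref{prop:vertical-maximum-bound-replacement} hold uniformly. We need $\ell\ge\beta e^{2\beta}$ and $\delta\bar\alpha\le\sqrt{\beta\log\ell}$, and the latter may fail because the increment $H_0(2L+1)-H_0(L)$ is comparable to $H_0$. If that happens, we would split into the case $L\le\beta^2e^{2\beta}$, handled by the Peierls/short-rectangle bound of Corollary~\ref{cor:short-rectangle-vertical-oscillation}, and the case of larger $L$. In the larger case, Lemma~\ref{lem:maximal-vertical-oscillations-with-floor} with the sharp triangle constant $\kappa_\beta\ge1/3$ still gives a polynomially small probability beyond the diffusive scale.
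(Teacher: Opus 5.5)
\textbf{There is a genuine gap in your inductive step: you run the censoring in the wrong order for the all-plus start.}

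You update $\Lambda_L^i$ first, during $[0,(i+1)t_L/k]$, while the rest of $\Lambda_L^{i+1}$ stays frozen at its all-plus initial value. So $\Lambda_L^i$ sees plus on its north side. That boundary law is not in $\mathcal D(\Lambda_L^i)$, which requires the north side to be stochastically below $\pi^-$.

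No monotone comparison repairs this. Starting from all plus, the censored law dominates $\pi^\tau$, so you need \emph{upper} bounds on $\mu_t(\sigma_x=+)$. Lowering the north boundary to something minus-like only produces a smaller measure, which is the wrong direction.

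Worse, this scheme cannot work at all. In your second phase only $A$ is updated. Hence the strip $\Lambda_L^{i+1}\setminus A$, of height $i(H_0(2L+1)-H_0(L))$, stays frozen at the output of a dynamics run with plus above and below, so it is mostly plus. Under $\pi^\tau_{\Lambda_L^{i+1}}$, by contrast, the interface stays within roughly $H_0(L)$ of the south side. Once $i$ exceeds a constant, most of that strip is minus in equilibrium. The censored chain therefore stays macroscopically far from $\pi^\tau$, and the inequality $\|\mu_t-\pi^\tau\|\le\|\mu_t^{\mathrm{cens}}-\pi^\tau\|$ yields nothing.

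\textbf{The fix, which is what the paper does following (4.9) of \cite{MaTo}, is to reverse the order.}

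First, update the top rectangle $A$ for time $t_L/k$. Its south boundary is then the frozen all-plus initial configuration, so $(\tau,+)$ is in $\mathcal D(A)$. The mixing on $A$ then comes from Corollary~\ref{cor:final-scale}; this is the first term. Note that $A$ has height $H_0(2L+1)$, so this is the $Q$-type statement rather than literally your $i=0$ case.

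Then update $\Lambda_L^i$ for the remaining time $(i+1)t_L/k$, so the total is $(i+2)t_L/k$. Its north boundary is the row of $A$ lying $H_0(L)$ above the bottom of $A$. That is the actual overlap, not $H_0(2L+1)-H_0(L)$ as you wrote. Under $\pi_A^{\tau,+}$ this row is minus-like, up to the interface-height and minus-phase decay-of-correlation errors handled exactly as in Claim~\ref{clm:Claim-3.6-analogue}; these are the third and fourth terms. The inductive hypothesis then applies and gives $CiL^{-8}$; this is the second term.

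Your base case and your treatment of the equilibrium errors are otherwise in line with the paper.
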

    Let us first defer the proof of the lemma, and conclude by also showing that $\mathbf{E}\|\mu_{t_L}^- - \pi^\tau\|\le C L^{-8}$. Using the same censoring scheme and Definition 4.2 there, we arrive at their equation (4.5). For the first term in their (4.5), in the case $L \ge \beta^2 e^{2\beta}$, Corollary~\ref{cor:final-scale} implies as long as $\bar A$ is large compared to $\bar \kappa$, it is at most $L^{-10}$. In the case $L\le \beta^2 e^{2\beta}$, by the second part of Corollary~\ref{cor:final-scale}, the mixing time is at most $\exp( 16 \beta^3)$, which since $L\ge e^{\beta/10}$, is at most $e^{ C \beta (\log L)^2}$ for some $\beta$-independent constant $C$, so as long as $\bar A$ is large, this term is also at most $L^{-10}$. For the third term in their (4.5), it is identically handled to the their third term in (3.22), which we already argued in ``Analogue of Proof of Theorem 3.2: part (1), (ii) in~\cite{MaTo}" was at most $L^{-10}$ if $L\ge \beta^2 e^{2\beta}$; if $L \le \beta^2 e^{2\beta}$, then it is similarly bounded because Corollary~\ref{cor:short-rectangle-vertical-oscillation} replaces the control on the vertical oscillations, yielding still error at most $L^{-10}$ for large $\bar \kappa$ (the height will still be less than $\beta^2$ for large $\beta$). Finally, the second term in their (4.5) is bounded like the second term in their (3.22), which we handled in our discussion around Claim~\ref{clm:Claim-3.8-analogue}.  In the case where $L\le \beta^2 e^{2\beta}$, the only step altered in the proof of Claim~\ref{clm:Claim-3.8-analogue} is the interface bound used to replace $\pi_{E_L(Q_L)}^{(-,-,+,-)}$ by $\pi_{E_L(Q_L)}^-$ for which we again use Corollary~\ref{cor:short-rectangle-vertical-oscillation} to still get $L^{-10}$ error.  

    If we now combine with the Lemma~\ref{lem:plus-start-final-step} with $i=k-1$, we get the claimed bound of~\eqref{eq:wts---+-bc}. 

    \begin{proof}[\textbf{\emph{Proof of Lemma~\ref{lem:plus-start-final-step}}}]
        The proof is inductive, with the $i=0$ case being exactly Corollary~\ref{cor:final-scale}. Now following the proof of~\cite{MaTo}, we arrive at their equation (4.9), and take expectation with respect to $\mathbf{P}$. The first term is bounded by Corollary~\ref{cor:final-scale} as there, by $L^{-10}$. The third and fourth terms are each bounded by $L^{-8}$ (the proof being identical to the bounds on the third and fourth terms in (3.14) of~\cite{MaTo}, which was exactly what we handled in our Claim~\ref{clm:Claim-3.6-analogue}). (For the case $L\in [e^{\beta/10},\beta^2 e^{2\beta}]$ that argument only used Peierls arguments which are unaffected, and the vertical interface fluctuation bound for which Corollary~\ref{cor:short-rectangle-vertical-oscillation} applies). Finally, we have the second term in (3.14) of~\cite{MaTo}, which is bounded  by the inductive hypothesis by $C i L^{-8}$.  
    \end{proof}

    \emph{Mixing time with ``$-$" boundary conditions}. At last we need one more argument to boost the above into a bound on the mixing time with all-minus boundary conditions. At this point, the modifications and making quantitative of all the estimates have been done before, but we go through them again for completeness. 
    We begin with the case of the all-plus initialization.  All the steps are unchanged (only using censoring, monotonicity, etc) up to the bound of (4.14) in~\cite{MaTo}. For the second term in (4.14), since the overlap of the two domains $\Lambda_L^-$ and $\Lambda_L^+$ is order $L$, the bound on interface fluctuations of Lemma~\ref{lem:maximal-fluctuation-bound} is applicable (in fact even a trivial $\epsilon_\beta L$ bound on the length of the interface already suffices) to give that this term is at most $e^{ - \beta L/5}$, say. The first term in their (4.14) is at most $L^{-6}$ by application of Lemma~\ref{lem:plus-start-final-step} in place of their Lemma 4.1. 

    Next we investigate the evolution from the all-minus initialization. This will be akin to the argument immediately preceding Claim~\ref{clm:Claim-3.8-analogue}. For given $x\in \Lambda_L$, letting $K_r$ be the $2r+1$ side-length box centered at $x$ as there, we follow the steps of their proof to arrive at the analogue of their (4.17): 
    \begin{align*}
        \mathbf{E}\|\mu_t^- - \pi^\tau\|\le \sum_{x\in \Lambda_L} \Big( \mathbf{E}\|\mu_{K_r,t}^{\tau,-} - \pi_{K_r}^{\tau,-}\| + e^{ - \beta r/2} \Big)\,.
    \end{align*}
    Using the $\beta$-dependent crude mixing time bound of Proposition~\ref{prop:starting-point}, choosing $t = t_L$ and $r = \frac{1}{8\beta} (\log t - \log \log t)$, the above is at most 
    \begin{align*}
        L^2 e^{ - \frac{1}{20} \log t_L }  \le L^2 e^{ - \beta (\log L)^2} \le L^{-10}
    \end{align*}
    for large $\beta$. This concludes the bound of mixing time with boundary conditions that are stochastically below the minus infinite-volume measure, which implies the same for the all-plus boundary condition by spin-flip symmetry. 
\end{proof}

\bibliographystyle{plain}
\bibliography{references}

\end{document}